\theoremstyle{plain}
\newtheorem{theorem}{Theorem}[section]
\newtheorem{corollary}[theorem]{Corollary}
\newtheorem{proposition}[theorem]{Proposition}
\newtheorem{lemma}[theorem]{Lemma}
\theoremstyle{definition}
\newtheorem{definition}[theorem]{Definition}
\newtheorem{remark}[theorem]{Remark}
\newtheorem{notation}[theorem]{Notation}
\newcommand{\LLL}{\mathscr{L}}
\newcommand{\XXX}{\mathcal{X}}
\newcommand{\ZZZ}{\mathcal{Z}}
\newcommand{\TTT}{\mathcal{T}}
\newcommand{\dd}{\mathrm{d}}
\newcommand{\sign}{\mathrm{sign}}
\DeclareMathOperator{\Tr}{Tr}
\DeclareMathOperator{\Ht}{Ht}
\newenvironment{psmallmatrix}
  {\left(\begin{smallmatrix}}
  {\end{smallmatrix}\right)}
\begin{document}

\title{Lagrange spectrum of a circle over the Eisensteinian field}

\author{Byungchul Cha}
\email{cha@muhlenberg.edu}
\author{Heather Chapman}
\email{heathercchapman@muhlenberg.edu}
\author{Brittany Gelb}
\email{brittany.gelb@rutgers.edu}
\author{Chooka Weiss}
\email{ChookaWeiss@muhlenberg.edu}
\address{Muhlenberg College, 2400 W.~Chew st., Allentown, PA 18104, USA}
\address{Department of Mathematics, Rutgers University, 110 Frelinghuysen rd., Piscataway, NJ 08854, USA}

\date{Oct 28, 2021}
\keywords{Lagrange spectrum, intrinsic Diophantine approximation, Romik's dynamical system, Berggren trees, Eisenstein triple, Eisensteinian field}
\subjclass[2010]{Primary: 11J06, Secondary: 11J70}

\begin{abstract}
We study an intrinsic Lagrange spectrum of the unit circle $|z|=1$ in the complex plane with respect to the Eisensteinian field $\mathbb{Q}(\sqrt{-3})$. 
We prove that the minimum of the Lagrange spectrum is $2$ and that its smallest accumulation point is $4/\sqrt{3}$. 
In addition, we characterize the set of all values in the spectrum between $2$ and $4/\sqrt3$.
\end{abstract}
\maketitle

\section{Introduction}
\subsection{Motivation}\label{SecMotivation}
Call $(a, b)$ an \emph{Eisenstein pair} if $a$ and $b$ are positive integers such that
$ a^2 + ab + b^2 $ is a perfect square.
Plot all Eisenstein pairs $(a, b)$ on the complex plane by associating $(a, b)$ to $a + b\omega$ where $\omega = \frac{1 + \sqrt{-3}}2$.
Suppose that we draw a half-line $\ell$ from the origin into 
the subset
$
\{ z \in \mathbb{C} \mid 0 \le \arg(z)  \le \pi/3 \}
$
of the complex plane and we aim to make $\ell$ stay as far away as possible from all but finitely many Eisenstein pairs (see Figure~\ref{FigEisensteinPairs}).
\begin{figure}[b]
    \centering
	\begin{tikzpicture}
  \pgfdeclarelindenmayersystem{triangular grid}{\rule{F->F-F+++F--F}}
  \path[draw=black,
        dotted,
  l-system={triangular grid,step=1cm,
    angle=-60,axiom=F--F--F,order=3,
  }]
  lindenmayer system -- cycle;

        \node[below] at (0, 0) {$0$};

        \filldraw (1, 0) circle (1pt);
        \node[below] at (1, 0) {$1$};

        \filldraw (0.5, 0.87) circle (1pt);
        \node[left] at (0.5, 0.85) {$\omega$};

        \node[fill=white,above] at (6.5, 2.6) {$5 + 3\omega$};
        \filldraw (6.5, 2.6) circle (1pt);

        \node[fill=white,above] at (5.5, 4.33) {$3 + 5\omega$};
        \filldraw (5.5, 4.33) circle (1pt);

        \draw (1, 0) arc (0:60:1);
        \draw[->] (0, 0) -- (6.92, 4) node[right] {$\ell(z)$};

        \node[right] at (0.87, 0.5) {$z$};
        \filldraw (0.87, 0.5) circle (1pt);
		\node[right] at (8, 0) {$\dots$};
		\node[above right] at (4, 6.928) {$\adots$};
\end{tikzpicture}
	\caption{Eisenstein pairs and a half-line $\ell(z)$}
    \label{FigEisensteinPairs}
\end{figure}
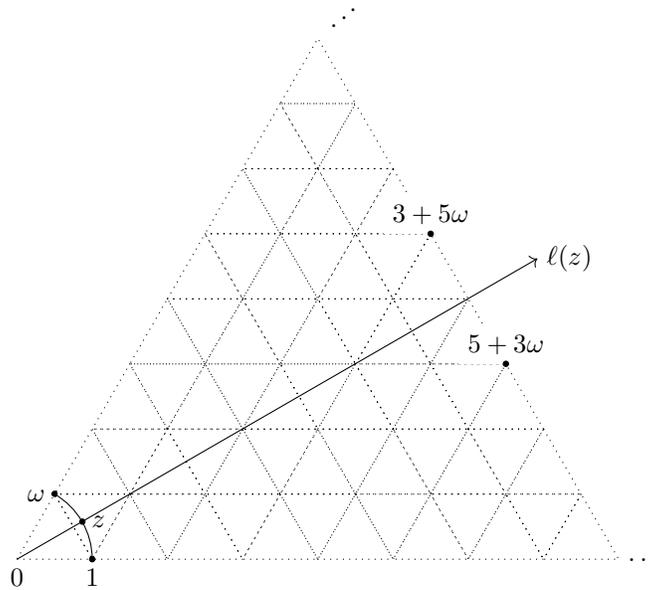
What is the greatest possible margin by which $\ell$ misses all but finitely many Eisenstein pairs?
What is the second greatest?

To formulate this question more precisely, 
let $z = \alpha + \beta\omega$
for any nonnegative real numbers $\alpha$ and $\beta$ with  $\alpha^2 + \alpha\beta + \beta^2 = 1$
and let $\ell(z)$ be the half-line in the complex plane which begins at 0 and passes through $z$.
Denote by $\delta'(z, (a, b))$ the shortest (Euclidean) distance from $a+b\omega$ to the half-line $\ell(z)$. 
Then we are interested in maximizing 
\[
	\delta(z) = \liminf_{(a, b)} \delta'(z, (a,b))
\]
where the Eisenstein pairs $(a,b)$ are ordered by 
the absolute value $\lvert a+b\omega \rvert = \sqrt{a^2 + ab + b^2}$.
Equivalently, we are interested in minimizing
\[
L(z) := \frac1{\delta(z)}
	= \limsup_{(a, b)} \frac1{\delta'(z, (a,b))}.
\]
Our first theorem provides answers for the questions we asked in the beginning.

\begin{theorem}\label{ThmMainTheoremI}
	The 5 smallest values of $L(z)$, together with corresponding $z$, are 
	as in Table~\ref{tab:Table1}.
\end{theorem}
\begin{table}
    \caption{The 5 smallest values of $L(z)$.}
\[
	   \begin{array}{@{} ll @{}}
			\toprule
			L(z) & z\\
			\midrule
			2 &  \frac1{\sqrt3} +  \frac1{\sqrt3}\omega
			 \\
			 \sqrt{\frac{13}{3}} = 
			 2.08166599946613 \dots
			&
			 \frac{1}{2}+\frac{-1+\sqrt{13}}{4} \omega
			\\
			\frac{\sqrt{133}}5 =
			2.30651251893416 \dots
			&
		   \frac{10}{\sqrt{399}} + \frac{13}{\sqrt{399}} \omega
			\\
			\frac{2\sqrt{15841}}{109}
			=
			2.30937677942215 \dots
			&
		   \frac{109}{\sqrt{47523}} + \frac{142}{\sqrt{47523}} \omega
			 \\
            \frac{2\sqrt{1884961}}{1189}
            =
            2.30940087256323\dots
            &
		   \frac{1189}{\sqrt{5654883}} + \frac{1549}{\sqrt{5654883}} \omega
			 \\
%             \frac{183642229}{2\sqrt{44965957696117921}}
%             =
%             0.43301270189221932499\dots
%             &
%             \left(
% \frac{183642229}{\sqrt{134897873088353763}},\,
% \frac{239244622}{\sqrt{134897873088353763}}
% 			 \right)
% 			 \\
			\bottomrule
		\end{array}
\]
    \label{tab:Table1}
\end{table}

In fact, for each value of $L(z)$, there are (infinitely) many $z$ which share the same $L(z)$. 
Table~\ref{tab:Table1} lists only one such $z$.

\begin{theorem}\label{ThmMainTheoremII}
The smallest accumulation point of %the set
$%\[
	\{ L(z) \mid \lvert z\rvert = 1, 
%	\quad 0\le \arg(z)\le \pi/3, \quad 
	L(z)>0 \}
$ is 
\[\frac4{\sqrt{3}} =
2.30940107675850\dots.
\]
\end{theorem}
%These theorems are easy corollaries of 
%Theorem~\ref{ThmMainTheoremIntro}. (See Theorem~\ref{ThmMainTheorem} as well.) 

\subsection{Intrinsic Diophantine approximation} \label{SecIntrinsicDiophantineApproximation}
To place our results in a general context, we introduce some notions about 
\emph{intrinsic Diophantine approximation}, following the exposition in \cite{FKMS}.
%Also, see \cite{FSU14}.
Suppose that $(\XXX, \dd)$ is a complete metric space and that  $\mathcal{Z}$ is a countable dense subset of $\XXX$ equipped with a \emph{height function} 
\[
\Ht: \mathcal{Z} \longrightarrow (0, \infty),
\]
namely, a function whose inverse image of any bounded subset of $(0, \infty)$ is finite.
We will call $(\XXX, \dd, \ZZZ, \Ht)$ (or more simply $(\XXX,\ZZZ)$) a \emph{Diophantine space}.
With respect to a Diophantine space $(\XXX, \ZZZ)$,
we say that a function $\psi:(0, \infty) \longrightarrow (0, \infty) $ is a \emph{Dirichlet function} if it has the property that, for each $P\in \XXX$, there exists a constant $C(P)$ and a sequence $\{Z_n\}_{n=1}^{\infty}$ in $\ZZZ$ such that
$%\[
\dd(P, Z_n) \le C(P) \psi(\Ht(Z_n))
%\]
$
for all $n\ge 0$.
Once the data 
%$(\XXX, \ZZZ, \psi)=
$(\XXX, \dd, \ZZZ, \Ht, \psi)$ 
are fixed, 
we define an \emph{approximation constant} $\delta(P)$ of each $P \in \XXX - \ZZZ$ to be 
\begin{equation}\label{EqDefApproxConstantWithDirichlet}
\delta(P) = 
\liminf_{\substack{Z\in\mathcal{Z}\\ \Ht(Z)\to\infty}} 
\frac{\dd(P, Z)}{\psi(\Ht(Z))}.
\end{equation}
The approximation constant $\delta(P)$ is thought to measure the approximability of $P$; this means that
the smaller $\delta(P)$ is the better $P$ is approximated by points in $\ZZZ$.
In particular, $P$ is said to be \emph{badly approximable} if $\delta(P)>0$.
There is a notion of an \emph{optimal} Dirichlet function, which is 
equivalent to the existence of badly approximable points
under some technical conditions.
We refer the reader to \cite{FSU14} for more in-depth discussion on this.
In our paper, we will simply pick a function $\psi$ which is known to be an optimal Dirichlet function (thus guaranteeing that badly approximable points exist), and we study the resulting approximation constants.
For instance, if $\XXX = \mathbb{R}^n$ and $\ZZZ = \mathbb{Q}^n$
(with the distance in $\XXX$ being given by the supremum norm and $\Ht(\mathbf{p}/q) = q$ with primitive $\mathbf{p}\in \mathbb{Z}^n$ and $q>0$),
it is well-known that the function $\psi: H \mapsto H^{-(1 + \frac1n)}$ is an optimal Dirichlet function.
In addition to the approximation constant $\delta(P)$, we define
the \emph{Lagrange number} $L(P)$ of $P \in \XXX- \ZZZ$ to be

\begin{equation}\label{EqDefLagrange}
L(P) =
\frac1{\delta(P)} = 
\limsup_{\substack{Z\in\mathcal{Z}\\ \Ht(Z)\to\infty}} 
\frac{\psi(\Ht(Z))}{\dd(P, Z)}.
\end{equation}
If $\delta(P) = 0$, then we let $L(P) = \infty$.
We also define the \emph{Lagrange spectrum} $\LLL(\XXX,\ZZZ)$ to be
\[
\LLL(\XXX,\ZZZ) =
\{ L(P) \mid P\in \XXX-\ZZZ, L(P)<\infty \}.
\]
%Then we aim to study the structure of $\LLL(\XXX,\ZZZ)$.

Suppose that $(\XXX_1, \ZZZ_1, \dd_1, \Ht_1)$
and $(\XXX_2, \ZZZ_2, \dd_2, \Ht_2)$ are Diophantine spaces.
Assume that there exists an isometry $\iota: (\XXX_1, \dd_1) \longrightarrow (\XXX_2, \dd_2)$ such that
\begin{itemize}
    \item $\iota$ maps $\ZZZ_1$ bijectively onto $\ZZZ_2$, and
    \item $\iota$ preserves heights, that is, $\Ht_2(\iota(Z)) = \Ht_1(Z)$ for all $Z\in \ZZZ_1$.
\end{itemize}
In other words, the isometry $\iota$ preserves the structure of ``rational points''.
Let us call such an $\iota$ a \emph{Diophantine isometry}. 
The Diophantine spaces $(\XXX_1,\ZZZ_1)$ and $(\XXX_2, \ZZZ_2)$ then share a common Dirichlet function $\psi$
and we have $\delta(P) = \delta(\iota(P))$ for all $P \in \XXX_1 - \ZZZ_1$.
As a result, $\LLL(\XXX_1,\ZZZ_1) = \LLL(\XXX_2,\ZZZ_2)$.

\subsection{Main theorem}
Let $K=\mathbb{Q}(\sqrt{-3})$, which we call the \emph{Eisensteinian field}.
In the present paper, we are concerned with the following Diophantine space:
\[
\begin{cases}
\XXX_{\mathbb{C}} = \{ z\in \mathbb{C} \mid \lvert z\rvert  = 1 \}, \\
\ZZZ_K =  \XXX_{\mathbb{C}} \cap K. \\
\end{cases}
\]
Here, $\XXX_{\mathbb{C}}$ is equipped with the usual Euclidean metric in the complex plane. 
To define a height function on $\ZZZ_K$, notice that
any element in $y\in K$ is written uniquely in the form
\[
y =
\frac{a + b\omega}{c}
\]
with $a, b, c\in \mathbb{Z}$ having no common factor and $c>0$.
(Recall $\omega= (1 + \sqrt{-3})/2$.)
The height function $\Ht_K(y)$ is defined by
\[
\Ht_K(y) = \Ht_K
\left(
\frac{a + b\omega}{c}
\right)
 = c.
\]
Finally, we let $\psi: H\mapsto H^{-1}$, which is a Dirichlet optimal function in this case.
With respect to this data $(\XXX_{\mathbb{C}}, \ZZZ_K,\Ht_K, \psi)$,
the definition \eqref{EqDefLagrange} becomes
\[
L(z) = 
\limsup_{\substack{y\in\mathcal{Z}\\ \Ht(y)\to\infty}} 
\frac1{
\Ht_K(y) \dd(z, y)}.
\]
It is easy to see that this $L(z)$ coincides with the $L(z)$ defined in \S\ref{SecMotivation}. 
%\todo{I commented out the old version of the theorem. If we don't like this new version, we can revive it. 
%(BC, 5/14).
%}
\begin{theorem}\label{ThmMainTheoremIntro}
Define 
\[
\lambda =
\frac{3 + \sqrt{13}}2
\quad\text{and}\quad
\overline{\lambda} =
\frac{3 - \sqrt{13}}2.
\]
Then we have 
\begin{align*}
\LLL&(\XXX_{\mathbb{C}},\ZZZ_K) \cap \left(0, \frac{4}{\sqrt3}
\right)
    \\
    &=
    \left\{
    \sqrt{\frac{13}{3}}
    \right\}
    \cup
    \left\{
    \frac2{\sqrt{3}} \cdot
\sqrt{4 - \frac{13}%
{\left(\lambda^{2k+1} - \overline{\lambda}^{2k+1}\right)^2}}
\Biggm\vert
k = 0, 1, \dots
\right\} \\
&=
\left\{
2,
    \sqrt{\frac{13}{3}},
    \frac{\sqrt{133}}5,
    \frac{2}{109}\sqrt{15841},
    \frac{2}{1189}\sqrt{1884961},
    \dots
\right\}.
\end{align*}
%Define 
%\[
%\lambda =
%\frac{3 + \sqrt{13}}2
%\quad\text{and}\quad
%\overline{\lambda} =
%\frac{3 - \sqrt{13}}2.
%\]
%Then we have that
%\begin{enumerate}[font=\upshape, label=(\alph*)]
%    \item $L_1 = 2$, the so-called \emph{Hurwitz bound} of $\LLL(\XXX_{\mathbb{C}},\ZZZ_K)$,
%    \item $L_2= \sqrt{\frac{13}{3}}$, and
%    \item for $k\ge 3$, 
%    \[
%    L_k
%    =
%    \frac2{\sqrt{3}} \cdot
%\sqrt{4 - \frac{13}%
%{\left(\lambda^{2k-3} - \overline{\lambda}^{2k-3}\right)^2}}.
%\]
%\end{enumerate}
%In particular, $L_k \to \frac4{\sqrt3}$ as $k\to\infty$
%and therefore $\frac4{\sqrt3}$ is the smallest accumulation point of
%$\LLL(\XXX_{\mathbb{C}},\ZZZ_K)$.
\end{theorem}
This theorem is an easy consequence of
Theorem~\ref{ThmMainTheorem}, 
which is our main theorem. 
Also, 
given any element $L$ in 
$\LLL(\XXX_{\mathbb{C}},\ZZZ_K) \cap \left(0, \frac{4}{\sqrt3}\right)$, 
Theorem~\ref{ThmMainTheorem} provides a recipe to produce a point $z\in \XXX_{\mathbb{C}}$ with 
$L = L(z)$.
All assertions in Theorems~\ref{ThmMainTheoremI} and \ref{ThmMainTheoremII} easily follow from this.

\subsection{Literature review}% and comparison with the Pythagorean case}
In the study of Lagrange spectra of various Diophantine spaces, perhaps the most prototypical example is the case $(\XXX, \ZZZ) = (\mathbb{R}, \mathbb{Q})$ with the usual Euclidean metric on $\mathbb{R}$ and the height function being $\Ht(\frac ab) = \lvert b \rvert $.
The function  $\psi: H\mapsto H^{-2}$ is an optimal Dirichlet function here.
With this set-up, a celebrated theorem of Markoff in \cite{Mar79} and \cite{Mar80} 
characterizes every (irrational) number $P$ whose Lagrange number $L(P) < 3$ in terms of its continued fraction expansion.
In particular, Markoff's theorem proves that $3$ is the smallest accumulation point of $\LLL(\mathbb{R},\mathbb{Q})$.
For detailed discussion of this remarkable theorem, we refer the reader to \cite{CF89} and \cite{Mal77}, as well as to \cite{Aig13} and \cite{Reu19} for more recent expositions.

There are numerous generalizations of Markoff's theorem in various contexts.
However, structures of Lagrange spectra of \emph{intrinsic} Diophantine approximation seem to be less known.
See the introduction in \cite{FKMS} for an overview on this topic.
In \cite{KM15} Kleinbock and Merrill studied intrinsic Diophantine approximation for $(\XXX, \ZZZ) = (S^n, S^n\cap \mathbb{Q}^{n+1})$, where $S^n$ is the unit $n$-sphere in $\mathbb{R}^{n+1}$.
Here, the metric on $S^n$ is the sup norm in $\mathbb{R}^{n+1}$
and the height of $\mathbf{z}/q\in S^{n+1}$ is $q$ whenever $\mathbf{z}\in\mathbb{Z}^{n+1}$ is a primitive integral vector and $q$ is a positive integer.
They show in \cite{KM15} that there exist badly approximable points in $S^n$ with respect to the function $\psi: H \mapsto H^{-1}$, therefore, establishing that $\psi$ is an optimal Dirichlet function in this setting.
As for the Lagrange spectrum of 
$\LLL(S^n, S^n\cap \mathbb{Q}^{n+1})$,
Kopetzky \cite{Kop80} and, independently, Moshchevitin \cite{Mos16} found that the minimum (or \emph{the Hurwitz bound}) of $\LLL(S^1, S^1\cap \mathbb{Q}^2)$ is $1/\sqrt{2}$.

In 2008, Romik introduced in \cite{Rom08} a certain dynamical system on the unit quarter circle $\{ (x, y) \in \mathbb{R}^2 \mid x, y\ge0, \, x^2 + y^2 =1\}$ and used it to define a certain digit expansion for points in the quarter circle. 
Romik's dynamical system is based on an old theorem of Berggren \cite{Ber34}, which provides a tree-like structure for the set of all \emph{Pythagorean triples} $(a, b, c)$, namely, a triple of coprime positive integers with $a^2 + b^2 = c^2$.
Jointly with Dong Han Kim in \cite{CK19}, the first-named author of the present paper utilized Romik's digit expansions to reveal the structure of the initial discrete part of 
$\LLL(S^1, S^1\cap \mathbb{Q}^2)$.
(In \cite{CK19}, the metric on $S^1$ is the usual Euclidean metric on $\mathbb{R}^2$, not the sup norm.) 
In particular, they prove that $2$ is the smallest accumulation point of 
$\LLL(S^1, S^1\cap \mathbb{Q}^2)$ 
and characterize those $P$ in $S^1$ whose Lagrange numbers $L(P)<2$ 
in terms of Romik's digit expansions of $P$.
This provides an analogue of the aforementioned theorem of Markoff in the context of intrinsic Diophantine approximation of $S^1$.
A similar but less direct result had been previously proven by Kopetzky in \cite{Kop85}.

\subsection{Comparison with A.~Schmidt's Markoff spectrum}
Let $f(x, y) = \alpha x^2 + \beta xy + \gamma y^2$ be a quadratic form with $\alpha, \beta, \gamma\in \mathbb{R}$ and $\alpha \neq 0$ and discriminant $\Delta(f) = \beta^2 - 4\alpha\gamma >0$.
In \cite{Sch77}, A.~Schmidt defines the \emph{3-minimum} of $f$ to be
\[
  \mu_3(f) = \inf_{(x, y) \in \mathbb{Z}^2 - \{ (0, 0) \}}
  \left\vert
  \frac{f(x, y)}{\gcd(x, 3)}
  \right\vert,
\]
and the \emph{3-spectrum}
$\mathfrak{M}_3$
to be
\[
  \mathfrak{M}_3 = 
  \{
    \sqrt{\Delta(f)}/\mu_3(f) \mid f \text{ as above}
  \}.
\]
His main result (Theorem 2.1 in \cite{Sch77}) gives
\begin{align*}
  \mathfrak{M}_3 \cap (0, 4) &= 
  \{
    \sqrt{13}
  \}
  \cup
  \left\{
    4\sqrt{1 - \frac4{M_n^2}}
    \biggm\vert
    \begin{array}[]{l}
      M_0 = 4, M_1 = 40, \\
      M_n = 11M_{n-1} - M_{n-2} \text{ for } n\ge 2
    \end{array}
  \right\} \\
  &= 
  \left\{
    2\sqrt3, 
    \sqrt{13}, 
     \frac{\sqrt3\sqrt{133}}5,
     \frac{2\sqrt3\sqrt{15841}}{109},
     \frac{2\sqrt3\sqrt{1884961}}{1189},
     \dots 
  \right\}.
\end{align*}
To connect this with our results, let $\lambda, \overline{\lambda}$ be as in Theorem~\ref{ThmMainTheoremIntro}.
Then one can use induction to show that, for $n\ge0$,
\[
  M_n = \frac4{\sqrt{13}}
  \left(
  \lambda^{2n+1} - \overline{\lambda}^{2n+1}
\right).
\]
Combining this and Theorem~\ref{ThmMainTheoremIntro}, we have
\[
  \frac{\mathfrak{M}_3 \cap (0, 4)}{\sqrt3} = 
  \mathscr{L}(\mathcal{X}_{\mathbb{C}}, \mathcal{Z}_K) \cap (0, \tfrac{4}{\sqrt3} ).
\]
This is analogous to the well-known fact that the original Markoff and Lagrange spectra coincide below 3, their common smallest accumulation point.
It would be desirable to provide a more detailed explanation on the connection between our Lagrange spectrum and A.~Schmidt's Markoff spectrum. 
Indeed, Kopetzky's result in \cite{Kop85} can be interpreted as providing this connection in the \emph{Pythagorean} case.
We intend to return to this theme in the near future. 

\subsection{Diophantine isometry and structure of the paper}
Instead of dealing with $(\XXX_{\mathbb{C}}, \ZZZ_K)$ directly,
we consider another Diophantine space $(\XXX_0,\ZZZ_0)$ with
\begin{equation}\label{EqDefinitionXZ2}
\begin{cases}
\XXX_0 = \{ (x, y) \in\mathbb{R}^2 \mid x^2 + xy + y^2 = 1 \}\\
\mathcal{Z}_0=  \XXX_0 \cap\mathbb{Q}^2. 
\end{cases}
\end{equation}
To define a metric on $\XXX_0$, we equip the ambient vector space $\mathbb{R}^2$ of $\XXX_0$ with an inner product 
\begin{equation}\label{EqDefInnerProduct}
P_1\cdot P_2 = x_1x_2 + \frac{x_1y_2 + x_2y_1}2 + y_1y_2
\end{equation}
for $P_1 = (x_1, y_1)$ and $P_2 = (x_2, y_2)$ in $\mathbb{R}^2$
and let $\XXX_0$ inherit the metric from the inner product space $(\mathbb{R}^2, \cdot)$.
With respect to this metric, $\XXX_0$ is the ``unit circle'' centered at the origin 
and $\ZZZ_0$ is the set of its rational points.
Also, we define a height function $\Ht$ on $\ZZZ_0$ to be
\[
\Ht(\tfrac ac, \tfrac bc) = c,
\]
whenever $(\frac ac, \frac bc)\in \ZZZ_0$ is written in lowest terms with $c>0$.
Finally we choose our Dirichlet function $\psi$ to be $\psi: H \longmapsto H^{-1}$.

One can easily check  that the map
\begin{equation}\label{EqDefZmap}
z: 
\mathbb{R}^2 \longrightarrow  
\mathbb{C},
\qquad
P = (\alpha, \beta) \mapsto
z(P) = \alpha+ \beta\omega
\end{equation}
is an isometry (see Figure~\ref{FigTheMapFromCtoEllipse}).
\begin{figure}
\centering
\begin{tikzpicture}[scale=1.5]
        %
        %
        % A trick I learned from the following website 
        % https://tex.stackexchange.com/questions/123158/tikz-using-the-ellipse-command-with-a-start-and-end-angle-instead-of-an-arc
        %
        %
      
        \tikzset{
    partial ellipse/.style args={#1:#2:#3}{
        insert path={+ (#1:#3) arc (#1:#2:#3)}
    }
}
        \draw[->] (-1.2, 0) -- (1.2, 0) node[right]{$x$};
        \draw[->] (0, -1.2) -- (0, 1.2) node[above]{$y$};

        \draw[rotate=-45] (0,0) ellipse (1.414cm and 0.816cm); % sqrt(2)=1.414 and 
                                                                % sqrt(2/3) = 0.816, respectively.
        \draw[very thick, rotate=-45] (0,0) [partial ellipse=60:120:1.414cm and 0.816cm];
        \draw[dashed] (0, 0) -- (7/13, 8/13) node[above right]{$P = (\alpha, \beta)$};
        \filldraw (7/13, 8/13) circle (1.0pt);
        \filldraw (1, 0) circle (1.0pt) node[below]{$1$};
        \filldraw (0, 1) circle (1.0pt) node[below left]{$1$};

    \draw[->] (2.3, 0) -- (4.7, 0) node[right]{$\Re(z)$};
    \draw[->] (3.5, -1.2) -- (3.5, 1.2) node[above]{$\Im(z)$};
    
      \draw (3.5, 0) circle (1cm);
    \draw[very thick] (4.5, 0) arc (0:60:1cm);
    \draw[dotted] (3.5, 0) -- (4, 0.866);
    \draw[dashed] (3.5, 0) -- (4.207, 0.707) node[above right]{$z(P) = \alpha+ \beta\omega$};
    
        \filldraw (4.5, 0) circle (1.0pt) node[below right]{$1$};
        \filldraw (4, 0.866) circle (1.0pt) node[above]{${\omega}$};
        \filldraw (4.207, 0.707) circle (1.0pt);
%        \node[below left] at (3.5, 1) {$i$};
\end{tikzpicture}
    \caption{The Diophantine isometry $P\mapsto z(P)$}
    \label{FigTheMapFromCtoEllipse}
\end{figure}
Furthermore, this map sends $\XXX_0$ bijectively onto $\XXX_{\mathbb{C}}$ and $\ZZZ_0$ bijectively onto $\ZZZ_K$, preserving
heights.
In other words, $z: (\XXX_0, \ZZZ_0) \longrightarrow (\XXX_{\mathbb{C}}, \ZZZ_K)$ is a \emph{Diophantine isometry} (see \S\ref{SecIntrinsicDiophantineApproximation}).
By symmetry, it is enough for us to consider a ``one-sixth'' $(\XXX, \ZZZ)$ of $(\XXX_0, \ZZZ_0)$, which is defined to be
\[
\XXX = \{ (x, y) \in \XXX_0 \mid x, y\ge 0\}
\]
and $\ZZZ = \ZZZ_0 \cap \XXX$.
Clearly $\LLL(\XXX, \ZZZ) = \LLL(\XXX_0, \ZZZ_0)$.
%so we study $(\XXX, \ZZZ)$ in this paper.

Let us call $(a, b, c)$ an \emph{Eisenstein triple} if $(a, b, c)$ is a coprime positive integer triple satisfying $a^2 + ab + b^2 = c^2$.
The set of all Eisenstein triples are in one-to-one correspondence with points in $\ZZZ$. 
On the other hand, it is proven by Wayne in \cite{Way82} that the set of all Eisenstein triples forms a certain tree-like structure, just as in the Pythagorean case.
This enables us to apply the same strategy developed in \cite{CK19} to our situation.

In fact, the current paper should be thought of as a companion paper to \cite{CK19} in the \emph{Eisensteinian} case.
We begin in \S\ref{SecRomik} by outlining constructions of Romik's dynamical system and digit expansions, making necessary modifications for the Eisensteinian case.
We will give a self-contained exposition for the sake of completeness at the expense of some duplication.

A central theme in the techniques developed in \cite{CK19} is the fact that Romik's digit expansion plays an analogous role as continued fraction expansions in the classical case.
The same is true for the Eisensteinian case. 
One of the main technical results in \S\ref{SecRomik} is to establish the fact that, for a fixed $P \in \XXX-\ZZZ$, all the best approximants of $P$ are contained in boundary points of the cylinder sets containing $P$ (Theorem~\ref{ThmBestApproximants}).
Another key step is to prove a version of \emph{Perron's formula} in the Eisensteinian case (Theorem~\ref{ThmPerronFormula}).

Once Perron's formula is established, it becomes possible to define a \emph{doubly infinite Romik sequence} $T$ and its Lagrange number $L(T)$ 
(see Definition~\ref{DefLagrangeSequence}),
so that to each $P\in \XXX - \ZZZ$ 
we can associate a doubly infinite Romik sequence $T$ such that $L(T) = L(P)$ (see Lemma~\ref{LemBombieri}).
Therefore, in order to characterize every Lagrange number $L(P) < 4/\sqrt3$, it is enough to characterize every doubly infinite Romik sequence $T$ with $L(T) < 4/\sqrt3$. 
This is the goal of \S\ref{SecAdmissible} and we give such a characterization in Theorem~\ref{ThmMainTheorem}, which is the main theorem of the paper.

Finally we mention that there is another Diophantine space $(\XXX', \ZZZ')$, whose intrinsic Diophantine approximation is equivalent to $(\XXX_0, \ZZZ_0)$.
Define
\[
\XXX' = \{ (x, y, z) \in \mathbb{R}^3 \mid x^2 + y^2 + z^2 = 2, \quad x + y + z = 0 \}
\]
and $\ZZZ' =  \XXX' \cap \mathbb{Q}^3$.
To define a metric on $\XXX'$, we simply rescale the Euclidean metric on $\mathbb{R}^3$ and let $\dd' = \dd_{\mathrm{Euclidean}}/\sqrt2$, so that $\XXX'$ becomes a ``unit circle'' in $\mathbb{R}^3$.
In addition, we define the height of $\mathbf{p}/q \in \ZZZ'$ to be $q$ whenever $\mathbf{p}\in \mathbb{Z}^3$ is primitive and $q>0$ is a positive integer.
Let $\mathbf{i} = (1, -1, 0)$ and $\mathbf{j} = (1, 0, -1)$. 
Then it is easy to check that the map
\[
\iota: \mathbb{R}^2 \longrightarrow \{ (x, y, z) \in \mathbb{R}^3 \mid x+ y+ z = 0\}
\]
with $\iota(\alpha, \beta) = \alpha \mathbf{i} + \beta\mathbf{j}$ 
is a Diophantine isometry between 
$(\XXX_0, \ZZZ_0)$ and $(\XXX', \ZZZ')$.
Recall that $\mathbb{R}^2$ is equipped with the metric defined by the inner product in \eqref{EqDefInnerProduct}.
In particular, $\LLL(\XXX, \ZZZ) = \LLL(\XXX', \ZZZ')$.

\subsection{Acknowledgments}

We are thankful to an anonymous referee, who brought to our attention a paper \cite{Sch77} by A.~Schmidt and prompted us to consider the connection between our Lagrange spectrum and Schmidt's Markoff spectrum.
We are also thankful to another referee whose comments helped us to improve the paper. Dong Han Kim discovered the Diophantine isometry $\iota$ between $(\XXX_0, \ZZZ_0)$ and $(\XXX', \ZZZ')$.
We are grateful to him for having us include it here and for other useful discussions while this paper was being written. 
\section{Romik's dynamical system and Berggren trees of Eisenstein triples}
\label{SecRomik}
\subsection{Preliminary results on linear algebra}\label{SubsecPrelim}
Let $(\mathbb{R}^3, Q(\mathbf{x}))$ be a \emph{quadratic space}, namely, a real vector space $\mathbb{R}^3$ equipped with a quadratic form
\[ 
Q(\mathbf{x}) = x_1^2 + x_1x_2 + x_2^2 - x_3^2
\]
for $\mathbf{x} = (x_1,x_2, x_3)\in \mathbb{R}^3$.
Associated to the form $Q(\mathbf{x})$, there is a symmetric bilinear pairing $\langle \cdot, \cdot \rangle$
\begin{equation}\label{EqDefPairing}
\begin{aligned}
\langle \mathbf{x}, \mathbf{y} \rangle
&= 
\frac12
\left(
Q(\mathbf{x}+\mathbf{y}) - Q(\mathbf{x}) - Q(\mathbf{y})
\right) \\
&= x_1y_1 + \frac{x_1y_2 + x_2y_1}2 + x_2y_2 - x_3y_3
\end{aligned}
\end{equation}
for $\mathbf{x} = (x_1, x_2, x_3)$ and $\mathbf{y} = (y_1, y_2, y_3)$.

We also consider an inner product space $(\mathbb{R}^2, \cdot)$, where the inner product $P_1\cdot P_2$ is defined by
\begin{equation}\label{EqDefInnerProduct2}
P_1\cdot P_2 = x_1x_2 + \frac{x_1y_2 + x_2y_1}2 + y_1y_2
\end{equation}
whenever $P_1 = (x_1, x_2)$ and $P_2 = (y_1, y_2)$.
This dot product, being positive definite, defines a metric on $\mathbb{R}^2$ and we will henceforth regard $\mathbb{R}^2$ as a metric space using this metric.
Let $\XXX_0$ be a ``unit circle'', that is, 
\begin{equation}\label{EqUnitCircle}
\XXX_0 = \{(x,y) \in \mathbb{R}^2 \mid x^2+xy+y^2=1 \}.
\end{equation}
For $P = (\alpha, \beta)\in \XXX_0$, denote by $\theta(P)$ the angle with $0\le \theta(P) < 2\pi$ satisfying
\begin{equation}\label{EqArgumentRelations}
\begin{cases}
\cos(\theta(P)) = \alpha + \frac12\beta, \\
\sin(\theta(P)) = \frac{\sqrt3}2\beta.
\end{cases}
\end{equation}
Also, we write 
\begin{equation}\label{EqDefPrecEq}
    P_1 \preceq P_2
\end{equation}
whenever $\theta(P_1) \le \theta(P_2)$.
Geometrically speaking, $\theta(P)$ is the angle measured from $(1, 0)$ to $P$ counterclockwise using the inner product \eqref{EqDefInnerProduct2}.
If $P_1, P_2\in \XXX_0$, we write
\begin{equation}\label{EqDefAngleP1P2}
\theta(P_1, P_2) = \theta(P_2)- \theta(P_1).
\end{equation}

\begin{definition}\label{DefVectors}
	Let $\mathbf{x} = (x_1, x_2, x_3) \in \mathbb{R}^3$. We say that $\mathbf{x}$ 
\begin{enumerate}[font=\upshape, label=(\alph*)]
	\item is \emph{normalized} (or $x_3$-\emph{normalized}) if $x_3 = 1$,
	\item is \emph{positive} if $x_3 > 0$, and
	\item \emph{represents} a point $(x, y)\in \mathbb{R}^2$ if $x = x_1/x_3$ and $y = x_2/x_3$. 
\end{enumerate}
When $P = (x, y)\in \mathbb{R}^2$, we will denote by $(P, 1)$ a normalized vector $ (x, y, 1)$.
\end{definition}

The bilinear pairing $\langle \cdot, \cdot \rangle$ and the inner product in \eqref{EqDefInnerProduct2} are related in the following obvious way.
If $\mathbf{x}_1 = (P_1, 1)$ and $\mathbf{x}_2 = (P_2, 1)$, then 
\begin{equation}\label{EqPairingDotProduct}
\langle \mathbf{x}_1, \mathbf{x}_2 \rangle
=
P_1\cdot P_2 - 1,
\end{equation}
where $P_1\cdot P_2$ is the inner product defined in \eqref{EqDefInnerProduct}.
Moreover, we have the following proposition.
\begin{proposition}\label{PropBilinearPairingInnerProductRelation}
If $P_1, P_2\in \XXX_0$ and if $\mathbf{x}_1 = (P_1, 1)$ and $\mathbf{x}_2 = (P_2, 1)$, then 
\begin{equation*}%\label{EqPairingAngle}
\langle \mathbf{x}_1, \mathbf{x}_2 \rangle
=
-2\sin^2\left(
\frac{\theta(P_1, P_2)}2
\right).
\end{equation*}
\end{proposition}
\begin{proof}
Writing $P_1 = (x_1, y_1)$ and $P_2 = (x_2, y_2)$,  we have
\begin{align*}
    -2\sin^2\left(
    \frac{\theta(P_1, P_2)}2
    \right)
    &= 2\cos^2\left(
    \frac{\theta(P_1, P_2)}2
    \right) - 2\\
    &= \cos(\theta(P_2) - \theta(P_1)) - 1\\
    &= \cos\theta(P_2)\cos\theta(P_1) + \sin\theta(P_2)\sin\theta(P_1) - 1 \\
    &= \left(
    x_1 + \frac{y_1}{2}\right)\left(x_2 + \frac{y_2}{2}\right) + \frac{\sqrt{3}}{2}y_1\cdot\frac{\sqrt{3}}{2}y_2 - 1\\
    &= \langle \mathbf{x}_1, \mathbf{x}_2 \rangle.
\end{align*}
\end{proof}
%Here is one easy corollary of this equation, which we will find useful later.
\begin{lemma}\label{LemNegativeOnCQ}
Let $\mathbf{p}_1$ and $\mathbf{p}_2$ be vectors with positive $x_3$-coordinates such that $Q(\mathbf{p}_1) = Q(\mathbf{p}_2) = 0$.
Then we have $\langle \mathbf{p}_1, \mathbf{p}_2 \rangle \le 0$,
with the equality holing only when $\mathbf{p}_1$ and $\mathbf{p}_2$ are (positive) scalar multiples of one another.
\end{lemma}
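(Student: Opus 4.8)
The plan is to reduce everything to the angle formula \eqref{EqPairingAngle} after a normalization. Since $\mathbf{p}_1$ and $\mathbf{p}_2$ have positive $x_3$-coordinates, write $\mathbf{p}_i = (a_i, b_i, c_i)$ with $c_i > 0$, so that $\mathbf{p}_i = c_i \,(P_i, 1)$ where $P_i = (a_i/c_i,\, b_i/c_i) \in \mathbb{R}^2$. The hypothesis $Q(\mathbf{p}_i) = 0$ is homogeneous of degree $2$, hence equivalent to $Q((P_i, 1)) = 0$, which unwinds to $a_i^2/c_i^2 + a_ib_i/c_i^2 + b_i^2/c_i^2 = 1$; that is, $P_i \in \XXX_0$ as defined in \eqref{EqUnitCircle}.

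By bilinearity of $\langle\cdot,\cdot\rangle$ we have $\langle \mathbf{p}_1, \mathbf{p}_2 \rangle = c_1 c_2 \,\langle (P_1, 1), (P_2, 1)\rangle$, and since $P_1, P_2 \in \XXX_0$, equation \eqref{EqPairingAngle} gives
\[
\langle \mathbf{p}_1, \mathbf{p}_2 \rangle
= -2\, c_1 c_2 \sin^2\!\left( \frac{\theta(P_1, P_2)}{2} \right).
\]
Because $c_1, c_2 > 0$ and $\sin^2(\cdot) \ge 0$, the right-hand side is $\le 0$, which is the asserted inequality. For the equality case, $\langle \mathbf{p}_1, \mathbf{p}_2 \rangle = 0$ forces $\sin^2(\theta(P_1, P_2)/2) = 0$. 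By the defining relations \eqref{EqArgumentRelations} we have $\theta(P_1), \theta(P_2) \in [0, 2\pi)$, so $\theta(P_1, P_2) = \theta(P_2) - \theta(P_1) \in (-2\pi, 2\pi)$ and thus $\theta(P_1, P_2)/2 \in (-\pi, \pi)$, where $\sin$ vanishes only at $0$. Hence $\theta(P_1) = \theta(P_2)$; since $\theta$ determines a point of $\XXX_0$ uniquely, $P_1 = P_2$, and therefore $\mathbf{p}_1 = c_1(P_1, 1)$ and $\mathbf{p}_2 = c_2(P_1, 1)$ are positive scalar multiples of one another. The converse implication is immediate, since a positive scalar multiple changes $\langle \mathbf{p}_1, \mathbf{p}_2\rangle$ by a positive factor and $\langle (P_1,1),(P_1,1)\rangle = Q((P_1,1)) = 0$.

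I do not expect a genuine obstacle here — the statement is flagged as an easy corollary of \eqref{EqPairingAngle}. The one point deserving a moment's care is confirming that $\theta(P_1, P_2)/2$ stays inside the open interval $(-\pi, \pi)$, so that the vanishing of the sine pins down $P_1 = P_2$ exactly, rather than allowing some antipodal-type coincidence; the range $[0,2\pi)$ built into the definition of $\theta$ in \eqref{EqArgumentRelations} is precisely what guarantees this.
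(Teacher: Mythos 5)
Your proof is correct and follows the same route as the paper: normalize the vectors (using positivity of the $x_3$-coordinates) and apply \eqref{EqPairingAngle}, with the equality case pinned down by the fact that $\theta(P_1,P_2)/2$ lies in $(-\pi,\pi)$. The paper's own proof is just a terser version of exactly this argument.
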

\begin{proof}
Without loss of generality, we may assume that both $\mathbf{p}_1$ and $\mathbf{p}_2$ are normalized. 
The conclusion of the lemma then follows immediately from Proposition~\ref{PropBilinearPairingInnerProductRelation}.
\end{proof}

We say that an invertible linear map $A$ from the quadratic space $(\mathbb{R}^3, Q(\mathbf{x}))$ to itself is \emph{orthogonal} with respect to $Q(\mathbf{x})$ 
%(or $Q$-orthogonal, to emphasize the dependency on $Q(\mathbf{x})$) 
if $Q(A\mathbf{x}) = Q(\mathbf{x})$ for any $\mathbf{x} \in \mathbb{R}^3$.
Equivalently, $A$ is orthogonal if and only if 
%\[
$
\langle A\mathbf{x}, A\mathbf{y} \rangle = 
\langle \mathbf{x}, \mathbf{y} \rangle
%\]
$
for any $\mathbf{x}, \mathbf{y} \in \mathbb{R}^3$.
%We denote by $O_Q(\mathbb{R}^3)$ the group of all orthogonal linear maps.

%Orthogonal maps can be constructed by \emph{reflections}.
%To explain, fix a vector $\mathbf{h}$ such that $Q(\mathbf{h})\neq 0$ and define 
%the reflection $s_{\mathbf{h}}(\mathbf{x})$ about $\mathbf{h}$ to be
%\[
%s_{\mathbf{h}}(\mathbf{x}) = 
%\mathbf{x} - 2\frac{\langle \mathbf{x}, \mathbf{h} \rangle }{Q(\mathbf{h})}\mathbf{h}.
%\]
%It is easy to see that $s_{\mathbf{h}}\in O_Q(\mathbb{R}^3)$ and that $s_\mathbf{h} = s_{\mathbf{h}}^{-1}$.

\begin{definition}[See \S3.2.4 of \cite{CNT}]\label{DefHandUd}
%Let $\mathbf{h} = (1, 1, 2)$. 
%Define $H = -s_{\mathbf{h}}$. 
Define $H$ to be the linear map of $\mathbb{R}^3$ onto itself given by the matrix
\[
H =
\begin{pmatrix}
  -4 & -3 & 4 \\
  -3 & -4 & 4 \\
  -6 & -6 & 7 \\
\end{pmatrix}
\]
with respect to the standard basis of $\mathbb{R}^3$.
Additionally, define $U_1, \dots, U_5$ by the following matrices:
\[
\begin{gathered}
U_1 = 
\begin{pmatrix}
  0 & -1 & 0 \\
  1 & 1 & 0 \\
  0 & 0 & 1 \\
\end{pmatrix},
\quad
U_2 = 
\begin{pmatrix}
  -1 & -1 & 0 \\
  0 & 1 & 0 \\
  0 & 0 & 1 \\
\end{pmatrix},
\quad
U_3 = 
\begin{pmatrix}
  -1 & 0 & 0 \\
  0 & -1 & 0 \\
  0 & 0 & 1 \\
\end{pmatrix},
\\
U_4 = 
\begin{pmatrix}
  1 & 0 & 0 \\
  -1 & -1 & 0 \\
  0 & 0 & 1 \\
\end{pmatrix},
\quad
U_5 = 
\begin{pmatrix}
  1 & 1 & 0 \\
  -1 & 0 & 0 \\
  0 & 0 & 1 \\
\end{pmatrix}.
\end{gathered}
\]

Finally, we define 
\[ M_d = HU_d\]
for $d = 1,\dots, 5$.
Explicitly, we have
\begin{equation}\label{EqDefintionM}
\begin{gathered}
M_1=\begin{pmatrix}
-3&1&4\\
-4&-1&4\\
-6&0&7
\end{pmatrix},
\quad
M_2=\begin{pmatrix}
4&1&4\\
3&-1&4\\
6&0&7
\end{pmatrix},
\quad
M_3=\begin{pmatrix}
4&3&4\\
3&4&4\\
6&6&7
\end{pmatrix}, \\
M_4=\begin{pmatrix}
-1&3&4\\
1&4&4\\
0&6&7
\end{pmatrix},
\qquad
M_5=\begin{pmatrix}
-1&-4&4\\
1&-3&4\\
0&-6&7
\end{pmatrix}.
\end{gathered}
\end{equation}
\end{definition}
\begin{lemma}[\S3.2.4 in \cite{CNT}]\label{Lem324}
The maps $U_1, \dots, U_5$ and $H$ are orthogonal with respect to $Q(\mathbf{x})$.
(As a result, $M_1, \dots, M_5$ are also orthogonal.)
Furthermore, 
define
\[
\hat{d}
=
\begin{cases}
5 & \text{ if } d= 1, \\
2 & \text{ if } d= 2, \\
3 & \text{ if } d= 3, \\
4 & \text{ if } d= 4, \\
1 & \text{ if } d= 5. \\
\end{cases}
\]
Then we have
\begin{enumerate}[font=\upshape, label=(\alph*)]
    \item $H = H^{-1}$, 
    \item $U_{\hat{d}} = U_d^{-1},$
    \item $M_d = HU_d,$ and $M_d^{-1} = U_{\hat{d}}H.$
\end{enumerate}
\end{lemma}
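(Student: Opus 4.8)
The plan is to reduce everything to a handful of explicit $3\times3$ matrix computations and to arrange them so the arithmetic stays short. Write $G = \begin{psmallmatrix} 1 & 1/2 & 0 \\ 1/2 & 1 & 0 \\ 0 & 0 & -1 \end{psmallmatrix}$ for the Gram matrix of the pairing $\langle\cdot,\cdot\rangle$ of \eqref{EqDefPairing}, and recall that a linear map $A$ lies in $O_Q(\mathbb{R}^3)$ if and only if $A^{\mathrm{T}} G A = G$.

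For $U_1,\dots,U_5$ this is transparent. Each $U_d$ has the block shape $\begin{psmallmatrix} B_d & 0 \\ 0 & 1 \end{psmallmatrix}$ with $B_d \in \mathrm{GL}_2(\mathbb{Z})$, so $U_d$ preserves $Q$ exactly when $B_d$ preserves the positive-definite binary form $x_1^2 + x_1 x_2 + x_2^2$. A one-line inspection identifies $B_1$ as rotation by $60^\circ$, $B_5 = B_1^{-1}$ as rotation by $-60^\circ$, $B_3 = -I$, and $B_2, B_4$ as reflections; all of these lie in the automorphism group (the dihedral group of order $12$) of the hexagonal lattice and hence preserve that form. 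The same picture makes part (b) immediate: $\hat d$ is the transposition swapping $1$ and $5$ and fixing $2,3,4$, so the claim $U_{\hat d} = U_d^{-1}$ amounts to checking the five products $U_d U_{\hat d}$, i.e.\ $U_1 U_5 = I$ and $U_2^2 = U_3^2 = U_4^2 = I$, each a trivial computation.

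The only genuinely non-block matrix is $H$, and it needs two short computations. First, squaring gives $H^2 = I$, which is part (a). Second, computing $GH$ shows that it is symmetric, i.e.\ $H^{\mathrm{T}} G = GH$; combined with $H^2 = I$ this gives $H^{\mathrm{T}} G H = (H^{\mathrm{T}} G) H = (GH) H = G H^2 = G$, so $H \in O_Q(\mathbb{R}^3)$. Consequently each $M_d = H U_d$ is orthogonal as a product of orthogonal maps, which completes the first assertion. Part (c) is then purely formal: $M_d = H U_d$ is the definition, and $M_d^{-1} = U_d^{-1} H^{-1} = U_{\hat d} H$ using (a) and (b).

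I do not anticipate a real obstacle here: the statement is a bundle of finite verifications. The only place where care is warranted is the orthogonality of $H$ with respect to the \emph{non}-standard form $Q$, and the shortcut of checking that $GH$ is symmetric (rather than multiplying out $H^{\mathrm{T}} G H$ directly) is what keeps that computation short and easy to audit.
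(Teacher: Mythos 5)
Your proof is correct and is essentially the paper's argument: the paper simply states that all assertions follow by straightforward calculation, and you carry out exactly that verification (with a tidy organization via the block structure of the $U_d$ and the symmetry of $GH$, which makes the computation for $H$ short).
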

\begin{proof}
All these can be verified by straightforward calculation.
\end{proof}

Before we finish this subsection, we will prove a geometric lemma (Lemma~\ref{LemPreservationOfShade}), which will be used later.
First, we define a subset $\Omega$ of $\mathbb{R}^2$ to be
\begin{equation}\label{EqDefOmega}
\Omega = \{ (x, y)\in \mathbb{R}^2 \mid x + y \ge 1, \quad x^2 + xy + y^2 \le 1 \},
\end{equation}
which is shown as a gray region in Figure~\ref{FigOmega}.
\begin{figure}
\begin{center}
\begin{tikzpicture}[scale=2]
        \tikzset{
    partial ellipse/.style args={#1:#2:#3}{
        insert path={+ (#1:#3) arc (#1:#2:#3)}
    }
}
        \draw[->] (-1.2, 0) -- (1.2, 0) node[right]{$x$};
        \draw[->] (0, -1.2) -- (0, 1.2) node[above]{$y$};
        \node[below left] at (0, 0) {$O$};
        \node[above right] at (1, 0) {$1$};
        \node[above right] at (0, 1) {$1$};
%        \node[below left] at (-1, 0) {$-1$};
%       \node[below left] at (0, -1) {$-1$};

%\draw (0,0) circle [x radius=1.414cm, y radius=0.8165cm ];
\draw[rotate=-45] (0, 0) [partial ellipse=0:360:1.414cm and 0.8165cm];

%\draw (1, 0) -- (2/3, 2/3) node[above right]{$P = (\frac 23, \frac23)$} -- (0, 1);
%jj        -- (-2/3, 4/3) node[above]{$P_1 = (-\frac23, \frac43)$}
  %j      -- (-4/3, 2/3) node[left]{$P_2 = (-\frac43, \frac23)$}
   %j     -- (-2/3, -2/3) node[below left]{$P_3 = (-\frac23, -\frac23)$}
    %j    -- (2/3, -4/3) node[below]{$P_4 = (\frac23, -\frac43)$}
     %j   -- (4/3, -2/3) node[right]{$P_5 = (\frac43, -\frac23)$}
      %j  -- cycle;

\draw[fill=gray, rotate=-45] (0, 0) [partial ellipse=60:120: 1.414cm and 0.8156cm] --  cycle;
%\draw[pattern=dots, rotate=-45] (0, 0) [partial ellipse=120:180: 1.414cm and 0.8156cm] --  cycle;
%\draw[pattern=dots, rotate=-45] (0, 0) [partial ellipse=180:240: 1.414cm and 0.8156cm] --  cycle;
%\draw[pattern=dots, rotate=-45] (0, 0) [partial ellipse=240:300: 1.414cm and 0.8156cm] --  cycle;
%\draw[pattern=dots, rotate=-45] (0, 0) [partial ellipse=300:360: 1.414cm and 0.8156cm] --  cycle;
%\draw[pattern=dots, rotate=-45] (0, 0) [partial ellipse=0:60: 1.414cm and 0.8156cm] --  cycle;

%\draw[pattern=horizontal lines, rotate=-45] (0, 0) [partial ellipse=60:120: 1.414cm and 0.8156cm] --  (0, 0.9428)  --cycle;
%       (0, 2*sqrt(2)/3)

%\draw[fill=gray, rotate=-45] (0, 0) [partial ellipse=120:180: 1.414cm and 0.8156cm] --  cycle;
%\draw[fill=gray, rotate=-45] (0, 0) [partial ellipse=180:240: 1.414cm and 0.8156cm] --  cycle;
%\draw[fill=gray, rotate=-45] (0, 0) [partial ellipse=240:300: 1.414cm and 0.8156cm] --  cycle;
%\draw[fill=gray, rotate=-45] (0, 0) [partial ellipse=300:360: 1.414cm and 0.8156cm] --  cycle;
%\draw[fill=gray, rotate=-45] (0, 0) [partial ellipse=0:60: 1.414cm and 0.8156cm] --  cycle;
\end{tikzpicture}
\end{center}
\caption{The subset $\Omega$ of $\mathbb{R}^2$ is shown as a gray region.} 
\label{FigOmega}
\end{figure}
It is easy to see that $\mathbf{y}$ represents a point in $\Omega$ whenever
\begin{itemize}
	\item[($\Omega$-I)] $\mathbf{y}$ is positive,
	\item[($\Omega$-II)] $Q(\mathbf{y})\le 0$, and 
	\item[($\Omega$-III)] $\langle \mathbf{y}, (2, 2, 3)^T \rangle \ge 0$.
\end{itemize}
Also, if $\mathbf{y} = (y_1, y_2, y_3)$ is a positive vector representing a point in $\Omega$, then
\begin{itemize}
	\item[($\Omega$-IV)] $y_1, y_2\ge0$, $y_3 > 0$, and
	\item[($\Omega$-V)] $y_3 \ge y_1$ and $y_3 \ge y_2$.
\end{itemize}
\begin{lemma}\label{LemPreservationOfShade}
Suppose that $\mathbf{y}$ is a positive vector representing a point in $\Omega$.
Then the vectors $\mathbf{y}_d' = M_d \mathbf{y}$ for $d = 1, \dots, 5$ are positive and they represent points in $\Omega$.
\end{lemma}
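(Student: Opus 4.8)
The plan is to check, for each $d = 1, \dots, 5$, the three sufficient conditions ($\Omega$-I)--($\Omega$-III) for the vector $\mathbf{y}_d' = M_d\mathbf{y}$, using only the weaker necessary conditions ($\Omega$-IV)--($\Omega$-V) satisfied by the input $\mathbf{y}$.

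Condition ($\Omega$-II) for $\mathbf{y}_d'$ requires no work: by Lemma~\ref{Lem324} each $M_d$ is orthogonal with respect to $Q$, so $Q(\mathbf{y}_d') = Q(M_d\mathbf{y}) = Q(\mathbf{y}) \le 0$, since $\mathbf{y}$ itself represents a point of $\Omega$.

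For ($\Omega$-I) and ($\Omega$-III) I would write $\mathbf{y} = (y_1, y_2, y_3)$ and record that ($\Omega$-IV)--($\Omega$-V) give $y_1, y_2 \ge 0$, $y_3 > 0$, $y_3 \ge y_1$, and $y_3 \ge y_2$. Reading off the third coordinate of $\mathbf{y}_d'$ from the explicit matrices in \eqref{EqDefintionM} and bounding it gives ($\Omega$-I) in each case by a one-line estimate; for example the third coordinate of $M_1\mathbf{y}$ is $7y_3 - 6y_1 \ge 7y_3 - 6y_3 = y_3 > 0$, and the four remaining cases reduce similarly to one of $y_3 \ge y_1$, $y_3 \ge y_2$, $y_1 \ge 0$, $y_2 \ge 0$. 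For ($\Omega$-III), the key simplification is that, directly from \eqref{EqDefPairing}, $\langle \mathbf{x}, (2, 2, 3)^T \rangle = 3(x_1 + x_2 - x_3)$, so ($\Omega$-III) for $\mathbf{y}_d'$ is just the assertion that $(\mathbf{y}_d')_1 + (\mathbf{y}_d')_2 - (\mathbf{y}_d')_3 \ge 0$. Computing this linear combination of the rows of $M_d$ collapses to a short nonnegative expression in $y_1, y_2, y_3$ in every case: one finds $y_3 - y_1$ for $d = 1$; $y_1 + y_3$ for $d = 2$; $y_1 + y_2 + y_3$ for $d = 3$; $y_2 + y_3$ for $d = 4$; and $y_3 - y_2$ for $d = 5$, each of which is $\ge 0$ by ($\Omega$-IV)--($\Omega$-V).

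Having verified ($\Omega$-I)--($\Omega$-III) for all five $\mathbf{y}_d'$, the conclusion follows, since those three conditions are exactly what is needed for a vector to be positive and to represent a point of $\Omega$. There is no genuine obstacle here — the computation is routine — but the point worth flagging is the identity $\langle \cdot, (2,2,3)^T \rangle = 3(x_1 + x_2 - x_3)$ together with the fact that the corresponding combinations of the entries of $M_d\mathbf{y}$ telescope down to short positive combinations of $y_1, y_2, y_3$; this is what lets the coarse ``box'' bounds ($\Omega$-IV)--($\Omega$-V) on $\mathbf{y}$ produce the sharper chord condition ($\Omega$-III) on $M_d\mathbf{y}$.
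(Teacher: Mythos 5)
Your proposal is correct and takes essentially the same route as the paper: positivity of $\mathbf{y}_d'$ from the explicit third rows of the $M_d$, condition ($\Omega$-II) from orthogonality, and condition ($\Omega$-III) by evaluating $\langle M_d\mathbf{y}, (2,2,3)^T\rangle$ in each case, which you do directly via the identity $\langle \mathbf{x}, (2,2,3)^T\rangle = 3(x_1+x_2-x_3)$ instead of moving $M_d$ across the pairing by orthogonality and computing $M_d^{-1}(2,2,3)^T$ as the paper does. Your five case values agree with the paper's up to the factor $3$ (and reveal a harmless swap of the $d=1$ and $d=5$ entries in the paper's intermediate table), and each is nonnegative under ($\Omega$-IV)--($\Omega$-V), so the argument is complete.
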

\begin{proof}
Write $\mathbf{y} = (y_1, y_2, y_3)$.
A straightforward calculation shows that the $x_3$-coordinates of $\mathbf{y}_d'$ are 
\[
	7y_3 \pm 6y_1, \quad
	7y_3 \pm 6y_2, \quad \text{or} \quad
	6y_1 + 6y_2 + 7y_3.
\]
From ($\Omega$-IV) and ($\Omega$-V) above, we see that all these quantities are positive and therefore $\mathbf{y}_d'$ is positive.
Also, $Q(\mathbf{y}'_d) = Q(\mathbf{y})\le0$ because of the orthogonality of $M_d$ (Lemma~\ref{Lem324}).
It remains to prove that $\mathbf{y}_d'$ satisfies the condition ($\Omega$-III) above.

From the orthogonality of $M_d$ again, we have
\[
	\langle \mathbf{y}_d', (2, 2, 3)^T \rangle
	=
	\langle M_d\mathbf{y}, (2, 2, 3)^T \rangle
	=
	\langle \mathbf{y}, M_d^{-1}(2, 2, 3)^T \rangle.
\]

An easy calculation shows
\[
	M_d^{-1}
\begin{pmatrix} 2 \\  2 \\ 3 \end{pmatrix} 
	=
	\begin{cases}
		(2, -4, -3)^T & \text{ if } d = 1, \\
		(4, -2, -3)^T & \text{ if } d = 2, \\
		(2, 2, -3)^T & \text{ if } d = 3, \\
		(-2, 4, -3)^T & \text{ if } d = 4, \\
		(-4, 2, -3)^T & \text{ if } d = 5. \\
	\end{cases}
\]
From this, we get
\[
	\langle \mathbf{y}, M_d^{-1}(2, 2, 3)^T \rangle
	=
	\begin{cases}
	3y_3 - 3y_2 & \text{ if } d = 1, \\
	3y_3 + 3y_1 & \text{ if } d = 2, \\
	3y_1 + 3y_2 + 3y_3& \text{ if } d = 3, \\
	3y_3 + 3y_2 & \text{ if } d = 4, \\
	3y_3 - 3y_1 & \text{ if } d = 5. \\
	\end{cases}
\]
From ($\Omega$-IV) and ($\Omega$-V), we see that all these quantities are nonnegative.
This completes the proof of the lemma. 
\end{proof}

\subsection{Berggren trees of Eisenstein triples and Romik's dynamical system}
We review a result of Wayne in \cite{Way82} on Eisenstein triples. 
Wayne's theorem states that every Eisenstein triple is obtained from one of the four Eisenstein triples---$(8, 7, 13)$,
$(3, 5, 7)$,
$(5, 3, 7)$,
$(7, 8, 13)$---by successively multiplying $M_{d_1}$, $\dots$, $M_{d_k}$ for some $[d_1, \dots, d_k]\in \{ 1, 2, 3, 4, 5\}^k$ in a unique way (see Definition~\ref{DefHandUd}).
Another way of stating this theorem is that the set of all Eisenstein triples forms four quinary trees (that is, directed trees with each vertex having one incoming edge and 5 outgoing edges, except for the base vertices which have no incoming edges), where each edge in the tree denotes left-multiplication.
Such trees are called \emph{Berggren trees} in \cite{CNT} and Wayne's theorem is a direct analogue of a much older theorem of Berggren regarding Pythagorean triples. 
It will be convenient for us to treat $(1, 0, 1)$ and $(0, 1, 1)$ as Eisenstein triples and add them to the Berggren trees, as drawn in Figure~\ref{FigTrees}.
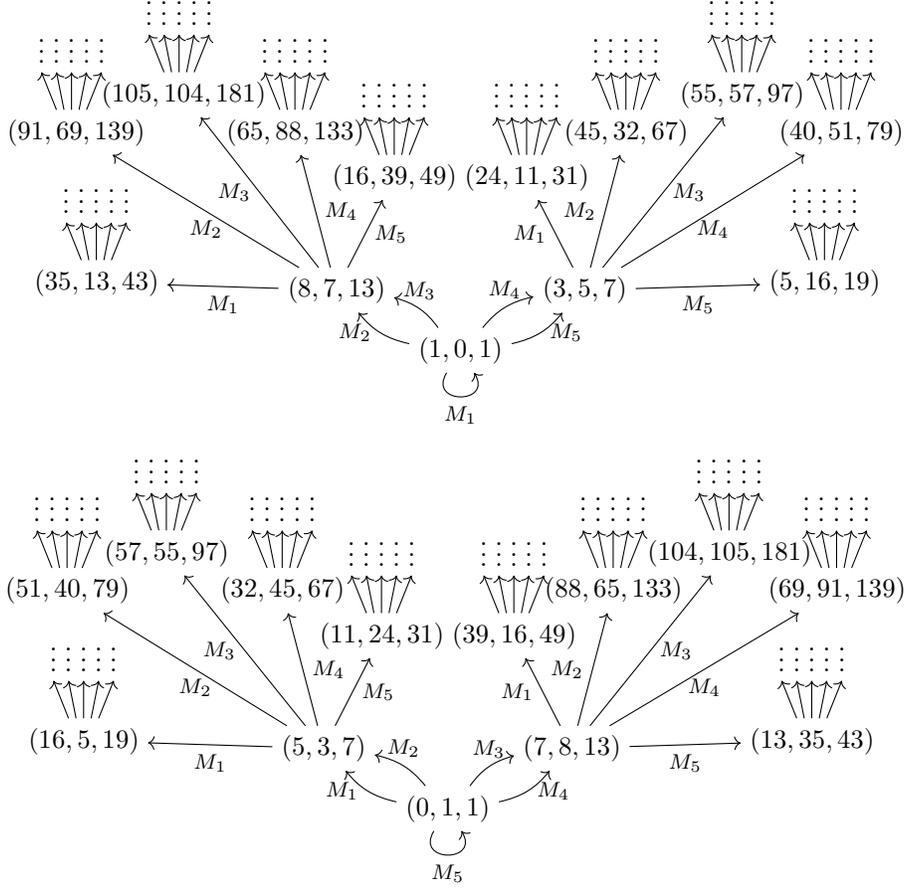
\begin{figure}
\begin{tikzpicture}[->, auto, xscale=1.1]
% define five arrow picture
\tikzset{triarrow/.pic={
    \draw (0, 0) -- (0, 0.5) node[above]{$\vdots$};
    \draw (-0.1, 0) -- (-0.2, 0.5) node[above]{$\vdots$};
    \draw (0.1, 0) -- (0.2, 0.5) node[above]{$\vdots$};
    \draw (-0.2, 0) -- (-0.4, 0.5) node[above]{$\vdots$};
    \draw (0.2, 0) -- (0.4, 0.5) node[above]{$\vdots$};
  }
}

%draw subbase and two base triples
\node (subbase) at (0, -1) {$(1, 0, 1)$};
\node (base1) at (-1.5, -0.2) {$(8, 7, 13)$};
\node (base2) at (1.5, -0.2) {$(3, 5, 7)$};

\node (1f1) at (-4.4, -0.1) {$(35, 13, 43)$};
\node (1f2) at (-4.65, 1.9) {$(91, 69, 139)$};
\node (1f3) at (-3.35, 2.4) {$(105, 104, 181)$};
\node (1f4) at (-2, 1.9) {$(65, 88, 133)$};
\node (1f5) at (-.8, 1.3) {$(16, 39, 49)$};

\node (1f10) at (4.4, -0.1) {$(5, 16, 19)$};
\node (1f9) at (4.6, 1.9) {$(40, 51, 79)$};
\node (1f8) at (3.4, 2.4) {$(55, 57, 97)$};
\node (1f7) at (2, 1.9) {$(45, 32, 67)$};
\node (1f6) at (.8, 1.3) {$(24, 11, 31)$};

%render five arrow picture
\pic at (4.4, .2)  {triarrow};
\pic at (4.6, 2.2) {triarrow};
\pic at (3.4, 2.7) {triarrow};
\pic at (2, 2.2) {triarrow};
\pic at (.8, 1.6) {triarrow};

\pic at (-4.4, .2)  {triarrow};
\pic at (-4.7, 2.2) {triarrow};
\pic at (-3.4, 2.7) {triarrow};
\pic at (-2, 2.2) {triarrow};
\pic at (-.8, 1.6) {triarrow};

%draw edges connecting subbase to bases
\draw (subbase) to[bend left=20] node[left, font=\footnotesize]{${M_2}$} (base1);
\draw (subbase) to[bend right=20] node[above, font=\footnotesize]{${M_3}$} (base1);

\draw (subbase) to[bend left=20] node[above, font=\footnotesize]{${M_4}$} (base2);
\draw (subbase) to[bend right=20] node[right, font=\footnotesize]{${M_5}$} (base2);

%draw arrow from subbase to itself
\draw (subbase) to [out=-120,in=-60,loop,looseness=4] 
	node[below, font=\footnotesize]{${M_1}$} (subbase);
	
%draw arrow from bases to triples
\draw (base1) to node[font=\footnotesize, below]{${M_1}$} (1f1);
\draw (base1) to node[font=\footnotesize, below]{${M_2}$} (1f2);
\draw (base1) to node[font=\footnotesize, left]{${M_3}$} (1f3);
\draw (base1) to node[font=\footnotesize, right]{${M_4}$} (1f4);
\draw (base1) to node[font=\footnotesize, right]{${M_5}$} (1f5);

\draw (base2) to node[font=\footnotesize, left]{${M_1}$} (1f6);
\draw (base2) to node[font=\footnotesize, left]{${M_2}$} (1f7);
\draw (base2) to node[font=\footnotesize, right]{${M_3}$} (1f8);
\draw (base2) to node[font=\footnotesize, below]{${M_4}$} (1f9);
\draw (base2) to node[font=\footnotesize, below]{${M_5}$} (1f10);
\end{tikzpicture}

%%%% SECOND TREE %%%%%
\begin{tikzpicture}[->, auto, xscale=1.1]
\centering
%define five arrow picture
\tikzset{triarrow/.pic={
    \draw (0, 0) -- (0, 0.5) node[above]{$\vdots$};
    \draw (-0.1, 0) -- (-0.2, 0.5) node[above]{$\vdots$};
    \draw (0.1, 0) -- (0.2, 0.5) node[above]{$\vdots$};
    \draw (-0.2, 0) -- (-0.4, 0.5) node[above]{$\vdots$};
    \draw (0.2, 0) -- (0.4, 0.5) node[above]{$\vdots$};
  }
}
%draw subbase and two base triples
\node (subbase) at (0, -1) {$(0, 1, 1)$};
\node (base1) at (-1.5, -0.2) {$(5, 3, 7)$};
\node (base2) at (1.5, -0.2) {$(7, 8, 13)$};

%draw nodes
\node (1f1) at (-4.4, -0.1) {$(16, 5, 19)$};
\node (1f2) at (-4.6, 1.9) {$(51, 40, 79)$};
\node (1f3) at (-3.4, 2.4) {$(57, 55, 97)$};
\node (1f4) at (-2, 1.9) {$(32, 45, 67)$};
\node (1f5) at (-.8, 1.3) {$(11, 24, 31)$};

\node (1f10) at (4.4, -0.1) {$(13, 35, 43)$};
\node (1f9) at (4.7, 1.9) {$(69, 91, 139)$};
\node (1f8) at (3.4, 2.4) {$(104, 105, 181)$};
\node (1f7) at (2, 1.9) {$(88, 65, 133)$};
\node (1f6) at (.8, 1.3) {$(39, 16, 49)$};

%render "triarrow" (really pentarrow)
\pic at (4.4, 0.2)  {triarrow};
\pic at (4.7, 2.2) {triarrow};
\pic at (3.4, 2.7) {triarrow};
\pic at (2, 2.2) {triarrow};
\pic at (.8, 1.6) {triarrow};

\pic at (-4.4, 0.2)  {triarrow};
\pic at (-4.6, 2.2) {triarrow};
\pic at (-3.4, 2.7) {triarrow};
\pic at (-2, 2.2) {triarrow};
\pic at (-.8, 1.6) {triarrow};

%draw edges connecting subbase to bases
\draw (subbase) to[bend left=20] node[left, font=\footnotesize]{${M_1}$} (base1);
\draw (subbase) to[bend right=20] node[above, font=\footnotesize]{${M_2}$} (base1);

\draw (subbase) to[bend left=20] node[above, font=\footnotesize]{${M_3}$} (base2);
\draw (subbase) to[bend right=20] node[right, font=\footnotesize]{${M_4}$} (base2);

%draw arrow from subbase to itself
\draw (subbase) to [out=-120,in=-60,loop,looseness=4] 
	node[below, font=\footnotesize]{${M_5}$} (subbase);

%draw arrows from bases to triples
\draw (base1) to node[font=\footnotesize, below]{${M_1}$} (1f1);
\draw (base1) to node[font=\footnotesize, below]{${M_2}$} (1f2);
\draw (base1) to node[font=\footnotesize, left]{${M_3}$} (1f3);
\draw (base1) to node[font=\footnotesize, right]{${M_4}$} (1f4);
\draw (base1) to node[font=\footnotesize, right]{${M_5}$} (1f5);

\draw (base2) to node[font=\footnotesize, left]{${M_1}$} (1f6);
\draw (base2) to node[font=\footnotesize, left]{${M_2}$} (1f7);
\draw (base2) to node[font=\footnotesize, right]{${M_3}$} (1f8);
\draw (base2) to node[font=\footnotesize, below]{${M_4}$} (1f9);
\draw (base2) to node[font=\footnotesize, below]{${M_5}$} (1f10);
\end{tikzpicture}
\caption{Berggren trees of Eisenstein triples}\label{FigTrees}
\end{figure}

\begin{definition}[Romik's dynamical system]\label{DefRomikSystem}
Let
\[
\XXX = \{(x,y) \in \mathbb{R}^2 \mid x^2+xy+y^2=1, x, y \geq 0 \}
\]
%Let $P = (\alpha, \beta)$ be a point on the ellipse $X$ defined by
%When $P$ is of the form $(\frac{a}{c}, \frac{b}{c})$, we say that $P$ is a rational point and $(a, b, c)$ is an \emph{Eisenstein triple}.
and define $\TTT: \XXX \longrightarrow \XXX$ to be
\[
\TTT(x, y) = 
\begin{cases}
\left( \dfrac{3x+4y-4}{-6x-6y+7}, \dfrac{-7x-7y+8}{-6x-6y+7} \right) & \text{if } 0 \leq x \leq \frac{3}{7}, \\
\left( \dfrac{-4x-3y+4}{-6x-6y+7}, \dfrac{7x+7y-8}{-6x-6y+7} \right) & \text{if } \frac{3}{7} < x \leq \frac{7}{13}, \\
\left( \dfrac{4x+3y-4}{-6x-6y+7}, \dfrac{3x+4y-4}{-6x-6y+7} \right) &
\text{if } \frac{7}{13} < x \leq \frac{8}{13}, \\
\left( \dfrac{7x+7y-8}{-6x-6y+7}, \dfrac{-3x-4y+4}{-6x-6y+7} \right) & \text{if } \frac{8}{13} < x \leq \frac{5}{7}, \\
\left( \dfrac{-7x-7y+8}{-6x-6y+7}, \dfrac{4x+3y-4}{-6x-6y+7} \right) &
\text{if } \frac{5}{7} < x \leq 1.
\end{cases}
\]
We will call the dynamical system $(\XXX, \TTT)$ \emph{the Romik system}. 
\end{definition}

For each $P = (x, y) \in \XXX$, we define the \emph{Romik digit} $d(P)$ of $P$ to be
\begin{equation}\label{EqDefRomikDigits}
d(P) =
\begin{cases}
1 & \text{if } \frac{5}{7} \leq x \leq 1, \\
2 & \text{if } \frac{8}{13} \leq x \leq \frac{5}{7}, \\
3 & \text{if } \frac{7}{13} \leq x \leq \frac{8}{13}, \\
4 & \text{if } \frac{3}{7} \leq x \leq \frac{7}{13}, \\
5 & \text{if } 0 \leq x \leq \frac{3}{7},
\end{cases}
\end{equation}
as pictured in Figure~\ref{FigRomikDigits}.
Notice in the definition \eqref{EqDefRomikDigits} that there are four (rational) 
points---$(\frac{5}{7}, \frac{3}{7})$, $(\frac{8}{13}, \frac{7}{13})$, $(\frac{7}{13}, \frac{8}{13}),$ and $(\frac{3}{7}, \frac{5}{7})$---whose 
Romik digits are not uniquely defined.  
It will be convenient for us to regard those points 
%$(\frac{3}{7}, \frac{5}{7}), (\frac{7}{13}, \frac{8}{13}), (\frac{8}{13}, \frac{7}{13}),$ and $(\frac{5}{7}, \frac{3}{7})$ 
as having \textit{two} valid Romik digits. 
\begin{figure}
\centering
\begin{tikzpicture}[scale=2.5]
      
       %code to define "partial ellipse"
        \tikzset{
    partial ellipse/.style args={#1:#2:#3}{
        insert path={+ (#1:#3) arc (#1:#2:#3)}
    }
}
        %draw and label axes and origin
        \draw[->] (-.2, 0) -- (2.2, 0) node[right]{$x$};
        \draw[->] (0, -.2) -- (0, 2.2) node[above]{$y$};
        \node[below left] at (0, 0) {$O$};
                                                     
        %draw solid line ellipse (double size)
        \draw[very thick, rotate=-45] (0,0) [partial ellipse=60:120:2.828cm and 1.632cm];
        
        %draw points (our usual ellipse points * 2)
        %left most point
        \draw[fill] (6/7, 10/7) circle (0.02) node[left] {};
        %left most point label
        \draw (6/7, 1.50857142857) node[below left] {$(\frac37, \frac57)$};
        %left middle point
        \draw[fill] (14/13, 16/13) circle (0.02) node[left] {};
        %left middle label
        \draw (14/13, 1.16076923077) node[left] {$(\frac{7}{13}, \frac{8}{13})$};
        %right middle point
        \draw[fill] (16/13, 14/13) circle (0.02) node[left] {};
        %right middle label
        \draw (1.28076923077, 1.09692307692) node[below left] {$(\frac{8}{13}, \frac{7}{13})$};
        %right most point
        \draw[fill] (10/7, 6/7) circle (0.02);
        %right most label
        \draw (1.45857142857, 0.89714285714) node[below left] {($\frac{5}{7}, \frac{3}{7}$)};
        \draw[fill] (0, 2) circle (0.02);
        \draw[fill] (2, 0) circle (0.02);
        
        %draw dotted ellipse/cylinder sets (1.1 times larger than solid ellipse)
        \draw[dotted, thick, <->, rotate=-45] (0, 0) [partial ellipse=60:82:2.9694cm and 1.7136cm]
        node[midway, above right] {$\footnotesize{d=1}$};
        \draw[dotted, thick, <->, rotate=-45] (0, 0) [partial ellipse=82:87.9:2.9694cm and 1.7136cm]
        node[midway, above right] {$\footnotesize{d=2}$};
        \draw[dotted, thick, <->, rotate=-45] (0, 0) [partial ellipse=87.9:92.3:2.9694cm and 1.7136cm]
        node[midway, above right] {$\footnotesize{d=3}$};
        \draw[dotted, thick, <->, rotate=-45] (0, 0) [partial ellipse=92.3:98:2.9694cm and 1.7136cm]
        node[midway, above right] {$\footnotesize{d=4}$};
        \draw[dotted, thick, <->, rotate=-45] (0, 0) [partial ellipse=98:120:2.9694cm and 1.7136cm]
        node[midway, above right] {$\footnotesize{d=5}$};
        
\end{tikzpicture}
\caption{Romik digits of $P$}
\label{FigRomikDigits}
\end{figure}
The following proposition reveals how the Romik system $(\XXX, \TTT)$ is related to the Berggren trees of Eisenstein triples.
%This can be proven by a simple case-by-case calculation, so we omit the proof.
\begin{proposition}\label{PropRomiksystemBerggrenTrees}
Let $\mathbf{p}$ be a positive vector representing a point $P$ in $\XXX$.
Then the vector ${M_{d(P)}}^{-1} \mathbf{p}$ is a positive vector representing $\TTT(P)$.
\end{proposition}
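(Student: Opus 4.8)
The plan is to verify the statement by direct computation, organized around the five cases of the Romik digit $d(P)$. Fix a positive vector $\mathbf{p} = (p_1, p_2, p_3)$ representing $P = (x, y) \in \XXX$, so $x = p_1/p_3$, $y = p_2/p_3$, and $p_3 > 0$. First I would record, using Lemma~\ref{Lem324}(c), that $M_d^{-1} = U_{\hat d}H$, which gives explicit $3\times 3$ integer matrices for each $M_d^{-1}$; alternatively one can invert the matrices in \eqref{EqDefintionM} directly. The point to prove splits into two assertions: (i) $M_{d(P)}^{-1}\mathbf{p}$ is a \emph{positive} vector (its $x_3$-coordinate is $>0$), and (ii) the point it represents equals $\TTT(P)$ as given in Definition~\ref{DefRomikSystem}. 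Because representing a point is a property of the vector up to positive scaling, it suffices to check both assertions for the normalized representative $\mathbf{p} = (x, y, 1)$, though keeping $\mathbf{p}$ general costs nothing.

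For assertion (ii), I would compute $M_{d(P)}^{-1}(x,y,1)^T$ componentwise. On the case $d(P) = 5$, i.e. $0 \le x \le 3/7$, one expects the $x_3$-coordinate of $M_5^{-1}(x,y,1)^T$ to be exactly $-6x - 6y + 7$ (matching the common denominator in the first line of $\TTT$), and the first two coordinates to be $3x + 4y - 4$ and $-7x - 7y + 8$; dividing through then reproduces $\TTT(x,y)$ on that branch. The analogous bookkeeping for $d = 4, 3, 2, 1$ matches the remaining four lines of $\TTT$ in turn. This is the routine but essential core of the argument: five $3\times 3$ matrix-vector products, each followed by reading off the ratio of the first two coordinates to the third. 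Since $\TTT$ was evidently \emph{designed} so that these identities hold, I would present the verification compactly, perhaps tabulating $M_d^{-1}(x,y,1)^T$ for $d = 1,\dots,5$ and pointing out that the entries coincide line-by-line with Definition~\ref{DefRomikSystem}.

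For assertion (i), positivity of the $x_3$-coordinate, the work is to show $-6x - 6y + 7 > 0$ on $0 \le x \le 3/7$ (and the corresponding inequality on each of the other four digit-regions). Here the constraint $x^2 + xy + y^2 = 1$ with $x, y \ge 0$ is what makes it true: on that arc, the linear functional $6x + 6y$ is maximized at the symmetric point, and one checks the relevant bound is strict except possibly at the boundary points between digit-regions — but at those boundary points the two adjacent digit choices give the two adjacent denominators, both of which one checks are positive. Concretely, on $\XXX$ we have $x + y \le 2/\sqrt{3} < 7/6$, so $-6x-6y+7 > 0$ everywhere on $\XXX$; and the other four denominators ($6x+6y-7$ does not occur, but expressions like $7x+7y-8$, $7y-7x$, etc.) are each seen to be positive on precisely the sub-arc where the corresponding digit is active, using the explicit $x$-ranges in \eqref{EqDefRomikDigits} together with the relation between $x$ and $y$ on $\XXX$. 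I would also note that $Q(M_{d(P)}^{-1}\mathbf{p}) = Q(\mathbf{p})$ by orthogonality of $M_d$ (Lemma~\ref{Lem324}), so once positivity is established the image automatically lies on the cone $Q = 0$ scaled appropriately, confirming $\TTT(P) \in \XXX$ — consistent with $\TTT$ being well-defined as stated.

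The main obstacle is not conceptual but organizational: making sure the branch boundaries ($x = 3/7, 7/13, 8/13, 5/7$) are handled consistently with the two-valued-digit convention, and checking that on each digit-region the \emph{correct} denominator among several candidate linear forms is the one that is positive there. I expect roughly a half-page table plus a short paragraph of inequalities on $\XXX$ suffices; there is no delicate estimate, only careful case-tracking. One clean way to reduce clutter is to observe that the five maps $M_d^{-1}$ are obtained from one another by the symmetry of the hexagon (compatible with the $U_d$'s and $\hat d$), so in principle only one case needs a fully written-out computation and the rest follow by symmetry; I would mention this but still display the resulting five formulas so the reader can match them against Definition~\ref{DefRomikSystem} at a glance.
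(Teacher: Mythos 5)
Your proposal is correct and takes essentially the same route as the paper, whose proof simply displays the five inverse matrices $M_d^{-1}$ and compares them entrywise with Definition~\ref{DefRomikSystem} and \eqref{EqDefRomikDigits}. One simplification you could make: every $M_d^{-1}$ has the same bottom row $(-6,-6,7)$, so your single bound $x+y\le 2/\sqrt3<7/6$ on $\XXX$ already gives positivity of the third coordinate in all five cases, and the case-by-case discussion of ``other denominators'' on the remaining digit-regions is unnecessary (those expressions are numerators, not denominators).
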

\begin{proof}
To prove this, we observe from \eqref{EqDefintionM} that
%\begin{equation}\label{EqDefintionMInverse}
\[
\begin{gathered}
M_1^{-1} =
\begin{pmatrix}
-7 & -7 & 8 \\
4 & 3 & -4 \\
-6 & -6 & 7
\end{pmatrix},
M_2^{-1} =
\begin{pmatrix}
7 & 7 & -8 \\
-3 & -4 & 4 \\
-6 & -6 & 7
\end{pmatrix},
M_3^{-1} =
\begin{pmatrix}
4 & 3 & -4 \\
3 & 4 & -4 \\
-6 & -6 & 7
\end{pmatrix}, 
\\
M_4^{-1} =
\begin{pmatrix}
-4 & -3 & 4 \\
7 & 7 & -8 \\
-6 & -6 & 7
\end{pmatrix},
\quad
M_5^{-1} =
\begin{pmatrix}
3 & 4 & -4 \\
-7 & -7 & 8 \\
-6 & -6 & 7
\end{pmatrix}
\end{gathered}
\]
and compare this with Definition~\ref{DefRomikSystem} and \eqref{EqDefRomikDigits}.
The proof follows from this immediately.
\end{proof}

\subsection{Romik sequences and cylinder sets}
To each $P\in \XXX$, we associate an infinite sequence $\{ d_j \}_{j=1}^{\infty}$ in $\{1, 2, 3, 4, 5\}$, which is defined by
\[
d_j=d(\TTT^{j-1}(P))
\]

for $j\in \mathbb{N}$.
%\todo[]{Previously said j = 1, \dots. (BG, 5/16). We used $\mathbb{N}$ later in \S4, so I replaced $\mathbb{Z}_+$ with $\mathbb{N}$. (BC, 5/17)}
Such a sequence is called a \emph{Romik digit expansion} of $P$
and we write 
\[
P = [d_1, d_2, \dots]_{\XXX}.
\]

The next proposition says that $\TTT$ is an analogue of the Gauss map for ordinary continued fractions.
\begin{proposition}
\label{prop:TshiftMap}
If $P = [d_1, d_2, \dots]_{\XXX} \in \XXX$
then $\TTT(P) = [d_2, d_3, \dots]_{\XXX}$.
In other words, $\TTT$ is a shift-map on the digit expansion of $P$.
\end{proposition}

\begin{proof}
This is clear from the definition $d_j = d(\TTT^{j-1}(P))$.
\end{proof}

Note that both points $(1, 0)$ are $(0, 1)$ are fixed by $\TTT$ and therefore
\[
(1, 0) = [1, 1, 1, \dots]_{\XXX} = [1^{\infty}]_{\XXX}
\text{ and }
(0, 1) = [5, 5, 5, \dots]_{\XXX} = [5^{\infty}]_{\XXX}.
\]
Because of the ambiguity in Romik digits for the four points---$(\frac{5}{7}, \frac{3}{7})$, $(\frac{8}{13}, \frac{7}{13})$, $(\frac{7}{13}, \frac{8}{13})$, and $(\frac{3}{7}, \frac{5}{7})$,
they admit \emph{two} Romik digit expansions:
\begin{equation}\label{EqFourRationalPts}
\begin{cases}
   \left(\frac{5}{7}, \frac{3}{7}
   \right) = 
   [1,  5^{\infty}]_{\XXX} \text{ or }
   [2, 5^{\infty}]_{\XXX}, \\
   \left(
   \frac{8}{13}, \frac{7}{13}
   \right) =
   [2, 1^{\infty}]_{\XXX} \text{ or }
   [3, 1^{\infty}]_{\XXX} ,\\
   \left(
   \frac{7}{13}, \frac{8}{13}
   \right) =
   [3, 5^{\infty}]_{\XXX} \text{ or }
   [4, 5^{\infty}]_{\XXX} ,\\
   \left(
   \frac{3}{7}, \frac{5}{7}
   \right) = 
   [4, 1^{\infty}]_{\XXX} \text{ or }
   [5, 1^{\infty}]_{\XXX}.
\end{cases}
\end{equation}
If $P$ is a rational point on $\XXX$ not equal to $(1, 0)$ and $(0, 1)$,
Wayne's theorem implies that $\TTT^j(P)$ is equal to one of the four rational points in \eqref{EqFourRationalPts} for some $j\ge 0$.
Therefore, we conclude that a Romik digit expansion of every rational point, except for $(1, 0)$ and $(0, 1)$, terminates with two alternate tails as given in \eqref{EqFourRationalPts}.
However, every irrational point $P\in \XXX-\ZZZ$ has a unique Romik digit expansion.

For any finite sequence $d_1, \dots, d_k$ in $\{ 1, \dots, 5 \}$,
we define its \emph{cylinder set} 
\[ 
C(d_1, \dots, d_k) = 
\{P \in \XXX \mid d_j=d(\TTT^{j-1}(P)) \text{ for } j=1,\dots, k\}.
\]
\begin{figure}
\centering
\begin{tikzpicture}[scale=2.5]
        \tikzset{
    partial ellipse/.style args={#1:#2:#3}{
        insert path={+ (#1:#3) arc (#1:#2:#3)}
    }
}
        %draw and label axes and origin
        \draw[->] (-.2, 0) -- (2.2, 0) node[right]{$x$};
        \draw[->] (0, -.2) -- (0, 2.2) node[above]{$y$};
        \node[below left] at (0, 0) {$O$};
                                                     
        %draw solid line ellipse (double size)
        \draw[very thick, rotate=-45] (0,0) [partial ellipse=60:120:2.828cm and 1.632cm];
        
        %draw points (our usual ellipse points * 2)
        %left most point
        \draw[fill] (6/7, 10/7) circle (0.02) node[left] {};
        %left most point label
        \draw (6/7, 1.50857142857) node[below left] {$(\frac37, \frac57)$};
        %left middle point
        \draw[fill] (14/13, 16/13) circle (0.02) node[left] {};
        %left middle label
        \draw (14/13, 1.16076923077) node[left] {$(\frac{7}{13}, \frac{8}{13})$};
        %right middle point
        \draw[fill] (16/13, 14/13) circle (0.02) node[left] {};
        %right middle label
        \draw (1.28076923077, 1.09692307692) node[below left] {$(\frac{8}{13}, \frac{7}{13})$};
        %right most point
        \draw[fill] (10/7, 6/7) circle (0.02);
        %right most label
        \draw (1.45857142857, 0.89714285714) node[below left] {($\frac{5}{7}, \frac{3}{7}$)};
        \draw[fill] (0, 2) circle (0.02);
        \draw[fill] (2, 0) circle (0.02);
        
        %draw dotted ellipse/cylinder sets (1.1 times larger than solid ellipse)
        \draw[dotted, thick, <->, rotate=-45] (0, 0) [partial ellipse=60:82:2.9694cm and 1.7136cm]
        node[midway, above right] {$\footnotesize{C(1)}$};
        \draw[dotted, thick, <->, rotate=-45] (0, 0) [partial ellipse=82:87.9:2.9694cm and 1.7136cm]
        node[midway, above right] {$\footnotesize{C(2)}$};
        \draw[dotted, thick, <->, rotate=-45] (0, 0) [partial ellipse=87.9:92.3:2.9694cm and 1.7136cm]
        node[midway, above right] {$\footnotesize{C(3)}$};
        \draw[dotted, thick, <->, rotate=-45] (0, 0) [partial ellipse=92.3:98:2.9694cm and 1.7136cm]
        node[midway, above right] {$\footnotesize{C(4)}$};
        \draw[dotted, thick, <->, rotate=-45] (0, 0) [partial ellipse=98:120:2.9694cm and 1.7136cm]
        node[midway, above right] {$\footnotesize{C(5)}$};
        
\end{tikzpicture}
\caption{Cylinder sets of length one}
\label{FigCylinderSetsLengthOne}
\end{figure}
The cylinder sets of length one are pictured in Figure~\ref{FigCylinderSetsLengthOne}.
Topologically speaking, cylinder sets are closed sub-arcs of $\XXX$ with rational boundary points.
To describe this more explicitly, 
we let
\begin{equation}\label{EqDefU10andU01}
\mathbf{u}^{(1, 0)}
=
\begin{pmatrix}
  1 \\ 0 \\ 1
\end{pmatrix}
\quad
\text{and}
\quad
\mathbf{u}^{(0, 1)}
=
\begin{pmatrix}
  0 \\ 1 \\ 1
\end{pmatrix}.
\end{equation}
Once a finite sequence $d_1, \dots, d_k$ of Romik digits is fixed, we write
\begin{equation}\label{EqDefZ10andZ01}
\mathbf{z}^{(1, 0)}
=
%\begin{pmatrix}
%  a^{(1, 0)} \\ b^{(1, 0)} \\ c^{(1, 0)}
%\end{pmatrix}
%=
M_{d_1} \cdots M_{d_k}
\mathbf{u}^{(1, 0)}
\quad
\text{and}
\quad
\mathbf{z}^{(0, 1)}
=
%\begin{pmatrix}
%  a^{(0, 1)} \\ b^{(0, 1)} \\ c^{(0, 1)}
%\end{pmatrix}
%=
M_{d_1} \cdots M_{d_k}
\mathbf{u}^{(0, 1)}.
\end{equation}

%To lessen the notations, we denote by $(*)$ either $(1, 0)$ or $(0, 1)$ in what follows. 
%For example, $\mathbf{z}^{(*)}$ would mean either $\mathbf{z}^{(1, 0)}$ or $\mathbf{z}^{(0, 1)}$.
Write 
$\mathbf{z}^{(1, 0)} = (a^{(1, 0)}, b^{(1, 0)}, c^{(1, 0)})$ 
and 
$\mathbf{z}^{(0, 1)} = (a^{(0, 1)}, b^{(0, 1)}, c^{(0, 1)})$,
and define rational points 
$Z^{(1, 0)}$ 
and
$Z^{(0, 1)}$ 
to be
\begin{equation}\label{EqDefZk}
Z^{(1,0 )} = 
\left(
\frac{a^{(1,0 )}}{c^{(1,0 )}},
\frac{b^{(1,0 )}}{c^{(1,0 )}}
\right)
\quad\text{and}\quad
Z^{(0, 1)} = 
\left(
\frac{a^{(0, 1)}}{c^{(0, 1)}},
\frac{b^{(0, 1)}}{c^{(0, 1)}}
\right).
\end{equation}
Then the cylinder set $C(d_1, \dots, d_k)$ is a closed sub-arc of $\XXX$ whose boundary points are $Z^{(1, 0)}$ and $Z^{(0, 1)}$.

\begin{definition}\label{DefZk}
Let $P\in \XXX - \ZZZ$ and write $P = [d_1, \dots, d_k, \dots]_{\XXX}$.
To each $k\ge0$ we define
\[
\mathbf{z}_k^{(1, 0)}(P)
=
\begin{pmatrix}
  a_k^{(1, 0)}(P) \\ b_k^{(1, 0)}(P) \\ c_k^{(1, 0)}(P)
\end{pmatrix}
=
M_{d_1} \cdots M_{d_k}
\mathbf{u}^{(1, 0)}
\]
and
\[
\mathbf{z}_k^{(0, 1)}(P)
=
\begin{pmatrix}
  a_k^{(0, 1)}(P) \\ b_k^{(0, 1)}(P) \\ c_k^{(0, 1)}(P)
\end{pmatrix}
=
M_{d_1} \cdots M_{d_k}
\mathbf{u}^{(0, 1)}.
\]
Also, define 
\[
Z_k^{(1, 0)}(P)
=
\left(
\frac{a_k^{(1,0 )}(P)}{c_k^{(1,0 )}(P)},
\frac{b_k^{(1,0 )}(P)}{c_k^{(1,0 )}(P)}
\right)
\quad\text{and}\quad
Z_k^{(0, 1)}(P) = 
\left(
\frac{a_k^{(0, 1)}(P)}{c_k^{(0, 1)}(P)},
\frac{b_k^{(0, 1)}(P)}{c_k^{(0, 1)}(P)}
\right).
\]
%When $P$ is implicitly understood, we will drop $P$ from the notations and write them as 
%$\mathbf{z}_k^{(1, 0)}$, $\mathbf{z}_k^{(0, 1)}$ ,$ Z_k^{(1, 0)}$, and $Z_k^{(0, 1)}$.
\end{definition}
For a finite Romik sequence $\{d_1, \dots, d_k\}$, define $\sign(d_1, \dots, d_k)$ to be 
\begin{equation}\label{EqDefSign}
\sign(d_1, \dots, d_k) 
=
\det(M_{d_1} \cdots M_{d_k}).
\end{equation}
An easy calculation shows 
\[
\det(M_d) = 
\begin{cases}
1 & \text{ if } d = 1, 3, 5, \\
-1 & \text{ if } d = 2, 4. \\
\end{cases}
\]
So we have
\[
\sign(d_1, \dots, d_k) = (-1)^{%
(\text{the number of occurrences of $2$ and $4$ in } \{ d_1, \dots, d_k \})
}.
\]
\begin{proposition}\label{PropMonotonicity}
Let $P_1, P_2\in C(d_1, \dots d_k)$, and 
let $P_1' = \TTT^k(P_1)$ and 
$P_2' = \TTT^k(P_2)$.
	\begin{enumerate}[font=\upshape, label=(\alph*)]
		\item Assume that $\sign(d_1, \dots, d_k) = 1$. Then $P_1 \preceq P_2$ if and only if $P_1' \preceq P_2'$.
		\item Assume that $\sign(d_1, \dots, d_k) = -1$. Then $P_1 \preceq P_2$ if and only if $P_2' \preceq P_1'$.
	\end{enumerate}
\end{proposition}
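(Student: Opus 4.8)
The plan is to reduce the statement to the elementary fact that a continuous injection of a closed interval into a line is strictly monotone. First I would describe $\TTT^k$ on a cylinder set in matrix terms. Iterating Proposition~\ref{PropRomiksystemBerggrenTrees} $k$ times shows that if $\mathbf{p}$ is a positive vector representing $P\in C(d_1,\dots,d_k)$, then $N^{-1}\mathbf{p}$ is a positive vector representing $\TTT^k(P)$, where $N=M_{d_1}\cdots M_{d_k}$. Equivalently, the injection $\TTT^k\vert_{C(d_1,\dots,d_k)}\colon C(d_1,\dots,d_k)\to\XXX$ is inverse to the restriction, to its image, of the self-map of $\XXX$ sending $P'$ to the point represented by $N(P',1)$. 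That this self-map of $\XXX$, and likewise the map of $\XXX$ induced by a single $M_d$, is well defined and continuous follows from Lemma~\ref{LemPreservationOfShade}: each $M_d$ carries a positive vector representing a point of $\Omega$ to a positive vector representing a point of $\Omega$, and since $M_d$ preserves $Q$ and $\XXX=\XXX_0\cap\Omega$ (see Figure~\ref{FigOmega}), the image again represents a point of $\XXX$. Because $C(d_1,\dots,d_k)$ and $\XXX$ are arcs of $\XXX_0$, the continuous injection $\TTT^k\vert_{C(d_1,\dots,d_k)}$ is strictly monotone for $\preceq$, and the Proposition is precisely the assertion that it is $\preceq$-preserving exactly when $\sign(d_1,\dots,d_k)=1$.

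The sense ($\preceq$-preserving or $\preceq$-reversing) of a strictly monotone map of an arc is preserved under restriction to a subarc and under taking inverses, and is multiplicative under composition. The self-map of $\XXX$ induced by $N$ is the composite of the strictly monotone self-maps induced by $M_{d_1},\dots,M_{d_k}$, so its sense equals $\prod_j(\text{sense of the }M_{d_j}\text{-map})$; and $\TTT^k\vert_{C(d_1,\dots,d_k)}$ --- being, up to restriction to a subarc, the inverse of the $N$-map --- has that same sense. Since $\sign(d_1,\dots,d_k)=\prod_j\det M_{d_j}$, it therefore suffices to prove the length-one statement: the self-map of $\XXX$ induced by $M_d$ is $\preceq$-preserving when $\det M_d=1$ and $\preceq$-reversing when $\det M_d=-1$.

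For the length-one statement, note that on $\XXX$ the coordinate $x$ is a strictly decreasing function of $\theta$ (both coordinates being nonnegative), so $P_1\preceq P_2$ if and only if $x(P_1)\ge x(P_2)$; hence the sense of the $M_d$-map is read off from the images of the two endpoints $(1,0)$ and $(0,1)$ of $\XXX$. A direct computation with \eqref{EqDefintionM} gives
\[
\begin{array}{c|c|c}
d & M_d(1,0,1)^T & M_d(0,1,1)^T\\ \hline
1 & (1,0,1)^T & (5,3,7)^T\\
2 & (8,7,13)^T & (5,3,7)^T\\
3 & (8,7,13)^T & (7,8,13)^T\\
4 & (3,5,7)^T & (7,8,13)^T\\
5 & (3,5,7)^T & (0,1,1)^T
\end{array}
\]
and, comparing first coordinates divided by third coordinates, one sees that $x\bigl(M_d(1,0,1)^T\bigr)\ge x\bigl(M_d(0,1,1)^T\bigr)$ --- so that the $M_d$-map carries $(1,0)\preceq(0,1)$ to a $\preceq$-inequality in the same direction --- exactly for $d\in\{1,3,5\}$, which are exactly the indices with $\det M_d=1$. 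This establishes the length-one statement, and together with the two preceding steps it proves the Proposition.

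The step I expect to cause the most friction is the first: pinning down that $\TTT^k$ really agrees with $N^{-1}$ on the \emph{closed} cylinder set (rational endpoints included) and that the maps involved are genuinely continuous there, so that the ``continuous injection of an interval is monotone'' principle legitimately applies; once that is in place, only the five-line table and a parity count remain.
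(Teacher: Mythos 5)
Your proposal is correct and takes essentially the same route as the paper: the paper's proof likewise reduces to the single-digit statement that $\TTT|_{C(d)}$ is order-preserving for $d=1,3,5$ and order-reversing for $d=2,4$, and then concludes by induction on $k$, which is exactly your multiplicativity-of-senses step applied to $\sign(d_1,\dots,d_k)=\prod_j\det M_{d_j}$. Your endpoint table and the continuity/injectivity argument via Lemma~\ref{LemPreservationOfShade} simply supply the verification that the paper leaves as an ``observe,'' and they check out.
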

\begin{proof}

Recall from Proposition~\ref{prop:TshiftMap} that $\TTT$ is a shift-map on the digit expansion. In particular, this implies that, for each of $d = 1, \dots, 5$, the restriction $\TTT\vert_{C(d)}$ of $\TTT$ to a cylinder set $C(d)$ is a bijection $\TTT\vert_{C(d)}: C(d) \longrightarrow \XXX$.

Moreover, $\TTT\vert_{C(d)}$ is ``order-preserving'' (with respect to $\preceq$) when $d = 1, 3, 5$ and is ``order-reversing'' when $d = 2, 4$.
The proof of the proposition follows from induction on $k$.
\end{proof}
\begin{proposition}\label{PropInvariance}
Suppose $P \in \XXX$ with
\[
P = [d_1, \dots, d_k, \dots]_{\XXX}
\]
and let $P' = T^k(P)$ for some $k\ge 1$.
Let $\mathbf{p}$ and $\mathbf{p}'$ be normalized vectors representing $P$ and $P'$, that is, $\mathbf{p} = (P, 1)$ and $\mathbf{p}' = (P', 1)$. Then for any $\mathbf{z}_1$ and $\mathbf{z}_2$ in $\mathbb{R}^2$ we have
\[
\langle \mathbf{p}, \mathbf{z}_1 \rangle
\langle \mathbf{p}', M_{d_k}^{-1} \cdots M_{d_1}^{-1}\mathbf{z}_2 \rangle
=
\langle \mathbf{p}, \mathbf{z}_2 \rangle
\langle \mathbf{p}', 
M_{d_k}^{-1} \cdots M_{d_1}^{-1}
\mathbf{z}_1 \rangle
.
\]
\end{proposition}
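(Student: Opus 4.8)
The plan is to reduce the identity to the $Q$-orthogonality of the matrices $M_d$, together with the observation, coming from Proposition~\ref{PropRomiksystemBerggrenTrees}, that the map $M_{d_k}^{-1}\cdots M_{d_1}^{-1}$ carries a normalized representing vector of $P$ to a \emph{positive} representing vector of $P'$, which must therefore be a positive scalar multiple of $\mathbf{p}'$. Once that scalar is produced, everything is a one-line computation with the invariance of the pairing.

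Concretely, set $N = M_{d_1}\cdots M_{d_k}$, so that $N^{-1} = M_{d_k}^{-1}\cdots M_{d_1}^{-1}$. Since $\mathbf{p} = (P, 1)$ is a positive vector representing $P$ and $d_j = d(\TTT^{j-1}(P))$ for each $j$, applying Proposition~\ref{PropRomiksystemBerggrenTrees} successively $k$ times shows that $N^{-1}\mathbf{p}$ is a positive vector representing $P' = \TTT^k(P)$. But any positive vector representing $P' = (x', y')$ is of the form $\mu\,(x', y', 1) = \mu\mathbf{p}'$ with $\mu$ equal to its (positive) third coordinate; hence $N^{-1}\mathbf{p} = \mu\mathbf{p}'$ for some $\mu > 0$, i.e.\ $\mathbf{p}' = \tfrac1\mu N^{-1}\mathbf{p}$.

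Now use that $N$, being a product of the $Q$-orthogonal maps $M_d$ (Lemma~\ref{Lem324}), is $Q$-orthogonal, and so is $N^{-1}$; thus $\langle N^{-1}\mathbf{u}, N^{-1}\mathbf{v}\rangle = \langle \mathbf{u}, \mathbf{v}\rangle$ for all $\mathbf{u}, \mathbf{v}$. Taking $\mathbf{u} = \mathbf{p}$, $\mathbf{v} = \mathbf{z}_i$ and inserting $\mathbf{p}' = \tfrac1\mu N^{-1}\mathbf{p}$ yields
\[
\langle \mathbf{p}',\, M_{d_k}^{-1}\cdots M_{d_1}^{-1}\mathbf{z}_i \rangle = \tfrac1\mu \langle N^{-1}\mathbf{p},\, N^{-1}\mathbf{z}_i \rangle = \tfrac1\mu \langle \mathbf{p}, \mathbf{z}_i \rangle
\]
for $i = 1, 2$. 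Substituting this into the left- and right-hand sides of the asserted identity, both sides collapse to $\tfrac1\mu \langle \mathbf{p}, \mathbf{z}_1 \rangle \langle \mathbf{p}, \mathbf{z}_2 \rangle$, which proves the proposition.

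The only step that is not completely automatic is establishing $N^{-1}\mathbf{p} = \mu\mathbf{p}'$ with $\mu > 0$: it uses Proposition~\ref{PropRomiksystemBerggrenTrees} applied along the digit sequence of $P$, plus the elementary fact that two positive vectors representing the same point of $\XXX$ differ by a positive scalar. After that, the invariance of the bilinear pairing under $N^{-1}$ finishes the argument at once, so I expect no genuine obstacle here.
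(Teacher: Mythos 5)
Your proposal is correct and follows essentially the same route as the paper: apply Proposition~\ref{PropRomiksystemBerggrenTrees} $k$ times to get $\mathbf{p}' = \lambda\, (M_{d_1}\cdots M_{d_k})^{-1}\mathbf{p}$ for a positive scalar, then use the $Q$-orthogonality of the product (Lemma~\ref{Lem324}) so that both sides of the identity reduce to the same scalar multiple of $\langle \mathbf{p}, \mathbf{z}_1\rangle\langle \mathbf{p}, \mathbf{z}_2\rangle$. The only cosmetic difference is that you write the scalar as $1/\mu$ where the paper writes $\lambda$; the argument is the same.
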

\begin{proof}
Let $M  = M_{d_1} \cdots M_{d_k}$. 
By applying Proposition~\ref{PropRomiksystemBerggrenTrees} successively $k$ times, we conclude that
$\mathbf{p}' = \lambda M^{-1}\mathbf{p}$ for some positive scalar $\lambda$.
The left-hand side of the statement in the proposition then becomes
\[
\langle \mathbf{p}, \mathbf{z}_1 \rangle
\langle \mathbf{p}', M^{-1} \mathbf{z}_2 \rangle
=
\langle \mathbf{p}, \mathbf{z}_1 \rangle
\langle 
\lambda M^{-1}\mathbf{p}
, M^{-1} \mathbf{z}_2 \rangle
=
\lambda \langle \mathbf{p}, \mathbf{z}_1 \rangle
\langle 
\mathbf{p}
, \mathbf{z}_2 \rangle
\]
because of the orthogonality of $M$.
Likewise, the right-hand side is
\[
\langle \mathbf{p}, \mathbf{z}_2 \rangle
\langle \mathbf{p}', M^{-1} \mathbf{z}_1 \rangle
=
\lambda \langle \mathbf{p}, \mathbf{z}_2 \rangle
\langle 
\mathbf{p}
, \mathbf{z}_1 \rangle.
\]
This proves the proposition.
\end{proof}

\subsection{Heights, approximation constants and best approximants}
%In this subsection, we will define \emph{heights} of rational points (see below) and \emph{approximation constants} $\delta(P)$ for irrational points $P$ on $\XXX$.
%We connect approximation constants with the bilinear pairing $\langle \cdot, \cdot \rangle$ and prove that the \emph{best approximants} can only occur at boundary points of cylinder sets (Theorem~\ref{ThmBestApproximants}).
The metric space $\XXX$ has a countable dense subset 
\[
\ZZZ = \XXX \cap \mathbb{Q}^2.
\]
For each $Z\in \ZZZ$, there exists a unique nonnegative and coprime triple $(a, b, c)$ of integers satisfying $a^2 + ab + b^2 = c^2$ (that is, an \emph{Eisenstein triple}), so that $Z = (\frac ac, \frac bc)\in \ZZZ$.
We define the \emph{height} $\Ht(Z)$ of $Z$ to be
\[
\Ht(Z) = c.
\]
We can express $\Ht(Z)$ using the bilinear pairing in the following way. 
Let $\mathbf{z} = (a, b, c)$ be the primitive integral vector representing $Z = (\frac ac, \frac bc)$ as above and let
\[
	\mathbf{v}_Q = \begin{pmatrix}0  \\ 0 \\  1
	\end{pmatrix}.
\]
Then
\begin{equation}\label{EqHtVQ}
\Ht(Z) = 
-\langle \mathbf{z}, \mathbf{v}_Q \rangle.
\end{equation}
Finally, we define the \emph{approximation constant} $\delta(P)$ of $P \in \XXX - \ZZZ$ to be
\[
\delta(P) = \liminf_{\substack{Z\in \ZZZ \\ \Ht(Z) \to \infty}}
\Ht(Z)\dd(P, Z).
\]
Recall that the metric $\dd(\cdot, \cdot)$ on $\XXX$ comes from the inner product \eqref{EqDefInnerProduct2}.

\begin{definition}\label{DefDeltaBasic}
%We will compute the approximation constant $\delta(P)$ using the bilinear pairing $\langle \cdot, \cdot \rangle$ on $\mathbb{R}^3$. 
%To do so, we begin with $P = (\alpha, \beta) \in \XXX-\ZZZ$ and $Z = (\frac ac, \frac bc)\in \ZZZ$.
Let $P \in \XXX$ and $Z = (\frac ac, \frac bc)\in \ZZZ$.
Write $\mathbf{p}$ for the normalized vector representing $P$, that is, $\mathbf{p} = (P, 1)$ and $\mathbf{z} = (a, b, c)$.
We define $\delta(P; Z)$ to be the positive real number satisfying
\[
\delta^2(P; Z) = -2c
\langle
\mathbf{p}, \mathbf{z}
\rangle.
\]
Notice from Lemma~\ref{LemNegativeOnCQ} that $\langle \mathbf{p}, \mathbf{z} \rangle$ is always negative.
If we apply Proposition~\ref{PropBilinearPairingInnerProductRelation}, this becomes equivalent to
	\begin{equation}\label{EqDeltaCsin}
\delta(P; Z) = 2c
\left\vert
\sin
\left(
\frac{\theta(P, Z)}{2}
\right)
\right\vert.
	\end{equation}
\end{definition}
It is easy to see that, as $Z\to P$, we have
\[
\frac{\dd(P, Z)}{2\lvert\sin(\theta(P, Z)/2)\rvert } \to 1.
\]
It follows from this that
\begin{equation}\label{EqDeltaEqualsPairing}
\delta(P) = \liminf_{\substack{Z \in \ZZZ \\ \Ht(Z)\to\infty}}
\delta(P; Z).
\end{equation}

For the rest of this subsection, our focus is to prove the following statements:
\begin{itemize}
	\item The heights of boundary points of a cylinder set are less than or equal to those of interior points (Theorem~\ref{PropInteriorHeight}).
	\item Fix $P = [d_1, d_2, \dots, d_k, \dots]\in \XXX - \ZZZ$. Then the set of all boundary points of cylinder sets $\{ C(d_1, \dots, d_k) \}_{k=1}^{\infty}$ will contain all the \emph{best approximants} of $P$. (Theorem~\ref{ThmBestApproximants}) 
\end{itemize}

%The proof of Theorem will be given as an easy application of Propositions~\ref{PropInteriorHeight2} and \ref{PropInteriorHeight3}.  We need to introduce some notations first.
\begin{notation}\label{NotYj}
	Fix a finite Romik sequence $\{ d_1, \dots, d_k \}$.
%	, we define $Y_1, \dots, Y_6$ to be the boundary points of the cylinder sets $C(d_1, \dots, d_k, 1)$, \dots, $C(d_1, \dots, d_k, 5)$ as follows.
First, we define
\[
	\begin{gathered}
		\mathbf{u}_1 = (1, 0, 1), \quad \mathbf{u}_2 = (5, 3, 7), \quad \mathbf{u}_3 = (8, 7, 13), \\
		\mathbf{u}_4 = (7, 8, 13), \quad \mathbf{u}_5 = (3, 5, 7), \quad \mathbf{u}_6 = (0, 1, 1)
	\end{gathered}
\]
and $\mathbf{y}_j = M_{d_1} \cdots M_{d_k}\mathbf{u}_j$.
Let $Y_j$ be the point represented by $\mathbf{y}_j$ for $j = 1, \dots, 6$.
As a result, $Y_1$ and $Y_6$ are the boundary points of the cylinder set $C(d_1, \dots, d_k)$. Further, $Y_d$ and $Y_{d+1}$ are the boundary points of the cylinder set $C(d_1, \dots, d_k, d)$ for $d = 1, \dots, 5$.
Also, Proposition~\ref{PropMonotonicity} shows that
\[
	Y_1 \preceq \cdots \preceq Y_6
	\quad
	\text{or}
	\quad
	Y_6 \preceq \cdots \preceq Y_1,
\]
depending on $\sign(d_1, \dots, d_k) =1$ or $-1$.
Figure~\ref{FigY1Y6} shows the former case. If the latter holds, we can relabel $Y_j$ and $C(d_1, \dots, d_k, d)$ in reverse order.
\end{notation}
\begin{figure}
\centering
\begin{tikzpicture}[scale=2.5]
      
       %code to define "partial ellipse"
        \tikzset{
    partial ellipse/.style args={#1:#2:#3}{
        insert path={+ (#1:#3) arc (#1:#2:#3)}
    }
}
        %draw and label axes and origin
%        \draw[->] (-.2, 0) -- (2.2, 0) node[right]{$x$};
%        \draw[->] (0, -.2) -- (0, 2.2) node[above]{$y$};
%        \node[below left] at (0, 0) {$O$};
                                                     
        %draw solid line ellipse (double size)
\draw[very thick, rotate=-45] (0,0) [partial ellipse=60:120:2.828cm and 1.632cm] node[below]{$Y_6$};
        %draw points (our usual ellipse points * 2)
        %left most point
        \draw[fill] (6/7, 10/7) circle (0.02) node[below left] {$Y_5$};
        %left most point label
        %\draw (6/7, 1.50857142857) node[below left] {$(\frac37, \frac57)$};
        %left middle point
        \draw[fill] (14/13, 16/13) circle (0.02) node[below left] {$Y_4$};
        %left middle label
%        \draw (14/13, 1.16076923077) node[left] {$(\frac{7}{13}, \frac{8}{13})$};
        %right middle point
        \draw[fill] (16/13, 14/13) circle (0.02) node[below left] {$Y_3$};
        %right middle label
%        \draw (1.28076923077, 1.09692307692) node[below left] {$(\frac{8}{13}, \frac{7}{13})$};
        %right most point
	\draw[fill] (10/7, 6/7) circle (0.02) node[below left]{$Y_2$};
        %right most label
%        \draw (1.45857142857, 0.89714285714) node[below left] {($\frac{5}{7}, \frac{3}{7}$)};
        \draw[fill] (0, 2) circle (0.02);
	\draw[fill] (2, 0) circle (0.02) node[below]{$Y_1$};
        
        %draw dotted ellipse/cylinder sets (1.1 times larger than solid ellipse)
        \draw[dotted, thick, <->, rotate=-45] (0, 0) [partial ellipse=60:82:2.9694cm and 1.7136cm]
	node[midway, above right] {$\footnotesize{C(d_1, \dots, d_k, 1)}$};
        \draw[dotted, thick, <->, rotate=-45] (0, 0) [partial ellipse=82:87.9:2.9694cm and 1.7136cm]
	node[midway, above right] {$\footnotesize{C(d_1, \dots, d_k, 2)}$};
        \draw[dotted, thick, <->, rotate=-45] (0, 0) [partial ellipse=87.9:92.3:2.9694cm and 1.7136cm]
	node[midway, above right] {$\footnotesize{C(d_1, \dots, d_k, 3)}$};
        \draw[dotted, thick, <->, rotate=-45] (0, 0) [partial ellipse=92.3:98:2.9694cm and 1.7136cm]
	node[midway, above right] {$\footnotesize{C(d_1, \dots, d_k, 4)}$};
        \draw[dotted, thick, <->, rotate=-45] (0, 0) [partial ellipse=98:120:2.9694cm and 1.7136cm]
	node[midway, above right] {$\footnotesize{C(d_1, \dots, d_k, 5)}$};
        
\end{tikzpicture}
\caption{The cylinder set $C(d_1,\dots, d_k)$ and rational points $Y_1, \dots, Y_6$, which are boundary points of 
	$C(d_1, \dots, d_k,1)$, \dots, 
	$C(d_1, \dots, d_k,5)$.
	This shows the case $Y_1\preceq \cdots \preceq Y_6$.
}
\label{FigY1Y6}
\end{figure}

\begin{proposition}\label{PropInteriorHeight2}
	With Notation~\ref{NotYj}, we have
\[
\max\{\Ht(Y_2), \Ht(Y_5)\} \le \min\{\Ht(Y_3), \Ht(Y_4)\}.
\]
\end{proposition}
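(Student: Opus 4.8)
The plan is to rewrite the claimed inequality purely in terms of the $x_3$-coordinates of the vectors $\mathbf{y}_j = M_{d_1}\cdots M_{d_k}\mathbf{u}_j$ and then to prove it by exhibiting a convex cone that is stable under all five matrices $M_1,\dots,M_5$. The first step is to observe that $\Ht(Y_j)$ is exactly the $x_3$-coordinate of $\mathbf{y}_j$: the matrix $M_{d_1}\cdots M_{d_k}$ lies in $\mathrm{GL}_3(\mathbb{Z})$ and each $\mathbf{u}_j$ is primitive, so $\mathbf{y}_j$ is primitive and integral, and each $\mathbf{u}_j$ represents a point of $\Omega$, so $\mathbf{y}_j$ is positive by Lemma~\ref{LemPreservationOfShade}; hence \eqref{EqHtVQ} gives $\Ht(Y_j) = -\langle \mathbf{y}_j,\mathbf{v}_Q\rangle = [\mathbf{y}_j]_3$, the $x_3$-coordinate. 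Writing $M = M_{d_1}\cdots M_{d_k}$ and using linearity of the $x_3$-coordinate functional, the statement $\max\{\Ht(Y_2),\Ht(Y_5)\}\le\min\{\Ht(Y_3),\Ht(Y_4)\}$ is then equivalent to the four inequalities
\[
[M(\mathbf{u}_3 - \mathbf{u}_2)]_3 \ge 0, \quad
[M(\mathbf{u}_4 - \mathbf{u}_2)]_3 \ge 0, \quad
[M(\mathbf{u}_3 - \mathbf{u}_5)]_3 \ge 0, \quad
[M(\mathbf{u}_4 - \mathbf{u}_5)]_3 \ge 0,
\]
where a direct computation gives $\mathbf{u}_3 - \mathbf{u}_2 = (3,4,6)^T$, $\mathbf{u}_4 - \mathbf{u}_2 = (2,5,6)^T$, $\mathbf{u}_3 - \mathbf{u}_5 = (5,2,6)^T$ and $\mathbf{u}_4 - \mathbf{u}_5 = (4,3,6)^T$.

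\textbf{The cone.} Next I would introduce the convex cone
\[
\mathcal{C} = \{\, a(1,0,1)^T + b(0,1,1)^T + c(2,2,3)^T \;:\; a,b,c \ge 0 \,\},
\]
equivalently $\mathcal{C} = \{(v_1,v_2,v_3)^T : v_1+v_2\ge v_3,\ 2v_3 \ge v_1 + 2v_2,\ 2v_3 \ge 2v_1 + v_2\}$; geometrically it is the cone over the triangle bounded by the chord $x+y=1$ and the two tangents to the circle $x^2+xy+y^2=1$ at $(1,0)$ and $(0,1)$, so it contains $\Omega$. The proof then reduces to three elementary checks: (i) each of the four difference vectors above lies in $\mathcal{C}$ — one writes down the nonnegative coefficients, e.g.\ $(3,4,6)^T = (1,0,1)^T + 2(0,1,1)^T + (2,2,3)^T$, and similarly for the other three; (ii) $M_d\,\mathcal{C}\subseteq\mathcal{C}$ for $d=1,\dots,5$ — since $\mathcal{C}$ is generated by three rays and $M_d$ is linear, it suffices to verify $M_d(1,0,1)^T, M_d(0,1,1)^T, M_d(2,2,3)^T \in \mathcal{C}$, and here $M_d(1,0,1)^T$ is always one of $\mathbf{u}_1,\mathbf{u}_3,\mathbf{u}_5$ and $M_d(0,1,1)^T$ one of $\mathbf{u}_2,\mathbf{u}_4,\mathbf{u}_6$ (all in $\mathcal{C}$), while $M_d(2,2,3)^T$ evaluates to $(8,2,9)^T,(22,16,33)^T,(26,26,45)^T,(16,22,33)^T,(2,8,9)^T$ for $d=1,\dots,5$, each visibly in $\mathcal{C}$; and (iii) every vector in $\mathcal{C}$ has nonnegative $x_3$-coordinate, since its three generators do.

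\textbf{Conclusion and main obstacle.} Combining these: by (i) each difference vector $\mathbf{v}$ lies in $\mathcal{C}$; by (ii) and induction on $k$ the image $M\mathbf{v}$ again lies in $\mathcal{C}$; by (iii) that image has nonnegative $x_3$-coordinate. This establishes the four inequalities, hence the proposition. I expect the real obstacle to be identifying the correct cone in step (ii): the naive candidate, the cone attached to $\Omega$ in Lemma~\ref{LemPreservationOfShade}, does not work, because the difference vectors represent points lying strictly outside the unit circle and so fail the condition $Q\le 0$; one is forced to enlarge it to the cone over the tangent triangle, and the only nontrivial point is the (routine but not a priori obvious) fact that this larger cone is still stable under all five generators $M_d$.
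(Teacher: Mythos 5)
Your proof is correct, and it takes a genuinely different route from the paper's. The paper never touches the difference vectors $\mathbf{u}_i-\mathbf{u}_j$ on the primal side: using the orthogonality relations of Lemma~\ref{Lem324} it moves the word across the pairing, writing
$\Ht(Y_i)-\Ht(Y_j)=-\langle H(\mathbf{u}_i-\mathbf{u}_j),\,M_{\hat d_k}\cdots M_{\hat d_1}H\mathbf{v}_Q\rangle$ via \eqref{EqHtVQ}, notes that $H\mathbf{v}_Q=(4,4,7)^T$ represents a point of $\Omega$, and then invokes Lemma~\ref{LemPreservationOfShade} for the \emph{reversed} word so that each difference of heights becomes a nonnegative combination of the first two coordinates of a vector representing a point of $\Omega$ (e.g.\ $H(\mathbf{u}_3-\mathbf{u}_2)=-(0,1,0)^T$ gives $\tfrac{y_1}2+y_2\ge0$). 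You instead stay on the primal side, identify $\Ht(Y_j)$ with the third coordinate of $M\mathbf{u}_j$ (your primitivity/positivity justification is sound), and prove nonnegativity of the third coordinates of $M(\mathbf{u}_i-\mathbf{u}_j)$ by building a new $M_d$-invariant cone, the cone over the tangent triangle at $(1,0)$ and $(0,1)$; your diagnosis that the $\Omega$-cone of Lemma~\ref{LemPreservationOfShade} cannot be used directly (the difference vectors have $Q>0$) is exactly right, and I checked your $15$ generator computations and the four cone memberships — they are all correct. What each approach buys: the paper's adjoint trick recycles the already-proved $\Omega$-stability and needs no new invariance statement, at the price of the reversed-word manipulation and the somewhat ad hoc identities for $H(\mathbf{u}_i-\mathbf{u}_j)$; your argument is duality-free, treats the four inequalities uniformly, and the same cone would also yield Proposition~\ref{PropInteriorHeight3} (since $\mathbf{u}_2-3\mathbf{u}_1=(2,3,4)^T$ and $\mathbf{u}_5-3\mathbf{u}_6=(3,2,4)^T$ lie in your cone), at the price of introducing and verifying one extra finite computation establishing the invariance $M_d\mathcal{C}\subseteq\mathcal{C}$.
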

\begin{proof}
	First, we prove $\Ht(Y_2) \le \Ht(Y_3)$. 
	%using the same strategy as the proof of Proposition~\ref{PropInteriorHeight}.
%	We will prove that $ \Ht(Y_2)\le \min\{\Ht(Y_3), \Ht(Y_4)\}$, as the inequality for $Y_5$ is then also proven in an identical way.
	%That is,
	Using Lemma~\ref{Lem324} and the orthogonality of $H$ and $U_d$ with respect to the bilinear pairing $\langle \cdot\, , \cdot\, \rangle$, we obtain
\begin{align*}
    \Ht(Y_3) - \Ht(Y_2) &=
    -\langle 
    H U_{d_1} \cdots H U_{d_k} (\mathbf{u}_3 - \mathbf{u}_2), \mathbf{v}_Q
\rangle \\
&=
    -\langle 
    \mathbf{u}_3 - \mathbf{u}_2, 
    U_{\hat{d}_k} H \cdots H U_{\hat{d}_1} \cdot H\mathbf{v}_Q
\rangle \\
&=
    -\langle 
    H(\mathbf{u}_3 - \mathbf{u}_2), 
    M_{\hat{d}_{k}}\cdots M_{\hat{d}_1} \cdot  H\mathbf{v}_Q
\rangle.
\end{align*}
An easy calculation shows that
\begin{equation}\label{EqU3minusU2}
	H(\mathbf{u}_3 - \mathbf{u}_2) = -(0, 1, 0)^T.
\end{equation}
Notice that $H\mathbf{v}_Q = (4, 4, 7)^T$ represents a point in the set $\Omega$ in \S\ref{SubsecPrelim}.
Therefore, if we let 
\[
	\mathbf{y} = (y_1,y_2, y_3) = M_{\hat{d}_{k}}\cdots M_{\hat{d}_1} \cdot  H\mathbf{v}_Q
= M_{\hat{d}_{k}}\cdots M_{\hat{d}_1}(4, 4, 7)^T,
\]
then we can apply Lemma~\ref{LemPreservationOfShade} repeatedly and see that $\mathbf{y}$ represents a point in $\Omega$ as well.
In particular, $y_1, y_2\ge 0$ from the property ($\Omega$-IV). 
So, 
\[
    \Ht(Y_3) - \Ht(Y_2) = -\langle H(\mathbf{u}_3 - \mathbf{u}_2), \mathbf{y}
\rangle = 
    \langle (0, 1, 0)^T, \mathbf{y} \rangle
=
    \frac{y_1}2 + y_2 \ge 0.
\]
Similarly, in order to prove $\Ht(Y_5) \ge \Ht(Y_4)$, we use
\begin{equation}\label{EqU4minusU5}
	H(\mathbf{u}_4 - \mathbf{u}_5) = -(1, 0, 0)^T
\end{equation}
in place of \eqref{EqU3minusU2} and obtain
\[
    \Ht(Y_4) - \Ht(Y_5) = 
    \langle (1, 0, 0)^T, \mathbf{y} \rangle
    = y_1 + \frac{y_2}2\ge 0.
\]

Next, we prove that $\Ht(Y_4) \ge \Ht(Y_2)$ and $\Ht(Y_3) \ge \Ht(Y_5)$.
To do the former, we compute
\[
    \Ht(Y_4) - \Ht(Y_2) = 
    \langle (-1, 2, 0)^T, \mathbf{y} \rangle
    =  \frac{3y_2}2\ge 0.
\]
For the latter, 
\[
    \Ht(Y_3) - \Ht(Y_5) = 
    \langle (2, -1, 0)^T, \mathbf{y} \rangle
    =  \frac{3y_1}2\ge 0.
\]
The proof of the proposition is now complete. 
\end{proof}
\begin{proposition}\label{PropInteriorHeight3}
	With Notation~\ref{NotYj}, we have
\[
3\Ht(Y_1) \le \Ht(Y_2)
\quad
	\text{and}
\quad
3\Ht(Y_6) \le \Ht(Y_5).
\]
\end{proposition}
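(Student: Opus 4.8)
The plan is to reuse the argument from the proof of Proposition~\ref{PropInteriorHeight2} almost verbatim, feeding it the difference vectors $\mathbf{u}_2 - 3\mathbf{u}_1$ and $\mathbf{u}_5 - 3\mathbf{u}_6$ in place of the ones used there. Since the $M_d$ are unimodular, $\mathbf{y}_j = M_{d_1}\cdots M_{d_k}\mathbf{u}_j$ is a primitive positive vector representing $Y_j$, so \eqref{EqHtVQ} gives
\[
\Ht(Y_2) - 3\Ht(Y_1) = -\langle M_{d_1}\cdots M_{d_k}(\mathbf{u}_2 - 3\mathbf{u}_1),\, \mathbf{v}_Q \rangle .
\]
Using $M_d = HU_d$, Lemma~\ref{Lem324}, and the orthogonality of $H$ and the $M_d$ exactly as in the chain of equalities inside the proof of Proposition~\ref{PropInteriorHeight2}, this rewrites as
\[
\Ht(Y_2) - 3\Ht(Y_1) = -\langle H(\mathbf{u}_2 - 3\mathbf{u}_1),\, M_{\hat{d}_k}\cdots M_{\hat{d}_1}\, H\mathbf{v}_Q \rangle .
\]
Here $\mathbf{u}_2 - 3\mathbf{u}_1 = (2,3,4)^T$, a short computation gives $H(2,3,4)^T = (-1,-2,-2)^T$, and $H\mathbf{v}_Q = (4,4,7)^T$, which (as already observed in Proposition~\ref{PropInteriorHeight2}) represents the point $(\tfrac47,\tfrac47)$ of $\Omega$.

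Applying Lemma~\ref{LemPreservationOfShade} repeatedly---to $M_{\hat{d}_1}$, then $M_{\hat{d}_2}$, and so on up to $M_{\hat{d}_k}$---shows that $\mathbf{y} = (y_1,y_2,y_3) := M_{\hat{d}_k}\cdots M_{\hat{d}_1}(4,4,7)^T$ again represents a point of $\Omega$. Hence $y_1,y_2\ge0$ by ($\Omega$-IV), and the first defining inequality $x+y\ge1$ of $\Omega$ in \eqref{EqDefOmega} gives, after multiplying through by $y_3>0$, that $y_1+y_2\ge y_3$. Substituting $H(\mathbf{u}_2 - 3\mathbf{u}_1)=(-1,-2,-2)^T$ into the last display and expanding the pairing \eqref{EqDefPairing},
\[
\Ht(Y_2) - 3\Ht(Y_1) = \langle (1,2,2)^T,\, \mathbf{y} \rangle = 2y_1 + \tfrac52 y_2 - 2y_3 \ge \tfrac12 y_2 \ge 0 ,
\]
the first inequality using $y_1+y_2\ge y_3$ (so that $2y_1+2y_2-2y_3\ge0$) and the second using $y_2\ge0$. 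The bound $3\Ht(Y_6)\le\Ht(Y_5)$ is the mirror image: with $(\mathbf{u}_5,3\mathbf{u}_6)$ in place of $(\mathbf{u}_2,3\mathbf{u}_1)$ one finds $\mathbf{u}_5-3\mathbf{u}_6=(3,2,4)^T$ and $H(3,2,4)^T=(-2,-1,-2)^T$, whence $\Ht(Y_5)-3\Ht(Y_6)=\langle(2,1,2)^T,\mathbf{y}\rangle=\tfrac52 y_1+2y_2-2y_3\ge\tfrac12 y_1\ge0$.

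I do not expect a genuine obstacle; once Lemmas~\ref{Lem324} and~\ref{LemPreservationOfShade} and the operator-transposition identity from the proof of Proposition~\ref{PropInteriorHeight2} are available, everything is a short computation. The one subtlety worth flagging is that the coarse properties ($\Omega$-IV) and ($\Omega$-V) of a vector representing a point of $\Omega$ are \emph{not} by themselves sufficient: a vector with merely $0\le y_1,y_2\le y_3$ can have $2y_1+\tfrac52 y_2-2y_3<0$, so one genuinely has to use the first defining inequality $x+y\ge1$ of $\Omega$, i.e.\ $y_1+y_2\ge y_3$. Beyond that, the only care needed is to invoke the transposition step precisely as it appears in Proposition~\ref{PropInteriorHeight2}, so that the chain of equalities there carries over verbatim after the substitution.
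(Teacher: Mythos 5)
Your proof is correct and follows essentially the same route as the paper: the paper also reduces to $-\langle H(\mathbf{u}_2-3\mathbf{u}_1),\mathbf{y}\rangle$ with $\mathbf{y}=M_{\hat d_k}\cdots M_{\hat d_1}(4,4,7)^T\in\Omega$ and then uses the decomposition $(1,2,2)^T=\tfrac23(2,2,3)^T+\tfrac13(-1,2,0)^T$ together with ($\Omega$-III), and since $\langle(2,2,3)^T,\mathbf{y}\rangle=3(y_1+y_2-y_3)$, your explicit use of $y_1+y_2\ge y_3$ is exactly that step in coordinates.
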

\begin{proof}
	We prove the two inequalities in a similar way as in the proof of Proposition~\ref{PropInteriorHeight2}.
	First, note that
	\[
		H (\mathbf{u}_2 - 3\mathbf{u}_1 ) = 
	-\begin{pmatrix} 1 \\ 2 \\ 2 \\
	\end{pmatrix}
	=
	-\frac23\begin{pmatrix} 2 \\ 2 \\ 3 \\
	\end{pmatrix}
	-
	\frac13 \begin{pmatrix} -1 \\ 2 \\ 0 \\
	\end{pmatrix}.
	\]
	Also, $\mathbf{y}$ represents a point in $\Omega$ and therefore 
	$\langle (2, 2, 3 )^T, \mathbf{y} \rangle \ge 0$ from the property ($\Omega$-III) in \S\ref{SubsecPrelim}. So,
	\begin{align*}
3\Ht(Y_1) - \Ht(Y_2)
&=
    \frac23\langle (2, 2, 3 )^T, \mathbf{y} \rangle
    +
    \frac13\langle (-1, 2, 0)^T, \mathbf{y} \rangle \\
    &=  
    \frac23\langle (2, 2, 3 )^T, \mathbf{y} \rangle
    +
    \frac{y_1}2\ge 0.
    \end{align*}
    Likewise, the other inequality is proven in the same way. We omit the detail.
\end{proof}
\begin{theorem}\label{PropInteriorHeight}
	Let $Z^{(1, 0)}$ and $Z^{(0, 1)}$ be the boundary points of a cylinder set $C(d_1, \dots, d_k)$ (see \eqref{EqDefZk}) and let $Z$ be a rational point in the interior of the same cylinder set $C(d_1, \dots, d_k)$.
Then we have 
\[
	\max\{ \Ht(Z^{(1,0)}) , \Ht(Z^{(0, 1)}) \} \le \Ht(Z).
\]
\end{theorem}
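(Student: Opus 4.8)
The plan is to base the proof on three ingredients. The first is a refinement of Propositions~\ref{PropInteriorHeight2} and~\ref{PropInteriorHeight3}: in Notation~\ref{NotYj},
\[
\max\{\Ht(Y_1),\Ht(Y_6)\}\le\min\{\Ht(Y_2),\Ht(Y_3),\Ht(Y_4),\Ht(Y_5)\}.
\]
The second is the monotonicity this yields: writing $h(C)$ for the larger of the heights of the two boundary points of a cylinder set $C$, one gets $h(C(d_1,\dots,d_k))\le h(C(d_1,\dots,d_k,d))$ for every $d\in\{1,\dots,5\}$. The third is that every rational point in the interior of a cylinder set coincides with the point $Y_j$ attached, via Notation~\ref{NotYj}, to a sufficiently deep sub-cylinder, for an index $j\in\{2,3,4,5\}$. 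Granting these, the theorem is immediate: given $Z$ in the interior of $C(d_1,\dots,d_k)$, write $Z=Y_j$ with $j\in\{2,3,4,5\}$ relative to a sub-cylinder $C(d_1,\dots,d_{k+n})$; then $\Ht(Z)=\Ht(Y_j)\ge\max\{\Ht(Y_1),\Ht(Y_6)\}$ by the first ingredient, where $Y_1,Y_6$ are now the boundary points of $C(d_1,\dots,d_{k+n})$, and $n$ applications of the second ingredient give $\max\{\Ht(Y_1),\Ht(Y_6)\}\ge\max\{\Ht(Z^{(1,0)}),\Ht(Z^{(0,1)})\}$.

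For the first ingredient, every comparison except $\Ht(Y_6)\le\Ht(Y_2)$ and $\Ht(Y_1)\le\Ht(Y_5)$ is immediate from Propositions~\ref{PropInteriorHeight2} and~\ref{PropInteriorHeight3}; for instance $\Ht(Y_1)\le\tfrac13\Ht(Y_2)\le\Ht(Y_2)\le\Ht(Y_3)$, and likewise for the comparisons involving $Y_3$ and $Y_4$. The two remaining inequalities reduce, through $\Ht(Y_6)\le\tfrac13\Ht(Y_5)$ and $\Ht(Y_1)\le\tfrac13\Ht(Y_2)$, to showing $3\Ht(Y_2)\ge\Ht(Y_5)$ and $3\Ht(Y_5)\ge\Ht(Y_2)$. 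These I would prove exactly as in the proof of Proposition~\ref{PropInteriorHeight2}: with $\mathbf{y}=M_{\hat d_k}\cdots M_{\hat d_1}(4,4,7)^T$ representing a point of $\Omega$ and $\Ht(Y_a)=-\langle H\mathbf{u}_a,\mathbf{y}\rangle$, one computes $H(3\mathbf{u}_2-\mathbf{u}_5)=(-4,4,2)^T$, whence $3\Ht(Y_2)-\Ht(Y_5)=-\langle(-4,4,2)^T,\mathbf{y}\rangle=2y_1+2(y_3-y_2)\ge0$ by properties~($\Omega$-IV) and~($\Omega$-V); the companion inequality $3\Ht(Y_5)\ge\Ht(Y_2)$ is entirely symmetric. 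The second ingredient is then immediate, since the boundary points of $C(d_1,\dots,d_k,d)$ are $Y_d$ and $Y_{d+1}$, and for each $d\in\{1,\dots,5\}$ at least one of $d,d+1$ lies in $\{2,3,4,5\}$, so $\max\{\Ht(Y_d),\Ht(Y_{d+1})\}\ge\max\{\Ht(Y_1),\Ht(Y_6)\}$.

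For the third ingredient, let $Z$ be rational and interior to $C(d_1,\dots,d_k)$. Since $\TTT^k$ maps $C(d_1,\dots,d_k)$ bijectively onto $\XXX$, carrying its boundary points $Z^{(1,0)}$ and $Z^{(0,1)}$ to $(1,0)$ and $(0,1)$, the point $\TTT^k(Z)$ is a rational point of $\XXX$ different from $(1,0)$ and $(0,1)$; by the consequence of Wayne's theorem recorded after~\eqref{EqFourRationalPts}, $\TTT^{k+n}(Z)$ therefore equals one of $(\tfrac57,\tfrac37)$, $(\tfrac8{13},\tfrac7{13})$, $(\tfrac7{13},\tfrac8{13})$, $(\tfrac37,\tfrac57)$ for some $n\ge0$. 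Letting $d_{k+1},\dots,d_{k+n}$ be the ensuing Romik digits of $Z$ and applying Proposition~\ref{PropRomiksystemBerggrenTrees} $n$ further times, the primitive vector representing $Z$ equals $M_{d_1}\cdots M_{d_{k+n}}\mathbf{u}_j$, where $j\in\{2,3,4,5\}$ is the index for which $\mathbf{u}_j$ is the primitive representative of that special point (one uses here that each $M_d$ and each $M_d^{-1}$ is integral of determinant $\pm1$, so primitivity is preserved). In Notation~\ref{NotYj} for $d_1,\dots,d_{k+n}$ this says precisely $Z=Y_j$ with $j\in\{2,3,4,5\}$, as the first paragraph requires.

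The step I expect to demand the most care is the third ingredient --- verifying that an arbitrary rational interior point really is one of $Y_2,Y_3,Y_4,Y_5$ for a deep enough cylinder, via the Berggren-tree description, Proposition~\ref{PropRomiksystemBerggrenTrees}, and the primitivity bookkeeping. By contrast the refinement is a routine extension of the computations already carried out for Propositions~\ref{PropInteriorHeight2} and~\ref{PropInteriorHeight3}, and the monotonicity is then purely formal.
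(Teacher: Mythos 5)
Your proposal is correct and follows essentially the same route as the paper: you upgrade Propositions~\ref{PropInteriorHeight2} and~\ref{PropInteriorHeight3} to $\max\{\Ht(Y_1),\Ht(Y_6)\}\le\min\{\Ht(Y_2),\Ht(Y_3),\Ht(Y_4),\Ht(Y_5)\}$ by the same $\Omega$-region pairing computation, and then chain this boundary-height monotonicity down to a deep cylinder where the rational interior point appears as one of the $Y_j$ with $j\in\{2,3,4,5\}$. The only deviations are cosmetic: the paper obtains $\Ht(Y_1)\le\Ht(Y_5)$ directly from $\Ht(Y_5)-\Ht(Y_1)=\langle(-1,2,0)^T,\mathbf{y}\rangle=\tfrac{3y_2}{2}\ge 0$ instead of via your auxiliary inequalities $3\Ht(Y_2)\ge\Ht(Y_5)$ and $3\Ht(Y_5)\ge\Ht(Y_2)$, and your third ingredient merely makes explicit the terminating-expansion (Wayne's theorem) step that the paper compresses into ``applying this argument repeatedly.''
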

\begin{proof}
	First, we prove that $\Ht(Y_1) \le \Ht(Y_5)$ following the same strategy.
	That is,
\[
    \Ht(Y_5) - \Ht(Y_1) = 
    \langle (-1, 2, 0)^T, \mathbf{y} \rangle
    = \frac{3y_2}2\ge 0.
\]
Since Propositions~\ref{PropInteriorHeight2} and \ref{PropInteriorHeight3} together imply 
	\[
		\Ht(Y_1) \le 
		\min\{ \Ht(Y_2), \Ht(Y_3), \Ht(Y_4) \},
	\]
	we have shown that
		\begin{equation}\label{EqBoundary1}
		\Ht(Y_1) \le 
		\min\{ \Ht(Y_2), \Ht(Y_3), \Ht(Y_4), \Ht(Y_5)\}.
		\end{equation}
Likewise, 
		\begin{equation}\label{EqBoundary2}
		\Ht(Y_6) \le 
		\min\{ \Ht(Y_2), \Ht(Y_3), \Ht(Y_4), \Ht(Y_5)\}.
		\end{equation}
	The inequalities \eqref{EqBoundary1} and \eqref{EqBoundary2} can be summarized by saying that, if $Z_1$ is a boundary point of $C(d_1, \dots, d_k)$ and $Z_2$ is a boundary point of $C(d_1, \dots, d_k, d_{k+1})$, then 
	\[
		\Ht(Z_1) \le \Ht(Z_2).
		\]
	The proof of Theorem~\ref{PropInteriorHeight} then follows by applying this argument repeatedly.
\end{proof}

\begin{proposition}
\label{prop:diam_zero}
Suppose $P\in \XXX - \ZZZ$ with $P = [d_1, d_2, \dots]_{\XXX}.$
Then the diameter of $C(d_1, \dots, d_k)$ (with respect to the metric defined by the inner product \eqref{EqDefInnerProduct2})
tends to zero as $k\to\infty$.
Also,  $Z_k^{(1, 0)}(P)$ and $Z_k^{(0, 1)}(P)$ tend to $P$ as $k\to\infty$.
\end{proposition}
\begin{proof}
The second statement in the proposition follows from the first because 
 $\dd(Z_k^{(1, 0)},P)$ and 
 $\dd(Z_k^{(0, 1)},P)$ are both bounded by the diameter 
 $
  \mathrm{diam}(
  C(d_1, \dots, d_k)
  )
  $
  of the cylinder set $C(d_1, \dots, d_k)$.
%So we only need to prove that $\mathrm{diam}C(d_1, \dots, d_k)$ becomes arbitrarily small as $k$ becomes large.
  Because $C(d_1, \dots, d_k)$ is a (closed) subarc of $\XXX$, 
  $
  \mathrm{diam}(
  C(d_1, \dots, d_k)
  )
  $
  is given by the distance between 
  $Z_k^{(1, 0)}(P)$ 
  and
  $Z_k^{(0, 1)}(P)$. 
  Using the notations introduced in Definition~\ref{DefZk}, we have% and Notation~\ref{NotYj}.
  \begin{align*}
  \mathrm{diam}(
  C(d_1, \dots, d_k)
  )^2
  &=
  \left( 
  \frac{a_k^{(1, 0)}}{c_k^{(1, 0)}}
  -
  \frac{a_k^{(0, 1)}}{c_k^{(0, 1)}}
  \right)^2 
  +
  \left( 
  \frac{b_k^{(1, 0)}}{c_k^{(1, 0)}}
  -
  \frac{b_k^{(0, 1)}}{c_k^{(0, 1)}}
  \right)^2 
  \\
  +
  &\left( 
  \frac{a_k^{(1, 0)}}{c_k^{(1, 0)}}
  -
  \frac{a_k^{(0, 1)}}{c_k^{(0, 1)}}
  \right) 
  \left( 
  \frac{b_k^{(1, 0)}}{c_k^{(1, 0)}}
  -
  \frac{b_k^{(0, 1)}}{c_k^{(0, 1)}}
  \right)
  \\
%  &= 2
%  \left( 
%    1
%    -
%    \frac{
%      a_k^{(1, 0)}a_k^{(0, 1)} 
%      +
%      b_k^{(1, 0)}b_k^{(0, 1)} 
%    }{c_k^{(1, 0)}c_k^{(0, 1)}}
%  \right) \\
  &=
  \frac{
  -2
  \langle 
  \mathbf{z}_k^{(1, 0)}, \mathbf{z}_k^{(0, 1)}\rangle
  }{
    c_k^{(1, 0)}c_k^{(0, 1)}
  }
  =
  \frac{
  -2
  \langle 
  \mathbf{u}^{(1, 0)}, \mathbf{u}^{(0, 1)}\rangle
  }{
    c_k^{(1, 0)}c_k^{(0, 1)}
  }
  =
  \frac{
  2
  }{
    c_k^{(1, 0)}c_k^{(0, 1)}
  }.
  \end{align*}
%  In the second last equality above, the orthogonality of $M_{d}$ is used.
On the other hand, utilizing Notation~\ref{NotYj}, we have (cf.~Figure~\ref{FigY1Y6})
\[
c_k^{(1, 0)}c_k^{(0, 1)} = 
\Ht(Z_k^{(1, 0)}(P))
\Ht(Z_k^{(0, 1)}(P))
=\Ht(Y_1)\Ht(Y_6)
\] 
and
\[
  c_{k+1}^{(1, 0)}c_{k+1}^{(0, 1)} = 
  \begin{cases}
    \Ht(Y_1)\Ht(Y_2) & \text{ if } d_{k+1} = 1, \\
    \Ht(Y_2)\Ht(Y_3) & \text{ if } d_{k+1} = 2, \\
    \Ht(Y_3)\Ht(Y_4) & \text{ if } d_{k+1} = 3, \\
    \Ht(Y_4)\Ht(Y_5) & \text{ if } d_{k+1} = 4, \\
    \Ht(Y_5)\Ht(Y_6) & \text{ if } d_{k+1} = 5. \\
  \end{cases}
\]
From Theorem~\ref{PropInteriorHeight}, we have
\[
\max\{ \Ht(Y_2), \Ht(Y_3), \Ht(Y_4), \Ht(Y_5) \}
<
\min\{ \Ht(Y_1), \Ht(Y_6) \},
\]
so we conclude that 
$c_k^{(1, 0)}c_k^{(0, 1)} < c_{k+1}^{(1, 0)}c_{k+1}^{(0, 1)}$.
This proves that $\mathrm{diam}(C(d_1, \dots, d_k))$ tends to zero as $k \to \infty$.
\end{proof}

\begin{theorem}\label{ThmBestApproximants}
Let $P \in \XXX - \ZZZ$ and $Z\in \ZZZ$.
Then there exists a $k\ge1$ such that
\begin{equation*}%\label{EqConvergence}
	\min\{\delta(P;Z_k^{(1,0)}(P)),   \delta(P;Z_k^{(0,1)}(P))\} \le \delta(P;Z).  
\end{equation*}
(See Definition~\ref{DefZk}.)
\end{theorem}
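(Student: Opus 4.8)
The goal is to show that for any point $P \in \XXX - \ZZZ$ and any rational point $Z \in \ZZZ$, some boundary point of some cylinder set $C(d_1,\dots,d_k)$ containing $P$ approximates $P$ at least as well (in the sense of $\delta(P;\cdot)$) as $Z$ does. The key idea is to locate $Z$ relative to the nested cylinder sets around $P$: since $P$ is irrational, the cylinder sets $C(d_1,\dots,d_k)$ shrink to $\{P\}$, while $Z \neq P$ is rational, so there is a well-defined largest $k$ for which $Z \in C(d_1,\dots,d_k)$. First I would fix this $k$, so that $Z$ lies in $C(d_1,\dots,d_k)$ but not in $C(d_1,\dots,d_k,d_{k+1})$; write $P' = \TTT^k(P)$ and let $Z'$ be the image of $Z$ under the corresponding map $M_{d_k}^{-1}\cdots M_{d_1}^{-1}$. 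Then $Z'$ is a rational point of $\XXX$ lying outside the cylinder set $C(d_{k+1})$, while $P'$ lies inside $C(d_{k+1})$.

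**Reducing to $k=0$.** The crucial reduction is to use Proposition~\ref{PropInvariance} to transport the quantity $\delta(P;Z)$ back along the dynamics. Writing $\mathbf{p} = (P,1)$, $\mathbf{p}' = (P',1)$, $\mathbf{v}_Q = (0,0,1)^T$, and $\mathbf{z}$ the primitive vector for $Z$, one has $\delta^2(P;Z) = -2\,\Ht(Z)\,\langle \mathbf{p},\mathbf{z}\rangle = 2\langle \mathbf{p},\mathbf{z}\rangle\langle \mathbf{z},\mathbf{v}_Q\rangle$ using \eqref{EqHtVQ}. Since $\mathbf{z} = M_{d_1}\cdots M_{d_k}\mathbf{z}'$ for the primitive vector $\mathbf{z}'$ of $Z'$ (up to a positive scalar), Proposition~\ref{PropInvariance} applied with $\mathbf{z}_1 = \mathbf{z}, \mathbf{z}_2 = \mathbf{v}_Q$ converts the product $\langle \mathbf{p},\mathbf{z}\rangle\langle\mathbf{p}',M_{d_k}^{-1}\cdots M_{d_1}^{-1}\mathbf{v}_Q\rangle$ into $\langle\mathbf{p},\mathbf{v}_Q\rangle\langle\mathbf{p}',\mathbf{z}'\rangle$; combined with the orthogonality of the $M_d$ and the definitions of height via the pairing, this should show that $\delta(P;Z)$ equals (a scalar-normalized version of) $\delta(P';Z')$, at least up to the bookkeeping of making $\mathbf{z}'$ primitive and tracking the positive scalar. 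The point is that $\delta(P;Z)$ and $\delta(P';Z')$ agree, so it suffices to prove the theorem in the case $k=0$: namely, if $Z \in \ZZZ$ lies outside $C(d_1)$ where $d_1 = d(P)$, then $\min\{\delta(P;Z_0^{(1,0)}), \delta(P;Z_0^{(0,1)})\} \le \delta(P;Z)$, and then iterate, peeling off one digit at a time until $Z$ escapes the cylinder.

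**The base case.** Once reduced, I would argue the base case directly. With $d_1 = d(P)$, the point $P$ lies in the arc $C(d_1)$, whose boundary points are among the six points $(1,0), (5/7,3/7), (8/13,7/13), (7/13,8/13), (3/7,5/7), (0,1)$ (these are $Y_1,\dots,Y_6$ for the empty sequence, cf.\ Notation~\ref{NotYj}). If $Z$ lies outside $C(d_1)$, then in the cyclic/linear ordering $\preceq$ on $\XXX$, the arc from $P$ to $Z$ must cross at least one of the two boundary points of $C(d_1)$ — call the nearer one $Z^{(1,0)}$ or $Z^{(0,1)}$. Geometrically, $|\theta(P,Z^{(\bullet)})/2| \le |\theta(P,Z)/2|$ for that boundary point, so by \eqref{EqDeltaCsin} we get $\delta(P;Z^{(\bullet)}) = 2\Ht(Z^{(\bullet)})|\sin(\theta(P,Z^{(\bullet)})/2)| \le 2\Ht(Z)|\sin(\theta(P,Z)/2)| = \delta(P;Z)$, provided $\Ht(Z^{(\bullet)}) \le \Ht(Z)$. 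This height comparison is exactly what Theorem~\ref{PropInteriorHeight} furnishes: the boundary points of a cylinder set have height at most that of any interior rational point, and more generally \eqref{EqBoundary1}–\eqref{EqBoundary2} give the nested inequalities on boundary heights that let the induction on the number of peeled digits go through.

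**Main obstacle.** I expect the delicate point to be the transport of $\delta(P;Z)$ through the dynamics — specifically, handling primitivity of the integral vectors and the positive scalars correctly, so that the identity $\delta(P;Z) = \delta(P';Z')$ is exact rather than merely proportional, and confirming that $Z'$ is genuinely the rational point of $\XXX$ associated to the cylinder structure one level down. A secondary subtlety is the case analysis when $Z$ lies on the ``far side'' of $P$ within $\XXX$ (i.e.\ when the short arc from $P$ to $Z$ wraps the long way around): one must check that the correct boundary point is still selected and that $\sin$ is monotone on the relevant range $[0,\pi/2)$ of half-angles, which requires knowing that $\delta(P;Z)$ with $\Ht(Z)$ small forces $\theta(P,Z)$ into a controlled range — but since we are comparing against fixed $Z$ and the boundary points have smaller height, the monotonicity of $2c\,|\sin(\cdot)|$ as a function that is increasing in both $c$ and $|\sin|$ on $[0,\pi/2]$ should suffice. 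The rest is the routine induction packaging these two ingredients.
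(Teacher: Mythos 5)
Your outline reproduces the easy half of the paper's argument (what the paper isolates as the ``Type I'' comparison, Lemma~\ref{LemTypeI}, together with the height bound of Theorem~\ref{PropInteriorHeight}), but it has two genuine gaps. First, the proposed reduction to the case $k=0$ is not available: $\delta(P;Z)$ is \emph{not} invariant under the dynamics. If $\mathbf{z}'=M_{d_k}^{-1}\cdots M_{d_1}^{-1}\mathbf{z}$ and $\mathbf{p}'=(P',1)$, then $\langle\mathbf{p}',\mathbf{z}'\rangle=\lambda\langle\mathbf{p},\mathbf{z}\rangle$ with a $P$-dependent scalar $\lambda$, while $\Ht(Z')=-\langle\mathbf{z}',\mathbf{v}_Q\rangle$ bears no fixed relation to $\Ht(Z)$; hence $\delta^2(P';Z')=-2\Ht(Z')\langle\mathbf{p}',\mathbf{z}'\rangle$ differs from $\delta^2(P;Z)$ by a factor $\lambda\,\Ht(Z')/\Ht(Z)$ that depends on $Z$. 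Proposition~\ref{PropInvariance} preserves only \emph{ratios} of pairings against a fixed $\mathbf{p}$, not heights, so neither $\delta$ nor even ratios of $\delta$'s are transported; this is precisely why the paper works directly at level $k$ with the points $Y_1,\dots,Y_6$ of Notation~\ref{NotYj} instead of conjugating by $\TTT$.

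Second, and more seriously, the base-case argument ``the nearest cylinder-boundary point between $P$ and $Z$ has height $\le\Ht(Z)$, then use monotonicity of $2c|\sin(\theta/2)|$'' covers only part of the configurations: it handles the adjacent-interior case and reduces the general interior case to $Z\in\{Y_2,\dots,Y_5\}$, but for those points the needed height comparison can fail. The only admissible approximant lying \emph{between} $P$ and $Z$ may be the near boundary point of $C(d_1,\dots,d_k,d_{k+1})$, and its height need not be $\le\Ht(Z)$: for instance $\Ht(Y_2)$ and $\Ht(Y_5)$ are not comparable in general (applying $M_1$ to $\mathbf{u}_2,\mathbf{u}_5$ gives heights $19<31$, while applying $M_5$ gives $31>19$), and when $d_{k+1}=3$ and $Z=Y_2$ (or $Y_5$) the point $Y_3$ (resp.\ $Y_4$) between $P$ and $Z$ has \emph{larger} height by Proposition~\ref{PropInteriorHeight2}, while the comparison point $Y_1$ with smaller height lies on the far side of $P$, so neither the height nor the angle inequality goes your way. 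This is exactly where the paper needs the ``Type II'' Lemma~\ref{LemTypeII} (a comparison through a pivot point, using Proposition~\ref{LemPairingIntegral}) and, for case (c) of Proposition~\ref{PropBestApproxRational}, the factor-three inequality $3\Ht(Y_1)\le\Ht(Y_2)$ of Proposition~\ref{PropInteriorHeight3} combined with the bound $\langle\mathbf{p},\mathbf{y}_1\rangle/\langle\mathbf{p},\mathbf{y}_2\rangle\le 3$ proved via Proposition~\ref{PropInvariance} and an explicit trigonometric estimate. Your proposal contains no counterpart of these steps, and they are the real content of the theorem.
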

\begin{remark}
Write $P = [d_1, \dots ]_{\XXX}$ and let $k$ be the greatest integer such that $Z\in C(d_1, \dots, d_k)$ but $Z\not\in C(d_1, \dots, d_{k+1})$.
%(If $k = 0$ then by definition $C(d_1, \dots, d_k)  =\XXX.$)
Then our proof will reveal that $\delta(P; Z)$ is greater than or equal to one of the following four values:
\[
	\delta(P;Z_k^{(1,0)}(P)),
	\quad
	\delta(P;Z_k^{(0,1)}(P)),
	\quad
	\delta(P;Z_{k+1}^{(1,0)}(P)),
	\quad
	\delta(P;Z_{k+1}^{(0,1)}(P)).
\]
\end{remark}
The rest of this subsection is devoted to giving a proof of Theorem~\ref{ThmBestApproximants}, which will consist of several propositions and lemmas.
%In view of the expression \eqref{EqDeltaCsin}, we see that $\delta(P; Z)$ becomes small when
%\begin{itemize}
%	\item the approximating $Z$ has a small height ($c$ is small) and
%	\item $Z$ is close to $P$ ($\theta$ is small).
%\end{itemize}
%We first prove a few propositions (Propositions~\ref{PropInteriorHeight}, \ref{PropInteriorHeight2}, and \ref{PropInteriorHeight3}), all of which control heights of boundary points of cylinder sets in genenal.

%\begin{theorem}
%The theorem for Hurwitz's bound can go here
%\end{theorem}
\begin{proposition}\label{LemPairingIntegral}
Let $\mathbf{z}_1$ and $\mathbf{z}_2$ be distinct primitive integral vectors representing $Z_1, Z_2\in \ZZZ$, respectively.
Then
\[
\langle
\mathbf{z}_1, \mathbf{z}_2
\rangle
\le -\frac12
\]
with the equality holding if $Z_1$ and $Z_2$ are the boundary points of a common cylinder set.
Also, using Notation~\ref{NotYj}, we have
\[
\langle
	\mathbf{y}_j, \mathbf{y}_{j+2}
\rangle
= -\frac32
\]
for $j = 1, \dots, 4$.
\end{proposition}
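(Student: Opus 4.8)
The plan is to split the statement into three independent pieces: the inequality $\langle \mathbf{z}_1, \mathbf{z}_2\rangle \le -\frac12$, the equality in the case that $Z_1$ and $Z_2$ bound a common cylinder set, and the evaluation $\langle \mathbf{y}_j, \mathbf{y}_{j+2}\rangle = -\frac32$. All three reduce to the orthogonality of the maps $M_d$ (Lemma~\ref{Lem324}) together with one short arithmetic remark, so I do not anticipate any serious difficulty.

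For the inequality, write $\mathbf{z}_i = (a_i, b_i, c_i)$ for the primitive integral vector representing $Z_i$. Since $Z_i \in \ZZZ$, the triple $(a_i, b_i, c_i)$ is an Eisenstein triple, so $Q(\mathbf{z}_i) = 0$, and $c_i > 0$, so $\mathbf{z}_i$ is positive. Lemma~\ref{LemNegativeOnCQ} then gives $\langle \mathbf{z}_1, \mathbf{z}_2\rangle \le 0$, with equality only if $\mathbf{z}_1$ and $\mathbf{z}_2$ are positive scalar multiples of one another; but two distinct primitive integral vectors are never proportional, so in fact $\langle \mathbf{z}_1, \mathbf{z}_2\rangle < 0$. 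The crucial point is that, by the formula \eqref{EqDefPairing}, $2\langle \mathbf{z}_1, \mathbf{z}_2\rangle = 2a_1a_2 + a_1b_2 + a_2b_1 + 2b_1b_2 - 2c_1c_2$ is an integer, hence a negative integer, hence at most $-1$; dividing by $2$ yields $\langle \mathbf{z}_1, \mathbf{z}_2\rangle \le -\frac12$. This passage from ``$\le 0$'' to ``$\le -\frac12$''---resting on the half-integrality of the pairing on integral vectors combined with the strict negativity supplied by primitivity---is the only step that is not pure bookkeeping, and it is the one place I expect to spell out carefully.

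For the equality, suppose $Z_1$ and $Z_2$ are the boundary points $Z^{(1,0)}$ and $Z^{(0,1)}$ of $C(d_1, \dots, d_k)$, so that their primitive integral representatives are $\mathbf{z}^{(1,0)} = M\mathbf{u}^{(1,0)}$ and $\mathbf{z}^{(0,1)} = M\mathbf{u}^{(0,1)}$ with $M = M_{d_1}\cdots M_{d_k}$; these are primitive because each $M_d$ has integer entries and determinant $\pm 1$, hence lies in $\mathrm{GL}_3(\mathbb{Z})$, while $\mathbf{u}^{(1,0)} = (1,0,1)$ and $\mathbf{u}^{(0,1)} = (0,1,1)$ are primitive. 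Orthogonality of $M$ then gives $\langle \mathbf{z}^{(1,0)}, \mathbf{z}^{(0,1)}\rangle = \langle \mathbf{u}^{(1,0)}, \mathbf{u}^{(0,1)}\rangle$, and \eqref{EqDefPairing} evaluates $\langle (1,0,1), (0,1,1)\rangle = \frac12 - 1 = -\frac12$. Finally, for the last claim orthogonality of $M = M_{d_1}\cdots M_{d_k}$ again gives $\langle \mathbf{y}_j, \mathbf{y}_{j+2}\rangle = \langle \mathbf{u}_j, \mathbf{u}_{j+2}\rangle$, so it suffices to compute the four inner products $\langle \mathbf{u}_1, \mathbf{u}_3\rangle$, $\langle \mathbf{u}_2, \mathbf{u}_4\rangle$, $\langle \mathbf{u}_3, \mathbf{u}_5\rangle$, $\langle \mathbf{u}_4, \mathbf{u}_6\rangle$ directly from the list in Notation~\ref{NotYj}; each comes out to $-\frac32$.
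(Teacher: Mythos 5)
Your proof is correct and follows essentially the same route as the paper: half-integrality of the pairing on integral vectors combined with Lemma~\ref{LemNegativeOnCQ} (plus the fact that distinct primitive positive vectors cannot be positive scalar multiples of each other) gives the bound $-\tfrac12$, and orthogonality of $M_{d_1}\cdots M_{d_k}$ reduces the equality cases to the direct computations $\langle \mathbf{u}^{(1,0)},\mathbf{u}^{(0,1)}\rangle=-\tfrac12$ and $\langle \mathbf{u}_j,\mathbf{u}_{j+2}\rangle=-\tfrac32$. Your added remark that each $M_d$ lies in $\mathrm{GL}_3(\mathbb{Z})$, so the boundary vectors remain primitive, is a detail the paper leaves implicit.
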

\begin{proof}
From the definition \eqref{EqDefPairing} of the pairing, $\langle \mathbf{z}_1, \mathbf{z}_2 \rangle$ is always a half integer whenever $\mathbf{z}_1$ and $\mathbf{z}_2$ are integral vectors.
If we combine this with Lemma~\ref{LemNegativeOnCQ}, we obtain the desired inequality.
Now, if $Z_1$ and $Z_2$ are the boundary points of a common cylinder, 
	then we may assume $\mathbf{z}_1 = \mathbf{z}^{(1, 0)}$ and $\mathbf{z}_2 = \mathbf{z}^{(0, 1)}$ in \eqref{EqDefZ10andZ01}, so that
\[
\langle
\mathbf{z}_1, \mathbf{z}_2
\rangle
=
\langle
	M_{d_1} \cdots M_{d_k}\mathbf{u}^{(1,0)}, M_{d_1} \cdots M_{d_k}\mathbf{u}^{(0,1)}
\rangle
=
\langle
	\mathbf{u}^{(1,0)}, \mathbf{u}^{(0, 1)}
\rangle
=
-\frac12.
\]
The last assertion is proven similarly, with the easy observations 
$
\langle
	\mathbf{u}_j, \mathbf{u}_{j+2}
\rangle
= -3/2$ 
	for $j = 1, \dots, 4$.
\end{proof}

\begin{lemma}\label{LemMonotonePairing}
	Let $P_1, P_2, P_3\in\XXX_0$ and let $\mathbf{p}_1, \mathbf{p}_2,\mathbf{p}_3$ be normalized vectors representing them, that is, $\mathbf{p}_j = (P_j, 1)$ for $j = 1,2,3$.
	If $P_1 \preceq P_2 \preceq  P_3$ then 
	\[
		\langle \mathbf{p}_1, \mathbf{p}_3 \rangle \le
		\langle \mathbf{p}_1, \mathbf{p}_2 \rangle.
	\]
	The same holds if the condition 
	$P_1 \preceq P_2 \preceq  P_3$ 
	is replaced by
	$P_3 \preceq P_2 \preceq  P_1$.
\end{lemma}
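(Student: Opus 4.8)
The statement concerns three points $P_1 \preceq P_2 \preceq P_3$ on the unit circle $\XXX_0$, and asks to compare the pairings $\langle \mathbf p_1, \mathbf p_3\rangle$ and $\langle \mathbf p_1, \mathbf p_2\rangle$ of their \emph{normalized} representatives. The natural tool is formula \eqref{EqPairingAngle}, which says that for points on $\XXX_0$ we have $\langle \mathbf p_i, \mathbf p_j\rangle = -2\sin^2\bigl(\theta(P_i,P_j)/2\bigr)$, where $\theta(P_i,P_j) = \theta(P_j)-\theta(P_i)$ as in \eqref{EqDefAngleP1P2}. So the inequality $\langle \mathbf p_1,\mathbf p_3\rangle \le \langle \mathbf p_1,\mathbf p_2\rangle$ is equivalent to
\[
\sin^2\!\left(\frac{\theta(P_1,P_3)}{2}\right) \ge \sin^2\!\left(\frac{\theta(P_1,P_2)}{2}\right).
\]
First I would reduce to the angles: set $a = \theta(P_1,P_2)$ and $b = \theta(P_2,P_3)$, so that $\theta(P_1,P_3) = a+b$, and the hypothesis $P_1 \preceq P_2 \preceq P_3$ translates into $0 \le a$, $0 \le b$, and $a + b < 2\pi$ (using $0 \le \theta(P) < 2\pi$). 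The claim then becomes: for $0 \le a$, $0 \le b$, $a+b < 2\pi$, we have $\sin^2\!\bigl(\tfrac{a+b}{2}\bigr) \ge \sin^2\!\bigl(\tfrac a2\bigr)$.

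The second step is to verify this elementary trigonometric inequality. Writing $u = a/2$ and $v = b/2$ with $u, v \ge 0$ and $u + v < \pi$, I must show $\sin^2(u+v) \ge \sin^2(u)$, i.e.\ $|\sin(u+v)| \ge |\sin(u)|$. Since $0 \le u \le u+v < \pi$, both $\sin u$ and $\sin(u+v)$ lie in $[0,1]$, so it suffices to show $\sin(u+v) \ge \sin(u)$ when $u + v \le \pi/2$ (monotonicity of $\sin$ on $[0,\pi/2]$), and when $u + v > \pi/2$ one uses $\sin(u+v) = \sin(\pi - u - v)$ together with $\pi - u - v < \pi - u \le \pi$, comparing $\pi - u - v$ with $u$; a short case analysis handles $\pi - u - v \ge u$ versus $\pi - u - v < u$, in the latter case noting that $\pi - u - v \ge \pi - 2u \ge \dots$ — more cleanly, one can just observe that among $u$ and $\pi - u$, the sine is maximized at whichever is closer to $\pi/2$, and since $u + v$ lies between $u$ and... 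Actually the cleanest route: $\sin^2(u+v) - \sin^2(u) = \sin(u+v+u)\sin(u+v-u) = \sin(2u+v)\sin(v)$ by the product-to-sum identity $\sin^2 X - \sin^2 Y = \sin(X+Y)\sin(X-Y)$. Here $\sin v \ge 0$ since $0 \le v < \pi$, and $\sin(2u+v) \ge 0$ since $0 \le 2u + v < 2\pi$ — wait, that is not enough, since $\sin$ is negative on $(\pi, 2\pi)$. So I would instead bound $2u + v = a + b/2 = \theta(P_1,P_3) + \theta(P_1,P_2))/\dots$; hmm, this needs $2u+v \le \pi$, which need not hold. The honest fix is the direct case analysis on whether $u+v \le \pi/2$ or not, which I expect to be a few lines.

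The final step is to note the symmetry: the variant hypothesis $P_3 \preceq P_2 \preceq P_1$ is obtained from the original by reversing the orientation of the circle, under which $\theta$ is replaced by $2\pi - \theta$ and all pairings $\langle \mathbf p_i,\mathbf p_j\rangle$ (being even functions of the angle differences by \eqref{EqPairingAngle}) are unchanged; hence the same inequality follows, or one simply repeats the identical argument with the roles of $P_1$ and $P_3$ swapped and $a$ replaced by $\theta(P_3,P_2)$. The only real obstacle here is handling the $\sin^2$ inequality cleanly across the full range $0 \le u + v < \pi$ rather than just $[0,\pi/2]$; I expect the product-to-sum identity plus splitting at $u + v = \pi/2$ to dispatch it in a short paragraph, with no substantive difficulty beyond bookkeeping of signs.
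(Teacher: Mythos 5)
Your first step---using \eqref{EqPairingAngle} to rewrite both pairings as $-2\sin^2$ of half the angle differences, and disposing of the reversed ordering by the symmetry of the pairing---is exactly the paper's proof. The gap is the trigonometric inequality you defer to ``a short case analysis'': for $u,v\ge 0$ with $u+v<\pi$, the inequality $\sin^2(u+v)\ge\sin^2(u)$ is \emph{false}, so no splitting at $u+v=\pi/2$ can finish it. By the identity you quote, $\sin^2(u+v)-\sin^2(u)=\sin(2u+v)\sin(v)$, and for $0<v<\pi$ this is nonnegative precisely when $2u+v\le\pi$; for instance $u=v=1.5$ gives $\sin^2(3)\approx 0.02 < 0.99\approx\sin^2(1.5)$. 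Translated back, $\theta(P_1)=0$, $\theta(P_2)=3$, $\theta(P_3)=6$ (all admissible values in $[0,2\pi)$) violates the asserted inequality, so the statement in the generality you are attempting---arbitrary points of $\XXX_0$ ordered by $\preceq$---simply does not hold, and the obstruction you noticed (possible negativity of $\sin(2u+v)$) is exactly where it fails, not a bookkeeping nuisance.

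What makes the lemma work where it is used is that the three points lie within a half circle of one another, i.e.\ $\theta(P_3)-\theta(P_1)\le\pi$; then both half-differences lie in $[0,\pi/2]$, where $\sin^2$ is increasing, and the conclusion is immediate. This one-line monotonicity argument is all the paper's proof does (implicitly in that range), and in every application the points lie on the arc $\XXX$, where $0\le\theta(P)\le\pi/3$, so the restriction costs nothing. The fix for your write-up is therefore not a cleverer case analysis but an added hypothesis (or a restriction to $\XXX$): assume $\theta(P_1,P_3)\le\pi$, after which your first reduction already completes the proof.
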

\begin{proof}
	From 
Proposition~\ref{PropBilinearPairingInnerProductRelation}
%	\eqref{EqPairingAngle} 
	and \eqref{EqDefAngleP1P2}, we have
	\[
		\langle \mathbf{p}_1, \mathbf{p}_3 \rangle  = -2\sin^2\left(
		\frac{\theta(P_3) - \theta(P_1)}2
		\right)
	\]
		and
	\[
		\langle \mathbf{p}_1, \mathbf{p}_2 \rangle  = -2\sin^2\left(
		\frac{\theta(P_2) - \theta(P_1)}2
		\right).
	\]
	The conclusion follows immediately from this because the condition in the lemma implies
	\[\theta(P_2) - \theta(P_1) \le \theta(P_3) - \theta(P_1).\]
	When $P_1$ and $P_3$ are exchanged, the conclusion remains unchanged because the bilinear pairing is symmetic.
\end{proof}
%\begin{lemma}\label{LemConsecutiveDeltas}
%For every $P \in \XXX-\ZZZ$ and $k\ge1$, we have
%	\[
%		\min\{
%			\delta(P; Z_k^{(1,0)}(P)),
%			\delta(P; Z_k^{(1,0)}(P))
%			\}
%			\le 1.
%	\]
%\end{lemma}
%\begin{proof}
%Write $c^{(*)} = c^{(*)}_k(P) = \Ht(Z_k^{(*)}(P))$.
%Without loss of generality, we may assume that $c^{(1, 0)} \le c^{(0,1)}$.
%Then we have
%	\begin{align*}
%		\delta^2(P; Z_k^{(1, 0)}(P)) &= 
%		-2c^{(1,0)} 
%		\langle \mathbf{p}, \mathbf{z}_k^{(1, 0)}(P) \rangle  \\
%		& \le
%		\frac{-2c^{(1,0)}}{c^{(0, 1)}}
%		\langle \mathbf{z}_k^{(0, 1)}(P), 
%		\mathbf{z}_k^{(1, 0)}(P) \rangle \\
%		& =
%		\frac{c^{(1,0)}}{c^{(0, 1)}} \le 1.
%	\end{align*}
%This proves the lemma.
%\end{proof}

In what follows, whenever $Z, Z_1, Z_2, \dots$ denote elements in $\ZZZ$, the corresponding small bold-faced letters $\mathbf{z}, \mathbf{z}_1, \mathbf{z}_2, \dots$ will mean the primitive integral vectors representing $Z, Z_1, Z_2,$ etc,.

\begin{lemma}[Type I]\label{LemTypeI}
	Let $P\in \XXX$ and $Z, Z_1\in \ZZZ$.
%	Also, let $\mathbf{z}, \mathbf{z}_1, \mathbf{z}_2\in \mathbb{Z}^3$ be the primitive integral vectors representing $Z, Z_1, Z_2$, respectively.
	Assume that the following conditions hold:
\begin{enumerate}[font=\upshape, label=(\Alph*)]
	\item $\Ht(Z_1) \le \Ht(Z)$
	\item Either $Z \preceq Z_1 \preceq P$ or $P \preceq Z_1 \preceq Z$.
\end{enumerate}
	Then $\delta(P; Z_1) \le \delta(P; Z)$.
\end{lemma}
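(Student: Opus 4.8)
The plan is to read the claim directly off the trigonometric formula \eqref{EqDeltaCsin}, which writes $\delta(P;Z)$ as the product of two nonnegative quantities: the height $\Ht(Z)$ and the factor $|\sin(\theta(P,Z)/2)|$. Hypothesis (A) controls the first factor outright, so it suffices to show that hypothesis (B) forces
\[
\left|\sin\!\left(\tfrac{\theta(P,Z_1)}{2}\right)\right| \le \left|\sin\!\left(\tfrac{\theta(P,Z)}{2}\right)\right|,
\]
after which multiplying the two inequalities gives $\delta(P;Z_1)\le\delta(P;Z)$.

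For the sine estimate I would first record that every point of $\XXX$ has angle $\theta$ in $[0,\pi/3]$, since $\theta((1,0))=0$ and $\theta((0,1))=\pi/3$ by \eqref{EqArgumentRelations}. Hence for any $R_1,R_2\in\XXX$ the difference $\theta(R_1,R_2)=\theta(R_2)-\theta(R_1)$ lies in $[-\pi/3,\pi/3]$, so its half lies in $[-\pi/6,\pi/6]$, an interval on which $t\mapsto|\sin t|$ is even and nondecreasing in $|t|$. Thus it is enough to check $|\theta(P,Z_1)|\le|\theta(P,Z)|$. In the first alternative of (B) we have $\theta(Z)\le\theta(Z_1)\le\theta(P)$, and subtracting $\theta(P)$ gives $\theta(P,Z)\le\theta(P,Z_1)\le 0$; in the second alternative $\theta(P)\le\theta(Z_1)\le\theta(Z)$ gives $0\le\theta(P,Z_1)\le\theta(P,Z)$. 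Either way $|\theta(P,Z_1)|\le|\theta(P,Z)|$, which yields the displayed sine inequality and hence the lemma.

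Alternatively, and perhaps more in keeping with the rest of the section, one can bypass angles: writing $\mathbf{z}^{\ast}=(Z,1)$ and $\mathbf{z}_1^{\ast}=(Z_1,1)$ for the normalized representatives, Definition~\ref{DefDeltaBasic} gives $\delta^2(P;Z)=-2\,\Ht(Z)^2\,\langle(P,1),\mathbf{z}^{\ast}\rangle$ and likewise for $Z_1$. Condition (B) lets us apply Lemma~\ref{LemMonotonePairing} to the triple consisting of $P$, $Z_1$, $Z$ taken in the order dictated by (B), concluding $0\le -\langle(P,1),\mathbf{z}_1^{\ast}\rangle\le -\langle(P,1),\mathbf{z}^{\ast}\rangle$, the nonnegativity being Lemma~\ref{LemNegativeOnCQ}; then (A) closes the argument.

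Since the statement is essentially a monotonicity observation, I do not anticipate a genuine obstacle. The only things to watch are the sign convention $\theta(P,Z)=\theta(Z)-\theta(P)$ and the fact that $|\sin|$ must be restricted to $[-\pi/6,\pi/6]$, so that ``$Z_1$ lies between $P$ and $Z$ in the $\preceq$ order'' genuinely translates into ``$Z_1$ has smaller angular distance from $P$ than $Z$ does.''
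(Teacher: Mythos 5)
Your proposal is correct and matches the paper's argument: the paper likewise factors $\delta^2(P;Z)$ into the height and the pairing (equivalently the sine factor), uses Lemma~\ref{LemMonotonePairing} with hypothesis (B) to compare the pairings of normalized vectors, and then invokes (A) to compare heights, exactly as in your second route. Your first, purely trigonometric phrasing via \eqref{EqDeltaCsin} is the same computation in different notation, so there is nothing to add.
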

\begin{proof}
	Write $c_1 = \Ht(Z_1)$ and $c(Z) = \Ht(Z)$. 
	Also, let $\mathbf{p}$ be the normalized vector representing $P$.
	With these notations, we can use Definition~\ref{DefDeltaBasic} to express $\delta(P; Z_1)$ and $\delta(P; Z)$ using the bilinear pairing.
	That is, 
	\begin{equation}\label{EqType1Inequality}
		\frac{\delta^2(P; Z_1)}{\delta^2(P; Z)} =
		\frac{-2c_1\langle \mathbf{p}, \mathbf{z}_1 \rangle}{%
		-2c(Z)\langle \mathbf{p}, \mathbf{z} \rangle}
		=
		\frac{c_1\langle \mathbf{p}, \mathbf{z}_1 \rangle}{%
		c(Z)\langle \mathbf{p}, \mathbf{z} \rangle}.
	\end{equation}
	On the other hand, we can apply Lemma~\ref{LemMonotonePairing} with the condition (B) to obtain 
	\begin{equation}\label{EqType1Monotone}
		\frac{\langle \mathbf{p}, \mathbf{z} \rangle}{c(Z)}
		\le
		\frac{\langle \mathbf{p}, \mathbf{z}_1 \rangle}{c_1}.
	\end{equation}
	Now combine \eqref{EqType1Monotone} with \eqref{EqType1Inequality} to obtain the following:
	(Warning: Notice from Lemma~\ref{LemNegativeOnCQ} that the values of the above pairings are all negative and one needs care handling the directions of these inequalities.)
	\[
		\frac{\delta^2(P; Z_1)}{\delta^2(P; Z)}  \le
		\frac{c_1^2}{c(Z)^2}
		\le 1
	\]
	where the last inequality is from (A).
	This proves the lemma.
\end{proof}

\begin{lemma}[Type II]\label{LemTypeII}
	Let $P\in \XXX$ and $Z, Z_1,Z_2\in \ZZZ$.
%	Also, let $\mathbf{z}, \mathbf{z}_1, \mathbf{z}_2\in \mathbb{Z}^3$ be the primitive integral vectors representing $Z, Z_1, Z_2$, respectively.
	Assume that the following conditions hold:
\begin{enumerate}[font=\upshape, label=(\Alph*)]
	\item $\Ht(Z_1) \le \Ht(Z)$
	\item Either $Z_1 \preceq P \preceq Z_2 \preceq Z$ or 
	$Z \preceq Z_2 \preceq P \preceq Z_1$,
	\item $\langle \mathbf{z}, \mathbf{z}_2 \rangle \le
		\langle \mathbf{z}_1, \mathbf{z}_2 \rangle.$
\end{enumerate}
	Then $\delta(P; Z_1) \le \delta(P; Z)$.
\end{lemma}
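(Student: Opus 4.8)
The plan is to follow the template of the proof of Lemma~\ref{LemTypeI}, but to use the auxiliary rational point $Z_2$ as a bridge that transfers the height comparison~(A) from the pair $(Z_1,Z)$ to the pair of vectors $(\mathbf{z}_1,\mathbf{z})$ by way of hypothesis~(C). Retaining the notation of Definition~\ref{DefDeltaBasic}, I would write $c_1=\Ht(Z_1)$, $c_2=\Ht(Z_2)$, $c(Z)=\Ht(Z)$ and $\mathbf{p}=(P,1)$, and recall that $\delta^2(P;Z_1)=-2c_1\langle\mathbf{p},\mathbf{z}_1\rangle$ and $\delta^2(P;Z)=-2c(Z)\langle\mathbf{p},\mathbf{z}\rangle$. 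By Lemma~\ref{LemNegativeOnCQ} every pairing occurring below, namely $\langle\mathbf{p},\mathbf{z}_1\rangle$, $\langle\mathbf{p},\mathbf{z}\rangle$, $\langle\mathbf{z}_1,\mathbf{z}_2\rangle$, $\langle\mathbf{z},\mathbf{z}_2\rangle$, is $\le 0$. It therefore suffices to establish the single inequality $c_1\langle\mathbf{p},\mathbf{z}_1\rangle\ge c(Z)\langle\mathbf{p},\mathbf{z}\rangle$ (note the direction: both sides are negative, so this is the reverse of the naive guess).

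The heart of the argument is the chain
\[
c_1\langle\mathbf{p},\mathbf{z}_1\rangle
\;\ge\; \tfrac{c_1}{c_2}\langle\mathbf{z}_1,\mathbf{z}_2\rangle
\;\ge\; \tfrac{c_1}{c_2}\langle\mathbf{z},\mathbf{z}_2\rangle
\;\ge\; \tfrac{c(Z)}{c_2}\langle\mathbf{z},\mathbf{z}_2\rangle
\;\ge\; c(Z)\langle\mathbf{p},\mathbf{z}\rangle .
\]
For the outer two links I would write $\mathbf{z}_i=c_i(Z_i,1)$ and apply Lemma~\ref{LemMonotonePairing}: the betweenness $Z_1\preceq P\preceq Z_2$ from~(B) gives $\langle(Z_1,1),(Z_2,1)\rangle\le\langle(Z_1,1),(P,1)\rangle$, i.e. $\tfrac1{c_2}\langle\mathbf{z}_1,\mathbf{z}_2\rangle\le\langle\mathbf{p},\mathbf{z}_1\rangle$, while $P\preceq Z_2\preceq Z$ gives $\langle\mathbf{p},\mathbf{z}\rangle\le\tfrac1{c_2}\langle\mathbf{z},\mathbf{z}_2\rangle$; multiplying these by $c_1>0$ and $c(Z)>0$ respectively produces the first and last links. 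The middle link is exactly hypothesis~(C) scaled by $c_1/c_2>0$, and the third link is hypothesis~(A), $c_1\le c(Z)$, applied to the non-positive quantity $\langle\mathbf{z},\mathbf{z}_2\rangle$. Reading off the ends of the chain yields $c_1\langle\mathbf{p},\mathbf{z}_1\rangle\ge c(Z)\langle\mathbf{p},\mathbf{z}\rangle$, hence $\delta^2(P;Z_1)\le\delta^2(P;Z)$ and the lemma. The second alternative in~(B), $Z\preceq Z_2\preceq P\preceq Z_1$, is handled verbatim after reversing every $\preceq$, using the symmetric form of Lemma~\ref{LemMonotonePairing}.

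I expect the only real care here — rather than any real difficulty — to be the sign bookkeeping: since every pairing in sight is negative, each of ``multiply by a positive scalar'', ``replace a pairing by a larger (less negative) one'', and ``enlarge the positive coefficient of a negative quantity'' must be run in the counterintuitive direction, precisely the pitfall flagged in the warning inside the proof of Lemma~\ref{LemTypeI}. I would therefore record once, at the outset, that all the pairings below are $\le 0$, and then annotate each link of the displayed chain with the hypothesis or lemma that justifies it. One may also note in passing that the hypotheses already force $Z_1$, $Z_2$, $Z$ and $P$ to be pairwise distinct — for instance $Z_2=Z$ would make~(C) read $0\le\langle\mathbf{z}_1,\mathbf{z}_2\rangle$, contradicting Proposition~\ref{LemPairingIntegral} — so no pairing degenerates to $0$ and there is no boundary case to treat separately.
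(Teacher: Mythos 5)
Your argument is correct and is essentially the paper's own proof: both rest on Definition~\ref{DefDeltaBasic}, two applications of Lemma~\ref{LemMonotonePairing} through the bridge point $Z_2$, and hypotheses (A) and (C), the only difference being that you run a chain of inequalities on the (negative) products $c\,\langle\cdot,\cdot\rangle$ while the paper bounds the ratio $\delta^2(P;Z_1)/\delta^2(P;Z)$ by $1$. Your formulation handles the sign bookkeeping at least as cleanly, and the closing remark about distinctness is harmless but unnecessary (the chain never divides by a pairing, so degenerate cases need no separate treatment).
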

\begin{proof}
	Again we write $c_1 = \Ht(Z_1)$, $c_2=\Ht(Z_2)$, and $c(Z) = \Ht(Z)$ and proceed similarly. That is,
	\begin{equation}\label{EqType2Inequality}
		\frac{\delta^2(P; Z_1)}{\delta^2(P; Z)} =
		\frac{c_1\langle \mathbf{p}, \mathbf{z}_1 \rangle}{%
		c(Z)\langle \mathbf{p}, \mathbf{z} \rangle}
		\le
		\frac{\langle \mathbf{p}, \mathbf{z}_1 \rangle}{%
		\langle \mathbf{p}, \mathbf{z} \rangle},
	\end{equation}
	where the last inequality is from the condition (A) in the statement of the lemma.
	This time, Lemma~\ref{LemMonotonePairing}, when applied with the condition (B), gives
	\begin{equation}\label{EqType2Monotone}
		\frac{\langle \mathbf{z}_1, \mathbf{z}_2 \rangle}{c_1c_2}
		\le
		\frac{\langle \mathbf{p}, \mathbf{z}_1 \rangle}{c_1}
		\quad
		\text{and}
		\quad
		\frac{\langle \mathbf{p}, \mathbf{z} \rangle}{c(Z)}
		\le
		\frac{\langle \mathbf{z}, \mathbf{z}_2 \rangle}{c(Z)c_2}.
	\end{equation}
	Therefore,
%	Now use \eqref{EqType2Monotone} to continue from \eqref{EqType2Inequality} to obtain the following:
	\[
		\frac{\delta^2(P; Z_1)}{\delta^2(P; Z)}  \le
		\frac{\langle \mathbf{p}, \mathbf{z}_1 \rangle}{%
		\langle \mathbf{p}, \mathbf{z} \rangle}
		\le
		\frac{\langle \mathbf{z}_1, \mathbf{z}_2 \rangle}{c_2}
		\frac{c_2}{\langle \mathbf{z}, \mathbf{z}_2 \rangle}
		=
		\frac{\langle \mathbf{z}_1, \mathbf{z}_2 \rangle}{%
		\langle \mathbf{z}, \mathbf{z}_2 \rangle}
		\le 1,
	\]
	where the last inequality is from (C).
	This proves the lemma.
\end{proof}
\begin{proposition}\label{PropBestApproxRational}
	We continue to use Notation~\ref{NotYj}.
	Suppose that $P$ is in the interior of a cylinder set $C(d_1,\dots, d_k, d_{k+1})$.
\begin{enumerate}[font=\upshape, label=(\alph*)]
	\item If $d_{k+1} = 1$, then
		$
	\delta(P; Y_1) \le \min\{\delta(P; Y_3), \delta(P; Y_4), \delta(P; Y_5) \}.
	$
\item If $d_{k+1} = 2$, then
$
	\delta(P; Y_2) \le \delta(P; Y_4) 
		$
		and
		$
	\delta(P; Y_1) \le \delta(P; Y_5).
$
\item If $d_{k+1} = 3$, then
$
	\delta(P; Y_1) \le \delta(P; Y_2)
$
and
$
	\delta(P; Y_6) \le \delta(P; Y_5).
$
\item If $d_{k+1} = 4$, then
$
	\delta(P; Y_5) \le \delta(P; Y_3) 
		$
		and
		$
	\delta(P; Y_6) \le \delta(P; Y_2).
$
	\item If $d_{k+1} = 5$, then
		$
	\delta(P; Y_6) \le \min\{\delta(P; Y_2), \delta(P; Y_3), \delta(P; Y_4) \}.
	$
\end{enumerate}
\end{proposition}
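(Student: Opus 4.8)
The plan is to obtain every listed inequality from Lemmas~\ref{LemTypeI}--\ref{LemTypeII} in the cases where $P$ lies between the two rational points being compared, and to treat the remaining ``same-side'' comparisons by hand using the sharper height gap of Proposition~\ref{PropInteriorHeight3}. First I would normalize: by the relabeling convention in Notation~\ref{NotYj} we may assume $Y_1\preceq Y_2\preceq\cdots\preceq Y_6$, so the hypothesis reads $Y_{d_{k+1}}\prec P\prec Y_{d_{k+1}+1}$. I would also record the coordinate swap $\sigma(x_1,x_2,x_3)=(x_2,x_1,x_3)$, which is orthogonal for $Q$, fixes $\mathbf v_Q$, sends $\mathbf u_j\mapsto\mathbf u_{7-j}$, and conjugates $M_d$ to $M_{6-d}$; hence it preserves all heights and all numbers $\delta(P;Z)$, reverses $\preceq$, and turns the assertion for $(d_1,\dots,d_{k+1})$ into the assertion for $(6-d_1,\dots,6-d_{k+1})$ with each index $j$ replaced by $7-j$. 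So it is enough to prove parts (a), (b) and the first inequality of (c); parts (e), (d) and the second inequality of (c) then follow.

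Two inputs will be used throughout. On the height side, Propositions~\ref{PropInteriorHeight2} and \ref{PropInteriorHeight3} together with the proof of Theorem~\ref{PropInteriorHeight} give
\[
\Ht(Y_1),\,\Ht(Y_6)\ \le\ \Ht(Y_2),\,\Ht(Y_5)\ \le\ \Ht(Y_3),\,\Ht(Y_4),\qquad 3\Ht(Y_1)\le\Ht(Y_2),\quad 3\Ht(Y_6)\le\Ht(Y_5).
\]
On the pairing side, $\langle\mathbf y_i,\mathbf y_j\rangle=\langle\mathbf u_i,\mathbf u_j\rangle$ by orthogonality of $M_{d_1}\cdots M_{d_k}$, and a single $6\times 6$ computation shows this equals $0,-\tfrac12,-\tfrac32,-2,-\tfrac32,-\tfrac12$ according as $|i-j|=0,1,2,3,4,5$ (recovering the equalities of Proposition~\ref{LemPairingIntegral}). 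For parts (a) and (b) every comparison $\delta(P;Z_1)\le\delta(P;Z)$ has $P$ between $Z_1$ and $Z$, and I would invoke Lemma~\ref{LemTypeII} with $Z_2$ the boundary point of $C(d_1,\dots,d_k,d_{k+1})$ that separates $P$ from $Z$: in (a), where $Y_1\prec P\prec Y_2$, take $(Z_1,Z_2,Z)=(Y_1,Y_2,Y_j)$ for $j=3,4,5$; in (b), where $Y_2\prec P\prec Y_3$, take $(Y_2,Y_3,Y_4)$ and $(Y_1,Y_3,Y_5)$. In each instance hypothesis~(A) of Lemma~\ref{LemTypeII} is one of the height inequalities above, hypothesis~(B) is immediate from the order $Y_1\preceq\cdots\preceq Y_6$ and the location of $P$, and hypothesis~(C), namely $\langle\mathbf z,\mathbf z_2\rangle\le\langle\mathbf z_1,\mathbf z_2\rangle$, holds because in every listed triple $\langle\mathbf z_1,\mathbf z_2\rangle\in\{-\tfrac12,-\tfrac32\}$ is at least $\langle\mathbf z,\mathbf z_2\rangle\in\{-\tfrac12,-\tfrac32,-2\}$ (e.g.\ $(Y_1,Y_2,Y_5)$ gives $\langle\mathbf y_5,\mathbf y_2\rangle=-2\le-\tfrac12=\langle\mathbf y_1,\mathbf y_2\rangle$).

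The first inequality of (c), $\delta(P;Y_1)\le\delta(P;Y_2)$ with $Y_1\preceq Y_2\preceq Y_3\prec P$, escapes the comparison lemmas because $P$ is not between $Y_1$ and $Y_2$; I would instead use $\Ht(Y_2)\ge 3\Ht(Y_1)$. Writing $\delta^2(P;Y_i)=-2\Ht(Y_i)\langle\mathbf p,\mathbf y_i\rangle$ with $\mathbf p=(P,1)$, and noting both pairings are negative, the claim follows once $\langle\mathbf p,\mathbf y_1-3\mathbf y_2\rangle\ge 0$. By orthogonality this equals $\langle M^{-1}\mathbf p,\mathbf u_1-3\mathbf u_2\rangle$ with $M=M_{d_1}\cdots M_{d_k}$; iterating Proposition~\ref{PropRomiksystemBerggrenTrees}, $M^{-1}\mathbf p$ is a positive multiple of $(\TTT^k(P),1)$, and since $\TTT^k(P)=(x,y)$ lies in the interior of $C(3)$ this reduces to the elementary inequality $37x+32y\le 40$ for $(x,y)\in C(3)$. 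That last point I would check by observing that the line $37x+32y=40$ meets $x^2+xy+y^2=1$ only at $(\tfrac8{13},\tfrac7{13})=Y_3$ and at a point with $x=\tfrac{24}{31}>\tfrac8{13}$, so the arc $C(3)$ (all of whose points have $x\le\tfrac8{13}$) lies on one side of the line, the side containing the other endpoint $(\tfrac7{13},\tfrac8{13})$, where the form equals $\tfrac{515}{13}<40$.

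The step I expect to be the main obstacle is precisely this part (c) (and, via $\sigma$, its mirror): the same-side comparison is invisible to Lemmas~\ref{LemTypeI} and \ref{LemTypeII}, and one must bring in both the factor-$3$ height estimate of Proposition~\ref{PropInteriorHeight3} and the explicit one-arc inequality; everything else is routine bookkeeping of the six points, their order, the value of $\sign(d_1,\dots,d_k)$, and the small pairing table.
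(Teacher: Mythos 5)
Your proposal is correct, and for parts (a), (b), (d), (e) it is essentially the paper's own proof: the same Type~II triples $(Y_1,Y_2,Y_j)$ and $(Y_2,Y_3,Y_4)$, $(Y_1,Y_3,Y_5)$, with hypothesis (A) supplied by Propositions~\ref{PropInteriorHeight2}--\ref{PropInteriorHeight3} and Theorem~\ref{PropInteriorHeight}, hypothesis (C) by the pairing values of Proposition~\ref{LemPairingIntegral}, and the cases (d), (e) and the mirror half of (c) obtained by symmetry (the paper just says ``by symmetry''; your explicit swap $\sigma$, which conjugates $M_d$ to $M_{6-d}$ and fixes $\mathbf{v}_Q$, is a clean way to record why that is legitimate). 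The genuine divergence is in (c). The paper likewise starts from $3\Ht(Y_1)\le\Ht(Y_2)$ and transports the pairing by orthogonality via Proposition~\ref{PropInvariance}, but then applies the invariance a second time to pass to $P''=\TTT^{k+1}(P)$ and proves $\langle\mathbf{p}',\mathbf{u}_1\rangle/\langle\mathbf{p}',\mathbf{u}_2\rangle\le 3$ by a cotangent estimate valid for $0\le\theta(P'')\le\pi/3$; you instead stay at $P'=\TTT^k(P)\in C(3)$ and verify the equivalent statement $\langle(P',1),\mathbf{u}_1-3\mathbf{u}_2\rangle\ge 0$, i.e.\ $37x+32y\le 40$ on the arc $C(3)$, by locating the two intersection points of that line with the conic (at $x=\tfrac{8}{13}$ and $x=\tfrac{24}{31}$, the latter outside $C(3)$). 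This is a correct and more elementary, purely algebraic substitute for the paper's trigonometric step, and your reduction of the $\delta$-inequality to $\langle\mathbf{p},\mathbf{y}_1-3\mathbf{y}_2\rangle\ge0$ handles the signs correctly. One cosmetic caveat: you cannot literally ``assume $Y_1\preceq\cdots\preceq Y_6$ by relabeling,'' since the indices are tied to $\mathbf{u}_j$ and appear in the statement; but this costs nothing, because hypothesis (B) of Lemma~\ref{LemTypeII} already allows both orientations, so your verifications go through verbatim when $\sign(d_1,\dots,d_k)=-1$, and part (c) as you argue it never uses the ordering at all.
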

\begin{proof}
	For (a), 
	we will prove $\delta(P; Y_1) \le \delta(P; Y_3)$ first.
	Let $Z_1 = Y_1$, $Z_2 = Y_2$, and $Z=Y_3$.
	We will apply Lemma~\ref{LemTypeII} (Type II) to obtain the desired inequality.
	To do so, we verify the conditions (A)---(C) in Lemma~\ref{LemTypeII}.
The point $Z_1$ is a boundary point of $C(d_1, \dots, d_k)$ and $Z$ is in its interior.
	So Proposition~\ref{PropInteriorHeight} gives (A).
	For (B), it is clear by definition that either $Z_1 \preceq P \preceq Z_2 \preceq Z$ or $Z \preceq Z_2 \preceq P \preceq Z_1$.
In addition, since $Z$ is in the interior of 
$C(d_1, \dots, d_k)$, 
we can apply Proposition~\ref{PropInteriorHeight} to $C(d_1, \dots, d_k)$ and conclude $\Ht(Z_1) \le \Ht(Z)$.
Finally, Proposition~\ref{LemPairingIntegral} gives
$\langle \mathbf{z}_1 ,\mathbf{z}_2\rangle = -1/2$, 
which is the maximum value of the pairing with integral vectors.
So the condition (C) in Lemma~\ref{LemTypeII} is satisfied. 
As a result, we obtain from Lemma~\ref{LemTypeII} that $\delta(P; Y_1) \le \delta(P; Y_3)$.
The same argument with $Z = Y_4$ and $Z=Y_5$ proves $\delta(P; Y_1) \le \delta(P; Y_4)$ and
$\delta(P; Y_1) \le \delta(P; Y_5)$.

	The proof of (b) is similar.  
	To prove $\delta(P; Y_2) \le \delta(P; Y_4)$, we let
	$Z_1 = Y_2$, $Z_2 = Y_3$, and $Z=Y_4$.
	In this case, the condition (A) of Lemma~\ref{LemTypeII} comes from Proposition~\ref{PropInteriorHeight2}. The other conditions are verified similarly as before. 
	For $\delta(P; Y_1) \le \delta(P; Y_5)$, we apply Lemma~\ref{LemTypeII} again with
	$Z_1 = Y_1$, $Z_2 = Y_3$, and $Z=Y_5$.
	Proposition~\ref{LemPairingIntegral} says $
	\langle \mathbf{y}_1 , \mathbf{y}_3 \rangle
	=
	\langle \mathbf{y}_3 , \mathbf{y}_5 \rangle
	=-3/2
	$, which gives (C).

	It remains to prove (c) in the proposition.
	Let $c_1 =\Ht(Y_1)$ and $c_2 = \Ht(Y_2)$. 
	First, we apply Proposition~\ref{PropInteriorHeight3} to get
	\begin{equation}\label{C1}
		\frac{\delta(P; Y_1)}{\delta(P; Y_2)} =
		\frac{c_1 \langle \mathbf{p} , \mathbf{y}_1 \rangle}{%
		c_2 \langle \mathbf{p} , \mathbf{y}_2 \rangle}
		\le
		\frac13
		\frac{\langle \mathbf{p} , \mathbf{y}_1 \rangle}{%
		 \langle \mathbf{p} , \mathbf{y}_2 \rangle}.
	\end{equation}
	Next, let $P' = \TTT^k(P)$ and $\mathbf{p}' = (P', 1)$ and apply Proposition~\ref{PropInvariance}:
	\begin{equation}\label{C2}
		\frac{\langle \mathbf{p} , \mathbf{y}_1 \rangle}{%
		\langle \mathbf{p} , \mathbf{y}_2 \rangle}
		=
		\frac{\langle \mathbf{p}' , \mathbf{u}_1 \rangle}{%
		\langle \mathbf{p}' , \mathbf{u}_2 \rangle}.
	\end{equation}
	To simplify this expression further, we note that $d(P') = d_{k+1} = 3$ and
	\[
	\mathbf{u}_1' := 
	M_3^{-1} 
	\mathbf{u}_1 = 
		\begin{pmatrix} 0 \\ -1 \\ 1 \end{pmatrix}
			\quad
			\text{and}
	\quad
	\mathbf{u}_2' := 
	M_3^{-1} \mathbf{u}_2 = 
		\begin{pmatrix} 1 \\ -1 \\ 1 \end{pmatrix}.
	\]
Let $P'' = \TTT(P')$ and $\mathbf{p}'' = (P'', 1)$, and apply Proposition~\ref{PropInvariance} once again to obtain
%Then \eqref{C2} becomes
	\begin{equation}\label{C4}
%		\frac{\langle \mathbf{p} , \mathbf{y}_1 \rangle}{%
%		\langle \mathbf{p} , \mathbf{y}_2 \rangle}
%		=
		\frac{\langle \mathbf{p}' , \mathbf{u}_1 \rangle}{%
		\langle \mathbf{p}' , \mathbf{u}_2 \rangle} 
		=
		\frac{\langle \mathbf{p}'' , \mathbf{u}_1' \rangle}{%
			\langle \mathbf{p}'' , \mathbf{u}_2' \rangle} 
		=
		\frac{\sin^2\left(\frac{\theta(P'')-\theta(U_1')}2\right)}{
			\sin^2\left(\frac{\theta(P'')-\theta(U_2')}2\right)},
	\end{equation}
	where $U_1' = (0, -1)$ and $U_2' = (1, -1)$.
	Clearly, $\theta(U_1') = 4\pi/3$ and $\theta(U_2') = 5\pi/3$ (see Figure~\ref{FigLocation}).
\begin{figure}
\centering
\begin{tikzpicture}[scale=1.5]
        %
        %
        % A trick I learned from the following website 
        % https://tex.stackexchange.com/questions/123158/tikz-using-the-ellipse-command-with-a-start-and-end-angle-instead-of-an-arc
        %
        %
      
        \tikzset{
    partial ellipse/.style args={#1:#2:#3}{
        insert path={+ (#1:#3) arc (#1:#2:#3)}
    }
}
        \draw[->] (-1.2, 0) -- (1.2, 0) node[right]{$x$};
        \draw[->] (0, -1.2) -- (0, 1.2) node[above]{$y$};

        \draw[rotate=-45] (0,0) ellipse (1.414cm and 0.816cm); % sqrt(2)=1.414 and 
                                                                % sqrt(2/3) = 0.816, respectively.
%        \draw[very thick, rotate=-45] (0,0) [partial ellipse=60:120:1.414cm and 0.816cm];
        \draw[dashed] (0, 0) -- (7/13, 8/13) node[above right]{$P''$};
        \filldraw (7/13, 8/13) circle (1.0pt);
        \filldraw (0, -1) circle (1.0pt) node[below left]{$U_1'$};
        \filldraw (1, -1) circle (1.0pt) node[below right]{$U_2'$};

%    \draw[->] (2.3, 0) -- (4.7, 0) node[right]{$\Re(z)$};
%    \draw[->] (3.5, -1.2) -- (3.5, 1.2) node[above]{$\Im(z)$};
    
%      \draw (3.5, 0) circle (1cm);
%    \draw[very thick] (4.5, 0) arc (0:60:1cm);
%    \draw[dotted] (3.5, 0) -- (4, 0.866);
%    \draw[dashed] (3.5, 0) -- (4.207, 0.707) node[above right]{$z(P) = \alpha+ \beta\omega$};
    
%        \filldraw (4.5, 0) circle (1.0pt) node[below right]{$1$};
%        \filldraw (4, 0.866) circle (1.0pt) node[above]{${\omega}$};
%        \filldraw (4.207, 0.707) circle (1.0pt);
%        \node[below left] at (3.5, 1) {$i$};
\end{tikzpicture}
    \caption{Locations of $U_1', U_2'$ and $P''$ on $\XXX_0$.}
    \label{FigLocation}
\end{figure}
	Some elementary calculus shows that
		\[
		\frac{\sin^2\left(\frac{\theta(P'')}2 - \frac{4\pi}3\right)}{
			\sin^2\left(\frac{\theta(P'')-\frac{5\pi}3}2\right)}
		=
			\left(
		\frac{%
		\frac{\sqrt3}2\cot\frac{\theta(P'')}2 + \frac12
		}{%
		\frac12\cot\frac{\theta(P'')}2 + \frac{\sqrt{3}}2
		}
			\right)^2 \le 3
		\]
	when $0\le \theta(P'') \le \pi/3$.
	Combining this with \eqref{C2} and \eqref{C4}, we obtain
	\begin{equation}\label{C5}
		\frac{\langle \mathbf{p} , \mathbf{y}_1 \rangle}{%
		\langle \mathbf{p} , \mathbf{y}_2 \rangle}
		\le 3.
	\end{equation}
	Then we conclude from \eqref{C1} and \eqref{C5} that
		\[
		\frac{\delta(P; Y_1)}{\delta(P; Y_2)} =
		\frac{c_1 \langle \mathbf{p} , \mathbf{y}_1 \rangle}{%
		c_2 \langle \mathbf{p} , \mathbf{y}_2 \rangle}
		\le
		1,
		\]
		which gives (c) of the proposition.
		The second inequality of (c) is proven by the same method. 

		The cases (d) and (e) immediately follow from (b) and (a) by symmetry.
		This completes the proof of the proposition.
\end{proof}
\begin{proof}[Proof of Theorem~\ref{ThmBestApproximants}]
As before, we write $P = [d_1, \dots, d_k, \dots]_{\XXX}$.
If $Z$ is one of the boundary points of the cylinder sets $\{C(d_1,\dots, d_k)\}_{k=1}^{\infty}$ containing $P$, then the conclusion of Theorem~\ref{ThmBestApproximants} is obviously true and there is nothing to prove.
So we will assume that $Z$ is not equal to any boundary point of $C(d_1, \dots, d_k)$ for every $k\ge1$.

Fix $k$ to be the largest integer such that $Z$ is in $C(d_1,\dots, d_k)$
but not in $C(d_1,\dots, d_k,d_{k+1}).$
Then the Romik digit expansion of $Z$ is given by
\[
Z= [d_1, \dots, d_k, d_Z, \dots]_{\XXX}
\]
with $d_Z\neq d_{k+1}$. 
In particular, $Z$ is not equal to any of 
the four points
$Z_k^{(1, 0)}(P)$,
$Z_k^{(0, 1)}(P)$, 
$Z_{k+1}^{(1, 0)}(P)$,
and
$Z_{k+1}^{(0, 1)}(P)$.

We first handle the case when $Z\in \{ Y_1, \dots, Y_6\}$. However, recall that $Z$ is assumed to be not equal to boundary points of any cylinder set containing $P$.
	Since $Y_1$ and $Y_6$ are the boundary points of $C(d_1, \dots, d_k)$ we have 
	\[
		Z\in \{ Y_2, Y_3, Y_4, Y_5 \} -
		\{
		Z_{k+1}^{(1, 0)}(P),
		Z_{k+1}^{(0, 1)}(P)
	\}.
	\]
	Now, we use Proposition~\ref{PropBestApproxRational} to show that the conclusion of the theorem is true.
For example, 
if $d_{k+1} = 1$, then $Y_2 = Z_{k+1}^{(0, 1)}(P)$ and therefore $Z \in\{ Y_3, Y_4, Y_5\}$ and
Proposition~\ref{PropBestApproxRational} gives
\[
\delta(P; Z_k^{(1, 0)}(P)) \le \delta(P; Z).
\]
Similarly for all remaining values of $d_{k+1}$, Proposition~\ref{PropBestApproxRational} gives
either
\[
\min\{
\delta(P; Z_k^{(1, 0)}(P)),
\delta(P; Z_k^{(0, 1)}(P))
\} 
\le \delta(P; Z),
\]
or 
\[
\min\{
\delta(P; Z_{k+1}^{(1, 0)}(P))
,
\delta(P; Z_{k+1}^{(0, 1)}(P))
\}
\le \delta(P; Z).
\]
To finish the proof of Theorem~\ref{ThmBestApproximants}, we now suppose that $Z$ is in the interior of $C(d_1, \dots, d_k,d_Z)$.
We handle the \emph{adjacent interior} case, namely, $d_Z= d_{k+1} +1$ or $d_Z= d_{k+1}-1$.
Say the former holds.
We let $Z_1$ be the common boundary of 
$C(d_1,\dots, d_k, d_{k+1}+1)$ and  
$C(d_1,\dots, d_k, d_{k+1})$.
Then we have $Z \preceq Z_1\preceq P$ or $P \preceq Z_1\preceq Z$.
We apply Proposition~\ref{PropInteriorHeight} to the cylinder set $C(d_1, \dots, d_k, d_k+1)$ to conclude that $\Ht(Z_1) \le \Ht(Z)$.
	Therefore the conditions (A) and (B) for Lemma~\ref{LemTypeI} (Type I) are satisfied and we conclude that $\delta(P; Z_1)\le \delta(P;Z)$.
This proves the statement in the conclusion of Theorem~\ref{ThmBestApproximants}.

The only remaining case is when $Z$ is in the interior of 
$C(d_1, \dots, d_k, d_Z)$,
which is not adjacent to 
$C(d_1, \dots, d_k, d_{k+1})$.
Then one of the boundary points, say, $Y$, of 
$C(d_1, \dots, d_k, d_Z)$,
satisfies either 
$Z\preceq Y\preceq P$ or
$P\preceq Y\preceq Z$,
while $Y$ itself is not a boundary of 
$C(d_1, \dots, d_k)$ or
$C(d_1, \dots, d_k, d_{k+1})$.
Then we can apply Lemma~\ref{LemTypeI} as before to obtain
$\delta(P; Y)\le \delta(P;Z)$.
However, we have already proved  that 
\[
\min\{
\delta(P; Z_k^{(1, 0)}(P)),
\delta(P; Z_k^{(0, 1)}(P))
\} 
\le \delta(P; Y)
\]
or
\[
\min\{
\delta(P; Z_{k+1}^{(1, 0)}(P))
,
\delta(P; Z_{k+1}^{(0, 1)}(P))
\}
\le \delta(P; Y)
\]
for any such $Y$.
The proof of Theorem~\ref{ThmBestApproximants} is now completed.
\end{proof}

\section{Perron's Formula}\label{SecPerron}
The goal of this section is to state and prove a version of \emph{Perron's formula} (Theorem~\ref{ThmPerronFormula}).
\subsection{Preliminary definitions and the statement of Perron's formula}
Define a \emph{stereographic projection} $\| \cdot \|: \XXX \longrightarrow [0, \infty]$ to be
\begin{equation}\label{EqDefNorm}
\| (\alpha, \beta) \| = \frac{\sqrt{3}(\alpha + \beta - 1)}{-2\alpha -\beta+2}.
\end{equation}
The above stereographic projection is chosen so that our version of Perron's formula (Theorem~\ref{ThmPerronFormula}) becomes similar to the classical Perron formula 
in the theory of continued fractions.
Also, see Remark~\ref{rem:stereo_motivation}.

The stereographic projection is \emph{order-reversing} in the sense that $P_1\preceq P_2$ if and only if $\| P_1 \| \ge \| P_2 \|.$
Also, we define $(\cdot)^{\vee}: \XXX \longrightarrow \XXX$ to be
\[
 (\alpha, \beta)^{\vee}  = (\beta, \alpha).
\]
The map $(\cdot)^{\vee}$ is compatible with vectors in the following sense.
If $P$ is represented by the normalization of $\mathbf{p}= (p_1, p_2, p_3)$, then $P^{\vee}$ is represented by the normalization of $\mathbf{p}^{\vee} := (p_2, p_1, p_3)$.
\begin{lemma}
\label{lem:digitwise_check}
For $d\in \{1, 2, 3, 4, 5 \}$, define
\[
d^{\vee}
=
\begin{cases}
5 & \text{ if } d = 1,\\
4 & \text{ if } d = 2,\\
3 & \text{ if } d = 3,\\
2 & \text{ if } d = 4,\\
1 & \text{ if } d = 5.\\
\end{cases}
\]
Suppose that $P \in \XXX$ with $P = [d_1, d_2, \dots]_{\XXX}$.
Then $P^{\vee} = [d_1^{\vee}, d_2^{\vee}, \dots]_{\XXX}$.
\end{lemma}
\begin{proof}
It is obvious that the first digit of $P$ is $d_1$ if and only if the first digit of $P^{\vee}$ is $d_1^{\vee}$ (see Figure~\ref{FigRomikDigits}).
To see that the remaining digits of $P^{\vee}$ are given as in the lemma, we write $P = (x, y)$ and $\mathbf{p} = (x, y, 1)$. 
And we let
\[
S = \begin{pmatrix}
  0 & 1 & 0 \\
  1 & 0 & 0 \\
  0 & 0 & 1
\end{pmatrix},
\]
so that $S\mathbf{p} = \mathbf{p}^{\vee}$.
It is easy to verify that $S M^{-1}_d = M^{-1}_{d^{\vee}}S$ for $d = 1, \dots, 5$.
Now, for any $k>1$, 
\[
M_{d_{k-1}^{\vee}}^{-1} 
\cdots
M_{d_1^{\vee}}^{-1} 
\mathbf{p}^{\vee}
%\begin{pmatrix} y \\ x \\ 1 \end{pmatrix}
=
%M_{d_{k-1}^{\vee}}^{-1} 
%\cdots
%M_{d_1^{\vee}}^{-1} 
%S\mathbf{p} = 
S 
\cdot
M_{d_{k-1}}^{-1} 
\cdots
M_{d_1}^{-1} \mathbf{p}.
\]
Combining this with Proposition~\ref{PropRomiksystemBerggrenTrees}, we see that the $k$-th digit of $P^{\vee}$ is the same as $d_k^{\vee}$.
\end{proof}

Lastly, we define another map $(\cdot)^{\hat{}}:\XXX \longrightarrow \XXX$ using Romik digit expansion.
If $P = [d_1, d_2, \dots]_{\XXX} \in \XXX$, then define
\[
\hat{P} = [\hat{d}_1, \hat{d}_2, \dots]_{\XXX}
\]
where $\hat{d}$ is defined as in Lemma~\ref{Lem324}.

\begin{theorem}[Perron's formula]\label{ThmPerronFormula}
Let $P\in \XXX-\ZZZ$ with 
\[
P = [d_1, d_2, \dots, d_k, d_{k+1},\dots]_{\XXX}.
\]
We use the following notations: for each $k\ge 1$,
\begin{enumerate}[font=\upshape, label=(\roman*)]
\item  $\mathbf{u} = \mathbf{u}^{(1, 0)} = (1, 0, 1)$,% as in \eqref{EqDefU10andU01}.
\item  $\mathbf{z}_k =  \mathbf{z}_k^{(1, 0)}(P)$
and $Z_k = Z_k^{(1, 0)}(P)$ as defined in Definition~\ref{DefZk},
\item $P_k' = \TTT^k(P) = [d_{k+1}, d_{k+2}, \dots]_{\XXX},$
\item $P_k'' = [d_k, d_{k-1}, \dots, d_2, d_1, 1^{\infty}]_{\XXX},$ and
\item $\epsilon_k(P) = \left\vert\frac{\sin(\theta(P)/2)}{\sin(\theta(Z_k)/2)}\right\vert.$
\end{enumerate}
Then we have
\[
\delta(P; Z_k) = \frac{\epsilon_k(P)}{\| (\hat{P}''_k)^{\vee} \| + \| P_k' \|}.
\]
\end{theorem}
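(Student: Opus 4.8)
The plan is to turn the asserted equality into a purely algebraic identity about the bilinear pairing $\langle\cdot,\cdot\rangle$ on $\mathbb{R}^3$, and then dispatch it with the linear algebra of \S\ref{SubsecPrelim}. First I would rewrite both sides in pairing form. By Definition~\ref{DefDeltaBasic} and \eqref{EqHtVQ}, $\delta(P;Z_k)^2 = -2\,\Ht(Z_k)\langle\mathbf{p},\mathbf{z}_k\rangle = 2\langle\mathbf{z}_k,\mathbf{v}_Q\rangle\langle\mathbf{p},\mathbf{z}_k\rangle$ with $\mathbf{p}=(P,1)$; and since $(1,0)$ lies at angle $0$, \eqref{EqPairingAngle} gives $\langle\mathbf{p},\mathbf{u}\rangle = -2\sin^2(\theta(P)/2)$ and $\langle\mathbf{z}_k,\mathbf{u}\rangle = -2\,\Ht(Z_k)\sin^2(\theta(Z_k)/2)$, so $\epsilon_k(P)^2 = \langle\mathbf{z}_k,\mathbf{v}_Q\rangle\langle\mathbf{p},\mathbf{u}\rangle/\langle\mathbf{z}_k,\mathbf{u}\rangle$. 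After cancelling $\langle\mathbf{z}_k,\mathbf{v}_Q\rangle$ (which is negative, hence nonzero), the theorem becomes
\[
2\,\langle\mathbf{p},\mathbf{z}_k\rangle\,\langle\mathbf{z}_k,\mathbf{u}\rangle\,\bigl(\|(\hat P''_k)^{\vee}\|+\|P'_k\|\bigr)^2 \;=\; -\langle\mathbf{p},\mathbf{u}\rangle .
\]

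Next I would record a pairing formula for the stereographic projection: a direct computation from \eqref{EqDefNorm} shows that for any positive vector $\mathbf{x}$ representing a point $X\in\XXX$ one has $\|X\| = -\langle\mathbf{x},\mathbf{a}\rangle/\bigl(2\sqrt3\,\langle\mathbf{x},\mathbf{u}\rangle\bigr)$, where $\mathbf{a}=(2,2,3)^{T}$ (the numerator is $3(x_1+x_2-x_3)$ and the denominator is $-2(x_1+x_2/2-x_3)$, up to the common factor $x_3$). Iterating Proposition~\ref{PropRomiksystemBerggrenTrees}, $P'_k=\TTT^k(P)$ is represented by $M^{-1}\mathbf{p}$ with $M=M_{d_1}\cdots M_{d_k}$, so pushing $M$ across by orthogonality gives $\|P'_k\| = -\langle\mathbf{p},M\mathbf{a}\rangle/\bigl(2\sqrt3\,\langle\mathbf{p},\mathbf{z}_k\rangle\bigr)$. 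For the other term I would use that $(\cdot)^{\vee}$ preserves $\langle\cdot,\cdot\rangle$ and interchanges $\mathbf{u}^{(1,0)}$ and $\mathbf{u}^{(0,1)}$ while fixing $\mathbf{a}$, together with the reversal identity $H M^{-1} H = M_{\hat d_k}\cdots M_{\hat d_1}$ (immediate from $H=H^{-1}$ and Lemma~\ref{Lem324}), the relations $H\mathbf{a}=-\mathbf{a}$, $H\mathbf{u}^{(1,0)}=\mathbf{u}^{(0,1)}$, and the fact (again from Proposition~\ref{PropRomiksystemBerggrenTrees}) that $\hat P''_k=[\hat d_k,\dots,\hat d_1,5^{\infty}]_{\XXX}$ is represented by $M_{\hat d_k}\cdots M_{\hat d_1}\,\mathbf{u}^{(0,1)}$; this yields $\|(\hat P''_k)^{\vee}\| = \langle\mathbf{u},M\mathbf{a}\rangle/\bigl(2\sqrt3\,\langle\mathbf{z}_k,\mathbf{u}\rangle\bigr)$. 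Putting the two over a common denominator, the parenthesised sum equals $N/\bigl(2\sqrt3\,\langle\mathbf{p},\mathbf{z}_k\rangle\langle\mathbf{z}_k,\mathbf{u}\rangle\bigr)$ with $N := \langle\mathbf{u},M\mathbf{a}\rangle\langle\mathbf{p},\mathbf{z}_k\rangle - \langle\mathbf{p},M\mathbf{a}\rangle\langle\mathbf{z}_k,\mathbf{u}\rangle$, and the displayed identity reduces to
\[
N^2 \;=\; -6\,\langle\mathbf{p},\mathbf{u}\rangle\,\langle\mathbf{p},\mathbf{z}_k\rangle\,\langle\mathbf{z}_k,\mathbf{u}\rangle .
\]

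The remaining identity is short. Because $\langle\mathbf{u},\mathbf{u}\rangle = \langle\mathbf{u},\mathbf{a}\rangle = 0$, the linear functional $\mathbf{p}\mapsto N$ vanishes at $\mathbf{p}=\mathbf{u}$ and at $\mathbf{p}=\mathbf{z}_k=M\mathbf{u}$, hence $N = c_0\det(\mathbf{p},\mathbf{u},\mathbf{z}_k)$ for a scalar $c_0$ depending only on $M$. Writing $N=\langle\mathbf{p},M\mathbf{w}_0\rangle$ with $\mathbf{w}_0=\langle\mathbf{u},M\mathbf{a}\rangle\mathbf{u}-\langle\mathbf{u},M\mathbf{u}\rangle\mathbf{a}$ and using $\langle\mathbf{u},\mathbf{u}\rangle=\langle\mathbf{u},\mathbf{a}\rangle=0$ and $Q(\mathbf{a})=3$, orthogonality of $M$ gives $Q(M\mathbf{w}_0)=Q(\mathbf{w}_0)=3\langle\mathbf{z}_k,\mathbf{u}\rangle^2$; on the other hand the standard identities relating the coordinate determinant, the form $Q$, and its Gram matrix (whose determinant is $-3/4$) give $Q(M\mathbf{w}_0)=c_0^2\cdot\tfrac43\langle\mathbf{z}_k,\mathbf{u}\rangle^2$, forcing $c_0^2=\tfrac94$. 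Finally, since $Q(\mathbf{p})=Q(\mathbf{u})=Q(\mathbf{z}_k)=0$ the Gram matrix of $(\mathbf{p},\mathbf{u},\mathbf{z}_k)$ has vanishing diagonal, so $\det(\mathbf{p},\mathbf{u},\mathbf{z}_k)^2 = -\tfrac83\,\langle\mathbf{p},\mathbf{u}\rangle\langle\mathbf{u},\mathbf{z}_k\rangle\langle\mathbf{p},\mathbf{z}_k\rangle$; multiplying by $c_0^2=\tfrac94$ gives precisely the boxed identity, and since all the quantities entering the original formula are positive, taking square roots finishes the proof.

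The main obstacle is the bookkeeping in the second step: one must keep careful track of where each pairing lands after the conjugations by $H$, the $(\cdot)^{\vee}$ symmetry, and the passage from the forward word $M$ to the reversed hatted word, so that the numerators of $\|P'_k\|$ and $\|(\hat P''_k)^{\vee}\|$ are both built from the \emph{same} vector $M\mathbf{a}$ and therefore combine into the single quantity $N$. One should also dispose separately of the degenerate cases $Z_k=(1,0)$ (equivalently $d_1=\dots=d_k=1$) and $\hat P''_k=(1,0)$, where some of $\|\cdot\|$ and $\epsilon_k(P)$ are infinite and the identity is to be read as a limit.
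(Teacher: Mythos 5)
Your proposal is correct, and I checked the key computations: the pairing formula $\| X \| = -\langle \mathbf{x}, \mathbf{a} \rangle / (2\sqrt3 \langle \mathbf{x}, \mathbf{u} \rangle)$ with $\mathbf{a} = (2,2,3)^T$ does follow from \eqref{EqDefNorm}; the relations $H\mathbf{a} = -\mathbf{a}$, $H\mathbf{u}^{(1,0)} = \mathbf{u}^{(0,1)}$ and $HM^{-1}H = M_{\hat d_k}\cdots M_{\hat d_1}$ give exactly $\| (\hat P_k'')^{\vee} \| = \langle \mathbf{u}, M\mathbf{a}\rangle/(2\sqrt3\langle \mathbf{z}_k,\mathbf{u}\rangle)$ and $\| P_k' \| = -\langle \mathbf{p}, M\mathbf{a}\rangle/(2\sqrt3\langle \mathbf{p},\mathbf{z}_k\rangle)$; and your closing identity $N^2 = -6\langle\mathbf{p},\mathbf{u}\rangle\langle\mathbf{p},\mathbf{z}_k\rangle\langle\mathbf{z}_k,\mathbf{u}\rangle$ is verified by the Gram-determinant computation you sketch (diagonal of the Gram matrix of $(\mathbf{p},\mathbf{u},\mathbf{z}_k)$ vanishes because all three vectors are isotropic, $\det G = -3/4$, $Q(\mathbf{a})=3$, $\langle\mathbf{u},\mathbf{a}\rangle=0$). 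However, your route differs from the paper's in how it closes. The paper first uses the invariance identity of Proposition~\ref{PropInvariance} to pull $\delta^2(P;Z_k)$ back to pairings involving $P_k'$ and $\mathbf{w}_k=(M_{d_1}\cdots M_{d_k})^{-1}\mathbf{u}$, then converts everything to half-angle cotangents via \eqref{EqPairingAngle}, and finishes by the explicit coordinate identity $\cot(\theta(W_k)/2)+\cot\bigl(\theta((\hat P_k'')^{\vee})/2\bigr)=2\sqrt3$ together with Proposition~\ref{PropStereographicProjection}(b), which translates cotangents into $\|\cdot\|$. You bypass the invariance proposition and all trigonometry by expressing the stereographic projection itself as a ratio of pairings and reducing the whole theorem to one algebraic identity on the isotropic cone of $Q$; this is arguably cleaner and makes the role of orthogonality more transparent, at the price of invoking the adjugate/Gram identities that the paper never needs. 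Two cosmetic points: your intermediate formula for $\epsilon_k(P)^2$ is missing a sign (it should be $-\langle\mathbf{z}_k,\mathbf{v}_Q\rangle\langle\mathbf{p},\mathbf{u}\rangle/\langle\mathbf{z}_k,\mathbf{u}\rangle$, since $\Ht(Z_k)=-\langle\mathbf{z}_k,\mathbf{v}_Q\rangle$), though your displayed reduction is already the correct one; and the only genuinely degenerate case is $d_1=\cdots=d_k=1$, where $\mathbf{z}_k=\mathbf{u}$ (your second listed case, $\hat P_k''=(1,0)$, cannot occur and is harmless anyway; the problematic one is $(\hat P_k'')^{\vee}=(1,0)$, which coincides with the first). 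That case makes both sides of the stated formula indeterminate and is left implicit in the paper's proof as well, so flagging it and treating it separately, as you do, is fine.
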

\begin{corollary}\label{CorPerronFormula}
The notations are the same as in Theorem~\ref{ThmPerronFormula}. Then
\[
\delta(P) = 
\liminf_{k\to\infty}
\min
\left\{
\frac1
{\| (\hat{P}''_k)^{\vee} \| + \| P_k' \|},
\frac1
{\| \hat{P}''_k \| + \| (P_k')^{\vee} \|}
\right\}.
\]
\end{corollary}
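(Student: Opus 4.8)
The plan is to combine Perron's formula (Theorem~\ref{ThmPerronFormula}) with the reduction of $\delta(P)$ to boundary approximants (Theorem~\ref{ThmBestApproximants}), using the reflection $(\cdot)^{\vee}$ to pass from the $(1,0)$--side to the $(0,1)$--side.

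\emph{Step 1: reduce to the boundary points.} First I would prove
\[
\delta(P)=\liminf_{k\to\infty}\min\bigl\{\delta(P;Z_k^{(1,0)}(P)),\ \delta(P;Z_k^{(0,1)}(P))\bigr\}.
\]
For ``$\le$'': the points $Z_k^{(1,0)}(P)$ and $Z_k^{(0,1)}(P)$ lie in $\ZZZ$, converge to $P\notin\ZZZ$, and their heights are nondecreasing in $k$ (by Theorem~\ref{PropInteriorHeight} and its proof applied along the nested cylinders), hence their heights tend to $\infty$; passing to a subsequence realizing the right-hand liminf, one of the two families gives a sequence in $\ZZZ$ of unbounded height along which $\delta(P;\cdot)$ tends to that liminf, whence $\delta(P)\le\liminf$. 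For ``$\ge$'': for $Z\in\ZZZ$ let $k=k(Z)$ be the integer from the Remark after Theorem~\ref{ThmBestApproximants}, so $\delta(P;Z)$ dominates one of the four quantities $\delta(P;Z_{k}^{(1,0)}(P)),\delta(P;Z_{k}^{(0,1)}(P)),\delta(P;Z_{k+1}^{(1,0)}(P)),\delta(P;Z_{k+1}^{(0,1)}(P))$. If $\Ht(Z)\to\infty$ but $k(Z)$ stayed bounded, $Z$ would lie in $C(d_1,\dots,d_{k_0})\setminus C(d_1,\dots,d_{k_0+1})$ for a fixed $k_0$, hence at distance bounded below from $P$ (which is interior to $C(d_1,\dots,d_{k_0+1})$), forcing $\delta(P;Z)\to\infty$; so along any sequence relevant to the liminf we may assume $k(Z)\to\infty$, and then $\delta(P)\ge\liminf_{k}\min\{\cdots\}$ follows.

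\emph{Step 2: apply Perron's formula to both families.} Theorem~\ref{ThmPerronFormula} gives $\delta(P;Z_k^{(1,0)}(P))=\epsilon_k(P)\big/\bigl(\|(\hat P_k'')^{\vee}\|+\|P_k'\|\bigr)$ directly. For the $(0,1)$--family I would use that $(\cdot)^{\vee}$ is a Diophantine isometry of $(\XXX,\ZZZ)$: it preserves $\XXX$, the pairing $\langle\cdot,\cdot\rangle$, and heights, so $\delta(P;Z)=\delta(P^{\vee};Z^{\vee})$. Conjugating by the coordinate-swap matrix $S$ shows $SM_dS=M_{6-d}$, whence the Romik digits of $P^{\vee}$ are $6-d_1,6-d_2,\dots$, $\TTT^k(P^{\vee})=(\TTT^kP)^{\vee}$, and $Z_k^{(0,1)}(P)^{\vee}=Z_k^{(1,0)}(P^{\vee})$. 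Therefore $\delta(P;Z_k^{(0,1)}(P))=\delta(P^{\vee};Z_k^{(1,0)}(P^{\vee}))$, and applying Theorem~\ref{ThmPerronFormula} to $P^{\vee}$ produces the second expression in the claimed minimum once one simplifies its ``past'' factor using that $(\cdot)^{\vee}$ commutes with $\hat{(\cdot)}$ (equivalently $6-\widehat{6-d}=\hat d$) and that the ``future'' factor $\TTT^k(P^{\vee})$ is $(P_k')^{\vee}$.

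\emph{Step 3 and the main obstacle.} It remains to delete the factors $\epsilon_k(P)$ and $\epsilon_k(P^{\vee})$: both tend to $1$ as $k\to\infty$, since $Z_k^{(1,0)}(P)\to P$ and $Z_k^{(1,0)}(P^{\vee})\to P^{\vee}$ while $\theta(P),\theta(P^{\vee})\ne0$ (as $P\notin\ZZZ$); along a subsequence realizing $\delta(P)$ the relevant denominators stay bounded, so the $\epsilon_k$'s may be removed, giving Corollary~\ref{CorPerronFormula} (with the degenerate values $\delta(P)\in\{0,\infty\}$ covered by the same reasoning). I expect the bookkeeping in Step~2 to be the delicate part: one must track three interacting involutions on Romik words---the geometric reflection $(\cdot)^{\vee}$, acting on digits by $d\mapsto 6-d$; the combinatorial map $\hat{(\cdot)}$, whose digit action swaps $1\leftrightarrow5$ and fixes $2,3,4$; and the reversal $d_1\cdots d_k\mapsto d_k\cdots d_1$ built into $P_k''$---and, in particular, reconcile the ``completing tail'' ($1^{\infty}$ versus $5^{\infty}$) appearing in $P_k''$ and in $\hat P_k''$ so that the $(0,1)$--side formula matches $\|\hat P_k''\|+\|(P_k')^{\vee}\|$ exactly; the reduction in Step~1 (notably the bounded-$k(Z)$ case) also needs care but is otherwise routine.
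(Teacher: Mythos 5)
Your proposal is correct and follows essentially the same route as the paper: reduce to the boundary approximants $Z_k^{(1,0)}(P), Z_k^{(0,1)}(P)$ via Theorem~\ref{ThmBestApproximants}, convert the $(0,1)$--side to Perron's formula for $P^{\vee}$ through the symmetry $\delta(P;Z_k^{(0,1)}(P))=\delta(P^{\vee};Z_k^{(1,0)}(P^{\vee}))$, and discard the factors $\epsilon_k\to1$. The only difference is that the paper cites Proposition~2.12 of \cite{CK19} for the symmetry identity that you prove directly via $SM_dS=M_{d^{\vee}}$, and the completing-tail ($1^{\infty}$ versus $5^{\infty}$) bookkeeping you flag is likewise glossed over in the paper; it washes out in the $\liminf$ for irrational $P$, so it is not an obstruction.
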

\begin{proof}
First of all, note that $\epsilon_k(P)\to1$ as $k\to\infty$ because $Z_k\to P$ (cf.~Proposition~\ref{prop:diam_zero}).
Next, we see from Theorem~\ref{ThmBestApproximants} that it is sufficient to approximate $P$ by 
$Z_k^{(1, 0)}(P)$ 
and
$Z_k^{(0, 1)}(P)$.
So,
\[
\delta(P) = 
\liminf_{k\to\infty}
\min
\left\{
\delta(P; Z_k^{(1, 0)}(P)),
\delta(P; Z_k^{(0, 1)}(P))
\right\}.
\]
However, one can show that
\[
\delta(P; Z_k^{(0, 1)}(P)) = \delta(P^{\vee}; Z_k^{(1, 0)}(P^{\vee})).
\]
For instance, the proof of Proposition~2.12 in \cite{CK19} applies identically to our situation.
The corollary then follows from Theorem~\ref{ThmPerronFormula}.
\end{proof}
\subsection{Proof of Perron's formula}
\begin{proposition}\label{PropStereographicProjection}
Let $P \in \XXX$.
\begin{enumerate}[font=\upshape, label=(\alph*)]
\item $\| P \|\cdot \| P^{\vee} \| = 1.$
(When $\| P \| = \infty$ or $0$, we interpret this as $\infty\cdot 0 = 1$.)
\item We have
\[
\cot
\left(
\frac{\theta(P)}2
\right)
= 
2\| P \| 
+ \sqrt3.
\]
\end{enumerate}
\end{proposition}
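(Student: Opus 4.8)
The plan is to prove part (b) first by a direct computation on the conic $\alpha^2 + \alpha\beta + \beta^2 = 1$, and then to deduce part (a) from it.

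For (b), I would start from the half-angle identity
\[
\cot\!\left(\frac{\theta(P)}{2}\right) = \frac{1 + \cos\theta(P)}{\sin\theta(P)} = \frac{\sin\theta(P)}{1 - \cos\theta(P)},
\]
valid for $0 < \theta(P) < 2\pi$, and substitute $\cos\theta(P) = \alpha + \tfrac12\beta$ and $\sin\theta(P) = \tfrac{\sqrt3}{2}\beta$ from \eqref{EqArgumentRelations}. Since $1 - \cos\theta(P) = \tfrac12(2 - 2\alpha - \beta)$, the second form gives
\[
\cot\!\left(\frac{\theta(P)}{2}\right) = \frac{\sqrt3\,\beta}{2 - 2\alpha - \beta}.
\]
On the other hand, putting the right-hand side of the claimed identity over the common denominator $2 - 2\alpha - \beta$ using \eqref{EqDefNorm} gives
\[
2\|P\| + \sqrt3 = \frac{2\sqrt3(\alpha+\beta-1) + \sqrt3(2 - 2\alpha - \beta)}{2 - 2\alpha - \beta} = \frac{\sqrt3\,\beta}{2 - 2\alpha - \beta},
\]
which is the same expression, proving (b) for $P \ne (1,0)$. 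At the endpoint $P = (1,0)$ we have $\theta(P) = 0$, so both sides are $+\infty$ under the stated convention (consistent with $(1,0)^{\vee} = (0,1)$ and $\|(0,1)\| = 0$).

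For (a), I would deduce the statement from (b). The involution $(\cdot)^{\vee}$ is reflection in the bisecting direction, so \eqref{EqArgumentRelations} yields $\theta(P^{\vee}) = \tfrac{\pi}{3} - \theta(P)$; hence, writing $t = \cot(\theta(P)/2) = 2\|P\| + \sqrt3$ and using the cotangent subtraction formula with $\cot(\pi/6) = \sqrt3$,
\[
2\|P^{\vee}\| + \sqrt3 = \cot\!\left(\frac{\pi}{6} - \frac{\theta(P)}{2}\right) = \frac{\sqrt3\,t + 1}{t - \sqrt3},
\]
so that $2\|P^{\vee}\| = \dfrac{4}{t - \sqrt3}$, i.e.\ $\|P^{\vee}\| = \dfrac{2}{t - \sqrt3} = \dfrac{1}{\|P\|}$, giving $\|P\|\cdot\|P^{\vee}\| = 1$; the convention $\infty\cdot 0 = 1$ covers $P \in \{(1,0),(0,1)\}$. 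Alternatively one can verify (a) directly by clearing denominators in \eqref{EqDefNorm}: with $s = \alpha+\beta$ and $\alpha\beta = s^2 - 1$, both $3(\alpha+\beta-1)^2$ and $(2-2\alpha-\beta)(2-\alpha-2\beta)$ collapse to $3(s-1)^2$.

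I do not expect a substantive obstacle here: the proposition amounts to a pair of identities on $\XXX$. The only points needing care are the half-angle branch — on $\XXX$ one has $0 \le \theta(P) \le \pi/3$, so $\theta(P)/2 \in [0,\pi/6]$ and $\cot(\theta(P)/2) \ge \sqrt3 > 0$, matching $2\|P\| + \sqrt3$ — and the interpretation of the degenerate endpoints where $\|P\| \in \{0,\infty\}$, which is fixed by the conventions already in the statement.
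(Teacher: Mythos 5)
Your computations are correct and fill in exactly the ``straightforward but tedious calculation'' and ``elementary trigonometry'' that the paper's proof leaves out: part (b) is the same substitution of \eqref{EqArgumentRelations} into the half-angle identity, and your alternative direct check of (a) (clearing denominators and using $\alpha\beta = s^2-1$ on the conic) is precisely the paper's intended argument, with your derivation of (a) from (b) via $\theta(P^{\vee}) = \pi/3 - \theta(P)$ being a harmless, equally valid variant. The branch and endpoint conventions you flag are handled correctly, so there is nothing to add.
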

\begin{proof}
The identity (a) is proven by some straightforward but tedious calculation from the definition of $\| P \|$, together with the condition that $\alpha^2 + \alpha\beta+\beta^2 = 1$.
Also, (b) comes from the definition \eqref{EqArgumentRelations} of $\theta(P)$, combined with some elementary trigonometry. We omit the details.
\end{proof}
\begin{proof}[Proof of Theorem~\ref{ThmPerronFormula}]
%In this proof, we fix a choice of $\mathbf{u} = (1, 0, 1)$ and leave the other case as an exercise, as the proof in that case is nearly identical except for obvious modifications.
We use the notations that are given in the statements of Theorem~\ref{ThmPerronFormula}.
In addition, we define
\begin{equation}\label{EqDefWk}
   \mathbf{w}_k = (M_{d_1} \cdots M_{d_k})^{-1} \mathbf{u}. 
\end{equation}
Also, we denote by  
$\mathbf{p}$ and $\mathbf{p}_k'$ 
the normalized vectors representing $P$ and $P_k'$, that is,
$\mathbf{p} = (P, 1)$ and $\mathbf{p}_k' = (P_k', 1)$.
With these notations, we prove Theorem~\ref{ThmPerronFormula} in the following steps:
\[
\text{Step 1:}\quad
\delta^2(P; Z_k) = 
-2\Ht(Z_k)
\frac{
\langle \mathbf{p}, \mathbf{u} \rangle
}
{
\langle \mathbf{z}_k, \mathbf{u} \rangle
}
\frac{
\langle \mathbf{w}_k, \mathbf{u} \rangle
\langle \mathbf{p}_k', \mathbf{u} \rangle
}{
\langle \mathbf{p}_k', \mathbf{w}_k \rangle
}.
\]
To prove Step 1, 
we begin with Definition~\ref{DefDeltaBasic}, which gives
\begin{equation}\label{EqDeltaBasic2}
\delta^2(P; Z_k) = 
-2\Ht(Z_k)
\langle \mathbf{p}, \mathbf{z}_k \rangle.
\end{equation}
Then we apply Proposition~\ref{PropInvariance} (with $\mathbf{z}_1 = \mathbf{z}_k$ and $\mathbf{z}_2 = \mathbf{u}$)
to obtain
\begin{equation}\label{EqInvariance}
\langle 
\mathbf{p},
\mathbf{z}_k
\rangle 
=
\langle 
\mathbf{p},
\mathbf{u}
\rangle 
\frac{
\langle 
\mathbf{p}_k',
\mathbf{u}
\rangle 
}{
\langle 
\mathbf{p}_k',
\mathbf{w}_k
\rangle
}.
\end{equation}
On the other hand, we note 
from the orthogonality of $M_{d_1}, \dots, M_{d_k}$ that
\begin{equation}\label{EqWkEqualsZk}
\langle 
\mathbf{w}_k,
\mathbf{u}
\rangle 
=
\langle 
(M_{d_1} \cdots M_{d_k})^{-1} \mathbf{u},
\mathbf{u}
\rangle
=
\langle 
\mathbf{u},
\mathbf{z}_k 
\rangle.
\end{equation}
Now, we combine \eqref{EqDeltaBasic2}, \eqref{EqInvariance}, and \eqref{EqWkEqualsZk} to obtain Step 1. 
%\begin{equation}\label{EqPrePerronAngleForm}
\[
\text{Step 2:}\quad
\delta^2(P; Z_k) =
\epsilon^2(P, Z_k)
\left(
\frac2{
\cot\left(
\frac{\theta(P_k')}2
\right)
-
\cot\left(
\frac{\theta(W_k)}2
\right)
}
\right)^2.
\]
Here, $W_k$ is the point represented by the normalization of $\mathbf{w}_k$.
In other words, if we write $\mathbf{w}_k = (w_1, w_2, w_3)$, then $W_k = (w_1/w_3, w_2/w_3)\in \XXX_0$ (see \eqref{EqUnitCircle}).
To prove Step 2, we apply 
Proposition~\ref{PropBilinearPairingInnerProductRelation}
%\eqref{EqPairingAngle} 
repeatedly to the pairings in Step 1. 
That is,
\begin{equation}\label{EqTemp1}
\Ht(Z_k)
\frac{
\langle \mathbf{p}, \mathbf{u} \rangle
}
{
\langle \mathbf{z}_k, \mathbf{u} \rangle
}
=
\frac{
\langle \mathbf{p}, \mathbf{u} \rangle
}
{
\langle \mathbf{z}_k/\Ht(Z_k), \mathbf{u} \rangle
}
=\frac{
\sin^2(\theta(P)/2)
}{
\sin^2(\theta(Z_k)/2)
}
=
\epsilon^2(P, Z_k),
\end{equation}
and
\begin{equation}\label{EqTemp}
\begin{gathered}
\frac{
\langle \mathbf{w}_k, \mathbf{u} \rangle
\langle \mathbf{p}_k', \mathbf{u} \rangle
}{
\langle \mathbf{p}_k', \mathbf{w}_k \rangle
}
=-2\frac{
\sin^2\left(
\frac{\theta(W_k)}{2}
\right)
\sin^2\left(
\frac{\theta(P_k')}{2}
\right)
}{
\sin^2\left(
\frac{\theta(P_k', W_k)}{2}
\right)
} \\
=\frac{-2}
{
\left(
\cot\left(
\frac{\theta(P_k')}2
\right)
-
\cot\left(
\frac{\theta(W_k)}2
\right)
\right)^2
},
\end{gathered}
\end{equation}
where the last line is justified by some easy trigonometric identities.
Then Step 2 follows from Step 1, together with \eqref{EqTemp1} and \eqref{EqTemp}.

\[
\text{Step 3:}\quad
\delta^2(P; Z_k) =
\epsilon^2(P, Z_k)
\left(
\frac2{
\cot\left(
\frac{\theta(P_k')}2
\right)
+
\cot\left(
\frac{\theta(\hat{P}_k'')^{\vee}
}2
\right)
-2\sqrt3
}
\right)^2.
\]
To obtain the equality in Step 3 from Step 2, it suffices to show
\begin{equation}\label{EqWkPk}
\cot\left(
\frac{\theta(W_k)}2
\right)
+
\cot\left(
\frac{\theta(\hat{P}_k'')^{\vee}}2
\right)
=2\sqrt3.
\end{equation}
First, write $W_k = (w_1, w_2, w_3)$ as before and use Proposition~\ref{PropStereographicProjection} to get
\begin{equation}\label{EqWkCot}
\cot\left(
\frac{\theta(W_k)}2
\right)
=
\frac{
2\sqrt3(w_1 + w_2 - w_3)
}{
-2w_1 - w_2 + 2w_3}
+ \sqrt3
%=
%\frac{
%\sqrt3
%(
%w_2 + w_3
%)
%}{
%-2w_1 - w_2 + 2w_3}
.
\end{equation}
For the other cotangent, we define 
\[
\mathbf{p}_k'' = M_{d_k}\cdots M_{d_1} \mathbf{u}.
\]
By definition, its normalization represents $P_k''=[d_k, \dots, d_1, 1^{\infty}]_{\XXX}$.
Therefore, $\hat{P}_k''$ is represented by the normalization of
\[
\hat{\mathbf{p}}_k'' :=
 M_{\hat{d}_k}\cdots M_{\hat{d}_1} \hat{\mathbf{u}},
\]
where $\hat{\mathbf{u}}$ is defined to be
\[
\hat{\mathbf{u}}
%=H\mathbf{u} 
=
\begin{pmatrix}
0 \\ 1 \\ 1
\end{pmatrix}.
\]
A quick calculation using Definition~\ref{DefHandUd} shows that $\hat{\mathbf{u}}= H\mathbf{u}$.
We use Lemma~\ref{Lem324} and \eqref{EqDefWk} to see that
%\begin{equation}\label{EqPkDoublePrime}
\[
\hat{\mathbf{p}}_k'' = 
M_{\hat{d}_k}\cdots M_{\hat{d}_1} H\mathbf{u}
=
H
U_{\hat{d}_k}H \cdots U_{\hat{d}_1}H\mathbf{u}
=
H\mathbf{w}_k.
%\end{equation}
\]
This lets us express the entries of $\hat{\mathbf{p}}_k''$ directly using $\mathbf{w}_k = (w_1, w_2, w_3)$.
Namely,
\[
\hat{\mathbf{p}}_k'' = 
H\mathbf{w}_k
=
\begin{pmatrix}
  -4 & -3 & 4 \\
  -3 & -4 & 4 \\
  -6 & -6 & 7 
\end{pmatrix}
\begin{pmatrix}
w_1 \\ w_2 \\ w_3 
\end{pmatrix}
=
\begin{pmatrix}
  -4w_1  -3w_2 + 4w_3 \\
  -3w_1  -4w_2 + 4w_3 \\
  -6w_1  -6w_2 + 7w_3 
\end{pmatrix}
.
\]
Therefore, we see that $(\hat{P}_k'')^{\vee} = (\alpha, \beta)$ with
\[
\alpha =
\frac{
  -3w_1  -4w_2 + 4w_3 
}{
  -6w_1  -6w_2 + 7w_3 
}
\quad
\text{and}
\quad
\beta =
\frac{
  -4w_1  -3w_2 + 4w_3 
}{
  -6w_1  -6w_2 + 7w_3 
}
.
\]
Apply this to Proposition~\ref{PropStereographicProjection} again and, after some simplification, we get
\[
\cot\left(
\frac{\theta(
(\hat{P}_k'')^{\vee}
)}2
\right)
=
\frac{
2\sqrt3(-w_1 - w_2 + w_3)
}{
-2w_1 - w_2 + 2w_3}
+ \sqrt3.
\]
Combine this with \eqref{EqWkCot} to obtain the proof of \eqref{EqWkPk}. 
This completes the proof of Step 3.
Finally, we apply Proposition~\ref{PropStereographicProjection} one more time to observe that
\[
\cot\left(
\frac{\theta(W_k)}2
\right)
+
\cot\left(
\frac{\theta(\hat{P}_k'')^{\vee}}2
\right)
-2\sqrt3
=
2(\| P_k' \| + \| (\hat{P}_k'')^{\vee} \| ).
\]
This concludes the proof of Theorem~\ref{ThmPerronFormula}.
\end{proof}

\section{Admissible sequences and their Lagrange numbers}\label{SecAdmissible}

\subsection{Doubly infinite Romik sequences and Lagrange numbers}\label{SubsecDoublyInfinite}
By a \emph{Romik sequence}, we mean an element of $\{1, 2, 3, 4, 5 \}^{\mathbb{N}}$.
By a slight abuse of language, we think of a Romik sequence $P$ as a point of $\XXX$ whose Romik digit expansion is $P$. 
When $P =[d_1, d_2, \dots ]$, we define 
$P^{\vee}$ and $\hat{P}$ to be 
$
P^{\vee} = [d_1^{\vee}, d_2^{\vee}, \dots]
$
and
$\hat{P}=[\hat{d}_1, \hat{d}_2, \dots]$ 
(cf.~Lemmas~\ref{Lem324} and \ref{lem:digitwise_check}).

%\todo{Here, I must say something about the digit-wise ``check'' map $d^{\vee}$.}
A \emph{reversed} Romik sequence is an element of $\{1, 2, 3, 4, 5\}^{\mathbb{Z}_{<0}}$.
For a Romik sequence $P =[d_1, d_2, \dots ]$, we denote by $P^*$ the reversed Romik sequence obtained by reading the digits of $P$ backwards. Namely,
\[
P^* = [\dots, d_2, d_1].
\]

A \emph{doubly infinite Romik sequence} is an equivalence class of $\{1, 2, 3, 4, 5\}^{\mathbb{Z}}$ where two elements in $\{1, 2, 3, 4, 5\}^{\mathbb{Z}}$ are defined to be equivalent whenever they are \emph{shifts} of one another.
A \emph{section} of a doubly infinite Romik sequence $T$ is simply an element in (the equivalence class) $T$.
Informally, we think of a doubly infinite Romik sequence to be an ``unmarked'' or ``base point free'' doubly infinite sequence in $\{1, 2, 3, 4, 5\}$, while its section is obtained by fixing a ``base point''.  

Suppose that two Romik sequences $P =[p_1, p_2, \dots]$ and $Q=[q_1, q_2, \dots]$ are given.
We define $P^*\vert Q$ to be an element of $\{1, 2, 3, 4, 5\}^{\mathbb{Z}}$ given by 
\[
P^*\vert Q = [\dots, p_2, p_1, q_1, q_2, \dots].
\]
Informally, we think of $P^*\vert Q$ as a section of a doubly infinite Romik sequence with its base point fixed at $q_1$.
For a doubly infinite Romik sequence $T$, 
we define $\hat{T}$, $T^{\vee}$ and $T^*$ in an obvious way;
$\hat{T}$ and $T^{\vee}$ are the doubly infinite Romik sequences obtained by applying $\hat{(\cdot)}$ and $(\cdot)^{\vee}$ to each digit of $T$, and $T^*$ is obtained from $T$ by reading the digits backwards.
Finally, any sequence (finite or infinite) with values in $\{1, 2, 3, 4, 5\}$ is called \emph{reduced} if it contains neither 1 nor 5.
%It is obvious that $T$ is reduced if and only if $T = \hat{T}$.
% Suppose $P=[p_1, p_2, \dots]$. Let
% \[
% P^* = [\dots, \hat{p}_2, \hat{p}_1].
% \]
% Namely, $P^*$ is the Romik sequence obtained from $P$ by reading $P$ backwards and then taking $\hat{\cdot}$.

For $P\in \XXX - \ZZZ$, we define \emph{the Lagrange number} $L(P)$ of $P$ to be $L(P) = 1/\delta(P)$.
Corollary~\ref{CorPerronFormula} shows that
\begin{equation}
\label{eq:LPLimsupMax}
L(P) = \limsup_{k\to\infty} 
\max
\left\{
\| (\hat{P}''_k)^{\vee} \| + \| P_k' \|,
\| \hat{P}''_k \| + \| (P_k')^{\vee} \|
\right\}.
\end{equation}
Modeling after this, we define a Lagrange number of a doubly infinite Romik sequence.
\begin{definition}[Lagrange number of a doubly infinite Romik sequence]\label{DefLagrangeSequence}
Let $T$ be a doubly infinite Romik sequence. 
If $P^*\vert Q$ is section of $T$, we define
\[
L(P^* \vert  Q) = 
\| \hat{P}^{\vee} \| + \| Q \|.
\]
	Also, the \emph{Lagrange number of} $T$ is defined to be
\[
L(T) =
\sup_{P^*\vert Q}
\left\{
\max\left(
L(P^*\vert Q),
L((P^{\vee})^*\vert Q^{\vee})
\right)
\right\}
\]
Here, the supremum is taken over all sections $\{ P^*\vert Q \}$ of $T$.
\end{definition}

\begin{proposition}\label{PropBasicLProperty}
	Let $T$ be a doubly infinite Romik sequence.
	\begin{enumerate}[font=\upshape, label=(\alph*)]
		\item $ L(T) = L(T^{\vee}) = L(\hat{T}^*) = L((\hat{T}^*)^{\vee}).$
		\item If $T$ is reduced (that is, $T$ contains neither 1 nor 5), then $L(T) = L(T^{\vee}) = L(T^*) = L((T^*)^{\vee}).$
		\item If $T$ is reduced and $T = T^*$ then
\[
L(T) =
\sup_{P^*\vert Q}
\left\{
L(P^*\vert Q)
\right\}
\]
where the supremum is taken over all sections $\{ P^*\vert Q \}$ of $T$.
	\end{enumerate}
\end{proposition}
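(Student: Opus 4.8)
The plan is to derive all three parts by unwinding Definition~\ref{DefLagrangeSequence} and juggling three elementary facts: the duality identity $\|P\|\cdot\|P^{\vee}\|=1$ from Proposition~\ref{PropStereographicProjection}(a); the involutivity $\hat{\hat{d}}=d$ from Lemma~\ref{Lem324}; and the commutation of the two digit involutions, $\widehat{d^{\vee}}=(\hat{d})^{\vee}$ (a five-case check), which promotes to $\widehat{P^{\vee}}=(\hat{P})^{\vee}$ for Romik sequences and $\widehat{T^{\vee}}=(\hat{T})^{\vee}$ for doubly infinite ones. I will also use that each of $(\cdot)^{\vee}$, $\hat{(\cdot)}$, $(\cdot)^{*}$ induces a bijection on the set of sections of a doubly infinite sequence, with $(P^{*}|Q)^{\vee}=(P^{\vee})^{*}|Q^{\vee}$, $\widehat{P^{*}|Q}=(\hat{P})^{*}|\hat{Q}$, and the reversal of $P^{*}|Q$ being $Q^{*}|P$.

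For part (a) I would argue as follows. Given a section $P^{*}|Q$ of $T$, set $a=\|(\hat{P})^{\vee}\|$ and $b=\|Q\|$, so $L(P^{*}|Q)=a+b$; unwinding $L((P^{\vee})^{*}|Q^{\vee})=\|(\widehat{P^{\vee}})^{\vee}\|+\|Q^{\vee}\|$ with the three facts gives $\|\hat{P}\|+\|Q^{\vee}\|=\tfrac1a+\tfrac1b$, so $P^{*}|Q$ contributes $\max\bigl(a+b,\tfrac1a+\tfrac1b\bigr)$ to the supremum $L(T)$. Running $P^{*}|Q$ through the section-bijection $P^{*}|Q\mapsto(\hat{Q})^{*}|\hat{P}$ onto the sections of $\hat{T}^{*}$, the same one-line computations yield $L((\hat{Q})^{*}|\hat{P})=\tfrac1a+\tfrac1b$ and $L(((\hat{Q})^{\vee})^{*}|(\hat{P})^{\vee})=a+b$, so this section contributes the identical $\max\bigl(a+b,\tfrac1a+\tfrac1b\bigr)$ to $L(\hat{T}^{*})$; taking suprema over the matched sections gives $L(T)=L(\hat{T}^{*})$. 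Separately, $(\cdot)^{\vee}$ is a section-bijection that merely swaps the two entries inside the inner $\max$ of Definition~\ref{DefLagrangeSequence}, whence $L(S)=L(S^{\vee})$ for any $S$; applying this to $S=T$ and $S=\hat{T}^{*}$ completes (a).

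For part (b) the idea is to reduce everything to a single claim. By (a) we have $L(T)=L(T^{\vee})$ and $L(T)=L(\hat{T}^{*})=L(\widehat{T^{*}})$; and once $L(T^{*})=L(T)$ is known, $L((T^{*})^{\vee})=L(T^{*})$ is another instance of $L(S)=L(S^{\vee})$. So everything comes down to: \emph{if $S$ is reduced then $L(\hat{S})=L(S)$} (apply to $S=T^{*}$, which is reduced exactly when $T$ is). Comparing cut by cut through the bijection $P^{*}|Q\mapsto(\hat{P})^{*}|\hat{Q}$: the contribution of $P^{*}|Q$ to $L(S)$ is $\max\bigl(\|(\hat{P})^{\vee}\|+\|Q\|,\ \tfrac1{\|(\hat{P})^{\vee}\|}+\tfrac1{\|Q\|}\bigr)$, whereas the contribution of $(\hat{P})^{*}|\hat{Q}$ to $L(\hat{S})$ works out, using the three facts and $\hat{\hat{P}}=P$, to $\max\bigl(\tfrac1{\|P\|}+\|\hat{Q}\|,\ \|P\|+\tfrac1{\|\hat{Q}\|}\bigr)$. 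These match provided the digit transposition $\hat{(\cdot)}$ is invisible to the stereographic projection here, i.e.\ $\|(\hat{P})^{\vee}\|=\|P^{\vee}\|$ and $\|\hat{Q}\|=\|Q\|$; this is precisely the role reducedness has to play, and it is the one place the definition of ``reduced'' is used.

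For part (c), where $T$ is reduced and $T=T^{*}$: by the computation in (b) (with $\hat{(\cdot)}$ invisible to $\|\cdot\|$) the contribution of a section $P^{*}|Q$ to $L(T)$ is $\max\bigl(L(P^{*}|Q),L((P^{\vee})^{*}|Q^{\vee})\bigr)$ with $L(P^{*}|Q)=\tfrac1{\|P\|}+\|Q\|$ and $L((P^{\vee})^{*}|Q^{\vee})=\|P\|+\tfrac1{\|Q\|}$. Since $T=T^{*}$, the reversed section $Q^{*}|P$ is again a section of $T$, and its plain value $L(Q^{*}|P)=\tfrac1{\|Q\|}+\|P\|$ equals the companion value $L((P^{\vee})^{*}|Q^{\vee})$ of $P^{*}|Q$; hence every companion term appearing under the $\max$ in $L(T)$ is already realized as a plain value of some section, so the supremum with the $\max$ equals $\sup_{P^{*}|Q}L(P^{*}|Q)$, as claimed. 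The step I expect to be the real obstacle is exactly the reducedness input shared by (b) and (c): showing, from the definition of a reduced doubly infinite Romik sequence, that the $1\leftrightarrow5$ digit transposition $\hat{(\cdot)}$ does not change the Lagrange number, so that the ``past'' and ``future'' halves of a Lagrange number become interchangeable under reversal — everything else being the bookkeeping above together with the identities of Proposition~\ref{PropStereographicProjection}.
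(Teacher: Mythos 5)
Your proposal is correct and takes essentially the same route as the paper's proof: the same section bijections $P^*|Q \mapsto (P^{\vee})^*|Q^{\vee}$ and $P^*|Q \mapsto \hat{Q}^*|\hat{P}$ together with the commutation of $\hat{(\cdot)}$ and $(\cdot)^{\vee}$ for (a), reducedness for (b), and the reversed section $Q^*|P$ for (c). The one step you flagged as a likely obstacle is in fact immediate: $\hat{d}=d$ for $d=2,3,4$ and $\hat{(\cdot)}$ merely swaps $1$ and $5$, so a reduced $T$ satisfies $\hat{T}=T$ (hence $\hat{P}=P$ and $\hat{Q}=Q$ for every section), which is exactly how the paper deduces (b) from (a).
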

\begin{proof}
	Whenever $P^*\vert Q$ is a section of $T$, $(P^{\vee})^*\vert Q^{\vee}$ is a section of $T^{\vee}$. 
	It follows from this that $L(T) = L(T^{\vee})$.
Note that, if $P^*\vert Q$ is a section of $T$, then $\hat{Q}^*\vert \hat{P}$ is a section of $\hat{T}^*$.
Therefore,
\[
L(\hat{Q}^*\vert \hat{P}) = \| Q^{\vee} \| + \|\hat{P} \| =  
L((P^{\vee})^*\vert Q^{\vee}),
\]
	which shows that $L(\hat{T}^*) = L(T^{\vee})$. 
	This proves all the equalities in (a).
	If $T$ is reduced, then $\hat{T} = T$ and all the equalities in (b) follow from (a).
	For (c), assume $T$ is reduced and $T^*=T$. 
      If $P^*\vert Q$ is a section of $T$, then $Q^*\vert P$ is also a section of $T$. 
	Since $T$ is also assumed to be reduced we have
	\[
	L((P^{\vee})^*\vert Q^{\vee}) = \| P \| + \| Q^{\vee} \| = L(Q^*\vert P).
	\]
	This proves (c).
\end{proof}

Following Bombieri (\S2 in \cite{Bom07}), we say that a doubly infinite Romik sequence $T$ is \emph{extremal for} a point $P$ in $\XXX$ if $L(P) = L(T)$.
\begin{lemma}[Bombieri's trick]\label{LemBombieri}
For any $P\in \XXX-\ZZZ$, there exists an extremal doubly infinite Romik sequence $T$.
\end{lemma}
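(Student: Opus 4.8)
The statement will follow from Corollary~\ref{CorPerronFormula} by a compactness (diagonal) argument in the style of Bombieri. Write $P = [d_1, d_2, \dots]_{\XXX}$ and, for each $k \ge 1$, set $P_k' = [d_{k+1}, d_{k+2}, \dots]_{\XXX}$ and $P_k'' = [d_k, d_{k-1}, \dots, d_1, 1^{\infty}]_{\XXX}$, so that $(P_k'')^*\mid P_k'$ is a section of a doubly infinite Romik sequence and, recalling $L(R^*\mid S) = \|\hat R^{\vee}\| + \|S\|$ and $(\widehat{R^{\vee}})^{\vee} = \hat R$, Corollary~\ref{CorPerronFormula} reads
\[
L(P) = \limsup_{k \to \infty} \max\bigl\{\, L\bigl((P_k'')^*\mid P_k'\bigr),\ L\bigl(((P_k'')^{\vee})^*\mid (P_k')^{\vee}\bigr) \,\bigr\}.
\]
First I would fix a subsequence $(k_n)$ realizing this $\limsup$. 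Define $S_n \in \{1,\dots,5\}^{\mathbb{Z}}$ by $S_n(j) = d_{k_n + j}$ whenever $k_n + j \ge 1$ (and, say, $S_n(j)=1$ otherwise). Since $\{1,\dots,5\}^{\mathbb{Z}}$ with the product topology is compact and metrizable, after passing to a further subsequence we may assume $S_n \to T$ pointwise; because $k_n \to \infty$, for each fixed $j$ the limit $T(j)$ is just the eventual value of $d_{k_n+j}$, so the irrelevant choice made for $k_n+j\le 0$ does no harm. This $T$ is the candidate extremal sequence. The single analytic fact used throughout is that if a sequence of Romik expansions agrees with a fixed one on ever-longer initial blocks, the corresponding points of $\XXX$ converge --- because the cylinder sets $C(e_1,\dots,e_m)$ shrink to a point as $m\to\infty$ --- and therefore so do their images under the continuous maps $(\cdot)^{\vee}$, $\hat{(\cdot)}$, and the stereographic projection $\|\cdot\|\colon \XXX \to [0,\infty]$; one must work in the extended reals to cover the case $\delta(P)=0$.

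\textbf{The inequality $L(T) \ge L(P)$.} Consider the section of $T$ based at the origin, $P_\infty^*\mid Q_\infty$, with $Q_\infty = [T(1),T(2),\dots]_{\XXX}$ and $P_\infty = [T(0),T(-1),T(-2),\dots]_{\XXX}$. For $n$ large the expansion of $P_{k_n}'$ agrees with $Q_\infty$ on longer and longer prefixes, and likewise that of $P_{k_n}''$ agrees with $P_\infty$ (the $1^{\infty}$-tail of $P_{k_n}''$ never reaches those prefixes since $k_n\to\infty$). By the convergence principle, $\|(P_{k_n}')\| \to \|Q_\infty\|$ and $\|(\widehat{P_{k_n}''})^{\vee}\| \to \|\hat P_\infty^{\vee}\|$, hence $L((P_{k_n}'')^*\mid P_{k_n}') \to L(P_\infty^*\mid Q_\infty)$, and similarly for the $\vee$-variant. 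Passing to a subsequence on which one of the two terms in the $\max$ attains the $\limsup$, we obtain a section of $T$ (namely $P_\infty^*\mid Q_\infty$ or its $\vee$) whose $L$-value equals $L(P)$, so $L(T)\ge L(P)$.

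\textbf{The inequality $L(T) \le L(P)$.} Let $P'^*\mid Q'$ be an arbitrary section of $T$, say $Q' = [T(m),T(m+1),\dots]_{\XXX}$ and $P' = [T(m-1),T(m-2),\dots]_{\XXX}$, and put $\ell_n = k_n + m - 1$. For $n$ large the expansion of $P_{\ell_n}'$ agrees with $Q'$, and that of $P_{\ell_n}''$ with $P'$, on ever-longer prefixes, so the convergence principle gives $L((P_{\ell_n}'')^*\mid P_{\ell_n}') \to L(P'^*\mid Q')$ and $L(((P_{\ell_n}'')^{\vee})^*\mid (P_{\ell_n}')^{\vee}) \to L((P'^{\vee})^*\mid Q'^{\vee})$. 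Since $\ell_n\to\infty$, each limit on the left is the limit of a subsequence of the sequence whose $\limsup$ is $L(P)$, and each term is bounded above by the corresponding $\max$; hence $\max\bigl(L(P'^*\mid Q'),\ L((P'^{\vee})^*\mid Q'^{\vee})\bigr) \le L(P)$. Taking the supremum over all sections of $T$ yields $L(T)\le L(P)$, and combined with the previous paragraph $L(T)=L(P)$ --- in particular the supremum defining $L(T)$ is attained at the origin-section.

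\textbf{Main obstacle.} No single hard estimate is involved; the work lies in the bookkeeping of the two convergence arguments --- matching the one-sided ``cut'' data $(P_k'',P_k')$ of the fixed point $P$ with the two-sided sections of the limit sequence $T$ under the correct index shift $\ell_n = k_n+m-1$ --- and in confirming that agreement of Romik expansions on long prefixes genuinely forces convergence of the quantities $\|\hat P^{\vee}\|+\|Q\|$, including the degenerate situations where $\|\cdot\|$ takes the values $0$ or $\infty$ and where the $1^{\infty}$-tail of $P_k''$ intervenes. Once the continuity of $\|\cdot\|$ as a map to $[0,\infty]$ and the shrinking of cylinder sets (both available from \S\ref{SecRomik} and \S\ref{SecPerron}) are in place, both inequalities follow.
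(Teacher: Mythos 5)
Your proposal is correct and is essentially the paper's proof: the paper simply invokes compactness of $\{1,\dots,5\}^{\mathbb{Z}}$ (Tychonoff) and cites Bombieri's argument in \cite{Bom07} as applying verbatim, and what you have written out --- extracting a limit $T$ of shifted digit sequences along a subsequence realizing the $\limsup$ in Corollary~\ref{CorPerronFormula}, then proving $L(T)\ge L(P)$ and $L(T)\le L(P)$ via shrinking cylinder sets and continuity of $\|\cdot\|$ into $[0,\infty]$ --- is precisely that argument in detail. No substantive difference in approach, only in the level of explicitness.
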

\begin{proof}
Our proof is based on Bombieri's argument in page 191 of \cite{Bom07}.
We will frequently rely on the fact that the function $\| \cdot \| : \{1, 2, 3, 4, 5 \}^{\mathbb{N}} \longrightarrow [0, \infty]$ is continuous when $\{1, 2, 3, 4, 5 \}^{\mathbb{N}}$ is equipped with the product topology.
This is an immediate corollary of Proposition~\ref{prop:diam_zero}.

Let $\mathcal{P} \in \mathcal{X} - \mathcal{Z}$.\footnote{During this proof, we use a calligraphic font $\mathcal{P}$ to denote an element in $\mathcal{X} - \mathcal{Z}$ in order to avoid the conflict with the $P$ in a section $P^*\vert Q$ of $T$.}
It is not difficult to prove that the lim sup and max in \eqref{eq:LPLimsupMax} commute, 
that is,
\begin{align*}
  L(\mathcal{P}) &= \limsup_{k\to\infty}
  \max\left\{
    \| (\hat{P}_k'')^{\vee} \| + \| P_k' \|
    ,
    \| (\hat{P}_k'')\| + \| P_k'^{\vee}  \|
  \right\} \\
  &=
  \max\left\{
  \limsup_{k\to\infty}
    \| (\hat{P}_k'')^{\vee} \| + \| P_k' \|
    ,
  \limsup_{k\to\infty}
    \| \hat{P}_k''\| + \| P_k'^{\vee}  \|
  \right\}.
\end{align*}
First, let us assume that
\begin{equation}
  L(\mathcal{P}) =
  \limsup_{k\to\infty}
  \left(
    \| (\hat{P}_k'')^{\vee} \| + \| P_k' \|
  \right)
   \ge 
  \limsup_{k\to\infty}
  \left(
  \| \hat{P}_k''\| + \| P_k'^{\vee}  \|
\right),
  \label{eq:assumption_lim_sup}
\end{equation}
and we shall find a doubly infinite sequence $T$ satisfying 
\begin{equation}
  L(\mathcal{P}) = L(T).
  \label{eq:Bombieri_trick}
\end{equation}

Choose a sequence (of indices) $k_1 < k_2 < \cdots $ such that
\[
  L(\mathcal{P}) = 
  \lim_{j\to\infty}
  \left( 
    \| (\hat{P}_{k_j}'')^{\vee} \| + \| P_{k_j}' \|
  \right).
\]
According to Tychonoff's theorem, $\{ 1, 2, 3, 4, 5 \}^{\mathbb{Z}}$ is a compact set.
Therefore the sequence 
$\{ (P_{k_j}'')^*\vert P_{k_j}' \}_{j=1}^{\infty}$ 
has a limit point, say, $P_0^*\vert Q_0$ in $\{ 1, 2, 3, 4, 5 \}^{\mathbb{Z}}$.
Let $T$ be the equivalence class of $P_0^*\vert Q_0$ and we will show that \eqref{eq:Bombieri_trick} is satisfied for this $T$.

By replacing $\{k_j\}_{j=1}^{\infty}$ with its subsequence, if necessary, we may assume without loss of generality that
$(P_{k_j}'')^*\vert P_{k_j}' \to P_0^*\vert Q_0$
with respect to the product topology in $\{ 1, 2, 3, 4, 5 \}^{\mathbb{Z}}$.
Consequently, we have $P_{k_j}'' \to P_0$ and $P_{k_j}' \to Q_0$ as elements of $\{1, 2, 3, 4, 5\}^{\mathbb{N}}$ with respect to the product topology.
In particular, $(\hat{P}_{k_j}'')^{\vee} \to \hat{P}_0^{\vee}$ because the digit-wise substitutions $(\hat{\cdot})$ and $(\cdot)^{\vee}$ are continuous.
Therefore,
\[
    \| (\hat{P}_{k_j}'')^{\vee} \| + \| P_{k_j}' \|
    \to
    \| \hat{P}_0^{\vee} \| + \| Q_0 \|
  =L(P_0^*\vert Q_0),
\]
which proves $L(\mathcal{P}) = L(P_0^*\vert Q_0)$.

Next, we prove that $L(P_0^*\vert Q_0) =L(T)= \sup_{P^*\vert Q}\max\{ L(P^*\vert Q), L(P^{\vee})^*\vert Q^{\vee} \}$ where $\{P^*\vert Q\}$ runs over all sections of $T$. 
Let $P^*\vert Q$ be a $t$-shift of $P_0^*\vert Q_0$  for some $t\in \mathbb{Z}$.
Then it is obvious that 
$P_{k_j+t}'' \to P$ and $P_{k_j+t}' \to Q$,
and therefore
\[
    \| (\hat{P}_{k_j+t}'')^{\vee} \| + \| P_{k_j+t}' \|
    \to
    \| \hat{P}^{\vee} \| + \| Q \|
    =
  L(P^*\vert Q).
\]
This shows that $L(P^*\vert Q)$ is a subsequential limit of 
$\{ 
    \| (\hat{P}_k'')^{\vee} \| + \| P_k' \|
\}$,
thus $\le L(\mathcal{P})$.
Since $L(\mathcal{P}) = L(P_0^*\vert Q_0)$,  we conclude that $L(P^*\vert Q) \le L(P_0^*\vert Q_0)$.
Additionally, we deduce from $P_{k_j+t}'' \to P$ and $(P_{k_j+t}')^{\vee} \to Q^{\vee}$ that
\[
  \| \hat{P}_{k_j+t}'' \| + \| (P_{k_j+t}')^{\vee} \|
    \to
    \| \hat{P} \| + \| Q^{\vee} \|
    =
  L((P^{\vee})^*\vert Q^{\vee}),
\]
showing that $L((P^{\vee})^*\vert Q^{\vee})$ is a subsequential limit of
$
\{ 
  \| \hat{P}_k'' \| + \| (P_k')^{\vee} \|
\}$.
From the assumption we made in \eqref{eq:assumption_lim_sup}, we conclude that $L((P^{\vee})^*\vert Q^{\vee}) \le L(\mathcal{P}) = L(P_0^*\vert Q_0)$.
This completes the proof of $L(P_0^*\vert Q_0) = L(T)$, thus of \eqref{eq:Bombieri_trick}.

Next, assume that
\begin{equation*}
  \limsup_{k\to\infty}
  \left(
    \| (\hat{P}_k'')^{\vee} \| + \| P_k' \|
  \right)
   \le 
  \limsup_{k\to\infty}
  \left(
  \| \hat{P}_k''\| + \| P_k'^{\vee}  \|
\right)
=L(\mathcal{P}).
%  \label{eq:assumption_lim_sup2}
\end{equation*}
In this case, we let $P_0^*\vert Q_0$ be a limit point of 
$\{ ({P_{k_j}''}^{\vee})^*\vert P_{k_j}'^{\vee} \}$ in $\{1, 2, 3, 4, 5 \}^{\mathbb{Z}}$
and let $T$ be the equivalent class of $P_0^*\vert Q_0$. 
The proof of $L(T) = L(\mathcal{P})$ can be done in a similar way as before, so we omit it.
\end{proof}

Thanks to this lemma, it suffices to characterize all doubly infinite Romik sequences $T$ with $L(T)< 4/\sqrt3$.

\subsection{Romik system on the real line}
Let $\XXX_{[0, \infty]} = [0, \infty]$ and define a map $\TTT_{[0, \infty]}: \XXX_{[0, \infty]}\longrightarrow \XXX_{[0, \infty]}$ to be
\[
\TTT_{[0, \infty]}(t)=
\begin{cases}
t-\sqrt3  & \text{ if } t\ge \sqrt3 ,\\
\dfrac{-t+\sqrt3}{\sqrt3t - 2}  & \text{ if } 2/\sqrt{3}\le t\le \sqrt3,\\
\dfrac{2t-\sqrt3}{-\sqrt3t + 2}  & \text{ if } \sqrt3/2\le t\le 2/\sqrt3,\\
\dfrac{-2t +\sqrt3}{\sqrt3t - 1}  & \text{ if } 1/\sqrt{3}\le t\le \sqrt3/2,\\
\dfrac{t}{-\sqrt3t + 1}  & \text{ if } 0\le t\le 1/\sqrt3.\\
\end{cases}
\]
Recall from \eqref{EqDefNorm} that we have defined a stereographic projection
$\| \cdot \|: \XXX \longrightarrow \XXX_{[0, \infty]}$.
Some straightforward but tedious calculation shows that the dynamical system $(\XXX_{[0, \infty]}, \TTT_{[0, \infty]})$ is conjugate to the Romik system $(\XXX, \TTT)$ with the conjugation map being the stereographic projection. 
In other words, the diagram 
\begin{equation}\label{EqCommutativeDiagram}
\begin{tikzcd}
\XXX \arrow[r, "\| \cdot \|"] \arrow[d, "\TTT"]
& \XXX_{[0, \infty]} \arrow[d, "\TTT_{[0, \infty]}"] \\
\XXX \arrow[r, "\| \cdot \|"]
& \XXX_{[0, \infty]}
\end{tikzcd}
\end{equation}
commutes.
Also, the Romik digit of $P$ is determined by its stereographic image:
\[
d(P) = 
\begin{cases}
1 & \text{ if } \| P \| \ge \sqrt3, \\
2 & \text{ if } 2/\sqrt3 \le \| P \| \ge \sqrt3, \\
3 & \text{ if } \sqrt3/2 \le \| P \| \ge 2/\sqrt3, \\
4 & \text{ if } 1/\sqrt3 \le \| P \| \ge \sqrt3/2, \\
5 & \text{ if } 0 \le \| P \| \ge 1/\sqrt3. \\
\end{cases}
\]
Figure~\ref{FigCylinderSetsLine} shows how the cylinder sets of $\XXX$ are mapped under the stereographic projection (cf.~Figure~\ref{FigCylinderSetsLengthOne}). 
By abuse of language, a cylinder set of $\XXX_{[0, \infty]}$ will mean the image of the corresponding cylinder set of $\XXX$.
\begin{figure}
\begin{center}
	\begin{tikzpicture}[scale=2.3]
		\draw (0, 0) node[below]{0} -- (1, 0) 
		node[midway, above]{$d = 5$}
		node[below]{$\frac1{\sqrt3}$} 
		-- (1.5, 0) 
		node[midway, above]{$d = 4$}
		node[below]{$\frac{\sqrt3}2$} 
		-- (2, 0) 
		node[midway, above]{$d = 3$}
		node[below]{$\frac2{\sqrt3}$} 
		-- (3, 0) 
		node[midway, above]{$d = 2$}
		node[below]{$\sqrt3$} 
		-- (5, 0) 
		node[midway, above]{$d = 1$}
		node[below]{$\infty$} ;
		\foreach \a in {0, 1, 1.5, 2, 3, 5}
		\filldraw (\a, 0) circle(0.5pt);
\end{tikzpicture}
\end{center}
    \caption{Images of cylinder sets in $\XXX_{[0, \infty]}$}
    \label{FigCylinderSetsLine}
\end{figure}

How do the actions of $M_1, \dots, M_5$ on $\XXX$ translate to $\XXX_{[0, \infty]}$?
To answer this, we let $\mathrm{GL}_2(\mathbb{R})$ act on $\mathbb{R}\cup \{ \infty\}$ via the fractional linear transformation:
\begin{equation}\label{EqDefFractionalLinearAction}
\begin{pmatrix}
a &  b \\ c & d \\
\end{pmatrix}\cdot x
=\frac{ax+ b}{cx + d}.
\end{equation}
(If $x = \infty$ then  $\begin{psmallmatrix}
a & b \\ c & d
\end{psmallmatrix}\cdot \infty = a/c$ by definition.)
And we define the matrices $N_1, \dots, N_5$ to be
\begin{equation}\label{EqNd}
\begin{gathered}
N_1 = 
\begin{pmatrix}
1 &  \sqrt{3} \\ 0 & 1 \\
\end{pmatrix},
\quad
N_2 = 
\begin{pmatrix}
2 & \sqrt{3} \\ \sqrt{3} & 1 \\
\end{pmatrix},
\quad
N_3 = 
\begin{pmatrix}
2 & \sqrt{3} \\ \sqrt{3} & 2 \\
\end{pmatrix},
\\
N_4 = 
\begin{pmatrix}
1 & \sqrt{3} \\ \sqrt{3} & 2 \\
\end{pmatrix},
\quad
N_5 = 
\begin{pmatrix}
1 & 0 \\ \sqrt{3} & 1 \\
\end{pmatrix}.
\end{gathered}
\end{equation}
Then the following proposition shows that the actions of $N_d$ on $\XXX_{[0, \infty]}$ and those of $M_d$ on $\XXX$ are the same
(cf.~Proposition~\ref{PropRomiksystemBerggrenTrees}).
\begin{proposition}\label{PropActionNd}
Let $P\in \XXX$. Then we have
\[
\| [d, P ]\| = N_d \cdot \| P \|
\]
for $d = 1, \dots, 5.$
Here, the dot between $N_d$ and $\| P \|$ indicates the $\mathrm{GL}_2(\mathbb{R})$-action of $N_d$ on $\XXX_{[0, \infty]}$ as in \eqref{EqDefFractionalLinearAction}.

\end{proposition}
\begin{proof}
%One can easily that the function
%\[
%D(t) =
%\left(
%\frac{t^2 + \frac2{\sqrt3}t}{t^2 + \sqrt3 t + 1},
%\frac{\frac2{\sqrt3}t + 1}{t^2 + \sqrt3 t + 1}
%\right).
%\]
%is the inverse function of the stereographic projection.
For each $d = 1, \dots, 5$, write
$
P = (\alpha, \beta) = [d_1, d_2, \dots]
$
and
$
P_d' = (\alpha_d', \beta_d') = [d, d_1, d_2, \dots].
$
Then  $P = \TTT(P_d')$.
If we let $t_d = \| P_d' \|$ and 
$t = \| P \|$ then the commutativity of 
\eqref{EqCommutativeDiagram}
implies that 
\begin{equation} \label{EqPTPd}
t = \TTT_{[0,\infty]}(t_d).
\end{equation}
On the other hand, \eqref{EqNd} gives 
\begin{gather*}
N_1^{-1} = 
\begin{pmatrix}
1 &  -\sqrt{3} \\ 0 & 1 \\
\end{pmatrix},
\quad
N_2^{-1} = 
\begin{pmatrix}
-1 & \sqrt{3} \\ \sqrt{3} & -2 \\
\end{pmatrix},
\quad
N_3^{-1} = 
\begin{pmatrix}
2 & -\sqrt{3} \\ -\sqrt{3} & 2 \\
\end{pmatrix},
\\
N_4^{-1} = 
\begin{pmatrix}
-2 & \sqrt{3} \\ \sqrt{3} & -1 \\
\end{pmatrix},
\quad
N_5^{-1} = 
\begin{pmatrix}
1 & 0 \\ -\sqrt{3} & 1 \\
\end{pmatrix}.
\end{gather*}
By comparing this with \eqref{EqPTPd} (and the definition of $\TTT_{[0, \infty]}$), 
%and the definition of $T_{[0,\infty]}$
we complete the proof of the proposition.
\end{proof}

For a fixed finite sequence $w = [d_1, \dots, d_k]$ of Romik digits, we define% the \emph{Cohn matrix} $N_w$ to be
\[
N_w = 
\begin{pmatrix}
a_w & b_w \\ c_w & d_w\\
\end{pmatrix}
=
N_{d_1}N_{d_2}\cdots N_{d_k}.
\]
We denote by $w^*$ and $w^{\vee}$ the corresponding sequences of Romik digits obtained from $w$ by applying $*$ and $\vee$,
that is,
\[
w^* = [d_k, d_{k-1}, \dots,d_1] \text{ and }
w^{\vee} = [d_1^{\vee}, d_2^{\vee}, \dots, d_k^{\vee}].
\]
%Recall from \S\ref{SubsecDoublyInfinite} that $w$ is said to be \emph{reduced} if $w$ contains neither $1$ nor $5$.
\begin{proposition}\label{PropCylinderMatrix}
Let $w = [d_1, \dots, d_k]$ and suppose $P\in C(d_1, \dots, d_k)$.
Then 
\[
\begin{cases}
b_w/d_w \le \| P \| \le a_w/c_w & \text{ if } \det(N_w) = 1, \\
a_w/c_w \le \| P \| \le b_w/d_w & \text{ if } \det(N_w) = -1. \\
\end{cases}
\]
Also,
\[
\| Z^{(1, 0)}_k(P) \| = a_w/c_w
\quad
\text{and}
\quad
\| Z^{(0, 1)}_k(P) \| = b_w/d_w.
\]
\end{proposition}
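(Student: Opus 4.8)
The plan is to transport the whole statement to $\XXX_{[0,\infty]}=[0,\infty]$ through the stereographic projection, where a cylinder set becomes the image of $[0,\infty]$ under the fractional linear transformation attached to $N_w$. First I would iterate Proposition~\ref{PropActionNd}: given $P\in C(d_1,\dots,d_k)$, set $P'=\TTT^k(P)\in\XXX$, and peel off one leading digit at a time, so that $k$ applications of Proposition~\ref{PropActionNd} give
\[
\|P\| \;=\; N_{d_1}N_{d_2}\cdots N_{d_k}\cdot\|P'\| \;=\; N_w\cdot\|P'\| \;=\; \frac{a_w\|P'\|+b_w}{c_w\|P'\|+d_w}.
\]
Since $\TTT^k$ restricts to a bijection $C(d_1,\dots,d_k)\to\XXX$ (each $\TTT|_{C(d)}$ being a bijection onto $\XXX$) and $\|\cdot\|\colon\XXX\to[0,\infty]$ is a bijection, the value $\|P'\|$ runs over all of $[0,\infty]$ as $P$ runs over $C(d_1,\dots,d_k)$. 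Hence $\{\|P\|:P\in C(d_1,\dots,d_k)\}$ is exactly the image of $[0,\infty]$ under $f_w\colon x\mapsto(a_wx+b_w)/(c_wx+d_w)$.

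Next I would pin down that image. Each of $N_1,\dots,N_5$ has nonnegative entries, so $N_w=N_{d_1}\cdots N_{d_k}$ does as well, and $\det N_w=\pm1\neq0$ forces the bottom row $(c_w,d_w)$ to be nonzero, so $c_wx+d_w>0$ for every $x\in(0,\infty)$. Thus $f_w$ has no pole on $(0,\infty)$ and restricts to a homeomorphism of $[0,\infty]$ onto an interval with endpoints $f_w(0)=b_w/d_w$ and $f_w(\infty)=a_w/c_w$. Its derivative on $(0,\infty)$ is $f_w'(x)=\det(N_w)/(c_wx+d_w)^2$, which has the sign of $\det N_w$; so $f_w$ is strictly increasing when $\det N_w=1$ and strictly decreasing when $\det N_w=-1$. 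Therefore the image is $[b_w/d_w,\,a_w/c_w]$ in the first case and $[a_w/c_w,\,b_w/d_w]$ in the second, which together with the first paragraph is the claimed double inequality for $\|P\|$.

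For the boundary points, recall $\mathbf{z}_k^{(1,0)}(P)=M_{d_1}\cdots M_{d_k}\mathbf{u}^{(1,0)}$ with $\mathbf{u}^{(1,0)}=(1,0,1)$ representing $(1,0)=[1^\infty]_{\XXX}$; by Proposition~\ref{PropRomiksystemBerggrenTrees}, left multiplication by $M_{d_1}\cdots M_{d_k}$ prepends the digits $d_1,\dots,d_k$, so $Z_k^{(1,0)}(P)=[d_1,\dots,d_k,1^\infty]_{\XXX}$, i.e. $\TTT^k(Z_k^{(1,0)}(P))=(1,0)$, and likewise $\TTT^k(Z_k^{(0,1)}(P))=(0,1)$. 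A direct check from \eqref{EqDefNorm} (equivalently: $(1,0)$ and $(0,1)$ are the $\TTT$-fixed points, carried by the conjugacy $\|\cdot\|$ to the fixed points $\infty$ and $0$ of the $d=1$ and $d=5$ branches of $\TTT_{[0,\infty]}$) gives $\|(1,0)\|=\infty$ and $\|(0,1)\|=0$. Substituting $P'=(1,0)$ and $P'=(0,1)$ into the identity $\|P\|=f_w(\|P'\|)$ from the first step then yields $\|Z_k^{(1,0)}(P)\|=f_w(\infty)=a_w/c_w$ and $\|Z_k^{(0,1)}(P)\|=f_w(0)=b_w/d_w$.

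The only step that is not pure bookkeeping is the identification of the image of the arc $[0,\infty]$ under $f_w$: a priori a fractional linear transformation could send this arc to the complementary arc through $\infty$ between the same two endpoints, and it is precisely the nonnegativity of the entries of $N_w$ — that is, the absence of a pole of $f_w$ on $(0,\infty)$ — that rules this out and simultaneously supplies the monotonicity. Everything else (nonnegativity of $N_w$, the values $\|(1,0)\|$ and $\|(0,1)\|$, and iterating Proposition~\ref{PropActionNd}) is routine.
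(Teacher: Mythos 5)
Your proof is correct, and it overlaps with the paper's for the endpoint values: the paper also gets $\| Z_k^{(1,0)}(P)\| = N_w\cdot\infty = a_w/c_w$ and $\| Z_k^{(0,1)}(P)\| = N_w\cdot 0 = b_w/d_w$ by iterating Proposition~\ref{PropActionNd} on $[d_1,\dots,d_k,1^\infty]$ and $[d_1,\dots,d_k,5^\infty]$. Where you diverge is the two-sided bound: the paper disposes of it in one line by citing Proposition~\ref{PropMonotonicity} (the cylinder set is the arc between its two boundary points, $\TTT^k$ is order-preserving or order-reversing according to $\sign(d_1,\dots,d_k)=\det N_w$, and $\|\cdot\|$ reverses $\preceq$), whereas you re-derive the monotonicity at the matrix level, writing $\|P\| = f_w(\|\TTT^k P\|)$ with $\|\TTT^k P\|\in[0,\infty]$ and analyzing the M\"obius map $f_w$ directly: nonnegativity of the entries of $N_w$ (with positive diagonal) rules out a pole on $(0,\infty)$, and the derivative $\det(N_w)/(c_wx+d_w)^2$ fixes the orientation. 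Your route is slightly longer but self-contained and gives a bit more (the image of the cylinder under $\|\cdot\|$ is exactly the closed interval with endpoints $b_w/d_w$ and $a_w/c_w$, not just contained in it), and your remark about why the arc cannot map to the complementary arc through $\infty$ is exactly the point the paper's appeal to Proposition~\ref{PropMonotonicity} encapsulates. One small touch-up: the formula \eqref{EqDefNorm} evaluated at $(1,0)$ is formally $0/0$, so ``a direct check'' should be replaced by the limit along $\XXX$ or by the fixed-point/conjugacy argument you give parenthetically (the paper likewise simply uses $\|(1,0)\|=\infty$); with that understood, everything goes through.
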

\begin{proof}
Proposition~\ref{PropActionNd} implies  
\[
\| Z_k^{(1, 0)}(P) \| = 
\| [d_1, \dots, d_k, 1^{\infty}] \| 
=
N_{d_1}\cdots N_{d_k} \cdot \| (1, 0) \| = N_w \cdot \infty = \frac{a_w}{c_w}.
\]
The equality for $Z^{(0, 1)}_k(P)$ is similarly proven.
Now, the inequalities for $P\in C(d_1, \dots, d_k)$ follow from Proposition~\ref{PropMonotonicity}.
\end{proof}

For instance, let 
$[2, d_1, d_2, \dots] \in C(2)$.
Since $N_2 = \begin{psmallmatrix}
2 & \sqrt3 \\ \sqrt3 & 1\\
\end{psmallmatrix}$ and its determinant is $-1$ we have
\[
\frac{2}{\sqrt3}
\le
\| [2, d_1, d_2, \dots] \| 
\le \frac{\sqrt3}{1}
\]
for any $P = [d_1, d_2, \dots] \in\XXX$.
To simplify notations,
we will write the above inequality as
\begin{equation}\label{EqCylinder4}
\frac{2}{\sqrt3}
\le
\| 2\cdots \| 
\le 
\sqrt3.
\end{equation}
In other words, the ellipses ($\cdots$) will be used whenever the inequality holds true regardless of the choice of the remaining Romik digits.

\begin{proposition}\label{CorCommonPrefixComparison}
Suppose that $w$ is a common prefix (possibly empty) of $P_1$ and $P_2$, that is,
\[
\begin{cases}
P_1 = [w, d_1, \dots] \\
P_2 = [w, d_2, \dots],
\end{cases}
\]
with $d_1\neq d_2$.
If $\det(N_w) = 1$, then 
\[
\| P_1 \| \le \| P_2 \| 
\Longleftrightarrow
d_1>d_2.
\]
If $\det(N_w) = -1$, then 
\[
\| P_1 \| \le \| P_2 \|
\Longleftrightarrow
d_1<d_2.
\]
\end{proposition}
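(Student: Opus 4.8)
The plan is to peel off the common prefix $w$ with the Romik map and reduce the claim to a comparison of two length-one cylinder sets, taking care to track how each map involved reverses or preserves the order $\preceq$. Let $k$ be the length of $w$ and set $R_i=\TTT^k(P_i)$ for $i=1,2$, so that $R_1\in C(d_1)$ and $R_2\in C(d_2)$ (here $d_1,d_2$ are the digits immediately following $w$, as in the statement). Since $P_1$ and $P_2$ both lie in the length-$k$ cylinder set $C(w)$, Proposition~\ref{PropMonotonicity} records whether $\TTT^k$ preserves or reverses $\preceq$ on $C(w)$, the deciding sign being $\sign(d_1,\dots,d_k)=\det(M_{d_1}\cdots M_{d_k})$; so the first thing to verify is that this sign equals $\det(N_w)$.

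That verification is immediate: from \eqref{EqNd} one has $\det(N_1)=\det(N_3)=\det(N_5)=1$ and $\det(N_2)=\det(N_4)=-1$, which matches the values of $\det(M_d)$ recorded after \eqref{EqDefSign}, so $\det(N_d)=\det(M_d)$ for every $d$; multiplying over the digits of $w$ gives $\det(N_w)=\sign(d_1,\dots,d_k)$. The second ingredient is the single-digit comparison. Since $\|\cdot\|$ maps the length-one cylinder sets onto the consecutive intervals $[0,\tfrac{1}{\sqrt3}]$, $[\tfrac{1}{\sqrt3},\tfrac{\sqrt3}{2}]$, $[\tfrac{\sqrt3}{2},\tfrac{2}{\sqrt3}]$, $[\tfrac{2}{\sqrt3},\sqrt3]$, $[\sqrt3,\infty]$ attached respectively to $d=5,4,3,2,1$ (Figure~\ref{FigCylinderSetsLine}), the hypothesis $d_1\neq d_2$ gives
\[
\|R_1\|\le\|R_2\|\iff d_1>d_2
\qquad\text{and, dually,}\qquad
\|R_1\|\ge\|R_2\|\iff d_1<d_2 .
\]

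To assemble the proof I would chain three equivalences, each an order-reversal. Since $\|\cdot\|$ is order-reversing (as noted just after \eqref{EqDefNorm}), $\|P_1\|\le\|P_2\|$ is equivalent to $P_2\preceq P_1$; by Proposition~\ref{PropMonotonicity} this is in turn equivalent to $R_2\preceq R_1$ if $\det(N_w)=1$ and to $R_1\preceq R_2$ if $\det(N_w)=-1$; applying the order-reversing property of $\|\cdot\|$ once more turns these into $\|R_1\|\le\|R_2\|$ and $\|R_1\|\ge\|R_2\|$ respectively; and the single-digit comparison above then yields $d_1>d_2$ and $d_1<d_2$, which is precisely the proposition. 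The only configuration deserving comment is the degenerate case $P_1=P_2$ (equivalently $R_1=R_2$), which by \eqref{EqFourRationalPts} can occur only with $|d_1-d_2|=1$; there $\|P_1\|=\|P_2\|$, and the statement is read under the assumption $P_1\neq P_2$ that holds in all applications. I expect the only real work to be this orientation bookkeeping --- composing the two reversals coming from $\|\cdot\|$ with the single, sign-dependent reversal coming from $\TTT^k$ so as to land on the correct direction in each case --- together with the routine determinant identity $\det(N_w)=\sign(w)$.
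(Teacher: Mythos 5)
Your proof is correct and follows essentially the same route as the paper, which simply cites Proposition~\ref{PropMonotonicity} together with the order-reversing property of the stereographic projection; your write-up just makes explicit the bookkeeping (the identity $\det(N_w)=\sign(w)$ and the single-digit comparison via Figure~\ref{FigCylinderSetsLine}) that the paper leaves implicit.
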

\begin{proof}
This follows from Proposition~\ref{PropMonotonicity} and the fact that the stereographic projection is order-reversing.
\end{proof}

\begin{lemma}\label{LemMatrixTranspose}
For a fixed sequence $w = [d_1, \dots, d_k]$ of Romik digits, write 
\[
N_w
=
\begin{pmatrix}
a_w & b_w \\ c_w & d_w\\
\end{pmatrix}.
\]
Then we have
\[
N_{w^{\vee}} = 
\begin{pmatrix}
d_w & c_w \\ b_w & a_w\\
\end{pmatrix}.
\]
Further, if $w$ is reduced (that is, $w$ contains neither 1 nor 5), then
\[
N_{w^*} = 
\begin{pmatrix}
a_w & c_w \\ b_w & d_w\\
\end{pmatrix},
\]
and that
\[
N_{(w^*)^{\vee}} = 
\begin{pmatrix}
d_w & b_w \\ c_w & a_w\\
\end{pmatrix}.
\]
\end{lemma}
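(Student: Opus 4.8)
The three identities are all instances of a single principle: at the level of the matrices $N_w$, passing from $w$ to $w^{\vee}$ conjugates $N_w$ by the antidiagonal involution $J=\begin{psmallmatrix}0&1\\1&0\end{psmallmatrix}$, and (for \emph{reduced} $w$) passing from $w$ to $w^{*}$ transposes $N_w$. Since $N_w=N_{d_1}\cdots N_{d_k}$, since conjugation by $J$ is a monoid homomorphism (as $J^2=I$), and since transposition is a monoid anti-homomorphism that matches the order-reversal built into $w\mapsto w^{*}$, in each case it suffices to check the assertion on the five generators $N_1,\dots,N_5$ and then multiply. Observe moreover that ``transpose'' and ``conjugate by $J$'' commute (because $J$ is symmetric), so they generate a Klein four-group of operations on $2\times2$ matrices whose orbit through $N_w=\begin{psmallmatrix}a_w&b_w\\c_w&d_w\end{psmallmatrix}$ consists precisely of the four matrices $N_w$, $N_{w^{\vee}}$, $N_{w^{*}}$, $N_{(w^{*})^{\vee}}$ displayed in the lemma.

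The plan is as follows. First I would recall the digit involution underlying $(\cdot)^{\vee}$, namely $1^{\vee}=5$, $2^{\vee}=4$, $3^{\vee}=3$, $4^{\vee}=2$, $5^{\vee}=1$ --- this is the involution forced by compatibility of the two descriptions of $P^{\vee}$ (it is \emph{not} the involution $d\mapsto\hat d$ of Lemma~\ref{Lem324}). Then a direct glance at \eqref{EqNd} confirms the five identities $N_{d^{\vee}}=J\,N_d\,J$ for $d=1,\dots,5$; e.g.\ $J N_1 J=\begin{psmallmatrix}1&0\\ \sqrt3&1\end{psmallmatrix}=N_5$, $J N_2 J=N_4$, and $J N_3 J=N_3$. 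Because $J^2=I$, for any $w=[d_1,\dots,d_k]$ this gives
\[
N_{w^{\vee}}=N_{d_1^{\vee}}\cdots N_{d_k^{\vee}}=(J N_{d_1} J)\cdots(J N_{d_k} J)=J\,N_w\,J,
\]
and conjugation by $J$ merely swaps the two rows and the two columns of $N_w$, yielding the first claimed formula. Note this step uses nothing about reducedness.

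Next I would observe that $N_2$, $N_3$, $N_4$ are symmetric while $N_1$, $N_5$ are not --- which is exactly why reducedness is imposed. Hence for reduced $w=[d_1,\dots,d_k]$,
\[
N_{w^{*}}=N_{d_k}\cdots N_{d_1}=N_{d_k}^{T}\cdots N_{d_1}^{T}=(N_{d_1}\cdots N_{d_k})^{T}=N_w^{T},
\]
which gives the second formula. Finally, since reducedness is preserved under reversal, applying the $\vee$-identity of the previous paragraph to $w^{*}$ in place of $w$ produces $N_{(w^{*})^{\vee}}=J\,N_{w^{*}}\,J=J\,N_w^{T}\,J$, and conjugating $N_w^{T}=\begin{psmallmatrix}a_w&c_w\\b_w&d_w\end{psmallmatrix}$ by $J$ gives the last claimed matrix.

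I do not expect any genuine obstacle: the argument is pure bookkeeping once the ``conjugate by $J$ $=$ $\vee$'' and ``transpose $=$ $*$ on reduced words'' dictionary is set up. The only points needing a moment's care are (i) using the correct digit involution $d\mapsto d^{\vee}$ (rather than $d\mapsto\hat d$) when reading off the base cases, and (ii) keeping straight which of the four forms of $N_w$ one lands in --- but since the orbit is under a commuting pair of involutions, no orientation or sign subtleties arise.
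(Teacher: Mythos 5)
Your proposal is correct and follows essentially the same route as the paper: conjugation by $J=\begin{psmallmatrix}0&1\\1&0\end{psmallmatrix}$ realizes $w\mapsto w^{\vee}$ via the generator identities $JN_dJ=N_{d^{\vee}}$, transposition realizes $w\mapsto w^{*}$ for reduced $w$ because $N_2,N_3,N_4$ are symmetric, and the third formula is obtained by combining the two. Your explicit remark that the relevant digit involution is $d\mapsto d^{\vee}$ (not $d\mapsto\hat d$) is a correct and useful clarification, but not a different argument.
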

\begin{proof}
Let 
\[
J 
=
\begin{pmatrix}
0 & 1 \\ 1 & 0\\
\end{pmatrix}.
\]
Then it is easy to see $J^2 = I_2$ (the $2\times 2$ identity matrix) and that
\[
J
\begin{pmatrix}
a & b \\ c & d\\
\end{pmatrix}
J
=
\begin{pmatrix}
d & c \\ b & a\\
\end{pmatrix}
\]
for any $a, b, c, d\in \mathbb{R}$.
A quick inspection of \eqref{EqNd} reveals
\[
JN_dJ = N_{d^{\vee}}
\]
for $d = 1, \dots, 5$.
From this observation, we have
\begin{align*}
N_{w^{\vee}} &=
N_{d_1^{\vee}} N_{d_2^{\vee}}  \cdots N_{d_k^{\vee}} \\
&= (JN_{d_1}J)(J N_{d_2} J) \cdots (JN_{d_k}J) = 
JN_wJ \\
&=
\begin{pmatrix}
d_w & c_w \\ b_w & a_w\\
\end{pmatrix}.
\end{align*}
For the second claim, note that $N_2, N_3, N_4$ are symmetric and
\[
N_{w^*} = N_{d_k}\cdots N_{d_1} =
N_{d_k}^T\cdots N_{d_1}^T= (N_w)^T.
\]
The third comes from combining the previous two.
\end{proof}

\begin{remark}
\label{rem:stereo_motivation}
It is a consequence of our choice of the stereographic projection in \eqref{EqDefNorm} that $JN_dJ = N_{d^{\vee}}$ and $N_2, N_3, N_4$ are symmetric.
This is one of the reasons why we have selected this particular stereographic projection in \eqref{EqDefNorm}.
\end{remark}

Given a finite Romik sequence $w = [d_1, \dots, d_k]$, we denote by ${}^{\infty}w^{\infty}$ the doubly infinite Romik sequence
\[
{}^{\infty}w^{\infty}
=
\cdots www\cdots.
\]
Notice that, if $P_w = w^{\infty}$ then 
$(P_{w^*})^*\vert P_w$ 
is a section of ${}^{\infty}w^{\infty}$, that is,
\[
(P_{w^*})^*\vert P_w
=
\cdots w\vert w\cdots.
\]
\begin{proposition}\label{PropPnormAndLP}
Given $w = [d_1, \dots, d_k]$, let $P_w = w^{\infty}$ and write
\[
N_w
=
\begin{pmatrix}
a_w & b_w \\ c_w & d_w\\
\end{pmatrix}.
\]
Also, let $\Delta_w = \Tr(N_w)^2 - 4\det(N_w)$.
Then,
\[
\| P_w \| =
\frac{a_w-d_w+\sqrt{\Delta_w}}{2c_w}.
\]
Further, if $w= [d_1,\dots d_k]$ is reduced, then
\[
L((P_{w^*})^*\vert P_w) = \frac{\sqrt{\Delta_w}}{c_w}.
\]
\end{proposition}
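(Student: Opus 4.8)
The plan is to prove the two displayed formulas in turn, deducing the Lagrange-number identity quickly from the fixed-point formula together with the transpose symmetries recorded in Lemma~\ref{LemMatrixTranspose}.

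\emph{The fixed-point formula for $\|P_w\|$.} Since $P_w = w^{\infty}$, applying $\TTT$ exactly $k$ times fixes $P_w$, so $k$ iterations of Proposition~\ref{PropActionNd} give $\|P_w\| = N_{d_1}\cdots N_{d_k}\cdot\|P_w\| = N_w\cdot\|P_w\|$; that is, $t := \|P_w\|$ is a fixed point of the fractional linear transformation attached to $N_w = \begin{psmallmatrix} a_w & b_w \\ c_w & d_w\end{psmallmatrix}$. This gives the quadratic $c_w t^2 - (a_w-d_w)t - b_w = 0$, whose discriminant is $(a_w-d_w)^2 + 4b_w c_w = (a_w+d_w)^2 - 4(a_w d_w - b_w c_w) = \Tr(N_w)^2 - 4\det(N_w) = \Delta_w$. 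To pin down the root: when $w$ is reduced — the only case needed for the second formula — every factor of $N_w$ is one of $N_2, N_3, N_4$, all of which have strictly positive entries, so $N_w$ has strictly positive entries; hence $c_w>0$ and the product of the two roots, $-b_w/c_w$, is negative, so the unique nonnegative root (which must be $\|P_w\|$) is the one carrying the $+$ sign, giving $\|P_w\| = \frac{a_w - d_w + \sqrt{\Delta_w}}{2c_w}$. (For general $w$, one has $c_w = 0$ only when $w$ is a power of $[1]$, in which case $\|P_w\| = \infty$; otherwise the same sign analysis, using $\|P_w\|\ge 0$ and the signs of the entries of $N_w$, selects the $+$ root.)

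\emph{The Lagrange number.} By Definition~\ref{DefLagrangeSequence}, taking $P = P_{w^*}$ and $Q = P_w$ for the section $(P_{w^*})^*\,|\,P_w$ we get
\[
L\big((P_{w^*})^*\,|\,P_w\big) = \big\|\,(\widehat{P_{w^*}})^{\vee}\,\big\| + \|P_w\|.
\]
Now $\hat{\,\cdot\,}$ fixes each of the digits $2,3,4$ by Lemma~\ref{Lem324}, hence fixes every reduced word; since $w$ reduced implies $w^*$ reduced, $\widehat{P_{w^*}} = P_{w^*}$, and applying $(\cdot)^{\vee}$ digitwise gives $(\widehat{P_{w^*}})^{\vee} = P_{(w^*)^{\vee}}$. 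The word $(w^*)^{\vee}$ is again reduced, so the fixed-point formula applies to it, and by Lemma~\ref{LemMatrixTranspose} we have $N_{(w^*)^{\vee}} = \begin{psmallmatrix} d_w & b_w \\ c_w & a_w\end{psmallmatrix}$, which has the same trace and the same determinant as $N_w$, hence the same $\Delta$. Therefore $\big\|P_{(w^*)^{\vee}}\big\| = \frac{d_w - a_w + \sqrt{\Delta_w}}{2c_w}$. Adding this to $\|P_w\| = \frac{a_w - d_w + \sqrt{\Delta_w}}{2c_w}$ cancels the $a_w$ and $d_w$ terms and leaves $\frac{2\sqrt{\Delta_w}}{2c_w} = \frac{\sqrt{\Delta_w}}{c_w}$, as claimed.

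\emph{On the difficulty.} This is essentially a bookkeeping argument once the conjugacy with the real-line Romik system (Proposition~\ref{PropActionNd}) and the transpose identities (Lemma~\ref{LemMatrixTranspose}) are in hand, so I do not anticipate a genuine obstacle. The only two points requiring a moment's care are the sign selection for the fixed point of $N_w$ — handled by positivity of $\|\cdot\|$ and of the entries of $N_w$ in the reduced case — and the verification that $\hat{\,\cdot\,}$ acts trivially on reduced words, which is exactly the reason the hypothesis ``$w$ reduced'' appears in the statement.
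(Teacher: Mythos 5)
Your argument is correct and essentially identical to the paper's: both obtain $\|P_w\|$ as the fixed point of the fractional linear transformation attached to $N_w$ (the paper phrases this via the eigenvector $(\|P_w\|,1)$ of $N_w$, using $\TTT^k(P_w)=P_w$ and Proposition~\ref{PropActionNd}), and both deduce the Lagrange-number formula by combining this with the transpose identity $N_{(w^*)^{\vee}}=\begin{psmallmatrix} d_w & b_w\\ c_w & a_w\end{psmallmatrix}$ from Lemma~\ref{LemMatrixTranspose}. Your explicit justification of which root of the quadratic equals $\|P_w\|$ (via positivity of the entries of $N_w$ in the reduced case) is a detail the paper leaves implicit, not a departure in method.
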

\begin{proof}
Since $\mathcal{T}_{[0, \infty]}^k(P_w) = P_w$ we have from Proposition~\ref{PropActionNd} that $N_w^{-1} \cdot \| P_w \| = \| P_w\|$.
On the other hand, it is easy to see from \eqref{EqDefFractionalLinearAction} that, 
for any $N\in \mathrm{GL}_2(\mathbb{R})$, we have 
$N\cdot x = x$ whenever the (column) vector $(x, 1)$ is an eigenvector of $N$.
Now a direct calculation shows that the vector
\[
\begin{pmatrix}
\frac{a_w-d_w+\sqrt{\Delta_w}}{2c_w} \\
1
\end{pmatrix}
\]
is an eigenvector of $N_w$.
This establishes the first equation in the proposition. For the second, combine this with Lemma~\ref{LemMatrixTranspose}.
\end{proof}
%\todo[inline]{Here, I will insert a remark explaining that the nice formula in the above proposition is a result of our particular choice of the stereographic projection.}
\subsection{Examples}\label{SecLExamples}
\subsubsection{Constant sequences} Since $ (3^{\infty})^{\vee}=3^{\infty}$ Proposition~\ref{PropStereographicProjection} gives $\| 3^{\infty} \| =1 $. So, 
\[
L({}^{\infty}3^{\infty})
=
L({}^{\infty}3\vert 3^{\infty}) = \| P^{\vee} \| + \| P \|= 1 + 1 = 2.
\]
For ${}^{\infty} 2^{\infty}$, we use Proposition~\ref{PropPnormAndLP} with 
$N_2 = \begin{psmallmatrix} 2  & \sqrt{3}\\ \sqrt{3} & 1
\end{psmallmatrix}$
to obtain
    \[
    L({}^{\infty}2\vert 2^{\infty}) = 
    \frac{\sqrt{3^2 + 4}}{\sqrt{3}} =
    \sqrt{\frac{13}{3}}.
    \]
Likewise, using
$N_4 = \begin{psmallmatrix} 1  & \sqrt{3}\\ \sqrt{3} & 2 \end{psmallmatrix}$,
\[
    L({}^{\infty}4|4^{\infty}) =\sqrt{\frac{13}{3}},
\]
which proves
\[
    L({}^{\infty}2^{\infty}) =
    L({}^{\infty}4^{\infty}) =
    \sqrt{\frac{13}{3}}.
\]
%In particular, all three constant sequences are admissible.
\subsubsection{}
Let $w = [2, 4]$ and $P_w = w^{\infty}$. Since
    \[
    N_2N_4 = 
    \begin{pmatrix} 5  & 4\sqrt{3}\\ 2\sqrt{3} & 5\end{pmatrix},
    \]
Proposition~\ref{PropPnormAndLP} gives
    \[
    L((P_{w^*})^*\vert P_w) = \frac{\sqrt{10^2- 4}}{2\sqrt{3}}=2\sqrt2.
    \]
    
  Next, we calculate $L(\dots 1515\vert 1515\dots)$. 
This sequence is \emph{not} reduced, so we cannot apply the second equation in Proposition~\ref{PropPnormAndLP} directly. 
Let
    $P=(51)^{\infty}$ and $Q = (15)^{\infty}$. 
   % We cannot use  because $P$ and $Q$ contain 1 and 5.
    We begin with
    \[
    N_5N_1 = 
    \begin{pmatrix} 1  & \sqrt{3}\\ \sqrt{3} & 4\end{pmatrix}.
    \]
    So the first equation in Proposition~\ref{PropPnormAndLP} gives
    \[
    \| P \| 
    =  \frac{-3+ \sqrt{21}}{2\sqrt{3}}.
    \]
    Apply Proposition~\ref{PropStereographicProjection} to get
    \[
    \| Q \| 
    = 
    \| P^{\vee} \|
    = 
    \| P \|^{-1}
    = \frac{3+ \sqrt{21}}{2\sqrt{3}}.
    \]
    So,
    \begin{equation}\label{Eq15}
    \begin{aligned}
    L(\dots 1515\vert 1515\dots) &= L(P^*\vert Q) = \| (\hat{P})^{\vee} \| + \| Q \| \\
    &=
    \| P \| + \| Q \| 
    = \frac{\sqrt{21}}{\sqrt{3}}\\
    &= \sqrt7.
    \end{aligned}
    \end{equation}

\subsubsection{The case $T = {}^{\infty}2\, 3\, 2^{\infty}$}
We claim that $L(T)  = 4/\sqrt3$.
From (c) of Proposition~\ref{PropBasicLProperty}, it is enough for us to compute $L(P^*\vert Q)$ for every section $P^*\vert Q$ of $T$.
First, we compute 
\[
L( {}^{\infty}2\, 3|  2^{\infty}) =
\| 3\, 4^{\infty} \| + \| 2^{\infty} \|.
\]
For $\| 2^{\infty} \|$, we apply Proposition~\ref{PropPnormAndLP} with $P = 2^{\infty}$ to obtain
\[
	\| 2^{\infty} \|
	= 
	\frac{\sqrt{13} + 1}{2\sqrt3}.
\]
Then use Proposition~\ref{PropStereographicProjection} to get
\[
	\| 4^{\infty} \|
	=
	\| 2^{\infty} \|^{-1}
	= 
	\frac{\sqrt{13} - 1}{2\sqrt3}.
\]
From Proposition~\ref{PropActionNd} and the fact that 
$N_3 = \begin{psmallmatrix} 2 & \sqrt3 \\ \sqrt3 & 2 \end{psmallmatrix}$, we have
\[
	\| 3\, 4^{\infty} \| = 
	\frac{2 \| 4^{\infty} \| + \sqrt3}{\sqrt3\| 4^{\infty} \| + 2}
	=
	\frac{\sqrt3 (\sqrt{13} + 5)}{2(\sqrt{13}+4)}.
\]
Some easy simplification gives 
\[
L( {}^{\infty}2\, 3\vert   2^{\infty}) =
	\frac{\sqrt3 (\sqrt{13} + 5)}{2(\sqrt{13}+4)}
	+
	\frac{\sqrt{13} + 1}{2\sqrt3}
=\frac{4}{\sqrt3}.
\]
Next, we show that $L(P^*\vert Q)\le 4/\sqrt3$ for any section $P^*\vert Q \neq {}^{\infty}2\, 3\vert 2^{\infty}$ of $T$.
We consider ${}^{\infty}2\,3\, 2^k\vert 2^{\infty}$ for some $k\ge1$.
Notice from Proposition~\ref{CorCommonPrefixComparison} that 
$\| 4^k3\,2^{\infty}\| < \| 3\, 2^{\infty} \|$.
So,
\[
L({}^{\infty}2\,3\, 2^k\vert 2^{\infty} )
	= 
\| 4^k3\,2^{\infty}\| + \| 2^{\infty} \| <
 \| 3\, 2^{\infty} \| + \| 2^{\infty} \| 
=\frac4{\sqrt3}.
\]
The remaining sections to be considered are of type ${}^{\infty}2\vert 2^k 3\,2^{\infty}$ for some $k\ge1$.
We use Proposition~\ref{CorCommonPrefixComparison} again to get
$\| 2^k 3\, 2^{\infty} \| \le \| 2 \, 3\, 2^{\infty} \|$ for any $k\ge1$.
Then apply Proposition~\ref{PropCylinderMatrix}
with $N_2N_3 =
\begin{psmallmatrix}
7 & 4 \sqrt{3} \\
3 \sqrt{3} & 5
\end{psmallmatrix}
$
to obtain
$\| 2\, 3 \, 2^{\infty} \| \le \frac{4\sqrt3}5$.
So,
\begin{align*}
L({}^{\infty}2\vert 2^k 3\,2^{\infty})
	&=
	\|
	4^{\infty}
	\| 
	+
	\|
2^k 3\, 2^{\infty} 
\| 
	\\
	&
\le	
	\frac{\sqrt{13} - 1}{2\sqrt3}
	+
	\frac{4\sqrt3}5 =
	\frac{1}{30} \sqrt{3} {\left(5 \sqrt{13} + 19\right)} \\
	&
	< \frac4{\sqrt3}.
\end{align*}

\subsection{Admissible Sequences}

A doubly infinite Romik sequence $T$ is \emph{admissible} if
\[
L(T) \le \frac{4}{\sqrt3},
\]
and is \emph{strongly admissible} if 
\[
L(T) < \frac{4}{\sqrt3}.
\]
Note that $T$ is admissible (or strongly admissible) if and only if any one of the sequences $\{ T, T^{\vee}, \hat{T}^*, (\hat{T}^*)^{\vee}\}$ is admissible (or strongly admissible).

\begin{proposition}\label{PropFobidden15}
The digits 1 and 5 are forbidden in an admissible $T$.
\end{proposition}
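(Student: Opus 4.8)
The plan is to rewrite the Lagrange number of $T$ in terms of one two-variable expression. For a section $P^*|Q$ of $T$ set $u=\|\hat P\|$ and $v=\|Q\|$. Since the digit operations $\hat{(\cdot)}$ and $(\cdot)^\vee$ commute, one has $\widehat{P^{\vee}}^{\vee}=\hat P$, and since $\|R\|\cdot\|R^{\vee}\|=1$ (Proposition~\ref{PropStereographicProjection}(a)) we get $\|\hat P^{\vee}\|=1/u$ and $\|Q^{\vee}\|=1/v$. Hence
\[
L(P^*|Q)=\|\hat P^{\vee}\|+\|Q\|=\tfrac1u+v,\qquad
L((P^{\vee})^*|Q^{\vee})=\|\hat P\|+\|Q^{\vee}\|=u+\tfrac1v .
\]
So for \emph{every} section of an admissible $T$,
\[
\max\!\left(\tfrac1u+v,\;u+\tfrac1v\right)\;\le\;L(T)\;\le\;\tfrac{4}{\sqrt3}.
\]

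\textbf{Rigidity of $\|Q\|$.} From $\tfrac1u+v\le \tfrac4{\sqrt3}$ and $u+\tfrac1v\le\tfrac4{\sqrt3}$ one first reads off $u,v\in(0,\infty)$ (any zero or infinite value makes one side infinite). Then the first inequality gives $u\ge\bigl(\tfrac4{\sqrt3}-v\bigr)^{-1}$ and the second $u\le\tfrac4{\sqrt3}-\tfrac1v$; combining and clearing denominators yields $v^2-\tfrac4{\sqrt3}v+1\le0$, i.e. $v\in[\tfrac1{\sqrt3},\sqrt3]$. Translating back: for every index $n$ the right-tail $[T_n,T_{n+1},\dots]$ satisfies $\bigl\|[T_n,T_{n+1},\dots]\bigr\|\in[\tfrac1{\sqrt3},\sqrt3]$.

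\textbf{Conclusion.} Suppose $T$ is admissible and contains a digit $1$, say $T_n=1$. Then $[T_n,T_{n+1},\dots]$ lies in $C(1)=\{\|\cdot\|\ge\sqrt3\}$, so by the previous step its stereographic norm equals $\sqrt3$; the unique point of $C(1)$ with norm $\sqrt3$ is its endpoint $(5/7,3/7)$, whose Romik expansion beginning with $1$ is $[1,5^{\infty}]$, forcing $T_{n+1}=T_{n+2}=\cdots=5$. In particular $T_{n+1}=5$, so $[T_{n+1},T_{n+2},\dots]\in C(5)=\{\|\cdot\|\le\tfrac1{\sqrt3}\}$ has norm $\tfrac1{\sqrt3}$, hence equals the endpoint $(3/7,5/7)=[5,1^{\infty}]$, forcing $T_{n+2}=T_{n+3}=\cdots=1$. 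But $T_{n+2}=5\neq1$, a contradiction. Thus an admissible $T$ has no digit $1$; applying this to $T^{\vee}$ (admissible because $L(T^{\vee})=L(T)$ by Proposition~\ref{PropBasicLProperty}(a), and it contains a $1$ whenever $T$ contains a $5$) shows $T$ has no digit $5$ either.

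\textbf{Where the care is.} The genuinely delicate points are the identity in the first step --- checking that $(P^{\vee})^*|Q^{\vee}$ is an honest section of $T^{\vee}$ and that $\hat{(\cdot)}$ and $(\cdot)^{\vee}$ commute on digits, so that $L((P^{\vee})^*|Q^{\vee})=u+1/v$ --- and the endpoint bookkeeping in the last step: one must use that the endpoint $\|\cdot\|=\sqrt3$ of $C(1)$ (resp. $\|\cdot\|=\tfrac1{\sqrt3}$ of $C(5)$) has a \emph{unique} Romik expansion starting with $1$ (resp. $5$), namely $[1,5^{\infty}]$ (resp. $[5,1^{\infty}]$), which is what pins the tail down. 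Everything else is the elementary quadratic estimate in the second step.
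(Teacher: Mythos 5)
Your proof is correct, but it takes a genuinely different route from the paper. The paper argues combinatorially: it first kills the blocks $11,12,13,14$ one at a time via Proposition~\ref{PropCylinderMatrix} (e.g.\ $N_1N_4$ gives $\|14\cdots\|\ge 4/\sqrt3$), concludes that a digit $1$ must be followed by $5$, then bounces between $T$ and $T^{\vee}$ to force a tail $(15)^{\infty}$, and finally invokes the explicit computation $L(\cdots1515|1515\cdots)=\sqrt7>4/\sqrt3$ from \S\ref{SecLExamples}. You instead exploit the full symmetry built into Definition~\ref{DefLagrangeSequence}: since $L(T)$ dominates both $L(P^*|Q)=\tfrac1u+v$ and $L((P^{\vee})^*|Q^{\vee})=u+\tfrac1v$ for every section (using $\|R^{\vee}\|=\|R\|^{-1}$ and the commutation of $\hat{(\cdot)}$ with $(\cdot)^{\vee}$ on digits, both of which check out and are used implicitly by the paper itself, e.g.\ in \S\ref{SecLExamples}), the elementary estimate $v^2-\tfrac4{\sqrt3}v+1\le0$ pins every one-sided tail norm into $[\tfrac1{\sqrt3},\sqrt3]$; a tail beginning with $1$ then has to be the boundary point with the unique expansion $[1,5^{\infty}]$ (via \eqref{EqFourRationalPts} and injectivity of the stereographic projection), and looking one index further produces the contradiction. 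What your route buys: no matrix products, no $\sqrt7$ computation, and it works cleanly with the non-strict bound $L(T)\le 4/\sqrt3$; your "rigidity" statement (all tails lie in $C(2)\cup C(3)\cup C(4)$) is in fact a slightly stronger reusable fact than the bare proposition. What the paper's route buys: explicit forbidden two-digit blocks and a style of argument (cylinder bounds from specific $N_w$) that is reused verbatim in Propositions~\ref{Prop24Forbidden}--\ref{Prop33Forbidden}, so the exposition stays uniform. One small presentational point: when you pass from "the point equals $(5/7,3/7)$" to "the sequence is $[1,5^{\infty}]$", you are using the paper's convention that a Romik sequence is an expansion of the point it represents together with the classification of the two expansions of the four ambiguous rational points; it is worth citing \eqref{EqFourRationalPts} explicitly there, as you do, since that is exactly what pins the tail down.
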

\begin{proof}
It is enough to show that $1$ is forbidden.
First, we prove that $14$ is forbidden. 
If $T$ does contain $14$, we choose a section $P^*\vert Q$ of $T$ with $Q = 14\cdots.$
Apply Proposition~\ref{PropCylinderMatrix} with $w = [1, 4]$
and 
$N_1N_4 =
\begin{psmallmatrix}
4 & 3 \sqrt{3} \\
\sqrt{3} & 2
\end{psmallmatrix}
$ to obtain $\| Q \| \ge \frac{4}{\sqrt3}.$
As a consequence,
\[
L(P^*\vert Q) = \| \hat{P}^{\vee} \| + \| Q \| > \frac4{\sqrt3}.
\]
Similar arguments show that 11, 12, 13 are all forbidden in an admissible $T$.
Therefore, if an admissible $T$ contains $1$, it must extend to the right as 15.
This implies that $T^{\vee}$ must contain 51. 
So the sequence 51 in $T^{\vee}$ must extend as 515. Repeating this argument, we show that $T$ must admit a section
\[
P^*\vert (15)^{\infty}.
\]
By choosing another section of $T$ (cutting far to the right), we conclude from \eqref{Eq15} that
\[
L(T) \ge L(\dots 1515\vert 1515\dots) = \sqrt{7}> \frac{4}{\sqrt3},
\]
which shows that $T$ cannot be admissible.
\end{proof}
One easy consequence of the above proposition is that an admissible $T$ cannot terminate with $1^\infty$ or $5^{\infty}$ in either direction. In particular, when we apply Proposition~\ref{PropCylinderMatrix} to obtain a bound of a cylinder set, the inequalities are always strict.
\begin{proposition}\label{Prop24Forbidden}
The sequences $24$ and $42$ are forbidden in an admissible $T$.
\end{proposition}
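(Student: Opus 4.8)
The plan is to mimic the mechanism of Proposition~\ref{PropFobidden15}: to show a word $w$ is forbidden, take an admissible $T$ containing $w$, choose a well-placed section $P^*|Q$, and push $L(P^*|Q)$ above $4/\sqrt3$ using the cylinder bounds of Proposition~\ref{PropCylinderMatrix}. Two preliminary remarks keep the bookkeeping short. First, by Proposition~\ref{PropFobidden15} an admissible $T$ has all digits in $\{2,3,4\}$, so $T$ is reduced and $\hat T=T$ (Lemma~\ref{Lem324}); hence for every section $L(P^*|Q)=\|\hat P^{\vee}\|+\|Q\|=\|P^{\vee}\|+\|Q\|=1/\|P\|+\|Q\|$ by Definition~\ref{DefLagrangeSequence} and Proposition~\ref{PropStereographicProjection}(a), and, by the remark following Proposition~\ref{PropFobidden15}, all cylinder bounds invoked below are strict. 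Second, the symmetries recorded before Proposition~\ref{PropFobidden15} say that, for a word $w$ over $\{2,3,4\}$, $w$ is forbidden $\iff$ $w^{\vee}$ is $\iff$ $w^{*}$ is. In particular it is enough to prove that $24$ is forbidden, since $42=24^{\vee}$.

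Step A (the six words of $\{2,4\}^{3}$ other than $222$ and $444$). I would prove $242$ and $422$ directly. If $T$ contains $242$, take the section obtained by cutting between the $4$ and the trailing $2$, so that $P\in C(4,2)$ and $Q\in C(2)$; with $N_{4}N_{2}=\begin{psmallmatrix}5 & 2\sqrt3\\4\sqrt3 & 5\end{psmallmatrix}$ (determinant $1$), Proposition~\ref{PropCylinderMatrix} gives $\|P\|<5/(4\sqrt3)$ and $\|Q\|>2/\sqrt3$, hence $L(P^*|Q)>\tfrac{4\sqrt3}{5}+\tfrac{2}{\sqrt3}=\tfrac{22}{15}\sqrt3>\tfrac{4}{3}\sqrt3=4/\sqrt3$. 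If $T$ contains $422$, cut between the $4$ and the first $2$, so that $P\in C(4)$ and $Q\in C(2,2)$; then Proposition~\ref{PropCylinderMatrix} gives $L(P^*|Q)>\tfrac{2}{\sqrt3}+\tfrac{3\sqrt3}{4}=\tfrac{17}{12}\sqrt3>4/\sqrt3$. Applying $\vee$ and $*$ to $242$ and $422$ then forbids $424,244,442,224$ as well. Consequently no three consecutive digits of $T$ all lying in $\{2,4\}$ can fail to be equal; in particular, in $T$ the word $24$ is immediately preceded and immediately followed by $3$.

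Step B ($324$ is forbidden). If $T$ contains $324$, then by Step A it extends to $3243$, and the digit after that $3$ is some $e\in\{2,3,4\}$, so $T$ contains five consecutive digits $3\,2\,4\,3\,e$. Take the section obtained by cutting between the $4$ and the following $3$: then $P\in C(4,2,3)$ and $Q\in C(3,e)$. With $N_{4}N_{2}N_{3}=\begin{psmallmatrix}16 & 9\sqrt3\\13\sqrt3 & 22\end{psmallmatrix}$ (determinant $1$), Proposition~\ref{PropCylinderMatrix} gives $\|P\|<16/(13\sqrt3)$, so $1/\|P\|>13\sqrt3/16$. For the three possibilities for $Q$, the matrices $N_{3}N_{2}$, $N_{3}^{2}$, $N_{3}N_{4}$ together with Proposition~\ref{PropCylinderMatrix} give $\|Q\|>7/(4\sqrt3)$, $\|Q\|>4\sqrt3/7$, $\|Q\|>5/(3\sqrt3)$ respectively, and $5/(3\sqrt3)$ is the smallest of the three. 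Hence in every case $L(P^*|Q)>\tfrac{13\sqrt3}{16}+\tfrac{5}{3\sqrt3}=\tfrac{197}{144}\sqrt3>\tfrac{4}{3}\sqrt3=4/\sqrt3$, so $T$ is not admissible and $324$ is forbidden.

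Conclusion and the main obstacle. Suppose an admissible $T$ contains $24$. Since $1$ and $5$ are forbidden, the digit immediately before this $24$ lies in $\{2,3,4\}$; it is not $2$ (else $224$) and not $4$ (else $424$), hence it is $3$, so $T$ contains $324$, contradicting Step B. Therefore $24$, and with it $42=24^{\vee}$, is forbidden. The hard part is that the crude one-level estimates $\|d\cdots\|\ge(\text{endpoint of }C(d))$ fall just short of $4/\sqrt3$ at the transition words, so one is forced to descend two or three levels into the nested cylinders and, in Step B, to split on the continuation digit $e$; the role of the $\vee$- and $*$-symmetries is precisely to confine this descent to the two genuinely new computations, $422$ (which also delivers $242$) and $324$.
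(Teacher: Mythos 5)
Your argument is correct: the matrix products ($N_4N_2$, $N_2^2$, $N_4N_2N_3$, $N_3N_2$, $N_3^2$, $N_3N_4$), the orientation of the bounds in Proposition~\ref{PropCylinderMatrix} according to the determinant, the strictness of those bounds (licensed by the remark following Proposition~\ref{PropFobidden15}), the identity $\|P^{\vee}\|=1/\|P\|$ from Proposition~\ref{PropStereographicProjection}, and the transfer of forbidden words under $\vee$ and $*$ all check out, and the bootstrap (non-constant words of $\{2,4\}^3$, then $324$ with the case split on the continuation digit, then the conclusion) is logically sound. But it is a genuinely different, and much longer, route than the paper's, and the difference comes down to which element of the orbit $\{24,42\}$ you attack. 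The paper reduces to $42$ and cuts an occurrence of $42$ between the $4$ and the $2$, taking the section $P^*4|2Q$; since $\hat{4}=4$ and $4^{\vee}=2$, this gives $L(P^*4|2Q)=\|2\hat{P}^{\vee}\|+\|2Q\|$, so the single one-level bound \eqref{EqCylinder4}, applied twice with strict inequality, already yields $>\tfrac{2}{\sqrt3}+\tfrac{2}{\sqrt3}=\tfrac{4}{\sqrt3}$ --- a two-line proof. You reduce to $24$ instead and, as you observe yourself, no single cut across a bare occurrence of $24$ reaches $4/\sqrt3$ from cylinder bounds alone; that is exactly what forces your detour through $242$, $422$ and their $\vee$- and $*$-images, and then through $324$ with the three-level cylinder $C(4,2,3)$. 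The extra forbidden words you establish along the way are all subsumed once $24$ and $42$ are forbidden, so the longer route buys nothing beyond the statement itself; the lesson is that the $\vee$-symmetry should be used to pass to the representative for which the crude estimate closes, namely $42$.
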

\begin{proof}
As before, it is enough to show that $42$ is forbidden.
Assume that $T$ contains $42$. Choose a section $P^*4\vert 2Q$. Then the bound \eqref{EqCylinder4} gives 
\[
L(P^*4\vert 2Q) = \| 2\hat{P}^{\vee}\| + \|2Q \|
> \frac{2}{\sqrt3} + \frac{2}{\sqrt3} = \frac{4}{\sqrt3}.
\]
This shows that $T$ is not admissible.
\end{proof}

\begin{proposition}\label{Prop234Forbidden}
The sequences $234$ and $432$ are forbidden in an admissible $T$.
\end{proposition}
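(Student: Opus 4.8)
The plan is to follow the template of Propositions~\ref{PropFobidden15} and \ref{Prop24Forbidden}: assuming an admissible $T$ contains the block $432$, I would produce a section $P^*|Q$ with $L(P^*|Q) > 4/\sqrt3$, contradicting admissibility, and then deduce the case of $234$ by symmetry. The natural first attempt is to cut $T$ between the $3$ and the $2$ of the block, so that $P = [3,4,\dots]$ and $Q = [2,\dots]$; then $\widehat{P}^{\vee} = [3,2,\dots]$ (because $\widehat 3 = 3$, $\widehat 4 = 4$, $3^{\vee}=3$, $4^{\vee}=2$), and Propositions~\ref{PropFobidden15} and \ref{PropCylinderMatrix} give $L(P^*|Q) = \|\widehat P^{\vee}\| + \|Q\| > \tfrac{7}{4\sqrt3} + \tfrac{2}{\sqrt3} = \tfrac{15}{4\sqrt3}$. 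This falls short of $\tfrac4{\sqrt3} = \tfrac{16}{4\sqrt3}$ by exactly $\tfrac1{4\sqrt3}$, so one more digit of information is needed.

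To get it, I would look at the digit of $T$ immediately following the $2$ of the block $432$: by Proposition~\ref{PropFobidden15} it is not $1$ or $5$, and by Proposition~\ref{Prop24Forbidden} it is not $4$ (since $24$ is forbidden), so $T$ contains $4322$ or $4323$. Keeping the same cut as above, $Q = [2, q_2, \dots]$ with $q_2 \in \{2,3\}$. Since $T$ is admissible it uses only the digits $2, 3, 4$, and therefore $\widehat P^{\vee}$ and $Q$ are Romik sequences with digits only in $\{2,3,4\}$; in particular they lie in the interiors of the cylinder sets containing them, so the bounds of Proposition~\ref{PropCylinderMatrix} hold strictly for them. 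Computing $N_3N_2$, $N_2^2$, and $N_2N_3$, one finds $\|\widehat P^{\vee}\| > \tfrac{7}{4\sqrt3}$ and, using $\tfrac{7}{3\sqrt3} > \tfrac{3\sqrt3}{4}$ (i.e.\ $28 > 27$), $\|Q\| > \tfrac{3\sqrt3}{4}$ in both sub-cases $q_2 = 2$ and $q_2 = 3$. Hence
\[
L(T) \ge L(P^*|Q) = \|\widehat P^{\vee}\| + \|Q\| > \frac{7}{4\sqrt3} + \frac{3\sqrt3}{4} = \frac{16}{4\sqrt3} = \frac{4}{\sqrt3},
\]
a contradiction, so $432$ is forbidden. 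Finally, since $(\cdot)^{\vee}$ replaces the block $234$ by $432$ digit-by-digit (with $2^{\vee}=4$, $3^{\vee}=3$, $4^{\vee}=2$) in the same order, $T$ contains $234$ if and only if $T^{\vee}$ contains $432$; as $L(T^{\vee}) = L(T)$ by Proposition~\ref{PropBasicLProperty}, the block $234$ is forbidden too.

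The only real obstacle is the near-miss in the first paragraph: the ``central'' cut by itself never reaches $4/\sqrt3$, so the argument genuinely has to exploit the previously-established exclusion of $24$ (to pin down the digit after the block) together with the elementary numerical fact $\tfrac{7}{3\sqrt3} > \tfrac{3\sqrt3}{4}$, which is exactly what lets the two sub-cases $q_2=2$ and $q_2=3$ collapse into the single clean estimate $\|Q\| > \tfrac{3\sqrt3}{4}$.
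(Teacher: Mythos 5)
Your proposal is correct and takes essentially the same route as the paper: the same section $\cdots 4\,3\,|\,2\cdots$, the same matrices $N_3N_2$ and $N_2^2$, and the same two bounds $\tfrac{7}{4\sqrt3}$ and $\tfrac{3\sqrt3}{4}$ summing to exactly $\tfrac{4}{\sqrt3}$, with strictness coming from reducedness. The only (cosmetic) difference is that the paper never needs Proposition~\ref{Prop24Forbidden}: since by Proposition~\ref{CorCommonPrefixComparison} the digit $2$ minimizes $\| 2\,q_2\cdots\|$ among $q_2\ge 2$, excluding $q_2=1$ (reducedness) already gives $\|2Q\|>\tfrac{3\sqrt3}{4}$, so your sub-cases $q_2=2,3$ and the exclusion of $q_2=4$ are handled automatically rather than being genuinely necessary.
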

\begin{proof}
We will derive a contradiction by assuming that an admissible $T$ contains $432$.
Consider a section $P^*43\vert 2Q$ of $T$.
We have
\[
L(P^*43\vert 2Q) = \| 32\hat{P}^{\vee} \| + \| 2Q\|.
\]
To find a lower bound for $\| 32\hat{P}^{\vee} \|$, we invoke Proposition~\ref{PropFobidden15} with $w = [3, 2]$ and
$N_3N_2 = \begin{psmallmatrix}
7 & 3 \sqrt{3} \\
4 \sqrt{3} & 5
\end{psmallmatrix}$ to obtain 
\[
\| 32\hat{P}^{\vee} \| > \frac7{4\sqrt3}.
\]
For $\| 2Q \|$, we need a better bound than \eqref{EqCylinder4}.
Note from Proposition~\ref{PropFobidden15} that $P$ and $Q$ are \emph{reduced}, that is, they contain no 1 and 5. 
Therefore, $Q$ cannot begin with 1.
In particular, $\| 2Q \| \ge \| 22\cdots \|$.
Apply Proposition~\ref{PropCylinderMatrix} with 
$N_2N_2 = \begin{psmallmatrix}
7 & 3 \sqrt{3} \\
3 \sqrt{3} & 4\end{psmallmatrix}$
to obtain
\begin{equation}\label{EqCylinderBound4}
\| 2Q \| \ge \| 22\cdots \| >\frac{3\sqrt3}{4}.
\end{equation}
To sum up, we have
\[
L(P^*43\vert 2Q) > 
\frac{7}{4\sqrt3}
+ \frac{3\sqrt3}{4}
 = \frac{4}{\sqrt3},
\]
which contracts the assumption that $T$ is admissible.
\end{proof}
\begin{proposition}\label{Prop33Forbidden}
Suppose that $T \neq {}^{\infty}3^{\infty}$ and that $T$ is admissible.
Then the sequence $33$ is forbidden in $T$.
\end{proposition}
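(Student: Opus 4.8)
The plan is to argue by contradiction. Suppose $T$ is admissible, $T\ne {}^{\infty}3^{\infty}$, and $33$ occurs in $T$. By Propositions~\ref{PropFobidden15}, \ref{Prop24Forbidden}, and \ref{Prop234Forbidden}, $T$ is reduced and contains none of $24$, $42$, $234$, $432$. Since $33$ occurs but $T$ is not constantly $3$, there is a maximal block of consecutive $3$'s, of some length $m\ge 2$, at least one of whose neighbours is not $3$. Now I would exploit the symmetries: $T\mapsto T^{*}$ (reversal) and $T\mapsto T^{\vee}$ (which fixes the digit $3$ and interchanges $2\leftrightarrow 4$) both preserve admissibility, reducedness, and the occurrence of $33$ (by Proposition~\ref{PropBasicLProperty} and the remark following the definition of admissibility), so I may assume the block is immediately followed by the digit $2$. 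Thus $T$ has a section
\[
P^{*}\,|\,Q,\qquad P = 3^{m}\,e\,\cdots,\quad Q = 2\,\cdots,
\]
where $e\ne 3$ is the digit preceding the block (or $P=3^{\infty}$ if the block is infinite to the left, a degenerate case handled the same way), and the cut is placed at the \emph{right} end of the block. This placement is the essential point: it keeps $P$ beginning with the long run $3^{m}$, so that $\|\hat P^{\vee}\|=\|P^{\vee}\|$ (using that $T$ is reduced) stays close to $\|3^{\infty}\|=1$; a cut at the left end of the block would leave $P^{\vee}$ starting with a low-norm digit and fail.

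Next I would bound the two terms of $L(P^{*}|Q)=\|P^{\vee}\|+\|Q\|$ from below. For the first term, $P^{\vee}=3^{m}\,e^{\vee}\cdots$ with $e^{\vee}\in\{2,4\}$, hence $P^{\vee}\in C(3^{m},e^{\vee})$, and since $m\ge 2$ Proposition~\ref{PropCylinderMatrix} (with strictness coming from Proposition~\ref{PropFobidden15}) gives a lower bound for $\|P^{\vee}\|$ that is already very near $1$. For the second term, since $24$ is forbidden and $T$ is reduced, the digit of $Q$ after its initial $2$ must be $2$ or $3$; iterating this observation shows $Q$ lies in $C(2^{j},3)$ for some $j\ge1$ or else $Q=2^{\infty}$, and evaluating the endpoints of these cylinders (via Proposition~\ref{PropCylinderMatrix} and the fixed-point formula of Proposition~\ref{PropPnormAndLP}, as in the examples of \S\ref{SecLExamples}) yields a uniform lower bound $\|Q\|\ge \tfrac{13\sqrt3}{17}$. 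Adding the two estimates gives $L(T)\ge L(P^{*}|Q)>\tfrac{4}{\sqrt3}$, contradicting admissibility.

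The main obstacle is that all of these estimates are genuinely sharp, so a careless bound fails: the ``obvious'' choices $\|P^{\vee}\|\ge$ (lower endpoint of $C(3^{2},4)$) $=\tfrac{19}{11\sqrt3}$ and $\|Q\|\ge$ (lower endpoint of $C(2,2)$) $=\tfrac{3\sqrt3}{4}$ sum to only about $2.296$, which is just \emph{below} $\tfrac{4}{\sqrt3}\approx 2.309$. One is therefore forced to descend one cylinder-level further on the $Q$-side, to $C(2,2,3)$ with endpoint $\tfrac{13\sqrt3}{17}\approx 1.3245$, and to check that none of the deeper cylinders $C(2^{j},3)$ dips below this value (their endpoints oscillate because $\det N_{2^{j}}=(-1)^{j}$, so this needs a small but careful verification); the resulting margin over $\tfrac{4}{\sqrt3}$ is only about $0.01$. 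A secondary point is the degenerate case where the $3$-block is infinite on one side: there $\|P^{\vee}\|=\|3^{\infty}\|=1$, and the same bound $\tfrac{13\sqrt3}{17}>\tfrac{4}{\sqrt3}-1$ still closes the argument.
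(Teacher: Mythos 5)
Your proposal is correct and follows essentially the same route as the paper: after reducing by the $T^{\vee}$/$T^{*}$ symmetries (legitimate since an admissible $T$ is reduced) to a section of the form $\cdots 33\,|\,2\cdots$, one adds two cylinder--endpoint lower bounds; the paper takes $\| 33\cdots \| > \tfrac{4\sqrt3}{7}$ and $\| 2\cdots \| > \tfrac{16}{7\sqrt3}$ (the lower endpoint of $C(2,2,4)$), which sum exactly to $\tfrac{4}{\sqrt3}$, whereas you take the slightly sharper endpoints $\tfrac{19}{11\sqrt3}$ (from $C(3,3,4)$, exploiting the maximal $3$-block) and $\tfrac{13\sqrt3}{17}$ and win with slack $\tfrac{4\sqrt3}{561}$. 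The one verification you defer---that $\| Q \| \ge \tfrac{13\sqrt3}{17}$ for every $Q \in C(2^{j},3)$ and for $Q = 2^{\infty}$---is immediate from Proposition~\ref{CorCommonPrefixComparison}: for $j \ge 2$ (and for $2^{\infty}$) the third digit of $Q$ is $2$ or $3$ while $\det(N_{22})=1$, so $\| Q \|$ is at least the upper endpoint $\tfrac{13\sqrt3}{17}$ of $C(2,2,4)$, and for $j=1$ the lower endpoint $\tfrac{7}{3\sqrt3}$ of $C(2,3)$ already exceeds that value; this is precisely the comparison-with-$C(2,2,4)$ device the paper itself uses, just with the other endpoint of that cylinder.
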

\begin{proof}
Since $T$ is reduced we may assume, by replacing $T$ with $T^{\vee}$ and/or $T^*$ if necessary, that $T$ contains a section $P^*33\vert 2Q$.
As before, we use
$N_3N_3 = 
\begin{psmallmatrix}
7 & 4 \sqrt{3} \\
4 \sqrt{3} & 7
\end{psmallmatrix}$ to yield
\[
\| 33P^{\vee} \| > \frac{4\sqrt3}{7}.
\]
For $\| 2Q \|$ in this case, we need a bit sharper bound than \eqref{EqCylinderBound4}, which came from the fact that $\| 2Q \|$ is bounded below by the lower bound of $C(2, 2)$.
To improve this bound, we note that, because $Q$ is reduced, $2Q$ cannot belong to the cylinder set $C(2, 2, 5)$.
\begin{figure}
\begin{center}
	\begin{tikzpicture}[scale=1.7]
%		\draw (225, 0) node[below]{$\frac{3\sqrt3}{4}$}-- (228.57, 0) 
		\draw (226, 0) node[below]{$\frac{3\sqrt3}{4}$}-- (228.57, 0) 
		node[midway, above]{$C(225)$}
		node[below]{$\frac{16}{7\sqrt3}$} 
		-- (229.41, 0) 
		node[midway, above]{$C(224)$}
		node[below]{$\frac{13\sqrt3}{17}$} 
		-- (230, 0) 
		node[midway, above]{$C(223)$}
		node[below]{$\frac{23}{10\sqrt3}$} 
		-- (230.77, 0) 
		node[midway, above]{$C(222)$}
		node[below]{$\frac{10\sqrt3}{13}$} 
		-- (232.33, 0) 
%		-- (233.33, 0) 
		node[midway, above]{$C(221)$}
		node[below]{$\frac{7}{3\sqrt3}$} ;
		\foreach \a in {226.0, 228.57, 229.41, 230, 230.77, 232.33}
		\filldraw (\a, 0) circle(0.6pt);
\end{tikzpicture}
\end{center}
    \caption{Images of $C(2,2)$ and of its subsets $C(2,2,1)$, $\dots$, $C(2,2,5)$. 
%    If $2Q\in C(2, 2)$ is reduced,
%    then $2Q$ can only be in 
%    $C(2, 2, 4)$, 
%    $C(2, 2, 3)$, 
%    or
%    $C(2, 2, 2)$, 
%    so that $\| 2Q \|$ is bounded below by the lower bound of $C(2, 2,4)$, which is $16/(7\sqrt3)$. 
    The commas in the cylinder sets in the figure are suppressed for a typographical reason.}
    \label{FigCylinderSetsLine3}
    % [2.250, 2.2857, 2.2941, 2.3, 2.3077, 2.3333]
    % [9/4, 16/7, 39/17, 23/10, 30/13, 7/3]
    % [ 3\sqrt3/4, 16/(7\sqrt3), 13\sqrt3/17, 23/(10\sqrt3), 10\sqrt3/13, 7/(3\sqrt3)]
\end{figure}
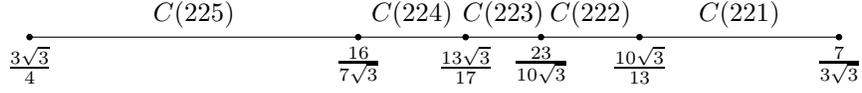
Therefore, $\| 2Q \|$ is bounded below by the lower bound of 
%(the image under the stereographic projection of) 
$C(2, 2, 4)$.
See Figure~\ref{FigCylinderSetsLine3}.

Since
$N_2N_2N_4 = 
\begin{psmallmatrix}
16 & 13 \sqrt{3} \\
7 \sqrt{3} & 17
\end{psmallmatrix}$
the lower bound of $C(2, 2, 4)$ is $\frac{16}{7\sqrt3}$
and therefore
\begin{equation}\label{EqCylinderBound442}
\| 2Q \| %\ge \|224\cdots \| 
\ge \frac{16}{7\sqrt3}.
\end{equation}
So,
\[
L(P^*33\vert 2Q) = \| 33P^{\vee} \|+\| 2Q \| > 
\frac{4\sqrt3}{7}
+\frac{16}{7\sqrt3}
=\frac{4}{\sqrt3}.
\]
\end{proof}

\subsection{Lagrange numbers of admissible sequences}
We characterize all doubly infinite admissible sequences in the following theorem.
\begin{theorem}\label{ThmCharacterizationOfAdmissibleSeq}
Suppose that $T$ is admissible. Then either $T$ or $T^{\vee}$ is equal to
\begin{enumerate}[font=\upshape, label=(\alph*)]
    \item ${}^{\infty}2^{\infty}$, ${}^{\infty}3^{\infty}$,
   % $~^{\infty}4^{\infty}$,
    \item ${}^{\infty}2 \, 3 \, 2^{\infty}$, or
    \item $
    \cdots 3\, 2^{2k}\, 3 \, 2^{2k}\, 3 \cdots,
    $
    for a fixed positive integer $k$.
\end{enumerate}
\end{theorem}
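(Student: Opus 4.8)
The statement is the necessity half of the characterization, so the plan is to start from the forbidden words already in hand and then cut the list down using the value of $L$ on a single family of sections. By Propositions~\ref{PropFobidden15}, \ref{Prop24Forbidden}, \ref{Prop234Forbidden} and \ref{Prop33Forbidden}, an admissible $T$ uses only the digits $2,3,4$; if $T={}^{\infty}3^{\infty}$ we are in case~(a), and otherwise the patterns $1,5,24,42,234,432,33$ are all absent, so $T$ decomposes into maximal constant blocks of $2$'s and of $4$'s separated by isolated $3$'s, and two consecutive blocks must have the same type (else $234$ or $432$ occurs). Passing to $T^{\vee}$ if necessary, we may assume all blocks are $2$-blocks, so $T$ is described by a list of block lengths and reads $\cdots 3\,2^{a_{1}}\,3\,2^{a_{2}}\,3\cdots$, where a missing block on one side means an infinite $2$-tail there. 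If there is no $3$ then $T={}^{\infty}2^{\infty}$ (case~(a)), and if there is exactly one $3$ then $T={}^{\infty}2\,3\,2^{\infty}$ (case~(b)); so from now on $T$ has at least two $3$'s.

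The next step is to localise $L(T)$. Elementary cylinder estimates via Proposition~\ref{PropCylinderMatrix} (using that a reduced $T$ never terminates in $1^{\infty}$ or $5^{\infty}$) bound $\|\hat{P}^{\vee}\|+\|Q\|$ strictly below $4/\sqrt3$ for every section whose base point is not immediately after a $3$, so
\[
L(T)=\sup\bigl\{\,\|3\,4^{b}\,3\cdots\|+\|2^{a}\,3\cdots\|\,\bigr\},
\]
the supremum being over the $3$'s of $T$, with $b,a$ the block lengths just to the left and right of the chosen $3$ (read as $\infty$ when that side is a tail). Proposition~\ref{CorCommonPrefixComparison}, applied to the prefixes $3\,4^{b}$ and $2^{a}$, shows that $\|3\,4^{b}\,3\cdots\|-\|3\,4^{\infty}\|$ has the sign of $(-1)^{b}$ and $\|2^{a}\,3\cdots\|-\|2^{\infty}\|$ has the sign of $(-1)^{a+1}$ (both vanishing on a tail). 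Since $\|3\,4^{\infty}\|+\|2^{\infty}\|=4/\sqrt3$ by the computation in \S\ref{SecLExamples}, a $3$ with left block even-or-tail and right block odd would give $L(T)>4/\sqrt3$ strictly (one of the two blocks being finite); applying this to $T$ and, through $L(T)=L(T^{*})$ (Proposition~\ref{PropBasicLProperty}(b)), to the reversal $T^{*}$, I get that consecutive blocks have equal parity, hence all blocks do. If that parity is even and some block is a tail, the $3$ adjacent to the neighbouring finite even block again forces $L(T)>4/\sqrt3$. Thus either all blocks of $T$ are finite and odd, or all are finite and even.

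The quantitative core is an exact evaluation of the ``just after a $3$'' section for the periodic sequence ${}^{\infty}(3\,2^{n})^{\infty}$. Writing $N_{2}^{\,n}=F_{n}N_{2}+F_{n-1}I$ and $N_{4}^{\,n}=F_{n}N_{4}+F_{n-1}I$ by Cayley--Hamilton, where $F_{n}=(\lambda^{n}-\overline{\lambda}^{\,n})/\sqrt{13}$ and $\lambda,\overline{\lambda}$ are the roots of $x^{2}-3x-1$ (the $\lambda,\overline{\lambda}$ of Theorem~\ref{ThmMainTheoremIntro}), a short computation with the fixed-point formula of Proposition~\ref{PropPnormAndLP} gives
\[
\|(2^{n}3)^{\infty}\|+\|(3\,4^{n})^{\infty}\|=\frac{2\sqrt{4G_{n}^{2}-(-1)^{n}}}{\sqrt3\,G_{n}},\qquad G_{n}=3F_{n}+F_{n-1}=F_{n+1}.
\]
This is $>4/\sqrt3$ precisely when $n$ is odd and $<4/\sqrt3$ precisely when $n$ is even (and, for $n$ even, equals $1/\delta_{n/2+2}$, consistent with Theorem~\ref{ThmMainTheoremIntro}). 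To finish, I would rule out the two surviving cases by exhibiting a $3$ at which the ``just after'' section exceeds $4/\sqrt3$: in the all-odd case one compares with the periodic extreme using that $b\mapsto\|3\,4^{b}\,3\cdots\|$ is increasing and $a\mapsto\|2^{a}\,3\cdots\|$ is decreasing along the odd integers (again Proposition~\ref{CorCommonPrefixComparison}); in the all-even but non-constant case one takes a $3$ at an ascent $b<a$ of consecutive block lengths, where the excess of $\|3\,4^{b}\,3\cdots\|$ over $\|3\,4^{\infty}\|$ is bounded below in terms of $b$ by the explicit $F_{n},G_{n}$ formulas while the deficit of $\|2^{a}\,3\cdots\|$ from $\|2^{\infty}\|$ is bounded above in terms of the larger $a$, forcing $L(T)>4/\sqrt3$. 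What remains is then $T={}^{\infty}(3\,2^{2k})^{\infty}$, which is case~(c).

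The main obstacle is exactly this last pair of estimates. Since $L\bigl({}^{\infty}(3\,2^{n})^{\infty}\bigr)\to 4/\sqrt3$ as $n\to\infty$ for \emph{both} parities, no crude cylinder bound can separate admissible from inadmissible sequences: everything turns on the exact sign of the exponentially small term $-(-1)^{n}/G_{n}^{2}$ under the square root --- equivalently on the sign of $\overline{\lambda}^{\,n}$, with $\overline{\lambda}<0$ --- and on propagating the parity dichotomy of the periodic case into the correct monotone comparison for non-periodic and unequal-block sequences. I expect the bookkeeping needed to make the ``ascent'' comparison in the all-even case (and the analogous comparison in the all-odd case) fully rigorous, accounting for the dependence of both terms on the remaining unequal blocks, to be the genuinely delicate point.
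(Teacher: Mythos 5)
Your structural reduction and your parity step are sound and essentially follow the paper's route: Propositions~\ref{PropFobidden15}, \ref{Prop24Forbidden}, \ref{Prop234Forbidden} and \ref{Prop33Forbidden} give the block form $\cdots 3\,2^{a_1}\,3\,2^{a_2}\,3\cdots$, your first-differing-digit sign analysis via Proposition~\ref{CorCommonPrefixComparison}, combined with $\|3\,4^{\infty}\|+\|2^{\infty}\|=4/\sqrt3$, correctly forces all finite blocks to have a common parity and disposes of tails outside the one-$3$ case, and your exact evaluation for the periodic sequences agrees with Proposition~\ref{PropN4k} and \eqref{EqLValue}. (Your intermediate claim that $L(T)$ is attained on sections based just after a $3$ is asserted rather than proved, but it is also not needed for the necessity direction, since only lower bounds on $L(T)$ are used.)

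The genuine gap is exactly the step you flag at the end: excluding all-odd block structures and non-constant all-even ones. Your plan for the odd case --- a termwise comparison with the periodic extreme ${}^{\infty}(3\,2^{n})^{\infty}$ using monotonicity of $b\mapsto\|3\,4^{b}\,3\cdots\|$ and $a\mapsto\|2^{a}\,3\cdots\|$ along odd integers --- does not work as stated: by Proposition~\ref{CorCommonPrefixComparison}, whether $\|2^{a_1}\,3\,2^{a_2}\,3\cdots\|$ lies above or below $\|(2^{n}3)^{\infty}\|$ is governed by the determinant of the common prefix, i.e.\ by the parity of the depth at which the sequence first deviates from periodicity, so one of your two termwise inequalities can fail even when the chosen right block is globally minimal. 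The paper sidesteps this by comparing with cylinder \emph{endpoints} instead of the periodic point: at a $3$ whose right block $k$ is odd and at most the left block, Proposition~\ref{PropN4k} gives $\|2^{k}3\cdots\|>(4c_k-d_k)/(\sqrt3\,c_k)$ and $\|3\,4^{m}\cdots\|>d_k/(\sqrt3\,c_k)$, whose sum is exactly $4/\sqrt3$ (Propositions~\ref{PropMgreaterThanKodd}--\ref{PropKodd}, with reversal via Proposition~\ref{PropBasicLProperty} when the odd block on the right is the larger one), and the unequal-even case, including $k=\infty$, is settled by the analogous bounds for $N_3N_4^{k-2}N_3$ together with the identity $9c_k^2-7c_kd_k+d_k^2=(-1)^{k+1}$ (Proposition~\ref{PropTheEnd}). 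Your ``ascent'' sketch for the even case is in this spirit, but since the margins are exponentially small the argument stands or falls with precisely these explicit estimates, which your proposal does not supply; as written, the decisive half of the proof is missing.
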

We have already proven in \S\ref{SecLExamples} that 
${}^{\infty}2^{\infty}$,
${}^{\infty}3^{\infty}$,
and
${}^{\infty}4^{\infty}$
are strongly admissible
and that 
    ${}^{\infty}2 \, 3 \, 2^{\infty}$
is admissible.
We will now assume that $T$ is non-constant (meaning that $T$ is not equal to any of
${}^{\infty}2^{\infty}$,
${}^{\infty}3^{\infty}$,
${}^{\infty}4^{\infty}$)
and admissible, therefore reduced. Also, by replacing $T$ with $T^{\vee}$ if necessary, we may assume that $T$ contains 2 as one of its digits.

Pick a digit 2 in $T$ and continue reading the subsequent digits to the right until a consecutive sequence of 2 is broken. Either $T$ terminates (to the right) with $2^\infty$ or $T$ must have as a section
\[
P^*\vert Q = P^*\vert 2^k 3 \cdots,
\]
because 24 is forbidden by Proposition~\ref{Prop24Forbidden}.
Since 33 and 234 are also forbidden by Propositions~\ref{Prop33Forbidden} and \ref{Prop234Forbidden} we see that $Q$ must be of the form $Q = 2^k3\, 2 \cdots$.
By repeating the same argument indefinitely we conclude that
\[
Q = 2^{k_1}\,3\,2^{k_2}\,3\,2^{k_3}3\cdots
\]
where $k_1, k_2, \dots$ are positive integers or $k_j = \infty$ for some $j$
(which means that $Q$ terminates with $2^{\infty}$ at that place).
Apply the same argument to $T^*$ to get
\begin{equation}\label{EqPossibleT}
T = \cdots2^{k_{-1}}\,3\,2^{k_0}\,3\,2^{k_1}3\cdots.
\end{equation}
To complete the proof of Theorem~\ref{ThmCharacterizationOfAdmissibleSeq}, we must
show that all $k_j$ are equal to the same positive even integer or two consecutive $k$'s are both equal to $\infty$. 
Pick two consecutive exponents of $2$ and call them $m$ and $k$, so that
\begin{equation}\label{EqTwithm}
T = \cdots 3 \, 2^m \,3\, 2^k \,3 \cdots.
\end{equation}
If $k=m=\infty$, we are done. So, we will assume from now on that at least one of them is $<\infty$.
We prove a series of propositions below (Propositions ~\ref{PropMgreaterThanKodd}---\ref{PropTheEnd}), which will collectively show that $k$ and $m$ must be equal to a (common) even integer.

An important technical lemma in our proof is to give explicit expressions for $N_2^kN_3$ and $N_3N_4^k$ for $k\ge0$.
\begin{proposition}\label{PropN4k}
For each $k\ge0$, we have
\[
N_2^kN_3 = 
\begin{pmatrix}
4c_k - d_k & \sqrt3(3c_k - d_k) \\
\sqrt3c_k & d_k \\
\end{pmatrix},
\]
and
\[
N_3N_4^k = 
\begin{pmatrix}
  d_k & \sqrt3(3c_k - d_k) \\
\sqrt3c_k &4c_k - d_k\\
\end{pmatrix}.
\]
Here, the sequences $\{c_k\}$ and $\{d_k\}$ are given by
\[
c_k = \frac{1}{\sqrt{13}}
\left(
\lambda^{k+1}
-
\overline{\lambda}^{k+1}
\right),
\]
and
\[
d_k = \frac{1}{\sqrt{13}}
\left[
\left(
\frac{7 - \sqrt{13}}{2}
\right)
\lambda^{k+1}
-
\left(
\frac{7 + \sqrt{13}}{2}
\right)
\overline{\lambda}^{k+1}
\right]
\]
where
\[
\lambda=\frac{3 + \sqrt{13}}{2},
\quad
\text{and}
\quad
\overline{\lambda}=\frac{3 - \sqrt{13}}{2}.
\]
Further,
\[
9c_k^2 - 7c_kd_k + d_k^2 = (-1)^{k+1}.
\]
\end{proposition}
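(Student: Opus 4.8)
The plan is to prove the two matrix identities first and then read off everything else. I would argue the formula for $N_2^kN_3$ by induction on $k$, setting up the recursion for $c_k$ and $d_k$ so that it is forced by the inductive step. Concretely, left-multiplying a matrix of the shape $\begin{psmallmatrix} 4c-d & \sqrt3(3c-d) \\ \sqrt3 c & d \end{psmallmatrix}$ by $N_2=\begin{psmallmatrix} 2 & \sqrt3 \\ \sqrt3 & 1 \end{psmallmatrix}$ gives, after a one-line computation of the four entries, a matrix of the same shape with $c$ replaced by $5c-d$ and $d$ replaced by $9c-2d$. Since $N_2^0N_3=N_3$ already has this shape with $(c_0,d_0)=(1,2)$, it follows that $N_2^kN_3$ has the stated shape, where $c_k,d_k$ are determined by $c_0=1$, $d_0=2$, $c_{k+1}=5c_k-d_k$, $d_{k+1}=9c_k-2d_k$.

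Next I would solve this recursion. Eliminating $d_k$ via $d_k=5c_k-c_{k+1}$ yields $c_{k+2}=3c_{k+1}+c_k$, whose characteristic polynomial $x^2-3x-1$ has roots $\lambda,\overline\lambda$. Matching $c_0=1$ and $c_1=5c_0-d_0=3$ and using $\lambda-\overline\lambda=\sqrt{13}$ gives $c_k=(\lambda^{k+1}-\overline\lambda^{k+1})/\sqrt{13}$; then $d_k=5c_k-c_{k+1}=\frac1{\sqrt{13}}\big((5-\lambda)\lambda^{k+1}-(5-\overline\lambda)\overline\lambda^{k+1}\big)$, and since $5-\lambda=\tfrac{7-\sqrt{13}}2$ and $5-\overline\lambda=\tfrac{7+\sqrt{13}}2$, this is exactly the claimed formula for $d_k$.

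For $N_3N_4^k$ I would exploit the conjugation symmetry recorded in the proof of Lemma~\ref{LemMatrixTranspose}: with $J=\begin{psmallmatrix} 0 & 1 \\ 1 & 0 \end{psmallmatrix}$ one has $JN_dJ=N_{d^\vee}$, so $N_4=JN_2J$ and $N_3=JN_3J$. Using $J^2=I_2$ and the symmetry of $N_2,N_3$, $N_3N_4^k=N_3JN_2^kJ=J\,(N_3N_2^k)\,J=J\,(N_2^kN_3)^{T}\,J$; since conjugation by $J$ swaps the two diagonal entries of a $2\times2$ matrix and fixes the antidiagonal ones, applying this to the transpose of the first formula produces precisely the second.

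Finally, the quadratic identity is essentially free once the matrix form is known: on the one hand $\det(N_2^kN_3)=(\det N_2)^k\det N_3=(-1)^k$, and on the other hand expanding the determinant of $\begin{psmallmatrix} 4c_k-d_k & \sqrt3(3c_k-d_k) \\ \sqrt3 c_k & d_k \end{psmallmatrix}$ gives $-(9c_k^2-7c_kd_k+d_k^2)$, so equating the two yields $9c_k^2-7c_kd_k+d_k^2=(-1)^{k+1}$. I do not expect a genuine obstacle here; the computations are routine, and the only points deserving care are keeping the index shift (powers $\lambda^{k+1}$ rather than $\lambda^{k}$) consistent throughout and choosing the economical route $d_k=5c_k-c_{k+1}$ rather than re-solving a linear system for the coefficients in the closed form for $d_k$.
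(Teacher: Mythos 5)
Your proposal is correct and takes essentially the same route as the paper: the same induction giving the recursion $(c_{k+1},d_{k+1})=(5c_k-d_k,\,9c_k-2d_k)$ with $(c_0,d_0)=(1,2)$ for $N_2^kN_3$ (you solve the recurrence where the paper merely verifies the closed forms), the transpose-and-$J$-conjugation symmetry underlying Lemma~\ref{LemMatrixTranspose} for $N_3N_4^k$, and the determinant computation for $9c_k^2-7c_kd_k+d_k^2=(-1)^{k+1}$. One small correction of wording only: conjugation by $J$ alone swaps the diagonal pair \emph{and} the antidiagonal pair, while it is the composite $M\mapsto JM^{T}J$ — which is exactly the operation you apply to $N_2^kN_3$ — that swaps the diagonal entries and fixes the antidiagonal ones, so the matrix you arrive at for $N_3N_4^k$ is indeed the stated one.
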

\begin{proof}
The formula for $N_2^k N_3$ can be proven by induction as follows.
The case for $k=0$ is easy.
The induction hypothesis shows that
 \[
 \begin{pmatrix}
 c_{k+1} \\
 d_{k+1} \\
 \end{pmatrix}=
 \begin{pmatrix}
 5 & -1 \\ 9 & -2 \\
 \end{pmatrix}
 \begin{pmatrix}
 c_k \\ d_k
 \end{pmatrix}
 \]
with $c_0 = 1$ and $d_0 = 2.$
Then we show that the given expressions for $c_k$ and $d_k$ satisfy the above recursive formula.
We omit the detail.

For $N_3N_4^k$, we use Lemma~\ref{LemMatrixTranspose} with the fact
\[
((2^k 3)^*)^{\vee}  = 3\, 4^k.
\]
The last equation is equivalent to the fact that $\det(N_2^kN_3) = (-1)^k$.
\end{proof}

\begin{proposition}\label{PropMgreaterThanKodd}
In the expression \eqref{EqTwithm}, if $k$ is odd and if $k\le m \le \infty$ then $T$ is not admissible.
\end{proposition}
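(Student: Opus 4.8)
Since an admissible $T$ is reduced (Proposition~\ref{PropFobidden15}), every section $P^*\mid Q$ of $T$ satisfies $L(P^*\mid Q)=\|\hat P^{\vee}\|+\|Q\|=\|P^{\vee}\|+\|Q\|$. The plan is to pick one favorable section and show $L(P^*\mid Q)>4/\sqrt3$. I would place the base point between the central $3$ and the run $2^k$ in the block $\cdots 3\,2^m\,3\,2^k\,3\cdots$, so that $P=3\,2^m\,3\cdots$ and $Q=2^k\,3\cdots$; then
\[
L(T)\ \ge\ L(P^*\mid Q)\ =\ \|3\,4^m\,3\cdots\|+\|2^k\,3\cdots\|,
\]
and it suffices to show the right side exceeds $4/\sqrt3=\|3\,4^{\infty}\|+\|2^{\infty}\|$, the two reference values computed in \S\ref{SecLExamples}.

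For the second summand I would argue as follows. The digit after the run $2^k$ is $3$, and since $m\ge 1$ the tail is actually of the form $3\,2\cdots$; Proposition~\ref{PropCylinderMatrix} applied to $N_3N_2$ gives $\|3\,2\cdots\|\le 3\sqrt3/5$, which lies strictly below the fixed point $\|2^{\infty}\|=(1+\sqrt{13})/(2\sqrt3)$ of the Möbius transformation $N_2$. Because $\det N_2=-1$ the map $N_2$ is orientation-reversing, and as $k$ is odd, $N_2^{k}$ is again orientation-reversing with the same fixed point; hence it sends a point below $\|2^{\infty}\|$ to a point above it, giving $\|2^k\,3\cdots\|\ge N_2^{k}(3\sqrt3/5)>\|2^{\infty}\|$. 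This is the only place the hypothesis ``$k$ odd'' enters. Using that the eigenvalues of $N_2$ are exactly $\lambda,\overline{\lambda}$ (as in Proposition~\ref{PropN4k}), the surplus $\|2^k\,3\cdots\|-\|2^{\infty}\|$ is bounded below by an explicit positive quantity of order $\lambda^{-2k}$.

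For the first summand, note $\det(N_3N_4^{m})=(-1)^{m}$. If $m=\infty$ the term equals $\|3\,4^{\infty}\|$ exactly; if $m$ is even, $N_3N_4^{m}$ is orientation-preserving and, since the tail $3\,4\cdots$ has $\|3\,4\cdots\|>\|4^{\infty}\|$, one gets $\|3\,4^m\,3\cdots\|>\|3\,4^{\infty}\|$ — in both cases the bound of the previous paragraph finishes the proof. The remaining case is $m$ odd and finite, which (since $k\le m$) forces $k$ odd as well; here $N_3N_4^{m}$ is orientation-reversing and the term can fall below $\|3\,4^{\infty}\|$, but only by $\epsilon_m\le\|3\,4^{\infty}\|-N_3N_4^{m}(4\sqrt3/7)$ (Proposition~\ref{PropCylinderMatrix} applied to $N_3N_4$ bounds the tail $3\,4\cdots$ by $4\sqrt3/7$), a quantity which the formulas of Proposition~\ref{PropN4k} show is decreasing in odd $m$ and of order $\lambda^{-2m}$. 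Since $m\ge k$, it then suffices to verify
\[
N_2^{k}\!\left(\tfrac{3\sqrt3}{5}\right)-\|2^{\infty}\|\ \ge\ \|3\,4^{\infty}\|-N_3N_4^{\,k}\!\left(\tfrac{4\sqrt3}{7}\right)
\]
for every odd $k$, the hypothesis $k\le m$ having been used to reduce the right side to its largest value at $m=k$.

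Substituting the closed forms for $N_2^kN_3$ and $N_3N_4^{k}$ turns this last inequality into a comparison of two quantities of the shape $(\text{same constant})+O(\lambda^{-2k})$. I expect the main obstacle to lie precisely here: neither deviation dominates by order of magnitude, so the inequality carries no slack and rests on the exact constants produced by $N_2,N_3,N_4$, together with careful parity bookkeeping — choosing the correct cylinder endpoint for each tail bound and tracking the orientation of each factor $N_d^{\,j}$ — so that the surplus from the run $2^k$ genuinely dominates the deficit from the run $4^m$ under $k\le m$; the base case $k=1$ is a direct numerical check.
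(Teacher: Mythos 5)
Your setup (the section $P^*\mid Q=\cdots 3\,2^m\,3\mid 2^k\,3\cdots$, the tail bounds from Proposition~\ref{PropCylinderMatrix}, the parity/orientation bookkeeping for $N_2^k$ and $N_3N_4^m$) is sound, but the proof is not complete: the decisive step — the inequality
\[
N_2^{k}\!\left(\tfrac{3\sqrt3}{5}\right)-\|2^{\infty}\|\ \ge\ \|3\,4^{\infty}\|-N_3N_4^{\,k}\!\left(\tfrac{4\sqrt3}{7}\right)
\quad\text{for every odd }k,
\]
together with the claim that the deficit $\|3\,4^{\infty}\|-N_3N_4^{m}(4\sqrt3/7)$ is decreasing in odd $m$ (needed to reduce $m$ to $k$) — is asserted, not proved. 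You explicitly flag it as ``the main obstacle,'' offer only a numerical check at $k=1$, and give no argument for general odd $k$; as it stands the case $m$ odd is therefore open. (Incidentally, your worry that the inequality ``carries no slack'' is unfounded: both sides decay like $\lambda^{-2k}$, but the surplus constant, roughly $(\|2^\infty\|-\tfrac{3\sqrt3}{5})\lambda^{-2k}$, exceeds the deficit constant by an order of magnitude, so the route is salvageable — but it still requires an actual estimate, e.g.\ via the closed forms in Proposition~\ref{PropN4k} or a derivative bound at the fixed points.)

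The paper avoids this balancing act entirely, and you may want to compare. Since $k\le m$, the point $3\,4^{m}\cdots$ lies in the cylinder of $3\,4^{k}$, so $\|3\,4^{m}\cdots\|=\|3\,4^{k}\cdots\|$ can be bounded below by an endpoint of that cylinder; likewise $\|2^{k}3\cdots\|$ by an endpoint of the cylinder of $2^{k}3$. Because $k$ is odd, $\det(N_2^kN_3)=\det(N_3N_4^k)=-1$, and Proposition~\ref{PropCylinderMatrix} gives the \emph{lower} endpoints $\frac{4c_k-d_k}{\sqrt3\,c_k}$ and $\frac{d_k}{\sqrt3\,c_k}$ (strict inequalities, by the remark following Proposition~\ref{PropFobidden15}), whose sum is \emph{exactly} $\frac{4}{\sqrt3}$. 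So $L(P^*\mid Q)>4/\sqrt3$ with no error terms to compare: the right choice of cylinders makes the two bounds complementary, which is the idea your version of the argument is missing.
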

\begin{proof}
Write
$
P = 3\, 2^m \, 3\cdots
$
and
$Q = 2^k \,3\cdots,$
so that
\[
P^*\vert Q = 
\cdots 3 \, 2^m \,3\vert  2^k \,3 \cdots 
\]
is a section of $T$.
Then,
\[
L(P^*\vert Q) = \|P^{\vee}\| + \| Q \|= \| 3\, 4^m\cdots\| + \| 2^k 3\cdots\|.
\]
We find the lower bounds of the two terms above using Proposition~\ref{PropN4k}. 
Since $k$ is odd we have 
$\det(N_2^kN_3) = \det(N_3N_4^k) = -1$ and
\begin{equation}\label{Eq4kLowerBound}
\| 2^k 3\cdots\| > \frac{4c_k - d_k}{\sqrt3 c_k},    
\end{equation}
and
\[
\| 3\, 4^{m}\cdots\| =
\| 3\, 4^{k} \, 4^{m-k} \cdots\| =
\| 3\, 4^{k}\cdots\|  >\frac{d_k}{\sqrt3 c_k}.
\]
So,
\[
L(P^*\vert Q)>
\frac{4c_k - d_k}{\sqrt3 c_k} + 
\frac{d_k}{\sqrt3 c_k} = \frac{4}{\sqrt3}.
\]
This shows that $T$ is not admissible.
\end{proof}
\begin{proposition}\label{PropMlessEvenThanKodd}
In the expression \eqref{EqTwithm}, if $k$ is odd and if $m$ is even then $T$ is not admissible.
\end{proposition}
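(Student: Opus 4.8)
The plan is to produce a single section $P^*|Q$ of $T$ for which $L(P^*|Q) > 4/\sqrt3$; by Definition~\ref{DefLagrangeSequence} this already forces $L(T) > 4/\sqrt3$. I would cut $T$ immediately to the left of the block $2^k$ appearing in \eqref{EqTwithm}, so that the section reads
\[
P^*|Q = \cdots 3\, 2^m\, 3\,|\, 2^k\, 3 \cdots,
\]
that is, $Q = 2^k\, 3\cdots$ and $P = 3\, 2^m\, 3\cdots$ (the second $3$ in $P$ is the separator preceding the block $2^m$). Since $T$ is reduced we have $\hat P = P$, and as $2^{\vee}=4$, $3^{\vee}=3$, Definition~\ref{DefLagrangeSequence} gives
\[
L(P^*|Q) = \|\hat P^{\vee}\| + \|Q\| = \|3\, 4^m\, 3\cdots\| + \|2^k\, 3\cdots\|.
\]

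The crux is then to bound each summand strictly below by its constant-sequence value, using Proposition~\ref{CorCommonPrefixComparison}. For the first summand I would take the common prefix $w = 3\,4^m$: since $m$ is even, $\det(N_w) = \det(N_3)\det(N_4)^m = 1$, while the digit of $3\,4^m\,3\cdots$ right after $w$ is $3$ and that of $3\,4^{\infty}$ is $4$; from $3<4$, Proposition~\ref{CorCommonPrefixComparison} yields $\|3\,4^m\,3\cdots\| > \|3\,4^{\infty}\|$. For the second summand I would take the common prefix $w = 2^k$: since $k$ is odd, $\det(N_w) = \det(N_2)^k = -1$, while the digit of $2^k\,3\cdots$ right after $w$ is $3$ and that of $2^{\infty}$ is $2$; from $3>2$, Proposition~\ref{CorCommonPrefixComparison} yields $\|2^k\,3\cdots\| > \|2^{\infty}\|$. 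Adding these and recalling the identity $\|3\,4^{\infty}\| + \|2^{\infty}\| = 4/\sqrt3$ already established in \S\ref{SecLExamples} (it is precisely the computation of $L({}^{\infty}2\,3|2^{\infty})$), we obtain $L(P^*|Q) > 4/\sqrt3$, so $T$ is not admissible.

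The only point requiring care — and the main (if minor) obstacle — is the degenerate situation in which one of $m,k$ equals $\infty$, i.e. the corresponding block of $2$'s is one-sided infinite. If $m = \infty$ then $P = 3\,2^{\infty}$ and the first comparison collapses to the equality $\|3\,4^{\infty}\| = \|3\,4^{\infty}\|$; but then $k<\infty$ by the standing hypothesis that $m$ and $k$ are not both infinite, the second comparison stays strict, and the sum is still $> 4/\sqrt3$. The case $k=\infty$ is symmetric (now $\|Q\| = \|2^{\infty}\|$ exactly, while $\|3\,4^m\,3\cdots\| > \|3\,4^{\infty}\|$ remains strict). Finally I would remark that the argument never uses the relation between $m$ and $k$, so it actually proves the statement for every even $m$ and every odd $k$, subsuming the $m$-even half of Proposition~\ref{PropMgreaterThanKodd}.
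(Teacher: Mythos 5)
Your proof is correct, but it follows a genuinely different route from the paper's. The paper first invokes Proposition~\ref{PropMgreaterThanKodd} to discard $m\ge k$, and then, for $m<k$, bounds $\| 3\,4^{m}\,3\cdots\|$ below via the explicit matrix $N_3N_4^{k-1}N_3$ and $\|2^k\,3\cdots\|$ via $N_2^kN_3$ (using the closed forms for $c_k, d_k$ in Proposition~\ref{PropN4k}), so that the excess over $4/\sqrt3$ comes out exactly as $\tfrac1{\sqrt3(4c_k-d_k)}>0$ thanks to the identity $9c_k^2-7c_kd_k+d_k^2=(-1)^{k+1}$. You instead compare each summand of $L(P^*|Q)=\|3\,4^m\,3\cdots\|+\|2^k\,3\cdots\|$ directly with its periodic-tail value, using only the determinant-parity monotonicity of Proposition~\ref{CorCommonPrefixComparison}: $\det(N_{3\,4^m})=+1$ for $m$ even gives $\|3\,4^m\,3\cdots\|>\|3\,4^{\infty}\|$, and $\det(N_{2^k})=-1$ for $k$ odd gives $\|2^k\,3\cdots\|>\|2^{\infty}\|$, and then the identity $\|3\,4^{\infty}\|+\|2^{\infty}\|=4/\sqrt3$ already computed in \S\ref{SecLExamples} finishes the argument. (Strictness is safe here because $3\,4^{\infty}$ and $2^{\infty}$ are irrational points, so they cannot coincide with the sequences you compare them to; this matches how the paper itself uses the comparison elsewhere.) What your approach buys: no dependence on Proposition~\ref{PropMgreaterThanKodd}, no explicit matrix computations, uniformity in $m$ and $k$ (so it indeed also covers the even-$m$ part of Proposition~\ref{PropMgreaterThanKodd}), and a clean conceptual reason for the bound. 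What the paper's computation buys is an explicit quantitative lower bound for the gap $L(P^*|Q)-4/\sqrt3$ and a presentation parallel to the adjacent propositions, though that extra precision is not needed later.
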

\begin{proof}
Proposition~\ref{PropMgreaterThanKodd} implies that $m>k$ is impossible for an admissible $T$. So, we will assume that $m < k$.

First, Proposition~\ref{CorCommonPrefixComparison} says that
\[
\| 3\, 4^{m}\,3 \cdots \| \ge
\| 3\, 4^{m} 4^{(k-1)-m} \,3 \cdots \|=
\| 3\,4^{k-1} \,3 \cdots \|.
\]
To find a lower bound for $\| 3\,4^{k-1} 3\cdots\|$,
we use Proposition~\ref{PropN4k} to get
\begin{align*}
N_3N_4^{k-1}N_3 &=(N_3 N_4^k)(N_4^{-1} N_3) =
\begin{pmatrix}
  d_k & \sqrt3(3c_k - d_k) \\
\sqrt3c_k &4c_k - d_k\\
\end{pmatrix}
\begin{pmatrix}
  -1& 0 \\
\sqrt3 &1\\
\end{pmatrix} \\
&=
\begin{pmatrix}
  9c_k - 4d_k & \sqrt3(3c_k - d_k) \\
\sqrt3(3c_k-d_k) &4c_k - d_k\\
\end{pmatrix}.
\end{align*}
Since $\det(N_3N_4^{k-1}N_3) = 1$ we have
\begin{equation}\label{Eq32k_13kLowerBound}
\| 3\,4^{k-1} 3\cdots\| > \frac{\sqrt{3}(3c_k-d_k)}{4c_k - d_k}.
\end{equation}
On the other hand, 
combining \eqref{Eq4kLowerBound} and \eqref{Eq32k_13kLowerBound}, we have
\begin{align*}
L(P^*\vert Q)-\frac{4}{\sqrt3} &= \| 3\,4^{m} 3\cdots\| + \| 2^k 3\cdots\| -\frac{4}{\sqrt3}\\
&> \frac{\sqrt{3}(3c_k-d_k)}{4c_k - d_k} +
\frac{4c_k - d_k}{\sqrt3 c_k} 
-\frac{4}{\sqrt3} \\
&= \frac{9c_k^2 - 7c_kd_k + d_k^2}{\sqrt{3}(4c_k-d_k)}
= \frac1{\sqrt{3}(4c_k-d_k)}>0.
    \end{align*}
Here, the last equality is obtained from Proposition~\ref{PropN4k}, together with the assumption that $k$ is odd.
\end{proof}
\begin{proposition}\label{PropKodd}
In the expression \eqref{EqTwithm}, if $k$ is odd then $T$ is not admissible.
\end{proposition}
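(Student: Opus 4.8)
The plan is to combine Propositions~\ref{PropMgreaterThanKodd} and \ref{PropMlessEvenThanKodd} with an infinite descent on block lengths. Assume for contradiction that $T$ is admissible and that the exponent $k$ in \eqref{EqTwithm} is odd. Since $T$ is reduced and, being non-constant, cannot contain $33$ (Proposition~\ref{Prop33Forbidden}), every occurrence of the digit $3$ in $T$ is flanked by maximal blocks of $2$'s, so $T$ has the shape recorded in \eqref{EqPossibleT},
\[
T = \cdots 3\, 2^{k_{-1}} \,3\, 2^{k_0} \,3\, 2^{k_1} \,3 \cdots,
\]
each exponent a positive integer, with at most one exponent equal to $\infty$ (a one-sided infinite run of $2$'s). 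Set $\ell_0 = k$, our odd exponent, and let $\ell_1, \ell_2, \dots$ be the exponents of the successive $2$-blocks met as one moves leftward from the block $2^{\ell_0}$.

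The descent step is as follows. Suppose $\ell_j$ is an odd positive integer. The pair $(\ell_{j+1},\ell_j)$ occurs in $T$ as $\cdots 3\, 2^{\ell_{j+1}} \,3\, 2^{\ell_j} \,3 \cdots$, which is precisely the pattern \eqref{EqTwithm} with $m = \ell_{j+1}$ and $k = \ell_j$. Because $\ell_j$ is odd and $T$ is admissible, Proposition~\ref{PropMgreaterThanKodd} rules out $\ell_j \le \ell_{j+1} \le \infty$; hence $\ell_{j+1}$ is finite and $\ell_{j+1} < \ell_j$. Proposition~\ref{PropMlessEvenThanKodd} then rules out $\ell_{j+1}$ being even, so $\ell_{j+1}$ is odd. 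Thus each odd block forces a strictly shorter odd block immediately to its left, and the induction never terminates, producing an infinite strictly decreasing sequence $\ell_0 > \ell_1 > \ell_2 > \cdots$ of positive integers — an absurdity. Therefore $T$ is not admissible.

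There is no real computational content here; everything quantitative has been absorbed into Propositions~\ref{PropN4k}, \ref{PropMgreaterThanKodd}, and \ref{PropMlessEvenThanKodd}. The one place needing a word of care is the very first descent step: one must know that the block just to the left of $2^k$ is genuinely finite rather than a one-sided infinite run of $2$'s, but that degenerate case is exactly the situation $m = \infty \ge k$ with $k$ odd, already excluded by Proposition~\ref{PropMgreaterThanKodd}. (One could equivalently phrase the whole argument as: among all admissible $T$ of the form \eqref{EqPossibleT} containing at least one odd finite exponent, choose one whose shortest odd finite block has minimal length and derive a contradiction from the block to its left.)
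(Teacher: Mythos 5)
Your argument is correct, but it is not the route the paper takes. The paper disposes of the one case left open by Propositions~\ref{PropMgreaterThanKodd} and \ref{PropMlessEvenThanKodd} (both $m$ and $k$ odd with $m<k$) in a single stroke: it passes to the reversed sequence $T^*$, in which the same two blocks appear as $\cdots 3\,2^{k}\,3\,2^{m}\,3\cdots$, so the odd exponent $m$ now sits in the ``$k$'' slot with the larger exponent to its left, and Proposition~\ref{PropMgreaterThanKodd} applies; this uses that admissibility is invariant under reversal for reduced sequences (Proposition~\ref{PropBasicLProperty}). You instead run an infinite descent leftward along the block structure \eqref{EqPossibleT}: an admissible $T$ with an odd finite exponent forces, by the two propositions, a strictly smaller odd finite exponent immediately to its left, and so on, contradicting well-ordering. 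Your descent buys independence from the reversal symmetry, at the cost of invoking the global shape \eqref{EqPossibleT} and an induction; the paper's trick is shorter but leans on $L(T)=L(T^*)$. Two small points of hygiene: the shape \eqref{EqPossibleT} needs Propositions~\ref{Prop24Forbidden} and \ref{Prop234Forbidden} as well as \ref{Prop33Forbidden} to exclude the digit $4$ (or you may simply cite \eqref{EqPossibleT}, which the surrounding text has already established for admissible non-constant $T$), and a doubly infinite $T$ can terminate with $2^{\infty}$ at \emph{both} ends, so ``at most one exponent equal to $\infty$'' is slightly off --- harmlessly, since any infinite $\ell_{j+1}$ is killed by Proposition~\ref{PropMgreaterThanKodd} anyway.
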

\begin{proof}
The only remaining case (after Propositions~\ref{PropMgreaterThanKodd} and \ref{PropMlessEvenThanKodd}) is when both $k$ and $m$ are odd and $m < k$.
But, in this case, $T^*$ would contain a pattern which is forbidden by Proposition ~\ref{PropMgreaterThanKodd}.
\end{proof}
Now we consider the case when both $k$ and $m$ are even and distinct. Replacing $T$ by $T^*$ if necessary, we may assume without loss of generality that $m<k$. The next proposition completes the proof of Theorem~\ref{ThmCharacterizationOfAdmissibleSeq}.
\begin{proposition}\label{PropTheEnd}
In the expression \eqref{EqTwithm}, if both $k$ and $m$ are even and if $m<k$ (possibly $k=\infty$) then $T$ is not admissible.
\end{proposition}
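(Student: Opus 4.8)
\emph{Proof strategy.} The plan is to exhibit a single section of $T$ whose Lagrange number already exceeds $4/\sqrt3$. In the notation of \eqref{EqTwithm}, cut $T$ between the central digit $3$ and the block $2^k$; this produces a section $P^*|Q$ of $T$ with
\[
P = 3\,2^m\,3\cdots, \qquad Q = 2^k\,3\cdots
\]
(where $Q = 2^{\infty}$ if $k=\infty$), so that $P^*|Q = \cdots 3\,2^m\,3\,|\,2^k\,3\cdots$, a genuine section of $T$ by \eqref{EqTwithm}. Since $T$ is admissible it is reduced, hence $\hat P = P$, and therefore
\[
L(T)\ \ge\ L(P^*|Q)\ =\ \|\hat P^{\vee}\| + \|Q\|\ =\ \|3\,4^m\,3\cdots\| + \|Q\|.
\]
It thus suffices to bound the two summands from below and show that their sum is $>4/\sqrt3$.

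For the first summand, $3\,4^m\,3\cdots$ lies in the cylinder set $C(3,4^m,3)$, and from Proposition~\ref{PropN4k} one computes
\[
N_3N_4^mN_3 = \begin{pmatrix} 9c_{m+1}-4d_{m+1} & \sqrt3(3c_{m+1}-d_{m+1}) \\ \sqrt3(3c_{m+1}-d_{m+1}) & 4c_{m+1}-d_{m+1}\end{pmatrix},
\]
which has determinant $1$ because $m$ is even; so Proposition~\ref{PropCylinderMatrix}, with strict inequality by the remark following Proposition~\ref{PropFobidden15}, gives $\|3\,4^m\,3\cdots\| > \dfrac{\sqrt3(3c_{m+1}-d_{m+1})}{4c_{m+1}-d_{m+1}}$. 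Here is where the hypothesis $m<k$ — equivalently, $m\le k-2$ since both are even — enters: from the closed forms in Proposition~\ref{PropN4k} one has the one-line identity $\tfrac{7-\sqrt{13}}{2}c_n - d_n = \overline{\lambda}^{\,n+1}$, and since $|\overline{\lambda}|<1$ this shows that the ratio $\tfrac{3c_n-d_n}{4c_n-d_n}$ is decreasing along odd indices $n$. As $m+1$ and $k-1$ are odd with $m+1\le k-1$, we obtain the sharper bound
\[
\|3\,4^m\,3\cdots\| \ >\ \frac{\sqrt3\,(3c_{k-1}-d_{k-1})}{4c_{k-1}-d_{k-1}}.
\]

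Now split on $k$. If $k<\infty$, then $2^k\,3\cdots\in C(2^k,3)$, the matrix $N_2^kN_3$ of Proposition~\ref{PropN4k} has determinant $1$, and Proposition~\ref{PropCylinderMatrix} gives $\|2^k\,3\cdots\| > \dfrac{\sqrt3\,(3c_k-d_k)}{d_k}$. Feeding the recursions $c_k = 5c_{k-1}-d_{k-1}$, $d_k = 9c_{k-1}-2d_{k-1}$ into the two lower bounds and then using the identity $9c_{k-1}^2-7c_{k-1}d_{k-1}+d_{k-1}^2 = (-1)^{k}=1$ from Proposition~\ref{PropN4k}, a short computation collapses the sum of the two bounds to
\[
\frac{\sqrt3\,(3c_{k-1}-d_{k-1})}{4c_{k-1}-d_{k-1}} + \frac{\sqrt3\,(3c_k-d_k)}{d_k}
= \frac{4}{\sqrt3} + \frac{\sqrt3}{3\,(4c_{k-1}-d_{k-1})\,d_k}\ >\ \frac{4}{\sqrt3},
\]
so $L(T)>4/\sqrt3$. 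If $k=\infty$, then $Q=2^{\infty}$ with $\|2^{\infty}\| = \tfrac{1+\sqrt{13}}{2\sqrt3}$ (computed in \S\ref{SecLExamples}); the identity $\tfrac{7-\sqrt{13}}{2}c_{m+1}-d_{m+1} = \overline{\lambda}^{\,m+2}>0$ (valid since $m+2$ is even) yields $\|3\,4^m\,3\cdots\| > \tfrac{7-\sqrt{13}}{2\sqrt3}$, whence
\[
\|3\,4^m\,3\cdots\| + \|2^{\infty}\|\ >\ \frac{7-\sqrt{13}}{2\sqrt3} + \frac{1+\sqrt{13}}{2\sqrt3}\ =\ \frac{4}{\sqrt3},
\]
again forcing $L(T)>4/\sqrt3$. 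In either case $T$ is not admissible, which completes the proof of Theorem~\ref{ThmCharacterizationOfAdmissibleSeq}.

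The main obstacle is the sign in the identity $9c_n^2-7c_nd_n+d_n^2=(-1)^{n+1}$: when the block length $k$ is even the natural ``period-$k$'' term is $-1$, so the direct estimate used in the odd cases (Propositions~\ref{PropMgreaterThanKodd}--\ref{PropKodd}) would produce a negative remainder and break down. The strict inequality $m<k$, i.e.\ $m\le k-2$, is exactly what lets us shift from index $k$ to index $k-1$ in the bound for $\|3\,4^m\,3\cdots\|$, where the identity carries the favorable sign $+1$; arranging the index-$(k-1)$ and index-$k$ lower bounds to telescope precisely against $4/\sqrt3$ is the heart of the computation. The subcase $k=\infty$ is genuinely borderline — the two bounds only add up to $4/\sqrt3$ in the limit — and must be dealt with separately via the exact value of $\|2^{\infty}\|$.
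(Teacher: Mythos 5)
Your proof is correct, and at its core it follows the same strategy as the paper: cut $T$ at the section $\cdots 3\,2^m\,3\,|\,2^k\,3\cdots$, lower-bound both norms by cylinder endpoints computed from Proposition~\ref{PropN4k}, and use the identity $9c_n^2-7c_nd_n+d_n^2=(-1)^{n+1}$ (with the favorable sign at the odd index $k-1$) to collapse the sum to $4/\sqrt3$ plus an explicitly positive remainder; your remainder $\frac{\sqrt3}{3(4c_{k-1}-d_{k-1})d_k}$ is exactly the paper's $\frac{1}{\sqrt3\,d_k(d_k-c_k)}$ rewritten via the recursion. The two places where you diverge are worth noting. First, to exploit $m\le k-2$ the paper stays combinatorial: it invokes Proposition~\ref{CorCommonPrefixComparison} to pass from $\|3\,4^m\,3\cdots\|$ down to $\|3\,4^{k-2}\,3\cdots\|$ and only then applies the cylinder bound; you instead apply the cylinder bound at $C(3,4^m,3)$ and prove an arithmetic monotonicity of $\frac{3c_n-d_n}{4c_n-d_n}$ along odd $n$ from the closed forms (your identity $\frac{7-\sqrt{13}}{2}c_n-d_n=\overline{\lambda}^{\,n+1}$ is correct and does yield this, though in a final write-up you should spell out the one extra line: $d_n/c_n=\frac{7-\sqrt{13}}{2}-\overline{\lambda}^{\,n+1}/c_n$ with $\overline{\lambda}^{\,n+1}>0$ and $c_n$ increasing, and $t\mapsto\frac{3-t}{4-t}$ decreasing). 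Second, for $k=\infty$ the paper approximates $2^\infty$ by $2^{2j}3\cdots$ and reduces to the finite case by a limiting argument, whereas you use the exact value $\|2^\infty\|=\frac{1+\sqrt{13}}{2\sqrt3}$ together with the uniform strict bound $\|3\,4^m\,3\cdots\|>\frac{7-\sqrt{13}}{2\sqrt3}$; your version is a bit more direct and makes the borderline nature of this subcase transparent, while the paper's avoids any exact evaluation. Both variants are valid, so there is no gap to repair.
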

\begin{proof}
We handle the case $k<\infty$ first. 
Since $m\le k-2$, Proposition~\ref{CorCommonPrefixComparison} implies that
\[
\| 3\,4^m\, 3\cdots \| 
\ge
\| 3\,4^m \,4^{(k-2)-m}\, 3\cdots \|
=
\| 3\,4^{k-2}\, 3\cdots \|.
\]
We need to find a lower bound of $\| 3\,2^{k-2}\, 3\cdots \|.$
To do so,
\begin{align*}
N_3N_4^{k-2}N_3 &=(N_3 N_4^k)(N_4^{-2} N_3) =
\begin{pmatrix}
  d_k & \sqrt3(3c_k - d_k) \\
\sqrt3c_k &4c_k - d_k\\
\end{pmatrix}
\begin{pmatrix}
  5 & 3 \\
-2\sqrt3 & -1\\
\end{pmatrix} \\
&=
\begin{pmatrix}
  -18c_k + 11d_k & \sqrt3(-3c_k +2 d_k) \\
\sqrt3(-3c_k+2d_k) &-c_k + d_k\\
\end{pmatrix}.
\end{align*}
Since $\det(N_3N_4^{k-2}N_3) = 1$, we have
\begin{equation}\label{Eq32k_23kLowerBound}
\| 3\,4^{k-2} 3\cdots\| > \frac{\sqrt{3}(-3c_k+2d_k)}{-c_k + d_k}.
\end{equation}
For $\|2^k\,3\cdots\|$, we note that $\det(N_2^kN_3) = 1$. Proposition~\ref{PropN4k} gives
\begin{equation}\label{EqLowerBoundN4kKeven}
    \|2^k\,3\cdots\| > \frac{\sqrt3(3c_k - d_k)}{d_k}.
\end{equation}
Let $P = 3\, 2^{m}\, 3\cdots$ and $Q = 2^k\,3\cdots$.
We use \eqref{Eq32k_23kLowerBound} and \eqref{EqLowerBoundN4kKeven} to obtain
\begin{align*}
L(P^*\vert Q) -\frac{4}{\sqrt3} &= \| 3\, 4^{m}\, 3\cdots \| + \| 2^k\,3\cdots \|
-\frac{4}{\sqrt3} \\
& > 
\frac{\sqrt3(-3c_k +2 d_k)}{-c_k + d_k}
+
\frac{\sqrt3(3c_k - d_k)}{d_k} 
-\frac{4}{\sqrt3} \\
&=
\frac{-9c_k^2 +7c_kd_k - d_k^2}{\sqrt{3}d_k(d_k - c_k)} \\
&=
\frac1{\sqrt{3}d_k(d_k - c_k)}>0.
\end{align*}

Suppose that $m< k = \infty$. Let $Q_{2j} = 2^{2j}3 \cdots$ (with an arbitrarily chosen tail). Then Proposition~\ref{CorCommonPrefixComparison} implies that $\{\| Q_{2j} \|\}$ is a monotonically increasing sequence (regardless of the choice of tail in $Q_{2j}$), converging to  $\| 2^{\infty} \|$.
So, for any given $\epsilon>0$,
\begin{align*}
L(P^*\vert 2^{\infty}) - \frac{4}{\sqrt3}- \epsilon 
&=
\| P^{\vee} \| + \| 2^{\infty} \| - \epsilon - \frac{4}{\sqrt3} \\
& \ge
\| P^{\vee} \| + \| Q_{2j} \|
-\frac{4}{\sqrt3} \\
&= L(P^*\vert Q_{2j}) - \frac{4}{\sqrt3} \\
&>0
\end{align*}
for all sufficiently large $j$.
\end{proof}

%\subsection{Lagrange numbers of admissible sequences}

\begin{theorem}[Main Theorem]\label{ThmMainTheorem}
A doubly infinite Romik sequence $T$ is admissible if and only if $T$ or $T^{\vee}$ is equal to
one of the sequences in Theorem~\ref{ThmCharacterizationOfAdmissibleSeq}.
Their Lagrange numbers are
\begin{enumerate}[font=\upshape, label=(\alph*)]
    \item $L({}^{\infty}3^{\infty}) = 2$, and $L({}^{\infty}2^{\infty}) = L({}^{\infty}4^{\infty}) = \sqrt{13/3}$.
    \item $L({}^{\infty}2 \, 3 \, 2^{\infty}) = 4/\sqrt{3}$.
    \item For each $k>0$,
    \[
    L(\cdots 3\, 2^{2k}\, 3 \, 2^{2k}\, 3 \cdots) =
    \frac{2}{\sqrt{3}}
\sqrt{4 - \frac{13}%
{\left(\lambda^{2k+1} - \overline{\lambda}^{2k+1}\right)^2}}.
    \]
\end{enumerate}
Here, $\lambda$ and $\overline{\lambda}$ are the constants defined in Proposition~\ref{PropN4k}.
\end{theorem}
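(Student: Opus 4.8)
The plan is to derive the Main Theorem from Theorem~\ref{ThmCharacterizationOfAdmissibleSeq} together with an explicit computation of $L(T)$ for each family listed there. The \emph{only if} half of the equivalence is exactly Theorem~\ref{ThmCharacterizationOfAdmissibleSeq}. For the \emph{if} half it suffices to check that every sequence listed in Theorem~\ref{ThmCharacterizationOfAdmissibleSeq} satisfies $L(T)\le 4/\sqrt3$, and this will fall out of computing $L(T)$. For families (a) and (b) the work is already done in \S\ref{SecLExamples}: there we found $L({}^{\infty}3^{\infty})=2$, $L({}^{\infty}2^{\infty})=L({}^{\infty}4^{\infty})=\sqrt{13/3}$, and $L({}^{\infty}2\,3\,2^{\infty})=4/\sqrt3$, and all of these are $\le 4/\sqrt3$ since $2<\sqrt{13/3}<4/\sqrt3$. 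So everything reduces to family (c).

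Fix $k\ge1$ and set $w=2^{2k}3$, so the sequence in (c) is $T={}^{\infty}w^{\infty}$. This $T$ is reduced and satisfies $T=T^*$ (reading $2^{2k}3$ backwards gives a shift of $T$), so Proposition~\ref{PropBasicLProperty}(c) gives $L(T)=\sup_{P^*|Q}L(P^*|Q)$ over the sections of $T$. Since $w$ is a primitive word of length $2k+1$ (it has a unique letter $3$), $T$ has exactly $2k+1$ sections, and each of them has the form $(P_{v^*})^*\,|\,P_{v}$ where $v$ ranges over the cyclic rotations $w(i)=2^i\,3\,2^{2k-i}$, $i=0,\dots,2k$, of $w$; all the $w(i)$ are reduced. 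Hence Proposition~\ref{PropPnormAndLP} applies, and writing $N_{w(i)}=\begin{psmallmatrix}a_{w(i)}&b_{w(i)}\\c_{w(i)}&d_{w(i)}\end{psmallmatrix}$ and $\Delta_{w(i)}=\Tr(N_{w(i)})^2-4\det(N_{w(i)})$ we get
\[
L\bigl((P_{w(i)^*})^*\,|\,P_{w(i)}\bigr)=\frac{\sqrt{\Delta_{w(i)}}}{c_{w(i)}}.
\]
The matrices $N_{w(i)}$ are cyclic reorderings of the single product $N_2\cdots N_2\,N_3$, hence mutually conjugate, so $\Delta_{w(i)}$ is independent of $i$; by Proposition~\ref{PropN4k} (which gives $N_2^{2k}N_3$ explicitly, so $\Tr(N_2^{2k}N_3)=4c_{2k}$ and $\det(N_2^{2k}N_3)=1$) its common value is $\Delta=16c_{2k}^2-4$. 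Therefore $L(T)=\sqrt{\Delta}\big/\min_{i}c_{w(i)}$.

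The crucial point is that $\min_i c_{w(i)}=\sqrt3\,c_{2k}$, attained at $i=2k$, i.e.\ at $w(2k)=w=2^{2k}3$. Multiplying the formula of Proposition~\ref{PropN4k} for $N_2^mN_3$ on the right by $N_3^{-1}$ yields
\[
N_2^m=\begin{psmallmatrix}d_m-c_m&\sqrt3(2c_m-d_m)\\\sqrt3(2c_m-d_m)&2d_m-3c_m\end{psmallmatrix},
\]
and expanding $N_{w(i)}=N_2^i\,N_3\,N_2^{2k-i}$ gives
\[
c_{w(i)}=\sqrt3\bigl[\,c_i(d_{2k-i}-c_{2k-i})+d_i(2c_{2k-i}-d_{2k-i})\,\bigr],
\]
so in particular $c_{w(2k)}=\sqrt3\,c_{2k}$ and $c_{w(0)}=\sqrt3(3c_{2k}-d_{2k})>\sqrt3\,c_{2k}$ because $0<d_m<2c_m$ for $m\ge1$. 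For the remaining rotations one substitutes the closed forms of Proposition~\ref{PropN4k} for $c_m,d_m$ in terms of $\lambda,\overline{\lambda}$ and uses the monotonicity of $\{c_m\}$, the inequality $d_m<2c_m$, and Proposition~\ref{CorCommonPrefixComparison}; the claim $c_{w(i)}\ge\sqrt3\,c_{2k}$ then reduces to a short list of elementary estimates in $\lambda$. I expect this last verification to be the main obstacle of the proof, although it is entirely routine (and trivially true for large $k$ because $\sqrt3\,c_{2k}=\tfrac{\sqrt3}{\sqrt{13}}(\lambda^{2k+1}-\overline\lambda^{2k+1})$ grows strictly slower than every other $c_{w(i)}$, all of which are $\sim\tfrac{4}{\sqrt3\cdot 13}\,\lambda^{2k+1}\!\cdot\!\sqrt{13}$ or larger).

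Granting the claim, $L(T)=\dfrac{\sqrt{16c_{2k}^2-4}}{\sqrt3\,c_{2k}}=\dfrac{2}{\sqrt3}\sqrt{4-\dfrac{1}{c_{2k}^2}}$. Since $c_{2k}=\tfrac{1}{\sqrt{13}}\bigl(\lambda^{2k+1}-\overline{\lambda}^{2k+1}\bigr)$ we have $1/c_{2k}^2=13/(\lambda^{2k+1}-\overline{\lambda}^{2k+1})^2$, and substituting produces exactly the formula of part~(c). Finally $4-1/c_{2k}^2<4$ forces $L(T)<4/\sqrt3$, so every sequence of type (c) is admissible; together with the already-known values in (a) and (b) this establishes the \emph{if} direction, completing the proof. (The relation $c_{2k}\to\infty$ also recovers $L(T)\to 4/\sqrt3$, consistent with Theorem~\ref{ThmMainTheoremIntro}.)
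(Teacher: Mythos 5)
Your reduction is sound and it genuinely differs from the paper's treatment of family (c). Both you and the paper settle (a) and (b) by the computations of \S\ref{SecLExamples}, take the ``only if'' direction from Theorem~\ref{ThmCharacterizationOfAdmissibleSeq}, and use Proposition~\ref{PropBasicLProperty}(c) (after checking that $T_{2k}$ is reduced and $T_{2k}=T_{2k}^*$) to reduce to a maximum over sections. The paper then splits the sections into three types, computes the exact value only for the section $P_A^*|Q_A$ with $P_A=(3\,2^{2k})^{\infty}$, $Q_A=(2^{2k}\,3)^{\infty}$ via Propositions~\ref{PropPnormAndLP} and \ref{PropN4k} (this is \eqref{EqLValue}), and dominates the other two types by coarse cylinder estimates (Propositions~\ref{PropCylinderMatrix} and \ref{CorCommonPrefixComparison}). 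You instead compute every section exactly: the sections are $(P_{v^*})^*|P_v$ with $v=w(i)=2^i\,3\,2^{2k-i}$, the matrices $N_{w(i)}$ are mutually conjugate, so $\Delta_{w(i)}=16c_{2k}^2-4$ for all $i$ and $L(T_{2k})=\sqrt{\Delta}\,/\min_i c_{w(i)}$; your formula $c_{w(i)}=\sqrt3\,[\,c_i(d_{2k-i}-c_{2k-i})+d_i(2c_{2k-i}-d_{2k-i})\,]$ is correct, and granting $\min_i c_{w(i)}=\sqrt3\,c_{2k}$ the closed form and the strict bound $L(T_{2k})<4/\sqrt3$ follow exactly as you say. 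If completed, your route even yields more than the paper's (the value of every section), at the price of one arithmetic identity.

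The gap is precisely that identity-level step: the inequality $c_{w(i)}\ge\sqrt3\,c_{2k}$ for $0<i<2k$ is the heart of the argument, and you verify only the endpoints $i=2k$ and $i=0$, deferring the rest as ``routine.'' The tools you cite (monotonicity of $\{c_m\}$, $d_m<2c_m$, Proposition~\ref{CorCommonPrefixComparison}) do not obviously produce it, and your asymptotic aside is off: for $i$ and $2k-i$ both large the ratio $c_{w(i)}/(\sqrt3\,c_{2k})$ tends to $4/\sqrt{13}\approx 1.11$, so the comparison is genuinely tight and cannot be dismissed by leading growth rates with the constants you quote. Fortunately the claim is true and closes in two lines: with $c_{-1}=0$ the recursion $c_{j+1}=3c_j+c_{j-1}$ gives $d_j=2c_j-c_{j-1}$, and the addition identity $c_{i+m}=c_ic_m+c_{i-1}c_{m-1}$ (immediate from $c_j=(\lambda^{j+1}-\overline{\lambda}^{j+1})/(\lambda-\overline{\lambda})$ and $\lambda\overline{\lambda}=-1$) turns your expression into $c_{w(i)}/\sqrt3-c_{2k}=c_{2k-i-1}\,(c_i-2c_{i-1})\ge 0$, with equality exactly when $i=2k$. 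Insert this (or, alternatively, fall back on the paper's cylinder-set domination of the intermediate sections) and your proof is complete.
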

\begin{proof}
We have already computed in \S\ref{SecLExamples} the Lagrange numbers of the sequences in (a) and (b).
So, we need to find the Lagrange number of 
\[
T_{2k} = \cdots 3\, 2^{2k}\, 3 \, 2^{2k}\, 3 \cdots.
\]
Since 
$T_{2k}$ is reduced and
$T_{2k} = T_{2k}^*$,
Proposition~\ref{PropBasicLProperty} says that
we only need to compute $L(P^*\vert Q)$ for the following types of $P$ and $Q$:
\begin{enumerate}[font=\upshape, label=(\Alph*)]
\item $P_A = (3\, 2^{2k})^{\infty}$ and $Q_A = (2^{2k}3)^{\infty}$,
\item $P_B = (2^{2k}3)^{\infty}$ and $Q_B = (3\,2^{2k})^{\infty}$,
\item $P_C = (2^{2k-m}\, 3 \, 2^m)^{\infty}$ and $Q_C=(2^m\, 3 \,2^{2k-m})^{\infty}$ with $0< m< 2k$.
\end{enumerate}
For the case (A),
we deduce from Propositions~\ref{PropPnormAndLP} and \ref{PropN4k} that
\begin{equation}\label{EqLValue}
\begin{aligned}
L(P_A^*\vert Q_A)
&=
\frac{\sqrt{(4c_{2k})^2 - 4}}{\sqrt3 c_{2k}} \\
&=
\frac{2}{\sqrt{3}}
\sqrt{4 - \frac{1}{c_{2k}^2}} \\
&=
\frac{2}{\sqrt{3}}
\sqrt{4 - \frac{13}%
{\left(\lambda^{2k+1} - \overline{\lambda}^{2k+1}\right)^2}},
\end{aligned}
\end{equation}
which is the value stated in the theorem.
%	In particular, 
%	\[
%		L(P_A^*|Q_A) \ge \frac{\sqrt{133}}5
%		\]
%		where the equality is attained whenever $k=1$.
So, in order to complete the proof, we will show that 
\begin{equation}\label{EqTwoInequalities}
L(P_A^*\vert Q_A) \ge L(P_B^*\vert Q_B) \quad \text{and} \quad
L(P_A^*\vert Q_A) \ge L(P_C^*\vert Q_C).
\end{equation}
Notice from \eqref{EqNd} (or simply from Figure~\ref{FigCylinderSetsLine}) that
\[
\| 3\cdots \| \le \frac2{\sqrt3} \quad \text{and} \quad
\| 4\cdots \| \le \frac{\sqrt3}2.
\]
So, we have 
	\begin{align*}
	L(P_B^* \vert Q_B) &=
\| 4^{2k}3 \cdots \| + \| 3\, 2^{2k} \cdots \| 
\le 
\frac2{\sqrt3}+
	\frac{\sqrt3}2
=
\frac7{2\sqrt3} = 2.0207\cdots  \\
		&<\frac{\sqrt{133}}5
		\le
	      L(P_A^*\vert Q_A),
	\end{align*}
which establishes the first inequality in \eqref{EqTwoInequalities}. 
For the second, 
\begin{equation}\label{EqLAminusLC}
\begin{aligned}
L(P_A^*\vert Q_A) &- L(P_C^*\vert Q_C) = \| P_A^{\vee}\| - \| P_C^{\vee}\| + \| Q_A \| - \| Q_C \| \\
&=
(\| 34\cdots \| - \| 4^{2k-m}3\cdots\|)
+
(\| 22\cdots \| - \| 2^m3\cdots\|).
\end{aligned}
\end{equation}
Observe that
\[
\begin{aligned}
N_2N_2 &=
\begin{pmatrix}
  7 & 3\sqrt3 \\
3\sqrt3 & 4\\
\end{pmatrix},
&
\frac{3\sqrt3}{4} < \| 22\cdots \| < \frac{7}{3\sqrt3}, \\
%N_2N_3 &=
%\begin{pmatrix}
%  7 & 4\sqrt3 \\
%3\sqrt3 & 5\\
%\end{pmatrix},
%&
%\frac{7}{3\sqrt3} < \| 23\cdots \| < \frac{4\sqrt3}5, \\
N_3N_4 &=
\begin{pmatrix}
  5 & 4\sqrt3 \\
3\sqrt3 & 7\\
\end{pmatrix},
&
\frac{5}{3\sqrt3} < \| 34\cdots \| < \frac{4\sqrt3}7, \\
N_4N_3 &=
\begin{pmatrix}
  5 & 3\sqrt3 \\
4\sqrt3 & 7\\
\end{pmatrix},
&
\frac5{4\sqrt3} < \| 43\cdots \| < \frac{3\sqrt3}7. \\
\end{aligned}
\]
Therefore, 
\[
\| 34 \cdots \| 
- \| 4^{2k-m}3\cdots\| \ge 
\| 34 \cdots \| - \| 43\cdots \| \ge 
\frac{5}{3\sqrt3}
-
\frac{3\sqrt3}{7}
=
\frac{8\sqrt3}{63}
\]
and
\[
\| 22 \cdots \| 
- \| 2^{m}3\cdots\| \ge 
\| 22 \cdots \| - \| 22\cdots \| \ge 
\frac{3\sqrt3}{4}
-
\frac{7}{3\sqrt3}
=
-\frac{\sqrt3}{36}.
\]
So, \eqref{EqLAminusLC} becomes
\[
L(P_A^*\vert Q_A) - L(P_C^*\vert Q_C) 
\ge
\frac{8\sqrt3}{63}
-\frac{\sqrt3}{36}>0.
\]
This completes proving \eqref{EqTwoInequalities}, thus the theorem is now proven.
\end{proof}

\begin{bibdiv}
\begin{biblist}
% \bib{AA13}{article}{
%   author={Abe, Ryuji},
%   author={Aitchison, Iain R.},
%   title={Geometry and Markoff's spectrum for $\mathbb {Q}(i)$, I},
%   journal={Trans. Amer. Math. Soc.},
%   volume={365},
%   date={2013},
%   number={11},
%   pages={6065--6102},
%   issn={0002-9947},
%   review={\MR {3091276}},
%   doi={10.1090/S0002-9947-2013-05850-3},
% }

% \bib{AAR16}{article}{
%   author={Abe, Ryuji},
%   author={Aitchison, Iain R.},
%   author={Rittaud, Beno\^\i t},
%   title={Two-color Markoff graph and minimal forms},
%   journal={Int. J. Number Theory},
%   volume={12},
%   date={2016},
%   number={4},
%   pages={1093--1122},
%   issn={1793-0421},
%   review={\MR {3484300}},
%   doi={10.1142/S1793042116500676},
% }

\bib{Aig13}{book}{
   author={Aigner, Martin},
   title={Markov's theorem and 100 years of the uniqueness conjecture},
   note={A mathematical journey from irrational numbers to perfect
   matchings},
   publisher={Springer, Cham},
   date={2013},
   pages={x+257},
   isbn={978-3-319-00887-5},
   isbn={978-3-319-00888-2},
   review={\MR{3098784}},
   doi={10.1007/978-3-319-00888-2},
}

% \bib{AR17}{article}{
%   author={Abe, Ryuji},
%   author={Rittaud, Beno\^{i}t},
%   title={On palindromes with three or four letters associated to the
%   Markoff spectrum},
%   journal={Discrete Math.},
%   volume={340},
%   date={2017},
%   number={12},
%   pages={3032--3043},
%   issn={0012-365X},
%   review={\MR{3698093}},
%   doi={10.1016/j.disc.2017.07.010},
% }
% \bib{Alp05}{article}{
%   author={Alperin, Roger C.},
%   title={The modular tree of Pythagoras},
%   journal={Amer. Math. Monthly},
%   volume={112},
%   date={2005},
%   number={9},
%   pages={807--816},
%   issn={0002-9890},
%   review={\MR{2179860}},
%   doi={10.2307/30037602},
% }
%\bib{Bar63}{article}{
%   author={Barning, F. J. M.},
%   title={On Pythagorean and quasi-Pythagorean triangles and a generation
%   process with the help of unimodular matrices},
%   language={Dutch},
%   journal={Math. Centrum Amsterdam Afd. Zuivere Wisk.},
%   volume={1963},
%   date={1963},
%   number={ZW-011},
%   pages={37},
%   review={\MR{0190077}},
%}
\bib{Ber34}{article}{
author={Berggren, B.},
title={Pytagoreiska triangular},
journal={Tidskrift f\"or element\"ar matematik, fysik och kemi},
volume={17},
date={1934},
pages={129--139},
}

% \bib{BLRS}{book}{
%   author={Berstel, Jean},
%   author={Lauve, Aaron},
%   author={Reutenauer, Christophe},
%   author={Saliola, Franco V.},
%   title={Combinatorics on words},
%   series={CRM Monograph Series},
%   volume={27},
%   note={Christoffel words and repetitions in words},
%   publisher={American Mathematical Society, Providence, RI},
%   date={2009},
%   pages={xii+147},
%   isbn={978-0-8218-4480-9},
%   review={\MR{2464862}},
% }
% \bib{Bde97}{article}{
%   author={Berstel, Jean},
%   author={de Luca, Aldo},
%   title={Sturmian words, Lyndon words and trees},
%   journal={Theoret. Comput. Sci.},
%   volume={178},
%   date={1997},
%   number={1-2},
%   pages={171--203},
%   issn={0304-3975},
%   review={\MR{1453849}},
%   doi={10.1016/S0304-3975(96)00101-6},
% }

% \bib{BM18}{article}{
%   author={Boca, Florin P.},
%   author={Merriman, Claire},
%   title={Coding of geodesics on some modular surfaces and applications to
%   odd and even continued fractions},
%   journal={Indag. Math. (N.S.)},
%   volume={29},
%   date={2018},
%   number={5},
%   pages={1214--1234},
%   issn={0019-3577},
%   review={\MR{3853422}},
%   doi={10.1016/j.indag.2018.05.004},
% }

\bib{Bom07}{article}{
   author={Bombieri, Enrico},
   title={Continued fractions and the Markoff tree},
   journal={Expo. Math.},
   volume={25},
   date={2007},
   number={3},
   pages={187--213},
   issn={0723-0869},
   review={\MR{2345177}},
   doi={10.1016/j.exmath.2006.10.002},
}
% \bib{BL93}{article}{
%   author={Borel, J.-P.},
%   author={Laubie, F.},
%   title={Quelques mots sur la droite projective r\'{e}elle},
%   language={French},
%   journal={J. Th\'{e}or. Nombres Bordeaux},
%   volume={5},
%   date={1993},
%   number={1},
%   pages={23--51},
%   issn={1246-7405},
%   review={\MR{1251226}},
% }
%\bib{CK18}{article}{
%   author={Cha, Byungchul},
%   author={Kim, Dong Han},
%   title={Number theoretical properties of Romik's dynamical system},
%   date={2019},
%   eprint={arXiv:1902.03571 [math.NT]},
%}

\bib{CK19}{article}{
   author={Cha, Byungchul},
   author={Kim, Dong Han},
   title={Intrinsic Diophantine approximation of a unit circle and its Lagrange spectrum},
   journal={Ann. Inst. Fourier (Grenoble)},
   date={2021},
   status={to appear},
%   eprint={arXiv:1903.02882 [math.NT]},
}
\bib{CNT}{article}{
   author={Cha, Byungchul},
   author={Nguyen, Emily},
   author={Tauber, Brandon},
   title={Quadratic forms and their Berggren trees},
   journal={J. Number Theory},
   volume={185},
   date={2018},
   pages={218--256},
   issn={0022-314X},
   review={\MR{3734349}},
   doi={10.1016/j.jnt.2017.09.003},
}
% \bib{CFHS17}{article}{
% author={Chaubey, Sneha},
% author={Fuchs, Elena},
% author={Hines, Robert},
% author={Stange, Katherine E.},
% title={The Dynamics of Super-Apollonian Continued Fractions},
% date={2017},
%   journal={Trans. Amer. Math. Soc.},
%   status={forthcoming},
% eprint={https://arxiv.org/abs/1703.08616},
% }
% \bib{Con}{article}{
% 	author={Conrad, Keith},
% 	title={Pythagorean descent},
% 	eprint={http://www.math.uconn.edu/~kconrad/blurbs/linmultialg/descentPythag.pdf}
% }

\bib{Coh55}{article}{
  author={Cohn, Harvey},
  title={Approach to Markoff's minimal forms through modular functions},
  journal={Ann. of Math. (2)},
  volume={61},
  date={1955},
  pages={1--12},
  issn={0003-486X},
  review={\MR {0067935}},
  doi={10.2307/1969618},
}

% \bib{Coh71}{article}{
%   author={Cohn, Harvey},
%   title={Representation of Markoff's binary quadratic forms by geodesics on a perforated torus},
%   journal={Acta Arith.},
%   volume={18},
%   date={1971},
%   pages={125--136},
%   issn={0065-1036},
%   review={\MR {0288079}},
%   doi={10.4064/aa-18-1-125-136},
% }

% \bib{Coh72}{article}{
%   author={Cohn, Harvey},
%   title={Markoff forms and primitive words},
%   journal={Math. Ann.},
%   volume={196},
%   date={1972},
%   pages={8--22},
%   issn={0025-5831},
%   review={\MR{0297847}},
%   doi={10.1007/BF01419427},
% }

% \bib{Coh79}{article}{
%   author={Cohn, Harvey},
%   title={Growth types of Fibonacci and Markoff},
%   journal={Fibonacci Quart.},
%   volume={17},
%   date={1979},
%   number={2},
%   pages={178--183},
%   issn={0015-0517},
%   review={\MR {536967}},
% }

\bib{CF89}{book}{
   author={Cusick, Thomas W.},
   author={Flahive, Mary E.},
   title={The Markoff and Lagrange spectra},
   series={Mathematical Surveys and Monographs},
   volume={30},
   publisher={American Mathematical Society, Providence, RI},
   date={1989},
   pages={x+97},
   isbn={0-8218-1531-8},
   review={\MR{1010419}},
   doi={10.1090/surv/030},
}

% \bib{Dar94}{book}{
%   author={Darling, R. W. R.},
%   title={Differential forms and connections},
%   publisher={Cambridge University Press, Cambridge},
%   date={1994},
%   pages={x+256},
%   isbn={0-521-46800-0},
%   review={\MR{1312606}},
%   doi={10.1017/CBO9780511805110},
% }

\bib{FKMS}{article}{
   author={Fishman, Lior},
   author={Kleinbock, Dmitry},
   author={Merrill, Keith},
   author={Simmons, David},
   title={Intrinsic Diophantine approximation on manifolds: general theory},
   journal={Trans. Amer. Math. Soc.},
   volume={370},
   date={2018},
   number={1},
   pages={577--599},
   issn={0002-9947},
   review={\MR{3717990}},
   doi={10.1090/tran/6971},
}

\bib{FSU14}{article}{
   author={Fishman, Lior},
   author={Simmons, David},
   author={Urba\'{n}ski, Mariusz},
   title={Diophantine approximation in Banach spaces},
   language={English, with English and French summaries},
   journal={J. Th\'{e}or. Nombres Bordeaux},
   volume={26},
   date={2014},
   number={2},
   pages={363--384},
   issn={1246-7405},
   review={\MR{3320484}},
}

\bib{KM15}{article}{
   author={Kleinbock, Dmitry},
   author={Merrill, Keith},
   title={Rational approximation on spheres},
   journal={Israel J. Math.},
   volume={209},
   date={2015},
   number={1},
   pages={293--322},
   issn={0021-2172},
   review={\MR{3430242}},
   doi={10.1007/s11856-015-1219-z},
}

% \bib{KZ73}{article}{
%   author={Korkine, A.},
%   author={Zolotareff, G.},
%   title={Sur les formes quadratiques},
%   language={French},
%   journal={Math. Ann.},
%   volume={6},
%   date={1873},
%   number={3},
%   pages={366--389},
%   issn={0025-5831},
%   review={\MR {1509828}},
%   doi={10.1007/BF01442795},
% }

\bib{Kop80}{article}{
   author={Kopetzky, Hans G\"{u}nther},
   title={Rationale Approximationen am Einheitskreis},
   language={German, with English summary},
   journal={Monatsh. Math.},
   volume={89},
   date={1980},
   number={4},
   pages={293--300},
   issn={0026-9255},
   review={\MR{587047}},
   doi={10.1007/BF01659493},
}

\bib{Kop85}{article}{
   author={Kopetzky, Hans G\"{u}nther},
   title={\"{U}ber das Approximationsspektrum des Einheitskreises},
   language={German, with English summary},
   journal={Monatsh. Math.},
   volume={100},
   date={1985},
   number={3},
   pages={211--213},
   issn={0026-9255},
   review={\MR{812612}},
   doi={10.1007/BF01299268},
}
% \bib{dLM94}{article}{
%   author={de Luca, Aldo},
%   author={Mignosi, Filippo},
%   title={Some combinatorial properties of Sturmian words},
%   journal={Theoret. Comput. Sci.},
%   volume={136},
%   date={1994},
%   number={2},
%   pages={361--385},
%   issn={0304-3975},
%   review={\MR{1311214}},
%   doi={10.1016/0304-3975(94)00035-H},
% }

\bib{Mal77}{article}{
  author={Maly\v sev, A. V.},
  title={Markov and Lagrange spectra (a survey of the literature)},
  language={Russian},
  note={Studies in number theory (LOMI), 4},
  journal={Zap. Nau\v cn. Sem. Leningrad. Otdel. Mat. Inst. Steklov. (LOMI)},
  volume={67},
  date={1977},
  pages={5--38, 225},
  review={\MR {0441876}},
}

\bib{Mar79}{article}{
  author={Markoff, A.},
  title={Sur les formes quadratiques binaires ind\'efinies},
  language={French},
  journal={Math. Ann.},
  volume={15},
  date={1879},
  pages={381--409},
  issn={0025-5831},
}

\bib{Mar80}{article}{
  author={Markoff, A.},
  title={Sur les formes quadratiques binaires ind\'efinies. II},
  language={French},
  journal={Math. Ann.},
  volume={17},
  date={1880},
  number={3},
  pages={379--399},
  issn={0025-5831},
  review={\MR {1510073}},
  doi={10.1007/BF01446234},
}

% \bib{Mat17}{article}{
%   author={Matheus, C.},
%   title={The Lagrange and Markov spectra from the dynamical point of view},
%   journal={ArXiv e-prints},
%   date={2017},
% }

% \bib{Mor18}{article}{
%   author={Moreira, Carlos},
%   title={Geometric properties of the Markov and Lagrange spectra},
%   journal={Ann. of Math. (2)},
%   volume={188},
%   date={2018},
%   number={1},
%   pages={145--170},
%   issn={0003-486X},
%   review={\MR {3815461}},
%   doi={10.4007/annals.2018.188.1.3},
% }

\bib{Mos16}{article}{
   author={Moshchevitin, Nikolay},
   title={\"Uber die rationalen Punkte auf der Sph\"are},
   language={German, with German summary},
   journal={Monatsh. Math.},
   volume={179},
   date={2016},
   number={1},
   pages={105--112},
   issn={0026-9255},
   review={\MR{3439274}},
   doi={10.1007/s00605-015-0818-4},
}

\bib{Reu19}{book}{
   author={Reutenauer, Christophe},
   title={From Christoffel words to Markoff numbers},
   publisher={Oxford University Press, Oxford},
   date={2019},
   pages={xi+156},
   isbn={978-0-19-882754-2},
   review={\MR{3887697}},
}

\bib{Rom08}{article}{
   author={Romik, Dan},
   title={The dynamics of Pythagorean triples},
   journal={Trans. Amer. Math. Soc.},
   volume={360},
   date={2008},
   number={11},
   pages={6045--6064},
   issn={0002-9947},
   review={\MR{2425702 (2009i:37101)}},
   doi={10.1090/S0002-9947-08-04467-X},
}

% \bib{Sch75}{article}{
%   author={Schmidt, Asmus L.},
%   title={Diophantine approximation of complex numbers},
%   journal={Acta Math.},
%   volume={134},
%   date={1975},
%   pages={1--85},
%   issn={0001-5962},
%   review={\MR {0422168}},
%   doi={10.1007/BF02392098},
% }

 \bib{Sch76}{article}{
   author={Schmidt, Asmus L.},
   title={Minimum of quadratic forms with respect to Fuchsian groups. I},
   journal={J. Reine Angew. Math.},
   volume={286/287},
   date={1976},
   pages={341--368},
   issn={0075-4102},
   review={\MR {0457358}},
   doi={10.1515/crll.1976.286-287.341},
 }
 
 \bib{Sch77}{article}{
   author={Schmidt, Asmus L.},
   title={Minimum of quadratic forms with respect to Fuchsian groups. II},
   journal={J. Reine Angew. Math.},
   volume={292},
   date={1977},
   pages={109--114},
   issn={0075-4102},
   review={\MR{457359}},
   doi={10.1515/crll.1977.292.109},
}

 \bib{Way82}{article}{
   author={Wayne, Alan},
   title={A Genealogy of $120^\circ$ and $60^\circ$ Natural Triangles},
   journal={Math. Mag.},
   volume={55},
   date={1982},
   number={3},
   pages={157--162},
   issn={0025-570X},
   review={\MR{1572421}},
 }
 
\bib{Zag82}{article}{
   author={Zagier, Don},
   title={On the number of Markoff numbers below a given bound},
   journal={Math. Comp.},
   volume={39},
   date={1982},
   number={160},
   pages={709--723},
   issn={0025-5718},
   review={\MR{669663}},
   doi={10.2307/2007348},
}

\end{biblist}
\end{bibdiv} 

\end{document}